\newcommand{\C}{{\mathbb{C}}}
\newcommand{\F}{{\mathcal{F}}}
\newcommand{\W}{{\mathcal{W}}}
\newcommand{\V}{{\mathcal{V}}}
\newcommand{\G}{\mathcal{G}}
\newcommand{\I}{\mathcal{I_{F}}}
\newcommand{\It}{\mc I^{\rm tr}_{\F}}
\newcommand{\Ii}{\mc I^{\rm inv}_{\F}}
\newcommand{\p}{{\mathbb{P}}}
\newcommand{\pd}{\check{\p}}
\newcommand{\pid}{\check{\pi}}
\newcommand{\N}{\mathbb{N}}
\newcommand{\Z}{{\mathbb{Z}}}
\newcommand{\leg}{{\rm Leg}\,}
\newcommand{\mf}[1]{{\mathfrak{#1}}}
\newcommand{\mr}[1]{{\mathrm{#1}}}
\newcommand{\mc}[1]{{\mathcal{#1}}}
\newcommand{\mb}[1]{{\mathbb{#1}}}
\newcommand{\wt}[1]{{\widetilde{#1}}}
\newcommand{\wh}[1]{{\widehat{#1}}}
\newcommand{\ov}[1]{{\overline{#1}}}
\newtheorem{teo}{Theorem}[section]
\newtheorem{thmm}{Theorem}
\newtheorem{corm}[thmm]{Corollary}
\newtheorem{lema}[teo]{Lemma}
\newtheorem{prop}[teo]{Proposition}
\newtheorem{defin}[teo]{Definition}
\newtheorem{cor}[teo]{Corollary}
\newtheorem{obs2}[teo]{Remark}
\newtheorem{q2}[teo]{Question}
\newtheorem{p2}[teo]{Problem}
\newtheorem{ex2}[teo]{Example}
\newenvironment{obs}{\begin{obs2}\rm}{
\end{obs2}}
\newenvironment{ex}{\begin{ex2}\rm}{\qed\end{ex2}}
\newenvironment{question}{\begin{q2}}{\end{q2}}
\newenvironment{dem}{\begin{proof}[Proof]}{\end{proof}}
\title{Foliations and webs inducing Galois coverings}
\author{A. Beltr\'an, M. Falla Luza, D. Mar\'{\i}n and M. Nicolau}
\date{\today}
\thanks{This work was partially supported by
Pontificia Universidad Católica del Perú through its Dirección de Gestión de la Investigación, the grants 476877/2013-0 from CNPQ-Brazil, MTM2011-26674-C02-01  from the Ministry of Economy and Competitiveness of Spain and by the grant UNAB10-4E-378 co-funded by ERDF ``A way to build Europe''}
\subjclass{14E05, 14E20,  37F75, 53A60, 32S65}
\address{Andrés Beltrán\\ Departamento de Matemática \\
 Pontificia Universidad Ca\-tó\-lica del Perú\\
Av. Universitaria 1801, 
Lima, Perú\\
abeltra@pucp.edu.pe }
\address{Maycol Falla Luza\\ Departamento de An\'alise -- IM \\
 Universidade Federal Fluminense  \\
 M\'ario Santos Braga s/n -- Niter\'oi, 24.020-140 RJ Brasil\\ 
 maycolfl@gmail.com}
\address{David Marín\\ Departament de Matem\`{a}tiques \\ Universitat Aut\`{o}\-noma de Bar\-ce\-lo\-na \\ E-08193  Bellaterra (Barcelona) Spain \\
 davidmp@mat.uab.es}
\address{Marcel Nicolau\\ Departament de Matem\`{a}tiques \\ Universitat Aut\`{o}noma de Barcelona \\ E-08193  Bellaterra (Barcelona) Spain \\
 nicolau@mat.uab.es}
\begin{document}
\maketitle

\begin{abstract}
We introduce the notion of Galois holomorphic foliation on  the complex projective space as that of foliations whose Gauss map is a Galois covering when restricted to an appropriate Zariski open subset.  First, we establish general criteria assuring that a rational map between projective manifolds of the same dimension defines a Galois covering.
Then, these criteria are used to give a geometric characterization of Galois foliations in terms of their inflection divisor and their singularities. We also characterize Galois foliations on $\mathbb P^2$ admitting continuous symmetries, obtaining a complete classification of Galois homogeneous foliations. 
\end{abstract}


\section{Introduction}

In this article we introduce the notion of Galois holomorphic foliation on the complex projective space. 
Our main objective is to establish general criteria characterizing those foliations that are Galois.

Let $\mathcal F$ be a holomorphic foliation in the complex projective plane $\mathbb P^2$. 
The degree $\deg \mathcal F$
of the foliation  is the number of tangencies  of $\mathcal F$ with a generic 
line of $\mathbb P^2$ and the Gauss map $\mc G_{\F}:\p^{2}\dasharrow\pd^{2}$ of the foliation, 
sending $x\in\p^{2}$ into the  tangent line of $\mathcal F$ at $x$,
is a well defined rational map whose indeterminacy points are just the singularities of the foliation. 
If the foliation is non degenerated then the restriction of $\mc G_{\F}$ 
to a suitable Zariski open subset $W$ of $\p^{2}$ is a covering map of order 
$\deg\mathcal F>0$.

The determination of finite subgroups of the Cremona group $\mr{Bir}(\p^{2})$
of birational transformations of $\p^2$ is a classical topic, nevertheless it continues to be an active field of current research
(cf. \cite{Beauville,Dol}). In \cite{CD}, Cerveau and Deserti addressed the problem of finding non-trivial 
birational deck transformations of the covering associated to a foliation $\F$, that is, birational maps 
$\tau:\p^{2}\dasharrow\p^{2}$ 
fulfilling $\mc G_{\F}\circ\tau=\mc G_{\F}$. Their aim was to construct  periodic 
elements of $\operatorname{Bir}(\mathbb P^2)$ in an effective way. 
In particular they associated a birational involution 
to each quadratic foliation and trivolutions to certain classes of  cubic foliations. In all these cases the
restriction of $\mc G_{\F}$ to the Zariski open set $W$ is necessarily a Galois covering.
It is therefore a natural question to determine the Galois foliations of $\p^2$, that is those foliations in~$\mathbb P^2$ whose Gauss map defines a Galois covering. And this is  the original purpose of this article.
We will see that for every Galois foliation the deck transformations of its Gauss map are birational.
This fact provides non-trivial explicit realizations of the symmetry groups of regular polyhedra into  the Cremona group (cf. Example~\ref{cremona}).

We are specially concerned with the problem of characterizing Galois foliations on $\p^{2}$ in terms of its geometric elements. In this direction, our main results are Theorems~\ref{D} and~\ref{E} and Corollary~\ref{B'} stated below. In order to prove them, we first consider the more general setting of arbitrary dominant rational maps $f\colon X \dasharrow Y$ between complex connected projective manifolds of the same dimension.

Such a rational map is called Galois if the field extension $f^{*}\colon\C(Y)\hookrightarrow\C(X)$ is Galois or equivalently if the group $\mr{Deck}(f):=\{\phi\in\mr{Bir}(X)\,|\,f\circ\phi=f\}$ acts transitively on the fibers of $f$. It is also equivalent to say that $f$ induces a topological Galois  covering by restriction to suitable Zariski open subsets.
On the other hand such a rational map $f\colon X\dasharrow Y$ admits a canonical birational model $\rho\colon N\to Y$, which is a finite branched covering, obtained by applying Stein factorization to a desingularization of $f$.
Thus, $f$ is Galois if and only if $\rho$ is a Galois branched covering and in this case the deck transformation group of $\rho$, which is birationally conjugated to $\mr{Deck}(f)$, consists in automorphisms of $N$.

There is a natural notion of branched covering of regular type by asking that the ramification indices are constant along the fibers. This notion, which translates naturally to rational maps, is of semi-local nature and it is implied by the global property of being Galois. One of our main results states that these two notions are equivalent when the source  is the projective space.

\begin{thmm}\label{A}
A dominant rational map $f:\p^{n}\dasharrow Y$ is Galois if and only if it is of regular type.
\end{thmm}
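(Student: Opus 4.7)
My plan is to pass from $f$ to its canonical branched covering model $\rho:N\to Y$ and then to exploit that $N$ is simply connected because the source of $f$ is $\p^{n}$. By the discussion preceding the statement, $f$ is Galois (respectively of regular type) if and only if $\rho$ is, so it suffices to work with $\rho$. The direction ``Galois $\Rightarrow$ regular type'' is the easy one already remarked in the text: the deck group acts on fibres transitively by automorphisms, conjugating the local ramification data, so ramification indices must be constant along each fibre. All the content lies in the converse.

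For the converse, I would first show that $N$ is simply connected. Indeed, $\rho$ is obtained by Stein factorising a resolution $X'\to Y$ of the indeterminacy of $f$; here $X'$ is a smooth projective birational modification of $\p^{n}$, hence rational and simply connected. Since $X'\to N$ is proper and surjective with connected fibres and $N$ is normal, the induced map on fundamental groups is surjective, giving $\pi_1(N)=1$. This is the only place where the hypothesis on the source enters, and it is essential: without it the statement would fail.

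Next I would bring in the Galois closure $\tilde\rho:\tilde N\to Y$ of $\rho$, with Galois group $G$. For each irreducible component $B_\beta$ of the branch locus of $\rho$, the monodromy around $B_\beta$ defines a permutation $\sigma_\beta\in G$ whose cycle lengths on a generic fibre realise the ramification indices of $\rho$ along the components of $\rho^{-1}(B_\beta)$, while the ramification of $\tilde\rho$ along $B_\beta$ equals the order of $\sigma_\beta$. The regular type hypothesis forces all cycle lengths of $\sigma_\beta$ to equal a common value $e_\beta$, hence $\mr{ord}(\sigma_\beta)=e_\beta$, so $\tilde\rho$ and $\rho$ have the same ramification index along every codimension one component of the branch locus.

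To finish, multiplicativity of ramification indices gives that $\tilde N\to N$ is unramified in codimension one; Zariski--Nagata purity applied to the normal variety $N$ then upgrades this to an étale cover. Because $N$ is simply connected and $\tilde N$ is connected, this étale cover must be trivial, so $\tilde N=N$ and $\rho$ is already Galois. The principal obstacle is really the very first step: identifying why $N$, although not itself $\p^{n}$, inherits simple connectedness from the source. Once $\pi_1(N)=1$ is established, the rest is a standard monodromy and purity argument.
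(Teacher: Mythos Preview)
Your first two steps are correct: $N$ is indeed simply connected (the smooth projective birational model $X'$ has $\pi_{1}(X')=1$, and the proper map $\gamma\colon X'\to N$ with connected fibres onto a normal target gives $\pi_{1}(X')\twoheadrightarrow\pi_{1}(N)$), and the monodromy computation showing that the Galois closure $\wt N\to N$ is unramified in codimension one is fine. The gap is in the last step. Zariski--Nagata purity requires the base to be \emph{regular}, not merely normal, and $N$ can be singular over $\mr{Sing}(\Delta_\rho)$. Nor can you simply pass to the smooth locus $N'=N\setminus\rho^{-1}(\mr{Sing}(\Delta_\rho))$ and invoke $\pi_{1}(N)=1$: removing a codimension-two set from a normal variety can strictly enlarge the fundamental group (already for an $A_{1}$ surface singularity the link has $\pi_{1}=\Z/2$), so $\pi_{1}(N')=1$ does not follow from $\pi_{1}(N)=1$. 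In fact there exist simply connected normal projective \emph{rational} surfaces admitting nontrivial connected covers unramified in codimension one (e.g.\ $\p^{2}\to\p^{2}/(\Z/3)$ for a diagonal action with three isolated fixed points). So ``$\pi_{1}(N)=1$ and $\wt N\to N$ quasi-\'etale'' is not by itself enough to force $\wt N=N$; some further use of the source being $\p^{n}$ is required beyond the bare fact that $N$ is simply connected.

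The paper supplies exactly this, but by a different mechanism. Instead of the Galois closure it uses Proposition~\ref{Galreg}: regular type means the cover $(N'\times_{Y'}N')^{\nu}\to N'$ is unbranched, and Galois means it is trivial. A generic line $\ell\simeq\p^{1}$ in $\p^{n}$ can be chosen to avoid $\Sigma_{f}$ and the preimage of $\mr{Sing}(\Delta_{\rho})$, so that the birational map $\p^{n}\dasharrow N$ carries $\ell$ entirely into the smooth locus $N'$; the unbranched cover then pulls back to an unbranched cover of $\p^{1}$, which is trivial. A Lefschetz-type admissibility argument ($\pi_{1}(\ell\cap X_{V})\twoheadrightarrow\pi_{1}(X_{V})$) propagates triviality from $\ell$ to $X_{V}\cong N_{V}$. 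Thus the paper never needs $\pi_{1}(N')=1$; it uses $\pi_{1}(\p^{1})=1$ together with the fact that a line in $\p^{n}$ can be moved into $N'$ and still surject on $\pi_{1}$. Your Galois-closure approach could be completed by the same device---pull $\wt N\to N$ back along such an $\ell$ and use admissibility---but as written the purity step does not go through.
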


The proof of this theorem is based on the dimensional reduction provided by Theorem~\ref{dim-red} which implies that the character Galois can be tested by restriction to appropriated hyperplane curves.

\medskip

We address the  natural question of describing the space of Galois maps in a given family of dominant rational maps. In this direction we have the following result that is a particular case of Theorem~\ref{TC}.

\begin{thmm}\label{B}
Consider a family $f:X\times T\dasharrow \p^{n}\times T$ of dominant rational maps of constant topological degree parametrized by $T$. Then 
$$\mr{Gal}(T):=\{t\in T\,|\, f_{t} \text{ is Galois}\}$$
is a Zariski closed subset of $T$ and the  Galois group is constant along each connected component of $\mr{Gal}(T)$.
\end{thmm}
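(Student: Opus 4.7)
The plan is to characterize the Galois locus fiberwise via Theorem~\ref{A} and its regular-type criterion, and then to exploit the rigidity of families of finite groups for the constancy of the Galois group.

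First, I would perform a simultaneous resolution of indeterminacies and relative Stein factorization on the whole family $f$. Over a dense Zariski open $T_0\subseteq T$ (extending to $T$ by taking closures), this yields a relative canonical model $\rho\colon N\to\p^{n}\times T$ that is a finite branched covering of degree $d$ and flat over $T_0$, whose fibers $\rho_t\colon N_t\to\p^{n}$ are the canonical normalized models of the individual $f_t$. Because the topological degree is constant, the deck group of $f_t$ is identified with the automorphism group of $\rho_t$ over $\p^{n}$.

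Second, by a fiberwise application of Theorem~\ref{A}, $f_t$ is Galois if and only if $\rho_t$ is of regular type, i.e.\ the ramification indices of $\rho_t$ are constant along each fiber over $\p^{n}$. I would decompose the relative ramification divisor $R\subset N$ into irreducible components $R_\alpha$ with ramification indices $e_\alpha$ along $\rho$, and likewise the branch divisor $B\subset\p^{n}\times T$ into components $B_\beta$. Stratifying $T$ into locally closed pieces on which the incidence pattern $\rho(R_\alpha)\subseteq B_\beta$ is preserved, the regular-type condition becomes the finite system of equalities $e_\alpha=e_\gamma$ for every pair of ramification components lying above a common branch component. Each such equality is a closed condition on the stratum; taking closures across strata yields the Zariski closed subset $\mr{Gal}(T)\subseteq T$.

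Third, to establish constancy of the Galois group, I would consider the relative deck-transformation scheme $\mc D\to T$ whose fiber at $t$ is $\mr{Deck}(f_t)\cong\mr{Aut}(N_t/\p^{n})$. Over $\mr{Gal}(T)$ each fiber has length exactly $d$, so $\mc D\to\mr{Gal}(T)$ is a finite \'etale group scheme of order $d$, the group law being induced by the composition of birational self-maps of $N$. Since finite groups of a given order fall into finitely many isomorphism classes, the assignment $t\mapsto\mr{Deck}(f_t)$ is locally constant on $\mr{Gal}(T)$ in the \'etale topology, hence constant on each connected component.

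The main technical obstacle lies in step two: in a family the irreducible components of the branch and ramification divisors may merge or split under specialization, so the indexing $(R_\alpha,B_\beta)$ is not canonically independent of $t$. The stratification and passage to closures thus require care, and the heart of the closedness argument is to check that ramification-index equalities propagate from a Zariski-dense open subset of $T$ to its closure, accounting for possible degenerations of the component structure, a point where the constancy of the topological degree plays its essential role by ensuring the flatness of $\rho$ over $T_0$ and controlling the total ramification data.
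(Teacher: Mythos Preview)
There is a genuine gap at the very start of your second step. Theorem~\ref{A} gives the equivalence ``Galois $\Leftrightarrow$ regular type'' for dominant rational maps whose \emph{source} is $\p^{n}$. In Theorem~\ref{B} the target fibres are $\p^{n}$, but the source fibres are the arbitrary manifold $X$; nothing in the hypotheses forces $X=\p^{n}$. For a general source the implication ``regular type $\Rightarrow$ Galois'' is false, as Example~\ref{contra} shows explicitly (a degree~$4$ regular-type covering of $\p^{1}$ by a genus~$2$ curve that is not Galois). Hence characterizing $\mr{Gal}(T)$ as the locus where $\rho_t$ is of regular type is simply incorrect, and the whole closedness argument built on it collapses. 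Even if $X=\p^{n}$, you yourself flag that the stratification step is the ``main technical obstacle'': showing that a condition that is closed on each stratum of a constructible decomposition glues to a globally closed set is exactly the hard part, and it does not follow formally.

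The paper's route is quite different. First (Proposition~\ref{TB}) one uses Var\v{c}enko equisingularity and induction on $\dim T$ to show that $\mr{Gal}(T)$ is constructible, with a dichotomy on a dense open $T'\subset T$. The closedness and constancy of the Galois group (Theorem~\ref{TC}) are then proved by a one-dimensional reduction that genuinely uses the hypothesis that the target is $\p^{n}$: for each boundary point $t_0$ one picks, via Lemma~\ref{aux}, a family of $\Lambda$-admissible lines $\ell_z\subset Y_{t_z}\simeq\p^{n}$ over a disc $\mb D$ with $t_z\in T'$ for $z\neq 0$, producing a degenerating family $g\colon\mc M\to\p^{1}\times\mb D$ of branched coverings of $\p^{1}$. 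One then invokes the Namba--Takai result \cite[Theorem~5]{NT} that the monodromy group of the central fibre injects into that of the nearby fibres. Since the nearby fibres are Galois (monodromy of order exactly $d$) and the central fibre $\mc M_0$ is shown to be irreducible (so its monodromy is transitive, hence of order $\ge d$), the inclusion forces equality, giving both Galois-ness at $t_0$ and constancy of the group. Your \'etale-group-scheme idea for constancy could be made to work once closedness is known, but it still presupposes the closedness you have not established.
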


We introduce two new combinatorial invariants of such  dominant rational maps: the weighted branching type (Definitions~\ref{bt} and~\ref{wbt}) and the genus (Definition~\ref{gf}). In Proposition~\ref{TB} and Theorem~\ref{TC} it is
shown that they are generically constant along the irreducible components of $\mr{Gal}(T)$. These invariants and the corresponding Galois groups are used to distinguish the different components of $\mr{Gal}(T)$.

\medskip

In Section~\ref{SFW} we turn back to our original motivation of studying the Gauss map $\G_{\F}:\p^{n}\dasharrow\pd^{n}$ of a foliation $\F$ on $\p^{n}$. We say that $\F$ is Galois if its Gauss map $\G_{\F}$ is a Galois rational map.
By duality, a foliation $\F$ on $\p^{n}$ induces a $d$-web $\leg\F$ (called Legendre transform of $\F$) on $\pd^{n}$ where $d=\deg\G_{\F}$. This web can be thought as the direct image of the foliation $\F$ by its Gauss map. This motivates to consider the direct image of a foliation $\F$ by a rational map $f\colon X\dasharrow Y$  which is a well defined web $f_{*}\F$ on $Y$ whenever $\F$ is in general position with respect to $f$, see Definition~\ref{gp} and Proposition~\ref{direct}. It turns out that the monodromy of the web $f_{*}\F$ is naturally identified to the monodromy of the map $f$. This allows us to formulate the following characterization of Galois rational maps in terms of decomposability of webs.

\begin{thmm}\label{C}
Let $f:X\dasharrow Y$ be a dominant rational map between complex projective manifolds of the same dimension and let $\F$ be a foliation on $X$ in general position with respect to $f$. Then $f$ is Galois if and only if the web $f^{*}f_{*}\F$ is totally decomposable. 
\end{thmm}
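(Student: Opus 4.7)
The plan is to reduce both implications to a local analysis at a generic point combined with a global extension argument. Let $d$ be the topological degree of $f$, fix a generic $y_{0}\in Y$ with $f^{-1}(y_{0})=\{x_{1},\ldots,x_{d}\}$ and set $x:=x_{1}$. By the general-position hypothesis on $\F$, the $d$-web $f_{*}\F$ splits near $y_{0}$ into $d$ foliation germs $\mc H_{1},\ldots,\mc H_{d}$, where $\mc H_{j}$ is the direct image by $f$ of the germ of $\F$ at $x_{j}$. Since $f$ is a local biholomorphism at $x$, pulling back yields the local decomposition $f^{*}f_{*}\F = f^{*}\mc H_{1}+\cdots+f^{*}\mc H_{d}$ of germs at $x$, and the question becomes whether these $d$ local germs come from $d$ global foliations on $X$.

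For the direct implication, suppose $f$ is Galois with $\mr{Deck}(f)=\{\sigma_{1}=\id,\sigma_{2},\ldots,\sigma_{d}\}$ and $\sigma_{j}(x)=x_{j}$. A germ-level check gives $f^{*}\mc H_{j}=\sigma_{j}^{*}\F$ at $x$: indeed, $\sigma_{j}$ is a local biholomorphism sending $x$ to $x_{j}$ and satisfies $f\circ\sigma_{j}=f$, so for any leaf $L$ of $\F$ through $x_{j}$ the local branch of $f^{-1}(f(L))$ at $x$ coincides with $\sigma_{j}^{-1}(L)$. Therefore $f^{*}f_{*}\F=\sum_{j=1}^{d}\sigma_{j}^{*}\F$ as webs on $X$, with each summand a globally defined foliation; this is exactly total decomposability.

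For the converse, assume $f^{*}f_{*}\F=\F_{1}+\cdots+\F_{d}$ with $\F_{1}=\F$ and each $\F_{i}$ a genuine foliation on $X$. At a generic $x$ the component $\F_{i}$ matches one of the germs $f^{*}\mc H_{\pi(i)}$, and this germ is of the form $\sigma_{i}^{*}\F$ for the local section $\sigma_{i}:=\tau_{\pi(i)}\circ f$ of $f$ based at $x_{\pi(i)}$, with $\tau_{j}$ denoting the local inverse of $f$ at $x_{j}$. To promote this local $\sigma_{i}$ to a birational self-map of $X$ I would work in the incidence variety $Z\subset X\times X$ cut out by $f(x')=f(x)$: every irreducible component of $Z$ is an algebraic correspondence compatible with $f$, and the global foliation $\F_{i}$ singles out one such component whose fibre over a generic first coordinate is reduced to a single point, forcing it to be the graph of a rational map $\sigma_{i}\in\mr{Deck}(f)$. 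Since distinct $\F_{i}$ give distinct $\sigma_{i}$, we obtain $d$ elements of $\mr{Deck}(f)$, which combined with the universal bound $|\mr{Deck}(f)|\le d$ forces $f$ to be Galois.

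The principal technical obstacle is exactly this global extension step: the local candidate $\tau_{\pi(i)}\circ f$ is only defined branch by branch, and turning it into a single-valued birational map of $X$ requires that the choice $\pi(i)$ be consistent under analytic continuation along generic paths, a consistency which is ensured precisely by $\F_{i}$ being a foliation on $X$ rather than a mere multigerm. A cleaner reformulation, which I might pursue in parallel, translates the entire argument into monodromy: by the identification of the monodromy of $f_{*}\F$ with the monodromy group $G\le S_{d}$ of $f$ recalled in Section~\ref{SFW}, total decomposability of $f^{*}f_{*}\F$ is equivalent to triviality of the stabilizer of $x$ in $G$, and together with transitivity of $G$ this is the standard characterization of the Galois condition.
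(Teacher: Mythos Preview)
Your argument is correct and agrees with the paper's in substance: both hinge on the fibered product $X\times_{Y}X$ (your ``incidence variety $Z$''), identifying total decomposability of $f^{*}f_{*}\F$ with the splitting of $X\times_{Y}X\to X$ into $d$ sheets, which is characterization~(\ref{4}) in Definition~\ref{def-gal}. The only difference is packaging. The paper proves once and for all (Theorem~\ref{univ}) that the irreducible factors of any web $\W$ are in bijection with the connected components of its developing space $Z_{\W}$, and then observes in the lemma preceding Theorem~\ref{webdecomp} that a desingularization of $X\times_{Y}X$, equipped with $q^{*}\F$ and the first projection, \emph{is} a developing triple for $f^{*}f_{*}\F$; the equivalence then drops out immediately. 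You instead extract each deck transformation $\sigma_{i}$ by hand from the corresponding $\F_{i}$, which is precisely what that bijection says in this instance. Your closing monodromy reformulation is likewise equivalent to the paper's use of characterization~(\ref{4}), since the monodromy of the pullback covering $X\times_{Y}X\to X$ is exactly the stabilizer of the base point in $\mr{Mon}(f)$.
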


In the case of a foliation $\F$ on $\p^{n}$ this result states that $\F$ is Galois if and only if the web $\G^{*}_{\F}\leg\F$ is totally decomposable. 
By means of this criterion and a dimensional reduction we are able to decide if certain families of foliations are Galois or not. In particular, we exhibit Galois foliations in every dimension and with degree arbitrarily large (cf. Corollary~\ref{n-dim-ex}).

\medskip

In section~\ref{S6} we focus on the study of Galois foliations in $\p^{2}$.
One of our main goals is to give a  characterization of Galois foliations $\F$ in terms of geometric elements, more concretely in terms of the inflection divisor $\I$ (whose definition is recalled in subsection~\ref{7.3}) and the singular locus $\Sigma_{\F}$. We decompose $\I=\mc I_{\F}^{\mr{inv}}+\mc I_{\F}^{\mr{tr}}$, where $\mc I_{\F}^{\mr{inv}}$ is given by the invariant lines of $\F$, and we set
$$\mc I_{\F}^{\mr{tr}}=\sum_{\varrho>1}(\varrho-1)\mc I_{\F}^{\varrho},$$
where $\mc I_{\F}^{\varrho}$ are reduced curves. We have that $p\in\mc I_{\F}^{\varrho}\setminus\Sigma_{\F}$ if and only if the tangency order at $p$ between the leaf of $\F$ passing through $p$ and its tangent line is $\varrho$. It turns out that $\G_{\F}$ has ramification index $\varrho$ along $\mc I_{\F}^{\varrho}$ and this is a local property. 
We consider the set $\Sigma_{\F}^{\mr{ram}}$ of singularities giving rise to  ramification components of the exceptional divisor after desingularizing $\G_{\F}$ and we denote by $\Sigma_{\F}^{\varrho}$ the subset of $\Sigma_{\F}^{\mr{ram}}$ consisting in those singularities whose corresponding ramification divisors are all of order $\varrho$.
The sets $\Sigma_{\F}^{\mr{ram}}$ and $\Sigma_{\F}^{\varrho}$ are characterized geometrically in Lemma~\ref{sigmad} by means of local arithmetic invariants. With these notations and using Theorem~\ref{A}  we can state the following semi-local characterization of Galois foliations:
\begin{thmm}\label{D}
A degree $d$ 
foliation $\F$ on $\p^{2}$ is Galois if and only if,  
for each  $\check{\ell}\in\pd^{2}$ such that the tangency locus $\mr{Tang}(\F,\ell)$ between $\F$ and $\ell$ has less than $d$ points, there is  $\varrho|d$, $\varrho>1$, 
such that $\mr{Tang}(\F,\ell)\subset(\mc I_{\F}^{\varrho}\setminus\Sigma_{\F})\cup\Sigma_{\F}^{\varrho}$. 
\end{thmm}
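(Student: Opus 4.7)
The plan is to derive Theorem~\ref{D} by applying Theorem~\ref{A} to the Gauss map $\G_{\F}:\p^{2}\dasharrow\pd^{2}$, whose topological degree equals $d=\deg\F$. By Theorem~\ref{A}, $\F$ is Galois exactly when $\G_{\F}$ is of regular type, that is, when the ramification indices of a desingularization of $\G_{\F}$ are constant along every fiber. So the proof amounts to translating this regular-type condition into geometric data attached to the lines $\check{\ell}\in\pd^{2}$. I would first fix a birational resolution $\pi:\wt{X}\to\p^{2}$ making $\wt{\G}_{\F}=\G_{\F}\circ\pi$ a regular morphism.

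Next I would describe the fiber $\wt{\G}_{\F}^{-1}(\check{\ell})$ and its ramification. It splits set-theoretically into two kinds of components: the strict transforms of the regular tangency points $p\in\mr{Tang}(\F,\ell)\setminus\Sigma_{\F}$, and the exceptional components lying over the singularities in $\mr{Tang}(\F,\ell)\cap\Sigma_{\F}^{\mr{ram}}$. The stated fact that $\G_{\F}$ has ramification index $\varrho$ along $\mc I_{\F}^{\varrho}$ gives that $\wt{\G}_{\F}$ ramifies with index $\varrho$ at a smooth tangency $p$ iff $p\in\mc I_{\F}^{\varrho}$ (with $\varrho=1$ off the inflection divisor), while Lemma~\ref{sigmad} identifies $p\in\Sigma_{\F}^{\varrho}$ with the condition that every exceptional ramification component over $p$ has order exactly $\varrho$. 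Since the total weighted count on every fiber equals $d$, if all indices there coincide with some common value $\varrho$ then $\varrho\mid d$ automatically.

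For the ``only if'' direction I would assume $\F$ is Galois. Theorem~\ref{A} forces the ramification indices on each fiber to be equal. Pick $\check{\ell}$ with $|\mr{Tang}(\F,\ell)|<d$; then at least one index exceeds $1$, so the common value $\varrho>1$ divides $d$. By the dictionary above, every $p\in\mr{Tang}(\F,\ell)\setminus\Sigma_{\F}$ must lie in $\mc I_{\F}^{\varrho}$ and every $p\in\mr{Tang}(\F,\ell)\cap\Sigma_{\F}$ in $\Sigma_{\F}^{\varrho}$, which is precisely the inclusion $\mr{Tang}(\F,\ell)\subset(\mc I_{\F}^{\varrho}\setminus\Sigma_{\F})\cup\Sigma_{\F}^{\varrho}$.

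For the ``if'' direction, the stated inclusion says every component of $\wt{\G}_{\F}^{-1}(\check{\ell})$ ramifies with the same index $\varrho$, whereas the fibers with $|\mr{Tang}(\F,\ell)|=d$ are automatically unramified. Hence $\G_{\F}$ is of regular type and Theorem~\ref{A} returns the Galois property. The main obstacle I anticipate is the bookkeeping at singular tangencies: one must verify carefully that the local arithmetic invariants packaged by Lemma~\ref{sigmad} genuinely detect every ramified exceptional component over $p\in\Sigma_{\F}$ produced by the resolution, and that these indices do not depend on the chosen resolution. Once this bridge between the local combinatorics and the ramification of $\wt{\G}_{\F}$ is in place, the equivalence reads off immediately from Theorem~\ref{A}.
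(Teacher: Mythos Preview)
Your proposal is correct and follows essentially the same strategy as the paper: reduce to the equivalence ``Galois $\Leftrightarrow$ regular type'' for maps out of $\p^{n}$ (Theorem~\ref{A}, which is Theorem~\ref{GG} in the body), and then translate regular type into the geometric condition on $\mr{Tang}(\F,\ell)$ via the identification of the non-exceptional ramification components with $\mc I_{\F}^{\varrho}$ (Lemma~\ref{tangencia}) and of the exceptional ones with $\Sigma_{\F}^{\varrho}$.

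The only noteworthy difference is organizational. The paper does not apply Theorem~\ref{A} directly; instead it passes first through the one-dimensional reduction: it restricts the associated branched covering $\rho:N\to\pd^{2}$ to a $\Delta_{\F}$-admissible line $\ell\subset\pd^{2}$ (Theorem~\ref{dim-red}), invokes Corollary~\ref{grtl} to get ``$\F$ Galois $\Leftrightarrow$ $\rho_{\ell}$ regular type'', and then reads off the branching type of $\rho_{\ell}$ at each point of $\Delta_{\F}\cap\ell$ using Proposition~\ref{Z} and Lemma~\ref{tangencia}. This detour buys something concrete: it makes transparent why it suffices to test the condition at one generic point of each irreducible component of $\Delta_{\F}$ (rather than at every $\check\ell$ with fewer than $d$ tangencies), and it sidesteps the need to compare $\wt\G_{\F}$ with $\rho$ at singular points of $\Delta_{\F}$. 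Your more direct route via Theorem~\ref{A} is equally valid, but the ``obstacle'' you flag---controlling all ramified exceptional components over a singular tangency and checking independence of the resolution---is precisely what the paper circumvents by restricting to generic points of $\Delta_{\F}$, where the normal form~\eqref{NF} applies cleanly.
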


Since being Galois is a global property, in general one can not expect to obtain a fully characterization of Galois foliations only in purely local terms.  However, we are able to state two natural conditions, one sufficient and the other necessary, using only local data of $\F$:

\begin{thmm}\label{E} 
Let $\F$ be a degree $d$ foliation on $\p^{2}$
and consider the following assertions:
\begin{enumerate}[(1)]
\item $\mc I_{\F}^{\varrho}=\emptyset$ unless for $\varrho=d$ and $\Sigma_{\F}^{\mr{ram}}=\Sigma_{\F}^{d}$;
\item $\F$ is Galois;
\item $\mc I_{\F}^{\varrho}=\emptyset$ unless for $\varrho|d$ and $\Sigma_{\F}^{\mr{ram}}=\bigcup\limits_{1<\varrho|d}\Sigma_{\F}^{\varrho}$.
\end{enumerate}
Then the implications $(1)\Rightarrow(2)\Rightarrow(3)$ hold. 
\end{thmm}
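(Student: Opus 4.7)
The plan is to route both implications through the semi-local criterion of Theorem~\ref{D}. The key bookkeeping tool is the ramification decomposition of the Gauss map: each $p\in\mr{Tang}(\F,\ell)$ carries a local ramification index $\varrho_{p}\geq 1$ of $\G_{\F}$ (after desingularization), the identity $\sum_{p}\varrho_{p}=d$ holds, and $\varrho_{p}>1$ precisely when either $p\in\I\setminus\Sigma_{\F}$ (so that $p\in\mc I_{\F}^{\varrho_{p}}$) or $p\in\Sigma_{\F}^{\mr{ram}}$ and the tangent direction at~$p$ determined by $\ell$ corresponds to a ramification component of the exceptional divisor over~$p$. This dictionary is essentially the content of Lemma~\ref{sigmad}.

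For the implication $(1)\Rightarrow(2)$, I would fix $\check{\ell}$ with $|\mr{Tang}(\F,\ell)|<d$ and verify the inclusion demanded by Theorem~\ref{D} with $\varrho=d$. Some $p\in\mr{Tang}(\F,\ell)$ must satisfy $\varrho_{p}>1$; by hypothesis~(1), since the only nonempty inflection stratum is $\mc I_{\F}^{d}$ and every ramified singularity lies in $\Sigma_{\F}^{d}$, such a $p$ satisfies $\varrho_{p}=d$. The identity $\sum_{q}\varrho_{q}=d$ then collapses $\mr{Tang}(\F,\ell)$ to $\{p\}$, and this single point lies in $(\mc I_{\F}^{d}\setminus\Sigma_{\F})\cup\Sigma_{\F}^{d}$, so Theorem~\ref{D} yields that $\F$ is Galois.

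For $(2)\Rightarrow(3)$, I would apply Theorem~\ref{D} point-by-point to $\F$ assumed Galois. If $p\in\mc I_{\F}^{\varrho_{0}}\setminus\Sigma_{\F}$ with $\varrho_{0}>1$, taking $\ell$ to be the tangent line to the leaf of $\F$ at $p$ gives a contribution $\varrho_{0}>1$ at $p$, hence $|\mr{Tang}(\F,\ell)|<d$; Theorem~\ref{D} produces $\varrho\mid d$ with $p\in\mc I_{\F}^{\varrho}\setminus\Sigma_{\F}$, and disjointness of the strata forces $\varrho_{0}=\varrho\mid d$. If instead $p\in\Sigma_{\F}^{\mr{ram}}$ and $E_{i}$ is any ramification component over~$p$ of order $\varrho_{i}$, I would choose a line $\ell$ through $p$ whose strict transform meets $E_{i}$ at a generic point; then $\G_{\F}$ ramifies at $\check{\ell}$ with index $\varrho_{i}$ coming from $p$, giving $|\mr{Tang}(\F,\ell)|<d$, and Theorem~\ref{D} places $p\in\Sigma_{\F}^{\varrho}$ for some $\varrho\mid d$. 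The defining uniformity of $\Sigma_{\F}^{\varrho}$ forces $\varrho_{i}=\varrho$ for every $i$, so $p\in\bigcup_{1<\varrho\mid d}\Sigma_{\F}^{\varrho}$; the reverse inclusion is automatic from the definitions.

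The main technical delicacy I expect is the isolation step in $(2)\Rightarrow(3)$: to be able to detect each ramification component $E_{i}$ over a singularity~$p$ separately by an appropriate line through~$p$. This requires a precise description of how the pencil of lines through~$p$ meets the exceptional divisor of the desingularization of $\G_{\F}$, and it is exactly what the local arithmetic invariants in Lemma~\ref{sigmad} are designed to encode.
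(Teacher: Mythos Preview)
Your approach is correct and matches the paper's own route. The paper does not spell out a proof of Theorem~\ref{E}; it simply states (just before Theorem~\ref{necsufloc}) that the implications follow ``from Theorem~\ref{FolGal} and Lemma~\ref{sigmad}'', and your proposal is precisely a fleshing out of that sentence. Your handling of both implications via the semi-local criterion of Theorem~\ref{D} (which is Theorem~\ref{FolGal}) together with the ramification bookkeeping $\sum\varrho_{p}=d$ is exactly what is intended, and your identification of the one delicate point---producing a line through a given $s\in\Sigma_{\F}^{\mr{ram}}$ that witnesses each exceptional ramification component---is accurate and is indeed what Lemma~\ref{sigmad} encodes.
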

Condition (1) characterizes foliations such that its Gauss map is \emph{extremal} in the sense that all its ramification indices are maximal, i.e. equal to $d$. The above theorem has the following corollary which implies that the Galois character of a foliation of prime degree can be checked by means of purely local data.

\begin{corm}\label{B'}
Let $\F$ be a foliation on $\p^{2}$ of prime degree. Then $\G_{\F}$ is Galois
if and only if $\G_{\F}$ is extremal.
\end{corm}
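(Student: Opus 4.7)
The plan is to derive Corollary~\ref{B'} as an immediate consequence of Theorem~\ref{E} by observing that, in the prime degree case, the ``divisibility'' condition~(3) collapses to the ``extremality'' condition~(1), so all three conditions of Theorem~\ref{E} become equivalent.

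More precisely, assume $d=\deg\F$ is prime. The only divisors of $d$ are $1$ and $d$, and the divisors $\varrho$ of $d$ with $\varrho>1$ that intervene in the decomposition $\mc I_{\F}^{\mr{tr}}=\sum_{\varrho>1}(\varrho-1)\mc I_{\F}^{\varrho}$ and in the union $\bigcup_{1<\varrho\mid d}\Sigma_{\F}^{\varrho}$ reduce to the single value $\varrho=d$. Thus condition~(3) of Theorem~\ref{E} reads
$$\mc I_{\F}^{\varrho}=\emptyset\text{ unless }\varrho=d,\quad\text{ and }\quad\Sigma_{\F}^{\mr{ram}}=\Sigma_{\F}^{d},$$
which is precisely condition~(1). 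In particular, for prime $d$ the implications $(1)\Rightarrow(2)\Rightarrow(3)$ furnished by Theorem~\ref{E} give a chain $(1)\Rightarrow(2)\Rightarrow(1)$, so that being Galois is equivalent to being extremal.

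Since the work is already absorbed into Theorem~\ref{E}, there is no real obstacle in the corollary itself; the only point that must be made carefully is the numerical observation that the set of strict divisors $\{\varrho:1<\varrho\mid d\}$ is the singleton $\{d\}$ when $d$ is prime, which is what forces conditions~(1) and~(3) to coincide and removes the gap between the sufficient and necessary conditions of Theorem~\ref{E}.
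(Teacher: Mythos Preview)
Your proof is correct and follows exactly the approach the paper intends: when $d$ is prime the set $\{\varrho : 1<\varrho\mid d\}$ reduces to $\{d\}$, so condition~(3) of Theorem~\ref{E} coincides with condition~(1), and the chain $(1)\Rightarrow(2)\Rightarrow(3)=(1)$ gives the equivalence. The paper treats the corollary as an immediate consequence of Theorem~\ref{E} (and more explicitly of its restatement as Theorem~\ref{necsufloc}) in precisely this way.
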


The set of degree $d$ foliations on $\p^{2}$ is a Zariski open subset of a projective space and we can consider the family of their Gauss maps. By applying Theorem~\ref{B} we deduce that the space $\mb G_{d}$ of degree $d$ Galois foliations is a quasi-projective variety. This raises the question  of describing its irreducible components in geometric terms. This problem is of similar nature to the study of the irreducible components of the space of codimension one foliations on $\p^{n}$ for $n\ge 3$ (cf. \cite{CLN}) and the study of the irreducible components of the space of flat webs (cf. \cite{MP}).

For each degree $d$ we present a continuous family  of Galois foliations, that include all the examples considered in \cite{CD}, and that we expect to be components of $\mb G_{d}$. We also exhibit  a degree $3$ Galois foliation that does not belong to the previous family. Looking at its genus we  show that $\mb G_{3}$ has at least two irreducible components (cf. Proposition~\ref{G3red}).

In subsection~\ref{homog} we treat the case of homogeneous foliations in $\mathbb P^2$, i.e. invariant by the flow associated to a radial vector field. Using Proposition~\ref{AutF} we can perform a dimensional reduction $\wh\G_{\F}:\p^{1}\to\p^{1}$ of $\G_{\F}:\p^{2}\dasharrow\pd^{2}$.
Then we can use the Klein classification of the 
Galois ramified coverings of $\mathbb P^1$ by itself (cf. Theorem~\ref{klein}) in terms of their Galois groups.
The left-right equivalence between rational functions on $\p^{1}$ preserve Galois property and translates into a natural action of  $\mr{PSL}_{2}(\C)\times\mr{PSL}_{2}(\C)$ on the space of homogeneous foliations. We obtain the following result.

\begin{thmm}\label{F}
The homogeneous Galois foliations of degree $d$ consists of the orbits by the left-right action of $\mr{PSL}_{2}(\C)\times\mr{PSL}_{2}(\C)$ of the following ones:
\begin{enumerate}[(1)]
\item $x^{d}\partial_{x}+y^{d}\partial_{y}$ for every $d$,
\item $(x^{n}+y^{n})^{2}\partial_{x}+(x^{n}-y^{n})^{2}\partial_{y}$ if $d=2n$ is even,
\item $(x^{4}+2i\sqrt{3}x^{2}y^{2}+y^{4})^{3}\partial_{x}+(x^{4}-2i\sqrt{3}x^{2}y^{2}+y^{4})^{3}\partial_{y}$ if $d=12$, 
\item $(x^{8}+14x^{4}y^{4}+y^{8})^{3}\partial_{x}+(xy(x^{4}-y^{4}))^{4}\partial_{y}$ if $d=24$,
\item $\scriptstyle{(x^{20}-228x^{15}y^{5}+494x^{10}y^{10}+228x^{5}y^{15}+y^{20})^{3}\partial_{x}+(xy(x^{10}+11x^{5}y^{5}-y^{10}))^{5}\partial_{y}}$ if $d=60$.
\end{enumerate}
\end{thmm}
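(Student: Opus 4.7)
The plan is to use the continuous symmetry of a homogeneous foliation to dimensionally reduce the Gauss map to a rational self-map of $\p^{1}$, and then apply Klein's classification. If $\F$ is defined by the homogeneous vector field $A(x,y)\partial_{x}+B(x,y)\partial_{y}$ of degree $d$, then by Proposition~\ref{AutF} the Gauss map $\G_{\F}$ admits a dimensional reduction $\wh{\G}_{\F}\colon\p^{1}\to\p^{1}$, which in these coordinates is the degree-$d$ map $[x:y]\mapsto[A(x,y):B(x,y)]$. By Theorem~\ref{dim-red}, $\F$ is Galois if and only if $\wh{\G}_{\F}$ is Galois, and the $\mr{PSL}_{2}(\C)\times\mr{PSL}_{2}(\C)$ action on homogeneous foliations is set up precisely so that the assignment $\F\mapsto\wh{\G}_{\F}$ intertwines it with the standard left-right action on the space of rational maps $\p^{1}\to\p^{1}$.

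Applying Klein's Theorem~\ref{klein}, every Galois rational map $\p^{1}\to\p^{1}$ is, up to left-right equivalence, one of five types determined by its Galois group: cyclic $\Z/d$, dihedral $D_{n}$ of order $2n$, tetrahedral $A_{4}$ of order $12$, octahedral $S_{4}$ of order $24$, or icosahedral $A_{5}$ of order $60$; in each case the degree equals the order of the group. It therefore suffices to exhibit, for each Klein type, a homogeneous foliation whose reduction realizes the corresponding normal form. The cyclic case is immediate: $[x:y]\mapsto[x^{d}:y^{d}]$ lifts to (1). For the dihedral case with $d=2n$, one checks directly that $[x:y]\mapsto[(x^{n}+y^{n})^{2}:(x^{n}-y^{n})^{2}]$ factors as the cyclic cover $[x:y]\mapsto[x^{n}:y^{n}]$ followed by the degree-$2$ involution $[u:v]\mapsto[(u+v)^{2}:(u-v)^{2}]$; the deck transformations $[x:y]\mapsto[\zeta_{n}x:y]$ and $[x:y]\mapsto[y:x]$ generate a copy of $D_{n}$, yielding (2).

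The three polyhedral cases are handled via the classical invariant theory of the binary polyhedral subgroups $G\subset\mr{SL}_{2}(\C)$. For each such $G$ there are three fundamental relative invariants whose vanishing loci are the $G$-orbits of the vertices, edge-midpoints and face-centers of the associated regular polyhedron on $\p^{1}$. Taking appropriate powers $\Phi_{1}^{p}$ and $\Phi_{2}^{q}$ of two of these forms so that both become honestly $G$-invariant polynomials of degree $|G|$, the ratio $[\Phi_{1}^{p}:\Phi_{2}^{q}]$ descends to the Galois quotient $\p^{1}\to\p^{1}/G$, and by the uniqueness part of Klein's theorem this is, up to left-right equivalence, the Galois map with Galois group $G$. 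Identifying the relevant invariants with the Klein polynomials $x^{4}\pm 2i\sqrt{3}x^{2}y^{2}+y^{4}$ for $A_{4}$, with $x^{8}+14 x^{4}y^{4}+y^{8}$ and $xy(x^{4}-y^{4})$ for $S_{4}$, and with $x^{20}-228x^{15}y^{5}+494x^{10}y^{10}+228x^{5}y^{15}+y^{20}$ and $xy(x^{10}+11x^{5}y^{5}-y^{10})$ for $A_{5}$, with matching exponents $(3,3)$, $(3,4)$, $(3,5)$ respectively, produces the foliations (3), (4), (5).

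The main obstacle is this last explicit identification: one must check that each of the polynomials appearing in (3)--(5) is indeed a relative invariant of the corresponding binary polyhedral group, with exponents chosen to yield genuine $G$-invariants of degree $|G|$. This follows from the classical theory of Klein forms and the polyhedral syzygies relating the vertex, edge and face polynomials, but is the only genuinely non-formal step of the argument; the remainder is a direct application of the dimensional reduction together with Klein's theorem.
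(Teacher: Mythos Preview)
Your argument is correct and follows the same route as the paper: reduce via Proposition~\ref{AutF} to the one-dimensional map $\wh\G_{\F}=[A:B]\colon\p^{1}\to\p^{1}$, then invoke Klein's classification (Theorem~\ref{klein}) and translate left-right equivalence of rational maps into the $\mr{PSL}_{2}(\C)\times\mr{PSL}_{2}(\C)$-action on $\mb H_{d}$. Two minor remarks: the equivalence ``$\F$ Galois $\Leftrightarrow\wh\G_{\F}$ Galois'' comes directly from Proposition~\ref{AutF} (equality of deck groups together with equality of degrees), not from Theorem~\ref{dim-red}, which concerns restriction to hyperplane curves in the \emph{target} rather than quotients; and your verification of the polyhedral invariants is superfluous here, since Theorem~\ref{klein} already records the explicit normal forms $f_{C_{n}},f_{D_{n}},f_{T},f_{O},f_{I}$, and the foliations (1)--(5) are nothing but their homogenizations (up to harmless scalar factors on the target).
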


As a consequence of this classification and Theorem~\ref{B} we deduce that~$\mb G_{d}$ has at least $2$ irreducible components if $d$ is even and it has at least $3$ irreducible components for $d=12,24,60$, as they are distinguished by their Galois groups: cyclic, dihedral, tetrahedral, octahedral and icosahedral.

We finish the article by considering foliations admitting other continuous groups of symmetries. We show that in all cases there is a dimensional reduction, analogous to the homogeneous one, which gives a characterization of Galois foliations in this setting (cf. Proposition~\ref{ghat}).

\medskip

{\it Acknowledgements.} The authors wish to thank J.V. Pereira  and T. Fassarella
for fruitful conversations. The first and second authors thank the Departament de Matemàtiques de la Universitat Autònoma de Barcelona for their hospitality and support.


\section{Branched coverings}

Along this article we will deal with morphisms between projective manifolds of the same dimension. 
Such maps turn out to be a branched coverings when restricted to 
appropriate Zariski open subsets. In this section we collect the results about branched coverings that 
will be used in the article. We begin by recalling some well-known facts about unbranched topological coverings.

\subsection{Topological coverings}

Let $\varpi:E\to B$ be a $d$-sheeted covering over a connected and locally path connected topological space $B$. Fix a base point $p_{0}\in B$ and its fibre $F=\varpi^{-1}(p_{0})=\{p_{1},\ldots,p_{d}\}$.
We consider the deck transformation group of the covering
$$D=\mr{Deck}(\varpi)=\{\tau:E\stackrel{\sim}{\longrightarrow} E\mid\varpi\circ\tau=\varpi\}$$ 
acting on $F$, on the left, by restriction. In fact, the restriction map is a monomorphism $D\hookrightarrow\mf S(F)$, where $\mf S(F)$ is the permutation group of $F$.
We also consider the monodromy anti-representation $\bar\mu:\pi_{1}(B,p_{0})\to\mf{S}(F)$ of $\varpi$ defined by $\bar\mu([\gamma])(p)=\wt \gamma_{p}(1)$ for each $p\in F$, where $\wt\gamma_{p}(t)$ is a path in $E$ starting at $\wt\gamma_{p}(0)=p$ and projecting onto $\gamma=\varpi(\wt\gamma_{p})$. The anti-morphism $\bar\mu$ defines a \emph{right} action of the fundamental group of $B$  on $F$. We define the monodromy representation $\mu:\pi_{1}(B,p_{0})\to\mf S(F)$ as the morphism $\gamma\mapsto\mu(\gamma)=\bar\mu(\gamma^{-1})$. Its image subgroup, denoted by $M=\mr{Mon}(\varpi)$,  is called the monodromy group of $\varpi$.
It is clear that if $E$ is connected, then the action of $D$ is free and the action of $M$ is transitive. Consequently,  if $E$ is connected then $|D|\le d$ and $|M|\ge d$.
Identifying $F\simeq\{1,\ldots,d\}$ we can consider both $D$ and $M$ as subgroups of the symmetric group $\mf S_{d}\simeq \mf S(F)$.
The image of $D$ inside $\mf S(F)$ can be characterized as the group of permutations of $F$ commuting with all the elements of the 
monodromy group (cf.   \cite{Cuk} or \cite{Douady}), that is
\begin{equation}\label{D=ZM}
D=Z(M)\quad\text{in}\quad\mf S(F).
\end{equation}

\begin{obs}\label{1.5}
In general, there is no inclusion between the subgroups $D$ and $M$. In fact, it follows from \eqref{D=ZM}  that their intersection $D\cap M=Z(M)\cap M=C(M)$ is the centre of $M$. Thus, $M\subset D$ if and only if $M$ is abelian.
In addition, if the covering $\varpi$ is finite and $E$ is connected  then $M$ is abelian if and only if $M=D$ because $|D|\le \deg\varpi\le |M|$. 
\end{obs}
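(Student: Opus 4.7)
The plan is to derive every assertion directly from the identity $D=Z(M)$ inside $\mf S(F)$ recorded in~\eqref{D=ZM}, combined with the two elementary consequences of connectedness of $E$ that were just stated: $D$ acts freely on $F$, and $M$ acts transitively on $F$.

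First I would compute the intersection. Inside $\mf S(F)$ one has
$$D\cap M=Z(M)\cap M,$$
and an element of $M$ lies in $Z(M)$ exactly when it commutes with every element of $M$, so this intersection is the centre $C(M)$ of $M$. The equivalence $M\subset D\Longleftrightarrow M\text{ is abelian}$ then drops out immediately, since $M\subset D=Z(M)$ says precisely that every element of $M$ commutes with every other.

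For the last sentence, I would add the finiteness hypothesis on $\varpi$ and the connectedness of $E$, use freeness of the $D$-action to obtain $|D|\le|F|=\deg\varpi$, and use transitivity of the $M$-action to obtain $|M|\ge\deg\varpi$. If $M$ is abelian, the preceding step gives $M\subset D$, so the chain
$$|M|\le|D|\le\deg\varpi\le|M|$$
collapses and forces $M=D$; conversely, if $M=D$ then $M\subset Z(M)$ and hence $M$ is abelian. The opening sentence about the absence of any general inclusion can be illustrated by any concrete three-sheeted non-normal covering, where $M=\mf S_{3}$ acts transitively on a set of three points and its centraliser in $\mf S_{3}$ is trivial, giving $D=\{1\}\subsetneq M$; a trivial covering with $|D|>1$ supplies the opposite non-inclusion. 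I do not foresee any real obstacle: the whole remark is a bookkeeping exercise around the centralizer formula~\eqref{D=ZM}, and the only point that needs care is keeping track of which of $D$ and $M$ acts freely and which acts transitively when bounding their orders.
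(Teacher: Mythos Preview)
Your proposal is correct and follows exactly the approach the paper itself sketches: the remark in the paper is self-contained, with its justification (``it follows from \eqref{D=ZM}'' and ``because $|D|\le \deg\varpi\le |M|$'') already embedded in the statement, and you have simply unpacked those hints in full. Your added examples illustrating the failure of either inclusion go slightly beyond what the paper provides, but they are correct and appropriate.
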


Given a connected and locally path connected space $B'$ and a continuous map $f:B'\to B$, the pull-back covering of
$\varpi:E\to B$ by the map $f$ is defined as 
$\varpi'\colon E'=E\times_{B}B'\to B'$, where 
$$
E\times_{B}B'=  \{(p,b')\in E\times B'\mid \varpi(p) = f(b')\},
$$  
and $\varpi'$ is the restriction of the natural projection $E\times B'\to B'$. 
Notice that $F$ is also the fibre of $E'$ and that $E'$ in not necessarily connected.
We shall denote by $E\times_{B}E\to E$ the pull-back covering obtained from $\varpi$ when $f= \varpi$.

\begin{defin}\label{def-gal}
A connected covering $\varpi:E\to B$ with fibre $F$, deck transformation group $D$ and 
monodromy group $M$ is said to be Galois if one of the following equivalent conditions hold:
\begin{enumerate}
\item $D$ acts transitively on $F$,
\item $M$ acts freely on $F$,
\item\label{4} the covering $E\times_{B}E\to E$ is trivial.
\end{enumerate}
In that case $M\simeq D$ and $E/D\simeq B$.
\end{defin}

From Remark~\ref{1.5} we get the following:

\begin{obs}\label{cyclic} 
If $\varpi:E\to B$ is a connected $d$-sheeted covering with abelian monodromy group $M\subset\mf S_{d}$, then $\varpi$ is Galois. In particular, if $M$ is cyclic then $M\simeq\Z_{d}$. Moreover, when $\deg\varpi$ is prime, $\varpi$ is Galois if and only if $M$ is cyclic.
\end{obs}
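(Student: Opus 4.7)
The key tool is the identity $D = Z(M)$ in $\mf{S}(F)$ from (\ref{D=ZM}), combined with the fact that $M$ already acts transitively on $F$ since $E$ is connected.

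For the first assertion, I would argue as follows. If $M$ is abelian, then every element of $M$ commutes with every element of $M$, so trivially $M \subset Z(M) = D$ as subgroups of $\mf{S}(F)$. Since the monodromy group acts transitively on the fibre $F$, the deck group $D$ also acts transitively, and condition (1) of Definition~\ref{def-gal} is satisfied. Hence $\varpi$ is Galois. Once this is established, Definition~\ref{def-gal} also gives $M \simeq D$, and as a free transitive subgroup of $\mf{S}(F)$, its order is $|F|=d$. In particular, if $M$ is cyclic it must be isomorphic to $\Z_d$.

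For the prime degree case, I would argue both implications. Assume $d = \deg\varpi$ is prime. If $M$ is cyclic then in particular $M$ is abelian, so by the first part $\varpi$ is Galois. Conversely, if $\varpi$ is Galois then $|M| = d$ is prime, so $M$ is a group of prime order, and every such group is cyclic (generated by any non-trivial element).

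None of the steps looks delicate: the whole argument rests on the structural identity $D = Z(M)$ from \eqref{D=ZM} together with the elementary observation that a transitive subgroup of a symmetric group $\mf S(F)$ of prime order must be cyclic. The only place one has to be slightly careful is to notice that the connectedness hypothesis is what makes $M$ transitive on $F$, which is needed to pass from $M\subset D$ to ``$D$ transitive''.
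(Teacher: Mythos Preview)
Your proof is correct and follows exactly the same route as the paper, which derives this remark directly from Remark~\ref{1.5}: the identity $D=Z(M)$ from~\eqref{D=ZM} gives $M\subset D$ when $M$ is abelian, transitivity of $M$ (from connectedness of $E$) then forces $D$ transitive, and the prime case is handled by the order count $|M|=d$. Your write-up is in fact slightly more explicit than the paper's, which simply points back to Remark~\ref{1.5}.
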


The following statement describes the relation between the monodromy groups $M$ and $M'$, as well as the deck transformation groups
$D$ and $D'$, of a given 
covering $E\to B$ and of its pull-back $E'\to B'$ by a continuous map.

\begin{prop}\label{1} 
Let $\varpi:E\to B$ be a covering with $E$ connected and let $\varpi'\colon E'=E\times_{B}B'\to B'$ be the pull-back covering of $\varpi$
by a continuous map  $f\colon B'\to B$. Let $M$, $M'$ and $D$, $D'$ denote the monodromy groups and the 
deck transformation groups of $\varpi$ and $\varpi'$ respectively. Then we have

\begin{enumerate}[(a)]
\item There are natural inclusions $M'\hookrightarrow M$ and $D\hookrightarrow D'$.
\item If $\varpi$ is Galois and $E'_0$ is a connected component of $E'$, then the restricted covering 
$\varpi_{0}':E_{0}'\to B'$ is also Galois. Moreover, the deck transformation group $D'_0$ of $\varpi'_0$
is naturally included in $D$. 
\item Assume that $f_\ast\colon \pi_1(B')\to \pi_1(B)$ is surjective. Then $E'$ is connected, 
$M\simeq M'$, $D\simeq D'$ and, consequently, $\varpi$ is Galois if and only if $\varpi'$ is Galois.
\end{enumerate}
\end{prop}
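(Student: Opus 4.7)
The plan is to establish the three parts in order, using the basic interplay between monodromy, deck transformations, and the pullback construction recalled earlier in this subsection.

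For (a), the inclusion $M'\hookrightarrow M$ follows from the functoriality of monodromy: the monodromy representation $\mu':\pi_{1}(B',b_{0}')\to\mf S(F)$ of $\varpi'$ factors as $\mu'=\mu\circ f_{*}$, because lifting a loop $f\circ\gamma'$ in $B$ and lifting $\gamma'$ in $B'$ produce the same endpoint permutation of the common fibre $F$. Hence $M'=\mr{im}(\mu')\subset\mr{im}(\mu)=M$. For $D\hookrightarrow D'$ I send $\tau\in D$ to $\tau'(p,b'):=(\tau(p),b')$, which is well-defined on $E'$ since $\varpi(\tau(p))=\varpi(p)=f(b')$, covers the identity of $B'$, and yields an injective group homomorphism.

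For (b), the monodromy of $\varpi'_{0}$ acts on its fibre $F'_{0}\subset F$ by restricting the action of $M'\subset M$ to the orbit $F'_{0}$. Since $\varpi$ is Galois, $M$ acts freely on $F$, hence so does $M'$ on $F'_{0}$; transitivity is automatic by connectedness of $E'_{0}$, so $\varpi'_{0}$ is Galois by Definition~\ref{def-gal}. To construct $D'_{0}\hookrightarrow D$, I fix a base point $(p_{0},b_{0}')\in E'_{0}$ and, given $\tau'\in D'_{0}$, I write $\tau'(p_{0},b_{0}')=(q_{0},b_{0}')$; the Galois property of $\varpi$ then provides a unique $\tau\in D$ with $\tau(p_{0})=q_{0}$. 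A path-lifting argument along curves in $E'_{0}$, projected to $E$ by the first coordinate, shows that $\tau'(p,b')=(\tau(p),b')$ on all of $E'_{0}$, yielding a well-defined homomorphism $D'_{0}\to D$. Injectivity follows because a deck transformation fixing $(p_{0},b_{0}')$ on a connected covering must be the identity.

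For (c), if $f_{*}$ is surjective then $M'=\mu(f_{*}\pi_{1}(B'))=\mu(\pi_{1}(B))=M$ as subgroups of $\mf S(F)$, so this common monodromy acts transitively on $F$, forcing $E'$ to be connected. The identities $D=Z(M)$ and $D'=Z(M')$ in $\mf S(F)$ given by \eqref{D=ZM} then yield $D\simeq D'$, and the Galois equivalence follows from the characterization of Definition~\ref{def-gal} via free action of the monodromy (or equivalently from $D\simeq D'$ together with $|F|=|F'|$).

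The main obstacle I anticipate is the middle step of (b): promoting a deck transformation $\tau'$ of the restricted covering $\varpi'_{0}$ to a genuine $\tau\in D$. The subtle point is showing that the first-coordinate behaviour of $\tau'$ does not depend on $b'$ in a way incompatible with a single globally defined $\tau$; this is exactly what the path-lifting argument sketched above is designed to handle. Every other assertion in the proposition reduces to the identity \eqref{D=ZM} and elementary properties of pullback coverings.
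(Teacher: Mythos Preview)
Your proof is correct. Parts~(a) and~(c) are handled exactly as in the paper: the factorization $\mu'=\mu\circ f_{*}$ gives $M'\subset M$, the map $\tau\mapsto(\tau\times\mr{id}_{B'})|_{E'}$ gives $D\hookrightarrow D'$, and identity~\eqref{D=ZM} finishes~(c).

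For part~(b) your argument is valid but organized differently from the paper's. You first prove that $\varpi_{0}'$ is Galois via the monodromy side (freeness of $M'\subset M$ restricted to the orbit $F_{0}'$), and then construct $D_{0}'\hookrightarrow D$ by matching at a chosen base point and invoking uniqueness of path lifting in $E$. The paper instead goes in the opposite direction: it uses the inclusion $D\hookrightarrow D'$ from~(a), observes that $D$ permutes the blocks of the partition of $F$ induced by the components of $E'$, and shows that the stabilizer $D_{0}=\{\tau\in D:\tau(F_{0})=F_{0}\}$ already acts transitively on $F_{0}$; this simultaneously proves $\varpi_{0}'$ Galois and identifies $D_{0}'$ with $D_{0}\subset D$ by a cardinality count. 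The paper's route is a touch shorter because it avoids the explicit path-lifting verification, while yours has the virtue of producing the inclusion $D_{0}'\hookrightarrow D$ in the direction actually stated in the proposition without a separate counting step.
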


\begin{dem}
The map $E'=E\times_{B}B'\to E$ induced by the natural projection $E\times B'\to E$ identifies the fibre $\varpi'^{-1}(p_{0}')$ with $\varpi^{-1}(p_{0})=F$.
Then the monodromy representation of $\varpi':E'\to B'$ is the composition 
$$\mu\circ f_{*}:\pi_{1}(B',p_{0}')\to\pi_{1}(B,p_{0})\to\mf S(F).$$ 
This implies assertions (a) and (c) using the characterization of $D$ given by identity~\eqref{D=ZM}.

Let $E'_0$ be a connected component of $E'$ and denote by $F_0$ the intersection of $E'_0$ with the fibre $F=\varpi'^{-1}(p_{0}')\equiv\varpi^{-1}(p_{0})$;
that is, $F_0$ is the fibre of $\varpi'_0$. We notice that the components of $E'$ induce a partition of the fibre $F$ and $F_0$ is one of these components.
Because of the inclusion $D\hookrightarrow D'$, the action of $D$ on the fibre $F$ preserves that partition. In particular, for a given $\tau\in D$
one has $\tau(F_0) = F_0$ or $\tau(F_0) \cap F_0 = \emptyset$. Assume now that $\varpi$ is Galois and therefore that $D$
acts transitively on $F_0$. Given two points $p_i, p_j \in F_0$ there is a unique $\tau\in D$ such that $\tau(p_i)= p_j$.  It follows that $\tau(F_0) = F_0$, and therefore that $\tau$ 
is an element of $D_0$. We deduce that $D'_0$ acts transitively on $F_0$ and that $D'_0$ is naturally identified to $D_0\subset D$, ending the proof.
\end{dem}

\subsection{Analytic branched coverings}\label{2.2}
We consider now surjective morphisms $f\colon X\to Y$ between complex analytic spaces of the same dimension. 
Under some conditions, the restriction of such a map $f$ to appropriate dense open subsets of $X$ and $Y$ is a topological covering.

\begin{defin} Let $f\colon X\to Y$ be a morphism between complex analytic spaces of the same dimension. Then $\nabla_{f}$ will stand for 
the analytic subset of $X$ defined by 
$$\nabla_{f}:=\{x\in X\,|\, f \textrm{ is not a local biholomorphism at }x\}.$$
\end{defin}

Along the article we make use of the following conventions. If $f:X\to Y$ is a morphism between complex analytic spaces and $K$ is an arbitrary subset of $Y$, then we denote
\begin{enumerate}[$\bullet$]
\item $X_{K}:=f^{-1}(K)$ and $f_{K}$ the restriction of $f$ to $X_{K}$; in the case $K=\{p\}$ then we will denote $X_{\{p\}}$ and $f_{\{p\}}$ simply by $X_{p}$ and $f_{p}$;
\item $f^{\nu}:X^{\nu}\to Y$ the composition of the normalization $X^{\nu}\to X$ of $X$ and $f$.
\end{enumerate}

We recall the following definition.
\begin{defin}
A \emph{finite branched covering} $f:X\to Y$ is a proper finite holomorphic map from a complex normal space $X$ onto a connected complex 
manifold $Y$ whose restriction to each connected component of $X$ is surjective.
\end{defin}

\begin{obs} (a) Since the analytic space $X$ in the above definition is assumed to be normal, its connected components are irreducible.

(b) A more general definition of branched covering, not requiring the map $f$ to be finite, can also be considered (cf. \cite{Namba}).
Nevertheless, in this article we will only deal with branched coverings whose fibres are finite, even without mention.
\end{obs}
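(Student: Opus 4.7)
The plan is to establish part (a) of this remark, which is the only substantive claim; part (b) is purely a choice of convention and needs no argument. So the goal reduces to showing that, in the definition above, since $X$ is assumed to be normal, its connected components coincide with its irreducible components.

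First I would translate normality into its local algebraic form: $X$ is normal precisely when, at each point $x\in X$, the stalk $\mc{O}_{X,x}$ is an integrally closed integral domain. The key consequence used here is simply that each such stalk is an integral domain, so it has a unique minimal prime. Geometrically this means $X$ is locally irreducible: through each point there passes exactly one germ of an irreducible component.

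Next, from local irreducibility I would deduce globally that distinct irreducible components of $X$ are pairwise disjoint. Indeed, if two irreducible components $X_1\neq X_2$ met at some point $x\in X_1\cap X_2$, then the germ $(X,x)$ would split into at least two distinct irreducible germs, contradicting the uniqueness provided by the integrality of $\mc{O}_{X,x}$. Hence $X$ is a disjoint union of its (closed) irreducible components. Since every irreducible complex analytic space is connected (its regular locus is a connected dense complex manifold, and connectedness passes to the closure), each of these components is connected. A partition into pairwise disjoint, closed, connected subsets must coincide with the decomposition into connected components, which is what we wanted.

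The main obstacle, mild but genuine, is the passage from the local integrality of each stalk to the global statement that distinct irreducible components do not meet. This rests on the standard correspondence between irreducible germs of $(X,x)$ and minimal primes of $\mc{O}_{X,x}$, combined with the fact that these germs are exactly the traces at $x$ of the global irreducible components passing through $x$. Both statements are classical in the theory of complex spaces, but it is precisely their combination that bridges the local and global sides of the argument.
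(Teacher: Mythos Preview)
Your argument for part (a) is correct and is the standard one: normality gives integral domain stalks, hence local irreducibility, hence disjointness of global irreducible components, which then forces the irreducible decomposition to coincide with the connected decomposition. Note, however, that the paper offers no proof at all for this remark; it simply records the fact as well known and moves on, so there is nothing to compare your approach against.
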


The \emph{ramification locus} of a finite branched covering $f:X\to Y$ is the analytic subset $\nabla_{f}$ of $X$ and the \emph{branching locus} (also called discriminant) of $f$ is the analytic subset  of $Y$
given by
$$\Delta_{f}:=f(\nabla_{f}).$$ 
Notice that $\nabla_{f}$ contains $\mr{Sing}(X)$ because $Y$ is smooth.
The ramification and branching loci $\nabla_{f}$ and $\Delta_{f}$ are hypersurfaces of $X$ and $Y$ respectively.
This follows from the purity of branch theorem (cf. \cite{Fischer}) and the finiteness of the map $f$.

Given a non-singular point  $q$ of $\Delta_{f}$, each $p\in f^{-1}(q)$ is a non-singular point of $X$ (cf. \cite[Corollary~1.1.10]{Namba}) and  there are 
local coordinates $(x_{1},\ldots,x_{n})$ in a neighborhood $V$ of $p$ and $(y_{1},\ldots,y_{n})$ in a neighborhood
$W$ of $q$ fulfilling $W\cap\Delta_{f}=\{y_{n}=0\}$ and $f(V)\subset W$, and such that in these coordinates
\begin{equation}\label{NF}
f(x_{1},\ldots,x_{n})=(x_{1},\ldots,x_{n-1},x_{n}^{\varrho}),
\end{equation} 
for some positive integer $\varrho=\varrho_{D}\geq 1$ which is constant along the irreducible component~$D$ of $f^{-1}(\Delta_{f})$ containing $p$ and which is called the \emph{ramification index} of $f$ along $D\subset f^{-1}(\Delta_{f})$, cf. \cite[Theorem~1.1.8]{Namba}. Notice that $\varrho_{D}=1$ if and only if $p\in f^{-1}(\Delta_{f})\setminus\nabla_{f}$.

The set $U:=Y\setminus\Delta_{f}$ is the maximal open subset of $Y$ such that the restriction
\begin{equation}\label{top_cov}
f_{U}:X_{U}=X\setminus f^{-1}(\Delta_f)\longrightarrow U=Y\setminus\Delta_{f}
\end{equation}
is an unbranched covering. The monodromy of that covering will be denoted by
$$\mu_{f}:\pi_{1}(Y\setminus\Delta_{f})\longrightarrow \mf S_{d},$$
where $d=\deg f_U$. We say that $d$ is the degree of the branched covering $f$.

Two finite branched coverings $f:X\to Y$ and $f':X'\to Y$ are said to be isomorphic if there is a biholomorphism $\phi:X\to X'$ such that $f'\circ\phi=f$.
The group $\mr{Deck}(f)=\{\phi\in\mr{Aut}(X)\,|\,f\circ\phi=f\}$ of all automorphisms of the branched covering $f:X\to Y$ is 
called the deck transformation group of $f$. The restrictions to $X_{U}$ of the elements of $\mr{Deck}(f)$ are deck transformations 
of the topological covering $f_U$
defined in~(\ref{top_cov}).

\begin{defin}
A finite branched covering $f:X\to Y$ is said to be Galois if $\mr{Deck}(f)$ acts transitively on each fiber of $f$. In that case, 
the quotient complex space $X/\mr{Deck}(f)$ is biholomorphic to $Y$.
\end{defin}

The following result states that $\mr{Deck}(f)$ and $\mr{Deck}(f_{U})$ are naturally isomorphic. Its proof, which is based on Riemann's extension theorem, 
can be found in \cite[Theorem~1.1.7]{Namba}.

\begin{teo}\label{iso1}
Let $f:X\to Y$ be a finite branched covering. The restriction map $\mr{Deck}(f)\to\mr{Deck}(f_{U})$ is an isomorphism. In particular, $f$ is Galois if and only if $f_{U}$ is Galois.
\end{teo}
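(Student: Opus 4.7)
The plan is to establish the restriction map as a homomorphism (immediate from the defining relation $f\circ\phi=f$), prove injectivity from density of $X_U$, and tackle surjectivity by an analytic extension argument relying on Riemann's second extension theorem. For well-definedness and injectivity: any $\phi\in\mr{Deck}(f)$ preserves fibers and therefore maps $f^{-1}(\Delta_{f})$ into itself, so its restriction is a biholomorphism of $X_U$ commuting with $f_U$ and lies in $\mr{Deck}(f_U)$. Since $X$ is normal and $f^{-1}(\Delta_{f})$ is a proper analytic hypersurface, $X_U$ is dense in $X$, so two elements of $\mr{Deck}(f)$ that coincide on $X_U$ are equal by continuity.

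The main step is surjectivity. Given $\tau\in\mr{Deck}(f_U)$, I would first show local boundedness of $\tau$ near each $x_0\in f^{-1}(\Delta_{f})$: if $x_n\in X_U$ tends to $x_0$ then $f(\tau(x_n))=f(x_n)\to f(x_0)$, and properness of $f$ together with finiteness of $f^{-1}(f(x_0))$ force $\{\tau(x_n)\}$ to accumulate only inside that finite fiber. Choosing a local closed embedding of a neighborhood of $f^{-1}(f(x_0))$ into some $\C^N$, the coordinates of $\tau$ become locally bounded holomorphic functions on the complement of the hypersurface $f^{-1}(\Delta_{f})$ in a normal neighborhood of $x_0$. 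Riemann's second extension theorem, valid on normal complex spaces, then produces holomorphic extensions of these coordinates; continuity and the closedness of the embedding place the resulting map $\tilde\tau$ back inside $X$. Running the same procedure for $\tau^{-1}$ and composing, both compositions agree with $\id_X$ on the dense set $X_U$ and hence everywhere, so $\tilde\tau\in\mr{Aut}(X)$; the relation $f\circ\tilde\tau=f$ extends from $X_U$ by continuity, showing that $\tilde\tau\in\mr{Deck}(f)$ restricts to $\tau$. The \emph{in particular} assertion then follows by noting that the quotient $X/\mr{Deck}(f)\to Y$ is finite, birational (an isomorphism over $U$) and between normal varieties, hence itself an isomorphism by Zariski's Main Theorem, so transitivity on generic fibers propagates to all fibers.

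The main obstacle is the extension step: since $X$ is only assumed normal, one cannot use global smooth coordinates and must invoke Riemann's theorem through a local embedding into $\C^N$, with the additional verification that the extended map stays inside $X$. Once the Riemann extension property for normal spaces is granted, the remainder of the argument is essentially bookkeeping.
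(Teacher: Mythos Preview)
Your argument is correct and follows precisely the Riemann-extension strategy that the paper itself attributes to Namba: the paper does not supply its own proof of this statement but simply cites \cite[Theorem~1.1.7]{Namba}, noting that it ``is based on Riemann's extension theorem.'' Your write-up is essentially a reconstruction of that proof.

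One terminological slip worth fixing: since $f^{-1}(\Delta_f)$ is a \emph{hypersurface}, the relevant result is Riemann's \emph{first} extension theorem (locally bounded holomorphic functions on a normal space extend across a nowhere-dense analytic hypersurface), not the second (which handles codimension $\ge 2$ without any boundedness hypothesis). Your argument does establish local boundedness before invoking the extension, so only the label is wrong; the logic is intact. Your treatment of the ``in particular'' clause via normality of $X/\mr{Deck}(f)$ and Zariski's Main Theorem is a clean way to pass from transitivity on generic fibers to transitivity on all fibers.
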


We also recall the following theorem
due to Grauert and Remmert \cite{GR2} (cf. \cite[Theorem~1]{NT}). 
\begin{teo}\label{GR}
Let $\Delta$ be a hypersurface of a connected complex manifold $Y$ and let $f':X'\to Y\setminus\Delta$ be a finite unbranched covering. Then there are a unique (up to isomorphism) finite branched covering $f:X\to Y$ and an inclusion $X'\subset X$ with the the property that $f$ branches at most at $\Delta$, i.e. $\Delta_{f}\subset\Delta$, and that $f$ is an extension of $f'$.
\end{teo}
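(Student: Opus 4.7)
The plan is to prove uniqueness by a Riemann-extension argument, and existence by constructing $X$ from local extensions that are then glued, with the crux lying in the local construction across $\Delta$.

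For uniqueness, let $f_i\colon X_i\to Y$ ($i=1,2$) be two such extensions. Their restrictions over $Y\setminus\Delta$ agree with $f'$, which provides a $Y$-isomorphism $\psi\colon X_1\setminus f_1^{-1}(\Delta)\to X_2\setminus f_2^{-1}(\Delta)$. Since each $f_i$ is finite and proper, $\psi$ is locally bounded near the analytic set $f_1^{-1}(\Delta)$; as $X_1$ and $X_2$ are normal, Riemann's extension theorem yields a biholomorphism $X_1\to X_2$ conjugating $f_1$ and $f_2$.

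For existence, I would first reduce to the case in which $X'$ is connected by treating each component separately. Around each $y_0\in Y$ I would choose a polydisc neighbourhood $V$ such that $V\setminus\Delta$ is connected with finitely generated fundamental group. The monodromy of $f'$ restricted to $V\setminus\Delta$ is a homomorphism $\pi_1(V\setminus\Delta)\to\mathfrak{S}_d$; picking an orbit on the generic fibre yields a connected finite unbranched covering $X'_V\to V\setminus\Delta$ that must now be extended to a finite branched covering $X_V\to V$ with branching inside $V\cap\Delta$.

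The local extension proceeds by embedding and normalization. After choosing bounded holomorphic functions embedding $X'_V\hookrightarrow V\times\mathbb{C}^N$, consider for each coordinate function $z_j$ on $\mathbb{C}^N$ the elementary symmetric functions of its values along the fibres of $X'_V\to V\setminus\Delta$. These symmetric functions are holomorphic and locally bounded on $V\setminus\Delta$ (boundedness coming from the properness of $f'$), hence extend holomorphically across $V\cap\Delta$ by Riemann's second extension theorem. Their vanishing produces polynomial relations whose common zero locus is a finite analytic subset $Z_V\subset V\times\mathbb{C}^N$ in which $X'_V$ sits as a dense open subset; the normalization $X_V:=Z_V^{\nu}$ is then a normal finite branched cover of $V$ extending $X'_V$, with $\Delta_{X_V\to V}\subset V\cap\Delta$. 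On overlaps $V\cap V'$ the two local extensions agree with the canonical covering over $(V\cap V')\setminus\Delta$, so the uniqueness argument identifies them canonically; gluing yields the desired global $X\to Y$.

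The main obstacle is the local step at singular points of $\Delta$: at a smooth point one has the explicit normal form~\eqref{NF} and the extension is elementary, but in general one must produce enough holomorphic relations on $V\times\mathbb{C}^N$ to realize $X'_V$ as an open dense subset of a space finite over $V$. The bounded-symmetric-functions construction together with Riemann's second extension theorem accomplishes exactly this; that is the analytic content of the Grauert--Remmert theorem.
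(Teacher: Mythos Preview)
The paper does not prove this statement: it is quoted as a theorem of Grauert and Remmert with references to \cite{GR2} and \cite[Theorem~1]{NT}, and then used as a black box. So there is no argument in the paper to compare your attempt against.

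Your uniqueness argument is correct. For existence, the overall architecture (local extensions glued via uniqueness) is the standard one, but there is a gap in the local step. You assert that $X'_V$ can be embedded in $V\times\C^{N}$ by \emph{bounded} holomorphic functions, with boundedness ``coming from the properness of $f'$''. However $f'$ is proper only as a map onto $Y\setminus\Delta$; approaching a point of $\Delta$ you have no compactness and hence no a priori bound on any holomorphic function on $X'_V$. Producing bounded fibre-separating functions on $X'_V$ is essentially equivalent to already having the finite extension $X_V\to V$, so the appeal is circular.

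The usual repair separates two regimes. Over the smooth locus of $\Delta$ the normal form~\eqref{NF} gives the extension directly, yielding a finite branched covering $X^{0}\to Y\setminus\mr{Sing}(\Delta)$. Now $\mr{Sing}(\Delta)$ has codimension $\ge 2$ in $Y$. Over a small polydisk $V$, the map $X^{0}_{V}\to V\setminus\mr{Sing}(\Delta)$ is finite, so a primitive element $g$ exists and the coefficients of its monic minimal polynomial are holomorphic on $V\setminus\mr{Sing}(\Delta)$; by the second Riemann extension theorem they extend across the codimension-two set $\mr{Sing}(\Delta)\cap V$ to all of $V$ with no boundedness hypothesis required. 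Normalising the resulting monic hypersurface in $V\times\C$ then gives the desired local model. You invoke both the normal form at smooth points and Riemann's second extension theorem, so all the ingredients are present; what is missing is this explicit two-step structure, which is exactly what replaces the unjustified boundedness claim.
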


Two finite branched coverings $f:X\to Y$ and $f':X'\to Y'$ of the same degree are said to be topologically (resp. analytically) equivalent if there are homeomorphisms (resp. biholomorphisms) $\phi:X\to X'$ and $\psi:Y\to Y'$ such that the following diagram is commutative
$$\xymatrix{X\ar[r]^{\phi}\ar[d]_{f} &X'\ar[d]^{f'}\\ Y\ar[r]^{\psi}& Y'}$$

From Theorem~\ref{GR} one deduces the following criterion for deciding when two finite branched coverings are equivalent in 
terms of the base spaces and the corresponding monodromies (cf. \cite[Theorem~2]{NT}).

\begin{teo}\label{clas}
Two finite branched coverings $f:X\to Y$ and $f':X'\to Y'$ of degree $d$ are topologically  (resp. analytically) equivalent if and only if there is a homeomorphism (resp. biholomorphism) $\psi:(Y,\Delta_{f})\to (Y',\Delta_{f'})$ such that the representations $\mu_{f},\mu_{f'}\circ\psi_{*}:\pi_{1}(Y\setminus\Delta_{f})\to\mf S_{d}$ are conjugated.
\end{teo}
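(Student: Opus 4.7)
The plan is to prove both implications separately, reducing the question to the classification of unbranched coverings and then appealing to Theorem~\ref{GR} to handle the extension across the branch locus.

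For the ``only if'' direction, assume $(\phi,\psi)$ realize an equivalence, so that $f'\circ\phi=\psi\circ f$. Since the ramification locus is characterized by $f$ failing to be a local biholomorphism (or in the topological case, by the local mapping degree being $>1$), the map $\psi$ sends $\Delta_{f}$ onto $\Delta_{f'}$, hence restricts to a homeomorphism (resp.\ biholomorphism) $\psi\colon U\to U'$ where $U=Y\setminus\Delta_f$ and $U'=Y'\setminus\Delta_{f'}$. The restriction of $\phi$ to $X_U$ is then an isomorphism of unbranched coverings $f_U\colon X_U\to U$ and $f'_{U'}\colon X'_{U'}\to U'$ covering $\psi$. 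Choosing a base point $p_{0}\in U$ and identifying fibres $f_U^{-1}(p_{0})$ and $f'^{-1}_{U'}(\psi(p_{0}))$ via $\phi$, the standard functoriality of monodromy yields $\mu_{f'}\circ\psi_{*}=\sigma\,\mu_{f}\,\sigma^{-1}$ for some $\sigma\in\mathfrak S_{d}$.

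For the ``if'' direction, suppose $\psi\colon(Y,\Delta_f)\to(Y',\Delta_{f'})$ is given with $\mu_{f'}\circ\psi_{*}$ and $\mu_{f}$ conjugate in $\mathfrak S_{d}$. The pullback covering $\psi^{*}f'_{U'}\colon \psi^{*}X'_{U'}\to U$ has monodromy $\mu_{f'}\circ\psi_{*}$, so by the Galois classification of finite unbranched coverings over $U$ by conjugacy classes of permutation representations of $\pi_{1}(U)$, there is an isomorphism $\tilde\phi\colon X_U\to\psi^{*}X'_{U'}$ of coverings over $U$. In the analytic case, this isomorphism is automatically holomorphic since both coverings are unbranched. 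Composing with the canonical map $\psi^{*}X'_{U'}\to X'_{U'}$ gives a homeomorphism (resp.\ biholomorphism) $\tilde\phi\colon X_U\to X'_{U'}$ satisfying $f'_{U'}\circ\tilde\phi=\psi\circ f_U$.

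It remains to extend $\tilde\phi$ across $f^{-1}(\Delta_f)$. Consider the composition $\psi\circ f\colon X\to Y'$; this is a finite branched covering which branches at most over $\Delta_{f'}$, and whose restriction over $U'$ is $\psi\circ f_U$. Via $\tilde\phi$ this restriction is isomorphic to $f'_{U'}$. Applying the uniqueness part of Theorem~\ref{GR} to the unbranched covering $f'_{U'}\colon X'_{U'}\to U'$, its extension branching at most along $\Delta_{f'}$ is unique up to isomorphism; hence $\tilde\phi$ extends uniquely to $\phi\colon X\to X'$ with $f'\circ\phi=\psi\circ f$. Applying the same argument to the inverses $\tilde\phi^{-1}$ and $\psi^{-1}$ produces a two-sided inverse, showing $\phi$ is a homeomorphism (resp.\ biholomorphism).

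The main technical obstacle is the analytic part of the extension step: one must ensure that $\tilde\phi$ extends to a genuine biholomorphism, not merely a continuous map. This is precisely what Theorem~\ref{GR} provides, using normality of $X$ and $X'$ together with the uniqueness of the Grauert--Remmert extension; without this deep input the construction would only yield a topological equivalence.
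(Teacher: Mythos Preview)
Your argument is correct and follows precisely the route the paper indicates: the statement is not proved in the paper but is cited from \cite[Theorem~2]{NT} with the remark that it is deduced from Theorem~\ref{GR}, and your two-step reduction (classify unbranched coverings by monodromy, then invoke Grauert--Remmert uniqueness to extend across the branch locus) is exactly that deduction. One small point worth making explicit: in the purely topological case $\psi\circ f$ is not holomorphic, so you are implicitly using the elementary topological analogue of Theorem~\ref{GR} (Fox completion, or a direct local construction via the normal form~\eqref{NF}) rather than the analytic statement; this is standard and easier, but it would strengthen the write-up to say so.
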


It is worth to recall also the following two results of M. Namba, proved in \cite{Namba2}.

\begin{teo}\label{Namba-existencia} 
For every finite group $G$ and every connected complex projective manifold $Y$ there 
exists a Galois branched covering $\rho:X\to Y$ whose  deck transformation group is isomorphic to~$G$.
\end{teo}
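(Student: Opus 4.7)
Plan: The strategy reduces the geometric problem to a topological one via the extension theorems stated earlier. By Theorem~\ref{iso1}, a finite branched covering $\rho:X\to Y$ is Galois if and only if the induced unbranched covering over the complement of the discriminant is Galois, with isomorphic deck transformation group. By Theorem~\ref{GR}, every finite unbranched covering of $Y\setminus\Delta$, for some hypersurface $\Delta\subset Y$, extends uniquely to a finite branched covering of $Y$ ramifying only over $\Delta$. Combined with the classical Galois correspondence between connected Galois unbranched coverings and surjective homomorphisms from $\pi_{1}$, these reduce the theorem to the following assertion: for any finite group $G$ and any connected complex projective manifold $Y$ there exists a hypersurface $\Delta\subset Y$ together with a surjective homomorphism $\varphi:\pi_{1}(Y\setminus\Delta,y_{0})\twoheadrightarrow G$. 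Indeed, the kernel of such $\varphi$ determines a connected Galois topological unbranched covering $\varpi':E'\to Y\setminus\Delta$ with deck group $G$, which then extends to a Galois branched covering $\rho:X\to Y$ with $\mr{Deck}(\rho)\simeq G$.

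To construct the pair $(\Delta,\varphi)$, I would embed $Y\hookrightarrow\p^{N}$ via a very ample line bundle $L$. When $\dim Y\geq 2$, iterated application of the Zariski--Lefschetz hyperplane theorem for quasi-projective manifolds provides a smooth complete intersection curve $C\subset Y$ with the property that, for any sufficiently generic hypersurface $\Delta\subset Y$ meeting $C$ transversely, the inclusion induces a surjection $\pi_{1}(C\setminus(C\cap\Delta))\twoheadrightarrow\pi_{1}(Y\setminus\Delta)$ (the case $\dim Y=1$ being trivial, with $C=Y$). Now pick a generating set $\{g_{1},\ldots,g_{k}\}$ of $G$ and, using Bertini's theorem, choose smooth irreducible hypersurfaces $\Delta_{1},\ldots,\Delta_{k}\in|L^{\otimes m}|$ (for $m$ large) in general position, each meeting $C$ transversely. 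Set $\Delta:=\bigcup_{i=1}^{k}\Delta_{i}$, and define $\varphi$ by sending a meridian around $\Delta_{i}$ to $g_{i}$; surjectivity is then automatic from the hypothesis that the $g_{i}$ generate $G$.

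The main obstacle is verifying that $\varphi$ is well-defined, i.e., that the meridian assignment respects the relations in $\pi_{1}(Y\setminus\Delta)$ arising both from the ambient topology of $Y$ (captured by the natural map to $\pi_{1}(Y)$) and from the mutual incidences of the divisors $\Delta_{i}$ (captured by van Kampen--type presentations on $C$). The flexibility comes from the fact that $\pi_{1}(C\setminus(C\cap\Delta))$ is free of large rank (specifically $2g(C)+|C\cap\Delta|-1$) and hence surjects onto any prescribed finite group, and from the freedom in choosing the $\Delta_{i}$ in a very ample linear system $|L^{\otimes m}|$ with $m$ arbitrarily large. By selecting the $\Delta_{i}$ in sufficiently general position and grouping together, under a common generator $g_{i}$, all meridians around the points of $C\cap\Delta_{i}$, one can arrange that the assignment descends through the Zariski--Lefschetz quotient to a well-defined surjection on $\pi_{1}(Y\setminus\Delta)$. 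This descent argument, controlling exactly which presentations of $G$ can be realized geometrically by a configuration of hypersurfaces, is the technical core of Namba's construction in \cite{Namba2}.
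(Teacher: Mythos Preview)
The paper does not prove this theorem: it is quoted, together with Theorem~\ref{Namba-pull-back}, as a result of Namba established in \cite{Namba2}, with no argument supplied. There is therefore no proof in the paper to compare against. Your reduction, via Theorems~\ref{iso1} and~\ref{GR}, to the problem of producing a hypersurface $\Delta\subset Y$ and a surjection $\pi_{1}(Y\setminus\Delta)\twoheadrightarrow G$ is correct and is the standard opening move.

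The gap is in the construction of $\varphi$. You take smooth divisors $\Delta_{1},\dots,\Delta_{k}\in|L^{\otimes m}|$ \emph{in general position} and send a meridian of $\Delta_{i}$ to a chosen generator $g_{i}\in G$, deferring the verification that this descends. But general position is precisely what blocks the descent for non-abelian $G$: if the $\Delta_{i}$ are smooth and pairwise transverse then $\Delta=\bigcup_{i}\Delta_{i}$ has simple normal crossings, and for $Y=\p^{n}$ with $n\ge 2$ a generic $2$-plane section of $\Delta$ is a nodal plane curve, whose complement has abelian fundamental group by the Deligne--Fulton theorem (Zariski's conjecture). The very Lefschetz-type surjection you invoke then forces $\pi_{1}(\p^{n}\setminus\Delta)$ itself to be abelian, so every homomorphism out of it has abelian image and your construction cannot reach any non-abelian $G$, already for $Y=\p^{n}$. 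The remedy is not ``sufficiently general position'' but the opposite: one needs $\Delta$ with worse singularities, or a different mechanism altogether (e.g.\ realise $G$ over $\p^{1}$, where complements of finite sets have free $\pi_{1}$, and pull back along a pencil on $Y$ with irreducible generic fibre). This is the substance you defer to \cite{Namba2}; as written, your outline does not get there.
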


\begin{teo}\label{Namba-pull-back}
For every Galois branched covering $f:X\to Y$ over a projective manifold $Y$ there is an isomorphism 
$\mr{Deck}(f)\stackrel{\sim}{\to}G\subset\mr{Aut}(\p^{n})$ for some $n\in\N$ and a rational map 
$g:Y\dasharrow\p^{n}/G$ such that $f:X\to Y$ is birationally equivalent to the fibred product
$Y_{0}\times_{\p^{n}/G}\p^{n}\to Y_{0}$, where $g_{0}:Y_{0}\to \p^{n}/G$ is a resolution of the indeterminacy of $g$.
\end{teo}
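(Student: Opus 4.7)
The plan is to embed $G:=\mr{Deck}(f)$ into $\mr{Aut}(\p^{n})$ for a suitable $n$, build a $G$-equivariant rational map from $X$ to $\p^{n}$, and then show that its descent to $Y$ realizes $f$ as a pullback.

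Set $n:=|G|-1$ and consider the regular representation of $G$ on the group algebra $\C[G]\simeq\C^{|G|}$ with its natural basis $\{e_{\sigma}\}_{\sigma\in G}$. Every non-identity element of $G$ permutes this basis non-trivially and hence cannot act as a scalar matrix, so projectivizing yields an embedding $G\hookrightarrow\mr{Aut}(\p^{n})$, providing the required subgroup of $\mr{Aut}(\p^{n})$ together with the isomorphism $\mr{Deck}(f)\stackrel{\sim}{\to}G$. Since $\C(X)/\C(Y)$ is a Galois extension with group $G$, the primitive element theorem supplies $h\in\C(X)$ whose $|G|$ Galois conjugates $h\circ\sigma^{-1}$, $\sigma\in G$, are pairwise distinct rational functions. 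I would then define a rational map $\phi\colon X\dasharrow\p^{n}$ by
$$\phi(x)\;:=\;\Bigl[\,\sum_{\sigma\in G} h(\sigma^{-1}x)\,e_{\sigma}\,\Bigr].$$
A direct check gives $\phi(\tau\cdot x)=\tau\cdot\phi(x)$ for every $\tau\in G$, so $\phi$ is $G$-equivariant. Consequently the composition $q\circ\phi$ with the quotient $q\colon\p^{n}\to\p^{n}/G$ is $G$-invariant; since $X/G\simeq Y$ birationally, it factors through $Y$ to yield a rational map $g\colon Y\dasharrow\p^{n}/G$ satisfying $g\circ f=q\circ\phi$. Let $g_{0}\colon Y_{0}\to\p^{n}/G$ be a resolution of the indeterminacy of $g$.

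Consider now $\Phi:=(f,\phi)\colon X\dasharrow Y\times\p^{n}$. The identity $g\circ f=q\circ\phi$ forces the image of $\Phi$ to lie in $Y\times_{\p^{n}/G}\p^{n}$, and pulling back along $Y_{0}\to Y$ produces a rational map $\wt\Phi\colon X\dasharrow Y_{0}\times_{\p^{n}/G}\p^{n}$ over $Y_{0}$. For a generic $x\in X$ the fibre $f^{-1}(f(x))$ is a $G$-orbit of cardinality $|G|$, and $\phi$ separates its points because its coordinates in the basis $\{e_{\sigma}\}$ are precisely the pairwise distinct Galois conjugates of $h$ evaluated at $x$. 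Thus $\wt\Phi$ is generically injective; since source and target have the same dimension, $\wt\Phi$ is birational onto the unique irreducible component of the fibred product dominated by $X$, establishing the claimed birational equivalence.

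The main obstacle I expect is bookkeeping the birational subtleties: the fibred product $Y_{0}\times_{\p^{n}/G}\p^{n}$ may well be reducible and singular, so one must identify explicitly the component dominated by $X$, verify that the $\mr{Deck}(f)$-action on $X$ corresponds to the induced $G$-action on that component, and ensure that the indeterminacy of $g$ does not spoil the generic argument. These issues can be handled by restricting to Zariski open subsets of $Y$ over which $f$ is an honest topological Galois covering and $\phi$ is regular, and then appealing to Theorems~\ref{GR} and~\ref{iso1} to extend the resulting analytic equivalence on the unbranched part to the full branched covering.
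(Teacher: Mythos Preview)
The paper does not prove this theorem: it is quoted, together with Theorem~\ref{Namba-existencia}, as a result of Namba from \cite{Namba2}. Your strategy---embed $G$ in $\mr{Aut}(\p^{n})$ via the regular representation, construct a $G$-equivariant rational map $\phi:X\dasharrow\p^{n}$, descend to $g:Y\dasharrow\p^{n}/G$, and compare with the fibred product---is the natural one and is essentially correct.

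There is, however, a gap in the choice of $h$: a primitive element only guarantees that the conjugates $h\circ\sigma^{-1}$ are pairwise distinct as functions, not that $\phi$ separates the points of a generic fibre, since equality in $\p^{n}$ is only up to scalar. For $G=\Z/2=\{e,\tau\}$, any $h$ with $h\circ\tau=-h$ is a primitive element, yet $\phi(x)=[h(x):h(\tau x)]=[1:-1]$ is constant, $g$ is constant, and the fibred product degenerates to $Y_{0}$ itself, so your conclusion fails outright. The remedy is to take $h$ to be a \emph{normal basis element}, so that $\{h\circ\sigma^{-1}:\sigma\in G\}$ is a $\C(Y)$-basis of $\C(X)$. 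If $\phi(\tau x)=\phi(x)$ held in $\p^{n}$ for generic $x$ and some $\tau\neq e$, one would get $(h\circ\sigma^{-1})\circ\tau=\lambda\,(h\circ\sigma^{-1})$ for all $\sigma$ and a fixed $\lambda\in\C(X)$; writing $1=\sum_{\sigma}c_{\sigma}\,(h\circ\sigma^{-1})$ with $c_{\sigma}\in\C(Y)$ and precomposing with $\tau$ forces $\lambda=1$, hence $h\circ\tau=h$ and $\tau=e$. With this correction $\phi$ does separate generic fibres, and the remainder of your argument goes through.
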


This last theorem states that, in the setting of birational mappings that we will consider in Section~3, each Galois finite branched covering is the pull-back of a branched covering whose source space is a projective space. 
This motivates our interest in characterizing Galois branched coverings of the type $\p^n\to Y$. In this direction our main result is Theorem~\ref{GG}.


\section{Dominant rational maps}\label{Sec_3}

In this section we describe some properties of dominant rational maps $f:X\dasharrow Y$ between projective or quasi-projective manifolds.  We see that in the case that $X$ and $Y$ have the same dimension
 there is a finite branched covering $\rho\colon N\to Y$, naturally associated to  $f$, which is unique up to isomorphism and birationally equivalent to $f$. 
We define Galois rational maps as those whose associated branched covering $\rho$ is Galois. This definition coincides 
with the classical one that requires the field extension $\C(Y)\hookrightarrow \C(X)$ induced by $f$ to be Galois.

All the analytic or algebraic objects considered in this section and all along the article are defined over the field $\C$ of the complex numbers. 

\subsection{Equisingularity theorem}

We begin by recalling a general and powerful theorem due to A.N.~Var\v cenko, which implies the topological local triviality of rational maps on appropriate Zariski open subsets. It plays a key role in the article. To state it properly we give first the following definition:

\begin{defin}
Let $f:E\to B$ be a continuous map and let $E^{1},\ldots,E^{q}$ be subsets of the topological space $E$. 
The family $(f,E,E^{1},\ldots,E^{q})$ is called 
\emph{equisingular over $V\subset B$} if for every $p\in V$ there is a neighborhood $W$ of $p$ in $V$ and a homeomorphism $h:E_{W}\to E_{p}\times W$ such that $h(E_{W}^{i})=E_{p}^{i}\times W$, where $E_{W}^{i}=E_{W}\cap E^{i}$ and $E_{p}^{i}=E_{p}\cap E^{i}$.
\end{defin}

With this notation we can state Var\v cenko's theorem as follows, cf. \cite[Theorems~5.2 and 5.3]{V}.

\begin{teo}\label{V}
Let $f:E\to B$ be a morphism from a constructible set 
$E$ onto an irreducible constructible set 
$B$, and let $E^{1},\ldots,E^{q}$ be constructible subsets of $E$. Then there is a 
non empty Zariski open subset $V$ of $B$ such that the family $(f,E,E^{1},\ldots,E^{q})$ is equisingular over $V$.
\end{teo}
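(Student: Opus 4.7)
My plan is to deduce Var\v cenko's equisingularity theorem from Whitney stratification theory combined with Thom's first isotopy lemma, applied in the algebraic/constructible setting.

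First I would produce a finite Whitney stratification $E = \bigsqcup_{\alpha} S_\alpha$ into smooth complex algebraic strata that is compatible with each of the constructible subsets $E^1,\ldots, E^q$, in the sense that each $E^i$ is a union of strata. This is available from the standard Whitney stratification theorem for complex algebraic sets, iterated to accommodate finitely many constructible constraints. I would then refine this together with a stratification of $B$ so that $f$ becomes a stratified morphism: each $S_\alpha$ is sent surjectively and submersively onto a smooth stratum $T_{\beta(\alpha)}$ of $B$. Since $B$ is irreducible, its top-dimensional stratum is a Zariski open subset $V \subset B$, which is the set stated in the theorem.

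Next I would apply Thom's first isotopy lemma over $V$: along any stratum of the target, a proper Whitney-stratified submersion is locally topologically a product, and the product structure respects the stratification of the source. Because each $E^i$ is a union of strata, this local trivialization automatically carries $E_W^i$ onto $E_p^i \times W$, which is precisely the equisingularity conclusion.

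The principal obstacle is that $f$ need not be proper, so Thom's lemma does not apply directly. I would circumvent this by taking projective completions $\overline{E}$ and $\overline{B}$, blowing up to resolve the indeterminacies of the extension to obtain a proper morphism $\tilde f \colon \tilde E \to \overline{B}$, and then producing a Whitney stratification of $\tilde E$ and $\overline{B}$ compatible with the boundaries $\tilde E \setminus E$ and $\overline{B} \setminus B$ and with the closures of the $S_\alpha$ and of the $E^i$. Thom's lemma applied to the proper stratified map $\tilde f$ yields local trivializations over each stratum of $\overline{B}$ preserving all strata. Shrinking $V$ if necessary so that it lies inside a single stratum of $\overline{B}$ and intersecting the trivialization with $E \subset \tilde E$ then gives the required homeomorphism $h \colon E_W \to E_p \times W$ with $h(E_W^i) = E_p^i \times W$.
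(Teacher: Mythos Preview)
The paper does not prove this theorem at all: it is simply recalled from Var\v{c}enko's original article (cited as \cite[Theorems~5.2 and 5.3]{V}) and used as a black box throughout. So there is no ``paper's own proof'' to compare against.

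Your proposal is a legitimate route to the result and is, in fact, the standard modern way of deriving statements of this type: produce an algebraic Whitney stratification of the source compatible with the given constructible subsets, refine both source and target so that $f$ becomes a stratified submersion, compactify to a proper stratified map so that Thom's first isotopy lemma applies, and then restrict the resulting local trivialization back to $E$. The only place I would ask you to be more explicit is the step ``refine \ldots\ so that $f$ becomes a stratified morphism'': you need the existence of a Whitney stratification of the pair (source, target) for which $f$ restricted to each stratum is a submersion onto a stratum of the target. This is not automatic from separate stratifications of $E$ and $B$; it is a theorem (due in the algebraic setting to Whitney, Thom, Hardt, or in the form you need to Verdier/Hironaka) and should be invoked by name. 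Once that is granted, your compactification argument and the restriction back to $E$ via compatibility with the boundary strata are correct, and the conclusion follows.
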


We recall that a constructible set is a finite union of  quasi-projective varieties. Over the complex numbers, a constructible set is just a  semi-algebraic set; that is, a set given locally by a finite number of algebraic equations $f_{i}=0$ and a finite number of algebraic inequalities $g_{i}\neq 0$. In particular, an irreducible constructible set is a quasi-projective variety.

Using Theorem~\ref{V} we prove the following proposition that describes the properties of the composition of dominant morphisms. It will be used all along the article.

\begin{prop}\label{Vb}
Let $f\colon X \to Y$ and $g\colon Y\to Z$ be dominant morphisms between quasi-projective varieties. Assume that $Y$ and $Z$ are irreducible. Then 
there exist Zariski open subsets $X'\subset X$, $Y'\subset Y$ and $Z'\subset Z$ fulfilling the following properties:
\begin{enumerate}[(a)]
\item 
The restrictions $f':X'\to Y'$, $g':Y'\to Z'$ and $g'\circ f':X'\to Z'$ are topological (locally trivial) fibre bundles.
\item If $\dim X=\dim Y$ then $f'$ and $f'_{z}:X'_{z}\to Y'_{z}$ are finite coverings of the same topological degree as $f$, for all $z\in Z'$.
In addition, if the generic fibre of $g$ is irreducible then $f'_{z}$ and~$f'_{z'}$ are topologically equivalent coverings for all $z,z'\in Z'$ by homeomorphisms $Y'_{z}\to Y'_{z'}$ that extend to $Y_{z}\to Y_{z'}$.
\end{enumerate}
\end{prop}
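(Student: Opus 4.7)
My plan is to apply Var\v cenko's Theorem~\ref{V} three times, with a careful choice of distinguished constructible subsets so that the three resulting local trivializations are mutually compatible.

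First I would apply Theorem~\ref{V} to the morphism $f\colon X\to Y$ to obtain a Zariski open $V_Y\subset Y$ over which $f\colon f^{-1}(V_Y)\to V_Y$ is a topologically locally trivial fibration. Next, I would apply Theorem~\ref{V} to $g\colon Y\to Z$ with $V_Y$ as a distinguished subset, producing a Zariski open $V_Z\subset Z$ such that the family $(g,Y,V_Y)$ is equisingular over $V_Z$; this gives trivializations $h_W\colon g^{-1}(W)\to Y_z\times W$ that carry $V_Y\cap g^{-1}(W)$ onto $(V_Y\cap Y_z)\times W$. Finally I would apply Theorem~\ref{V} to $g\circ f\colon X\to Z$ with $f^{-1}(V_Y)$ as a distinguished subset, yielding a Zariski open $V'_Z\subset Z$ with trivializations of $g\circ f$ that preserve $f^{-1}(V_Y)$. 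Setting $Z':=V_Z\cap V'_Z$, $Y':=V_Y\cap g^{-1}(Z')$ and $X':=f^{-1}(Y')$ should then give the desired open subsets.

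Part~(a) would follow by routine verification: $f'$ is locally trivial as the restriction of the Step~1 bundle to the open subset $Y'\subset V_Y$; $g'$ is locally trivial because the Step~2 trivializations restrict, via the distinguished subset $V_Y$, to trivializations over $Y'\cap g^{-1}(W)$; and $g'\circ f'$ is locally trivial by the same argument applied to Step~3. For Part~(b), the equality $\dim X=\dim Y$ forces $f$ to be generically finite of some topological degree $d$, so after shrinking $V_Y$ to lie inside the Zariski open locus where $f$ is \'etale of degree $d$, the restriction $f'\colon X'\to Y'$ becomes a finite covering of degree $d$, and each $f'_z\colon X'_z\to Y'_z$ inherits this structure as the restriction of a covering to the open subset $Y'_z\subset Y_z$. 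For the topological equivalence of the $f'_z$, irreducibility of the generic fibre of $g$ permits a further shrinking of $Z'$ so that all fibres of $g$ over $Z'$ are irreducible; since $Z$ is irreducible, $Z'$ is connected and path-connected in the Euclidean topology. Given $z,z'\in Z'$, I would pick a path in $Z'$ joining them, cover it by finitely many neighborhoods in which $g$ and $g\circ f$ are compatibly trivialized, and concatenate the resulting fibre-homeomorphisms to produce a homeomorphism $Y_z\to Y_{z'}$ that carries $Y'_z$ onto $Y'_{z'}$ and intertwines $f_z$ with $f_{z'}$.

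The main obstacle I foresee is the mutual compatibility of the trivializations produced in Steps~2 and~3: Var\v cenko's theorem applied separately to $g$ and to $g\circ f$ yields trivializations that a priori live in different universes, whereas the transport argument in~(b) requires them to fit into a commutative square with $f$ so that moving along a path in $Z'$ simultaneously trivializes the tower $X\to Y\to Z$. Arranging this rigorously will demand either a more refined choice of distinguished subsets so that a Var\v cenko stratification of $X$ descends via $f$, or an appeal to Thom's first isotopy lemma for stratified maps, which is the natural tool for simultaneous trivializations along a chain of morphisms.
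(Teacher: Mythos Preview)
Your three applications of Theorem~\ref{V} and the definition of $X',Y',Z'$ match the paper's proof of part~(a) essentially word for word; that part is fine.

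The gap is exactly where you locate it, but the resolution the paper uses is simpler than either of the routes you propose. You do \emph{not} need the Var\v cenko trivializations of $g$ and $g\circ f$ to be compatible, and you do not need Thom's isotopy lemma. The point is that once $\dim X=\dim Y$, the map $f'\colon X'\to Y'$ is a finite \emph{unbranched covering}, and coverings are completely determined by their monodromy representation. So the paper argues as follows: over a contractible neighborhood $W\subset Z'$ of $z$, the equisingularity of the pair $(Y,Y')$ over $Z'$ (your Step~2) gives a homeomorphism $(Y_W,Y'_W)\simeq(Y_z\times W,\,Y'_z\times W)$, hence $\pi_1(Y'_W)\simeq\pi_1(Y'_z)$. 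Under this identification the monodromy of the covering $f'_W\colon X'_W\to Y'_W$ coincides with that of $f'_z$, because $f'_z$ is the restriction of $f'_W$ to the slice $Y'_z\hookrightarrow Y'_W$. By the classification of coverings via monodromy, $f'_W$ is therefore topologically equivalent to $f'_z\times\mathrm{id}_W$; restricting to any $z'\in W$ gives the equivalence $f'_z\simeq f'_{z'}$ via a homeomorphism $Y'_z\to Y'_{z'}$ that extends to $Y_z\to Y_{z'}$. One then concatenates along a path in $Z'$ as you suggest.

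In short: your Step~3 trivialization of $X$ is never used for the topological equivalence in~(b); the covering property of $f'$ plus the equisingularity of $(Y,Y')$ already force the transport of $f'_z$ along $Z'$. Drop the compatibility worry and invoke monodromy instead.
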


\begin{dem}
(a) By applying Theorem~\ref{V} to $f$, we see that there is a Zariski open subset $U$ of $Y$ contained in $ f(X)$  such that $f_{U}:X_{U}\to U$ is a topological fiber bundle. By applying again Theorem~\ref{V} to $g:U\to g(U)$, it follows that there is a Zariski open subset $V$ of $Z$ contained in $g(U)$ such that $g_{V}\colon U_{V}\to V$ and $f_{U_{V}}\colon f^{-1}(U_{V})\to U_{V}$ are also topological fiber bundles. Applying one more time Theorem~\ref{V} to the composition $g_{V}\circ f_{U_{V}}$ we obtain an open Zariski subset $Z'$ of $Z$ contained in $V$ such that the restrictions of $g$ to $Y':=g^{-1}(Z')$ and of $f$ to $X':=f^{-1}(Y')$ satisfy the desired properties.\\
(b) That the map $f'$ is a finite covering follows from~(a) because the generic fibre of $f$ has dimension $\dim X-\dim Y=0$. Moreover, the restriction $f'_{z}$ is a pull-back of the covering $f'$ by the inclusion $Y'_{z}\hookrightarrow Y'$. It remains to prove that the coverings $f'_{z}$ and $f'_{z'}$ are topologically equivalent for every $z,z'\in Z'$ by homeomorphisms extending to $Y_{z}\to Y_{z'}$. 
By Theorem~\ref{V}, we can assume that the family $(Y,Y')\to Z$ is equisingular over $V$.
Each point $z\in Z'$ has a contractible neighborhood $W\subset Z'$ such that  $(Y_{W},Y'_{W})\simeq (Y_{z}\times W, Y'_{z}\times W)$.
Then the monodromy representations   $\mu_{W}:\pi_{1}(Y'_{W})\simeq\pi_{1}(Y'_{z})\stackrel{\mu_{z}}{\to}\mf S_{d}$ of the restricted coverings $f'_{W}\colon X'_{W}\to Y'_{W}$ and $f'_{z}\colon X'_{z}\to Y'_{z}$ can be canonically identified. Consequently, $f'_{W}\colon X'_{W}\to Y'_{W}$ is topologically equivalent to $f'_{z}\times\mr{id}_{W}:X'_{z}\times W\to Y'_{z}\times W$. This implies that $f_{z}'$ and $f_{z'}'$ are topologically equivalent by homeomorphisms that extend to $Y_{z}\to Y_{z'}$  if $z'\in W$.
If $z'\in Z'$ is arbitrary then we can join it with $z$ by a path $\gamma$ in $Z'$ and choose a finite set of open sets $W$ covering $\gamma$ in order to conclude.
\end{dem}

\subsection{Galois rational maps}\label{3.2}
Let $X, Y$ be connected complex projective manifolds of the same dimension and let $f:X\dasharrow Y$ be a dominant rational map, i.e. a rational map 
with dense image. Let  $\Sigma_{f}\subset X$ be the indeterminacy locus of $f$. 
We consider  the closed graph of $f$
$$\Gamma_f:=\overline{\{(x,f(x))\,|\, x\in X\setminus\Sigma_{f}\}}\subset X\times Y$$ 
and we denote by $p_{X}$ and $p_{Y}$ the restrictions to $\Gamma_f$ of the natural projections from $X\times Y$ onto $X$ and $Y$ respectively.

Let $\delta:\wt X\to\Gamma_{f}$ be a desingularization of $\Gamma_{f}$, i.e a proper surjective birational morphism from a smooth projective manifold $\wt X$. Without loss of generality we can assume that the \emph{exceptional divisor} $\nabla_{\beta}$ of the birational map $\beta:=p_{Y}\circ\delta:\wt X\to X$ satisfies 
\begin{equation}\label{min}
\nabla_{\beta}=\beta^{-1}(\Sigma_{f}).
\end{equation}

The map $\wt f:=p_{Y}\circ\delta:\wt X\to Y$ is a  proper surjective morphism because $f$ is dominant and $\wt X$ is projective. 
We will say that $\wt f$ is a \emph{desingularization} of the rational map $f$. Thus we can apply the Stein factorization theorem to $\wt f$ in order to  write it as the composition $\wt X\stackrel{\gamma}\to N\stackrel{\rho}{\to}Y$ with $\gamma$ having connected fibres and $\rho$ being finite. In fact, $\gamma$ is birational because $\dim X=\dim Y$, $N$ is normal because $\wt X$ is smooth (cf. \cite[p. 213]{GR84}) and $\rho:N\to Y$ is a finite branched covering. Then the following diagram is commutative:
\begin{equation}\label{stein}
\xymatrix{\wt X\ar[r]^{\gamma}\ar[d]_{\beta}\ar[rd]^{\wt f} & N\ar[d]^{\rho}\\ X\ar@{.>}[r]^{f} & Y}
\end{equation}

The following proposition follows from Theorem~\ref{GR}. It 
states that the finite branched covering $\rho\colon N\to Y$ does not depend 
on the chosen desingularization $\delta$ of $\Gamma_{f}$.

\begin{prop}\label{rho}
Let $f:X\dasharrow Y$ be a dominant rational map between projective manifolds of the same dimension. 
The morphism $\rho:N\to Y$ constructed above is unique up to isomorphism. 
We say that $\rho$ is the finite branched covering associated to $f$.
\end{prop}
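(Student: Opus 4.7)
The plan is to use the Grauert--Remmert extension theorem (Theorem~\ref{GR}) to identify $\rho$ with the unique finite branched extension of an unbranched covering canonically attached to $f$, independently of any desingularization.

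First I would isolate a Zariski open subset $U\subset Y$ intrinsic to $f$ alone. Applying Proposition~\ref{Vb} to the morphism $f|_{X\setminus\Sigma_{f}}\colon X\setminus\Sigma_{f}\to Y$, there exists a Zariski dense open $U\subset Y$ such that $f^{-1}(U)\subset X\setminus\Sigma_{f}$ and the restriction $f_{U}\colon f^{-1}(U)\to U$ is an unbranched topological covering of degree equal to the topological degree of $f$. Since $\Sigma_{f}$ has codimension at least two in $X$ (as $Y$ is projective), after possibly shrinking $U$ one may assume $\Delta:=Y\setminus U$ is a hypersurface of $Y$. Neither $U$ nor the covering $f_{U}$ depends on any choice of desingularization of $\Gamma_{f}$.

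Next I would compare this intrinsic unbranched covering with the finite branched covering $\rho\colon N\to Y$ obtained via Stein factorization of $\wt f=p_{Y}\circ\delta$. By property~\eqref{min}, $\beta$ restricts to an isomorphism from $\wt X\setminus\nabla_{\beta}$ onto $X\setminus\Sigma_{f}$. On the other hand, $\gamma\colon\wt X\to N$ is a proper birational morphism onto the normal variety $N$, and by Zariski's main theorem it is an isomorphism away from its exceptional locus. The $\rho$-image of this exceptional locus and the branch divisor $\Delta_{\rho}$ are proper algebraic subsets of $Y$, so shrinking $U$ once more (keeping its complement a hypersurface) ensures that $\beta$ and $\gamma$ are simultaneously isomorphisms over $f^{-1}(U)$ and $\rho^{-1}(U)$ respectively. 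Since $\rho\circ\gamma=f\circ\beta=\wt f$ on the common domain, we obtain a canonical isomorphism of unbranched coverings $\rho|_{\rho^{-1}(U)}\simeq f_{U}$ over~$U$.

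Finally I would apply Theorem~\ref{GR} to conclude. Given a second desingularization yielding $\rho'\colon N'\to Y$, one enlarges $\Delta$ to contain $\Delta_{\rho}\cup\Delta_{\rho'}$ and repeats the previous comparison on the smaller open set $U'=Y\setminus(\Delta\cup\Delta_{\rho}\cup\Delta_{\rho'})$. Both $\rho$ and $\rho'$ are then finite branched coverings of $Y$ whose branching is contained in the hypersurface $Y\setminus U'$ and whose restrictions to $U'$ are canonically identified (both being isomorphic to $f_{U'}$ as unbranched coverings). The uniqueness statement in Theorem~\ref{GR} forces $\rho\simeq\rho'$ as branched coverings, establishing the proposition.

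The main obstacle is the second step: arranging that $\beta$ and $\gamma$ simultaneously restrict to isomorphisms over preimages of a Zariski open $U\subset Y$ whose complement is a hypersurface, so that the comparison of $\rho|_{\rho^{-1}(U)}$ with $f_{U}$ is unambiguous. Once this geometric picture is in place, the Grauert--Remmert theorem delivers the conclusion essentially formally.
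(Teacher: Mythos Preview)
Your proposal is correct and follows exactly the route the paper indicates: the paper offers no detailed argument beyond the sentence ``follows from Theorem~\ref{GR}'', and you have filled in precisely the natural details---identify $\rho$ over a suitable Zariski open set with the intrinsic unbranched covering $f_{U}$, then invoke the uniqueness clause of Grauert--Remmert. One cosmetic remark: the codimension-$\ge 2$ property of $\Sigma_{f}$ comes from $X$ being smooth, not from projectivity of $Y$, but this does not affect your argument.
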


Although the finite branched covering $\rho\colon N\to Y$ is a 
morphism univocally associated to $f$, it has the disadvantage that $N$ can be singular. By that reason, 
we will look sometimes at the rational morphism $\wt f\colon \wt X\to Y$ rather than $\rho$ itself. The hypersurface $\nabla_{\wt f}$ decomposes as 
\begin{equation}\label{delta}
\nabla_{\wt f} =\mc R_{\wt f}\cup\mc C_{\wt f}
\end{equation}
where $\mc R_{\wt f}$ is the union of all irreducible components $C\subset\nabla_{\wt f}$ such that $\wt f(C)$ is a hypersurface of 
$Y$ and $\mc C_{\wt f}=\overline{\nabla_{\wt f}\setminus\mc R_{\wt f}}$ 
is the union of all irreducible components of $\nabla_{\wt f}$ that are contracted by $\wt f$. Notice that  $\gamma(\mc R_{\wt f})$ coincides with the subset $\nabla_{\rho}$ of $N$ and that $\gamma_{|\mc R_{\wt f}} \colon \mc R_{\wt f} \to \nabla_\rho$ is a birational map. This means that the components of the ramification locus of $\rho$ and their ramification indices can be seen in $\wt X$.
We also deduce that the Zariski closed subset $\Delta_{f}:=\wt f(\mc R_{\wt f})$ fulfills 
\begin{equation}\label{delta2}
\Delta_{f}=\Delta_{\rho}
\end{equation}
and that it does not depend on the desingularization. We also consider the Zariski closed subset 
$$\Lambda_{f}:=\wt f(\nabla_{\wt f}\cup\nabla_{\beta})\subset Y$$
which, under the asumption~(\ref{min}), is independent on the chosen desingularization  because it coincides with $p_{Y}(p_{X}^{-1}(\overline{\nabla_{f_{|X\setminus\Sigma_{f}}}}\cup\Sigma_{f}))$.

\begin{defin}
A dominant rational map $f:X\dasharrow Y$ between 
projective manifolds of the same dimension is said to be Galois if
the associated finite branched covering $\rho\colon N\to Y$ is Galois.
\end{defin}

Next theorem collects some known facts with the assertion that $f\colon X\to Y$ is Galois if its restriction $f_{V}:X_{V}=f^{-1}(V)\to V$ to any Zariski open subset $V$ of $Y\setminus\Lambda_{f}$ is Galois.

Since the manifolds $X$ and $Y$ are assumed to be connected the rational map $f$ induces a finite field extension 
$f^\ast \colon\C(Y)\hookrightarrow\C(X)$ whose degree is the topological degree $\deg  f$ of $f$. Hence, one could also say that 
$f$ is Galois if the the field extension $\C(X)|\C(Y)$ is Galois.

\begin{teo}\label{car-gal}
Let $f:X\dasharrow Y$ be a given dominant rational map between projective manifolds of the same dimension and let $\rho\colon N\to Y$  be the finite branched covering associated to $f$. If $V$ is a Zariski open subset of $Y$ contained in $Y\setminus \Lambda_{f}$ then $f_{V}:X_{V}\to V$ is a covering whose monodromy group does not depend on $V$. Moreover,  the groups
\begin{itemize}
\item $\mr{Deck}(\rho)=\{\phi\in\mr{Aut}(N)\,|\,\rho\circ\phi=\rho\}$,
\item $\mr{Deck}(f_{V})=\{\phi\in\mr{Homeo}(X_{V})\,|\,f_{V}\circ\phi=f_{V}\}$,
\item $\mr{Deck}(f)=\{\phi\in\mr{Bir}(X)\,|\,f\circ\phi=f\}$,
\item $\mr{Aut}(\C(X)|\C(Y))=\{\varphi\in\mr{Aut}(\C(X))\,|\,\varphi_{|\C(Y)}=\mr{id}_{\C(Y)}\}$.
\end{itemize}
are naturally isomorphic. 
\end{teo}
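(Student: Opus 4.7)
The proof naturally splits into checking that $f_{V}$ is a topological covering, proving monodromy independence of $V$, and identifying the four groups.

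First, the set $V_{0}:=Y\setminus\Lambda_{f}$ is a nonempty Zariski open subset of $Y$, and by the very definition of $\Lambda_{f}$ together with~\eqref{min}, the preimage $\wt f^{-1}(V_{0})$ avoids both $\nabla_{\wt f}$ and $\nabla_{\beta}$. Hence $\wt f$ is unramified on $\wt f^{-1}(V_{0})$, while $\beta$ and $\gamma$ restrict to biholomorphisms onto $X_{V_{0}}$ and $\rho^{-1}(V_{0})$ respectively. This gives canonical identifications of topological coverings
\begin{equation*}
f_{V_{0}}\colon X_{V_{0}}\to V_{0}\;\simeq\;\wt f^{-1}(V_{0})\to V_{0}\;\simeq\;\rho_{V_{0}}\colon\rho^{-1}(V_{0})\to V_{0},
\end{equation*}
and by restriction the same holds for every Zariski open $V\subset V_{0}$. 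Furthermore, for any such $V$ the complement $V_{0}\setminus V$ is a proper Zariski closed subset of the smooth irreducible variety $V_{0}$, so the inclusion $V\hookrightarrow V_{0}$ induces a surjection on fundamental groups (generic loops in $V_{0}$ can be pushed off the real codimension~$\geq 2$ subset $V_{0}\setminus V$). Proposition~\ref{1}(c) then yields that $\mr{Mon}(f_{V})=\mr{Mon}(f_{V_{0}})$ as subgroups of $\mf S_{d}$, proving the monodromy claim.

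For the four-group statement, Theorem~\ref{iso1} applied to $\rho$ (combined with~\eqref{delta2} which gives $V\subset V_{0}\subset Y\setminus\Delta_{\rho}$, and Riemann's extension theorem on the normal space $N$) identifies $\mr{Deck}(\rho)$ with $\mr{Deck}(\rho_{V})$; the latter coincides with $\mr{Deck}(f_{V})$ under the biholomorphism $\rho^{-1}(V)\simeq X_{V}$ of the previous paragraph. To connect with $\mr{Deck}(f)$, given a birational self-map $\phi\in\mr{Deck}(f)$ one lifts it, via $\beta$, to a birational self-map $\wt\phi$ of $\wt X$ satisfying $\wt f\circ\wt\phi=\wt f$; by the universal property of the Stein factorization $\wt\phi$ descends through $\gamma$ to a birational self-map $\ov\phi$ of $N$ with $\rho\circ\ov\phi=\rho$, and since $N$ is normal, $\rho$ is finite and $\ov\phi$ is generically a permutation of fibres of $\rho$, Zariski's main theorem shows that $\ov\phi$ extends to an automorphism of $N$, hence to an element of $\mr{Deck}(\rho)$. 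The inverse assignment $\psi\mapsto\beta\circ\gamma^{-1}\circ\psi\circ\gamma\circ\beta^{-1}$ makes sense at the birational level. Finally, the identification $\mr{Deck}(f)\simeq\mr{Aut}(\C(X)|\C(Y))$ is the classical correspondence between birational self-maps of $X$ over $Y$ and $\C(Y)$-algebra automorphisms of $\C(X)$, realized by pull-back of rational functions.

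The most delicate step will be the map $\mr{Deck}(f)\to\mr{Deck}(\rho)$: one must verify that the lift through $\beta$ and the descent through $\gamma$ are both well defined and respect composition, and one must invoke normality of $N$ in an essential way in order to upgrade the resulting birational self-map of $N$ to a genuine automorphism. The remaining identifications are essentially bookkeeping, once the setup of the Stein factorization~\eqref{stein} and the topological covering on $V_{0}$ are in place.
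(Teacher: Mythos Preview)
Your argument is correct and follows essentially the same skeleton as the paper's proof: identify $f_{V}$, $\wt f|_{\wt f^{-1}(V)}$ and $\rho_{V}$ via $\beta$ and $\gamma$, use surjectivity of $\pi_{1}(V)\to\pi_{1}(V_{0})$ for the monodromy independence, and then link the four groups. The one noteworthy difference is in how you establish $\mr{Deck}(f)\simeq\mr{Deck}(\rho)$. You build the map $\mr{Deck}(f)\to\mr{Deck}(\rho)$ directly---lift through $\beta$, descend through $\gamma$, and then invoke Zariski's main theorem (using normality of $N$ and finiteness of $\rho$) to upgrade the resulting birational self-map of $N$ to an automorphism. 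The paper instead goes the other way: it defines $\gamma^{*}\colon\mr{Deck}(\rho)\hookrightarrow\mr{Deck}(f)$ by conjugation (an easy direction, since the target is only required to be birational), places it in a commutative square with restriction maps to $\mr{Deck}(\rho_{U})$ and $\mr{Deck}(f_{V})$, and uses the centralizer identity $D=Z(M)$ from~\eqref{D=ZM} together with the equality of monodromies to force all arrows in the square to be isomorphisms. This diagram chase sidesteps any explicit appeal to Zariski's main theorem. Your route is slightly more hands-on but equally valid; the paper's is a bit slicker because it never needs to argue that a birational deck transformation of $N$ extends to a morphism.
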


\begin{dem}
If $V$ is a Zariski open subset of $Y$ contained in $Y\setminus\Lambda_{f}$ then the restrictions $\beta_{V}$ and $\gamma_{V}$ of $\beta$ and $\gamma$  to $\wt f^{-1}(V)$ are biholomorphisms onto $X_{V}=f^{-1}(V)$ and $N_{V}=\rho^{-1}(V)$ respectively. Hence we can identify the restriction $f_{V}:X_{V}\to V$ and the covering $\rho_{V}:N_{V}\to V$ via the biholomorphism $\gamma_{V}\circ\beta_{V}^{-1}:X_{V}\to N_{V}$. On the other hand, since $V\subset Y\setminus\Lambda_{f}\subset Y\setminus\Delta_{f}=U$, the covering $\rho_{V}$ is a restriction of the maximal unbranched covering $\rho_{U}$ considered in \eqref{top_cov}. The monodromy representation of $\rho_{V}$ is the composition of the monodromy representation of $\rho_{U}$ with the morphism $\imath_{*}:\pi_{1}(V)\to\pi_{1}(U)$ induced by the inclusion $\imath:U\hookrightarrow V$. Using the Lefschetz type theorem proved by Hamm and L\^{e} in \cite{HL}, we deduce that $\imath_{*}$ is an epimorphism, so that the monodromy groups $\mr{Mon}(\rho_{U})$ and $\mr{Mon}(\rho_{V})\simeq\mr{Mon}(f_{V})$ of $\rho_{U}$ and $\rho_{V}$ coincide.

It is easy to see that the following diagram is commutative:
$$
\xymatrix{
\mr{Deck}(f)\ar@{^{(}->}[r] &\mr{Deck}(f_{V})\\
\mr{Deck}(\rho)\ar@{^{(}->}[r]^{r}\ar@{^{(}->}[u]^{\gamma^{*}} & \mr{Deck}(\rho_{U})\ar@{<->}[u]
}
$$
where $\gamma^\ast$ is defined by $\gamma^{*}(\phi)=\gamma\circ\phi\circ\gamma^{-1}$ if $\phi\in\mr{Aut}(N)$,
the horizontal arrows are injective because they are given by restriction, and the right vertical arrow is the composition 
of the isomorphisms
$$
\mr{Deck}(\rho_{U})\simeq Z(\mr{Mon}(\rho_{U}))
=Z(\mr{Mon}(\rho_{V}))
=Z(\mr{Mon}(f_{V}))\simeq\mr{Deck}(f_{V}),
$$
where we are using~(\ref{D=ZM}). Moreover, $r$ is surjective thanks to Theorem~\ref{iso1}. Hence all the arrows considered  are isomorphisms.
Finally, the groups $\mr{Deck}(f)$ and $\mr{Aut}(\C(X)|\C(Y))$ are naturally identified.
\end{dem}

\begin{obs}
Since the above natural isomorphisms are given by restrictions the previous proof shows that every $\phi\in\mr{Bir}(N)$ such that $\rho\circ\phi=\rho$ is actually in $\mr{Aut}(N)$ and that every $\phi\in\mr{Homeo}(X_{V})$  such that $f\circ\phi=f$ extends to a birational map $X\dasharrow X$.
\end{obs}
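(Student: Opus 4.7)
The plan is to derive both assertions of the remark as direct consequences of a fact made explicit in the proof of Theorem~\ref{car-gal}: all four groups $\mr{Deck}(\rho)$, $\mr{Deck}(f_{V})$, $\mr{Deck}(f)$ and $\mr{Aut}(\C(X)|\C(Y))$ are identified by \emph{restriction} maps, so in particular every element of the larger groups comes by restriction from an element of the smaller, more structured groups.

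For the first assertion, suppose $\phi\in\mr{Bir}(N)$ satisfies $\rho\circ\phi=\rho$ on its domain of definition. I would shrink $V$ inside $Y\setminus\Lambda_{f}$ to a Zariski open subset small enough that both $\phi$ and $\phi^{-1}$ are regular on $N_{V}=\rho^{-1}(V)$; because $\phi$ commutes with $\rho$, the complement to avoid in $V$ is just the image under $\rho$ of finitely many hypersurfaces, and automatically $\phi(N_{V})\subset N_{V}$. The restriction $\phi|_{N_{V}}$ is then a deck transformation of the unbranched covering $\rho_{V}$, which is itself a restriction of the maximal unbranched covering $\rho_{U}$. By the isomorphism $\mr{Deck}(\rho)\cong\mr{Deck}(\rho_{U})$ of Theorem~\ref{iso1} (combined with the equality $\mr{Mon}(\rho_{U})=\mr{Mon}(\rho_{V})$ proved via Hamm--L\^e inside the proof of Theorem~\ref{car-gal}), the deck transformation $\phi|_{N_{V}}$ extends to some $\bar\phi\in\mr{Aut}(N)$. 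Since $\phi$ and $\bar\phi$ are two birational self-maps of the normal variety $N$ that agree on the Zariski dense open subset $N_{V}$, they coincide as birational maps; hence $\phi=\bar\phi\in\mr{Aut}(N)$.

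For the second assertion, suppose $\phi\in\mr{Homeo}(X_{V})$ with $f\circ\phi=f$, so that $\phi\in\mr{Deck}(f_{V})$. The commutative diagram in the proof of Theorem~\ref{car-gal} shows that the restriction map $\mr{Deck}(f)\hookrightarrow\mr{Deck}(f_{V})$ is in fact an isomorphism, being the composition of the (restriction) isomorphisms $\mr{Deck}(f)\cong\mr{Deck}(\rho)\cong\mr{Deck}(\rho_{U})\cong\mr{Deck}(f_{V})$. Therefore there exists $\tilde\phi\in\mr{Deck}(f)\subset\mr{Bir}(X)$ whose restriction to $X_{V}$ coincides with $\phi$, which is precisely the asserted birational extension.

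The main technical point to handle carefully is the transport of deck transformations across the identifications $X_{V}\stackrel{\sim}{\to}N_{V}$ afforded by $\gamma_{V}\circ\beta_{V}^{-1}$; thanks to \eqref{min} this is an honest biholomorphism commuting with the projections to $V$, so it is equivariant for deck transformations, and this equivariance needs to be invoked explicitly to guarantee that the global extensions $\bar\phi$ and $\tilde\phi$ really recover the original $\phi$ upon restriction. Beyond that verification the argument is purely formal, as it only recycles the isomorphisms already produced in Theorem~\ref{car-gal} together with the Zariski density of the open sets on which the isomorphisms are implemented.
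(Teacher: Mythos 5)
Your argument is correct and is exactly the unpacking the paper intends: the remark is justified there by the single observation that all the isomorphisms in the proof of Theorem~\ref{car-gal} are restriction maps, and you simply make explicit the two surjectivity statements (restrict a birational $\phi$ of $N$ to a deck transformation of the unbranched covering $\rho_{V}$, extend back to $\mr{Aut}(N)$ via Theorem~\ref{iso1}, and identify the extension with $\phi$ by Zariski density; dually for $\mr{Deck}(f_{V})\cong\mr{Deck}(f)$). The only cosmetic slip is calling the indeterminacy locus of $\phi$ a union of hypersurfaces --- on the normal variety $N$ it has codimension at least two --- but this does not affect the argument, since all that is needed is that its image under the finite map $\rho$ is a proper Zariski closed subset of $Y$.
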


The characterization of Galois rational maps $f:X\dasharrow Y$ via the induced field extension $\C(X)|\C(Y)$ show that being Galois is a birational property.
More precisely, two rational maps $f\colon X\dasharrow Y$ and $f'\colon X'\dasharrow Y'$ are called \emph{birationally left-right-equivalent} (\emph{birationally equivalent} for short) if there are
birational maps $\beta_{X}\colon X'\dasharrow X$ and $\beta_{Y}\colon Y\dasharrow Y'$ such that $f'=\beta_{Y}\circ f\circ\beta_{X}$. It follows that if $f$ and $f'$ are birationally equivalent then $f$ is Galois if and only if $f'$ is Galois.

From the above discussion, we conclude that every dominant rational map $f':X'\dasharrow Y'$ between irreducible projective varieties of the same dimension is birationally equivalent to a branched covering $f:X\to Y$. 
In that case 
$\mr{Deck}(f)\subset\mr{Aut}(X)$, and if $f'$ is Galois then $Y=X/G$ with $G=\mr{Deck}(f)$.

\section{Rational maps of regular type}

Let $f:X\to Y$ be a finite branched covering of degree $d$.
According to Theorem~\ref{clas}, a complete systems of topological invariants of $f$ is given by the embedded topological type of $\Delta_{f}\subset Y$ jointly with the conjugacy class of the monodromy representation $\mu_{f}:\pi_{1}(Y\setminus\Delta_{f})\to\mf S_{d}$. Now, we introduce a weaker topological invariant of combinatorial nature. 

\subsection{Branching type}\label{4.1} Let $C$ be an irreducible component of $\Delta_{f}$. Each irreducible component $D$ of $f^{-1}(C)$
has a ramification index $\varrho_{D}$ defined by the normal form~(\ref{NF}).
The monodromy of a local generator of
$\pi_{1}(W\setminus C)\simeq\pi_{1}(\C^{n-1}\times\C^{*})\simeq\Z$ is a product of disjoint $\varrho_{D}$-cycles.
Therefore the sum of the ramification indices of the irreducible components of $f^{-1}(C)$ is equal to the degree $d$ of $f$. We consider the set of degree $d$ branching types
$$\mf B_{d}=\bigcup\limits_{k=1}^{d-1}\Big\{(\varrho_{1},\ldots,\varrho_{k})\in\mb N^{k}\,|\,\varrho_{1}\ge\varrho_{2}\ge\cdots\ge\varrho_{k},\ \sum_{j=1}^{k}\varrho_{j}=d\Big\}$$ and the subsets
$\mf B_{d}^{\mr{ext}}=\{(d)_{1}\}\subset\mf B_{d}^{\mr{reg}}=\mathop{\bigcup}\limits_{k}\{(d/k)_{k}\}\subset\mf B_{d}$, where $k$ varies in the set of divisors of $d$ with $k<d$ and
 $(\varrho)_{k}=(\varrho,\ldots,\varrho)\in\mb N^{k}$.

\begin{defin}\label{bt}
The \emph{branching type} of a branching covering $f:X\to Y$ of degree $d$ is the map $\mf b_{f}:\mf C_{f}\to \mf B_{d}$ obtained by taking in increasing order the ramification indices along the irreducible components of $f^{-1}(C)$, where $C$ varies in the set $\mf C_{f}$ of  irreducible components of $\Delta_{f}$. The branched covering $f$ is called of regular type (resp. extremal type) if the image of $\mf b_{f}$ is contained in $\mf B_{d}^{\mr{reg}}$ (resp. $\mf B_{d}^{\mr{ext}}$).
\end{defin}

Notice that a branched covering is of regular type  if the ramification indices are constant on the fibres over $\Delta_{f}\setminus\mr{Sing}(\Delta_{f})$. A finite branched covering is of extremal type if all its ramification indices are equal to the degree of $f$.

\begin{obs}\label{global-local}
For a finite branched covering, being Galois is a global property, being of regular type is a semi-local one and being of extremal type is purely local. 
For arithmetical reasons, if the degree $d$ is prime, then regular type is equivalent to extremal type. Galois implies regular type and extremal type implies regular type but the converses do not hold in general as the following example shows.
\end{obs}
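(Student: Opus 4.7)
The plan is to verify the assertions of the remark in turn. Three of them are formal readings of the definitions, and only the implication \emph{Galois $\Rightarrow$ regular type} requires actual work; the failure of the converses is promised in the following example.

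The ``global/semi-local/local'' trichotomy is just a reading of what the three notions demand: Galoisness requires transitivity of $\mr{Deck}(f)\subset\mr{Aut}(X)$ on every fibre, hence all of $X$ at once (global); regular type compares ramification indices along whole fibres over each irreducible component of $\Delta_f$ (semi-local); extremal type is the pointwise condition $\varrho(p)=d$ (purely local). The inclusion $\mf B_d^{\mr{ext}}\subset\mf B_d^{\mr{reg}}$ is formal, since $(d)_1=(d/k)_k$ with $k=1$, so extremal $\Rightarrow$ regular. When $d$ is prime the only divisor $k$ with $1\le k<d$ is $k=1$, so $\mf B_d^{\mr{reg}}=\{(d)_1\}=\mf B_d^{\mr{ext}}$, and regular and extremal coincide.

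The core step is \emph{Galois $\Rightarrow$ regular type}. Let $f\colon X\to Y$ be Galois of degree $d$ with $G=\mr{Deck}(f)$, fix an irreducible component $C$ of $\Delta_f$, and pick a generic $q\in C$, that is, a smooth point of $\Delta_f$ lying only on $C$. By definition $G$ acts transitively on $f^{-1}(q)$; each $\phi\in G$ is a biholomorphism of $X$ with $f\circ\phi=f$, hence a local biholomorphism intertwining the normal forms~(\ref{NF}) at $p$ and $\phi(p)$, so the local ramification index satisfies $\varrho(\phi(p))=\varrho(p)$. Transitivity forces all $\varrho(p)$ for $p\in f^{-1}(q)$ to equal a common value $\varrho$. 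Writing $k=|f^{-1}(q)|$, the fibre-degree identity $\sum_{p\in f^{-1}(q)}\varrho(p)=d$ yields $k\varrho=d$, whence $\mf b_f(C)=(d/k)_k\in\mf B_d^{\mr{reg}}$ and $f$ is of regular type. The main obstacle in the overall paper is of course not this easy direction but the converse, \emph{regular type $\Rightarrow$ Galois} on $X=\p^n$ (Theorem~\ref{A}), which requires the dimensional reduction of Theorem~\ref{dim-red}; the example announced right after this remark is what prevents such a converse from holding in the general branched-covering setting.
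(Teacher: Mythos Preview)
Your argument is correct. The paper states this as a remark without proof, treating the assertions as immediate consequences of the definitions together with the example that follows. Your write-up supplies exactly the standard justification one would give: the trichotomy and the arithmetic reduction for prime $d$ are read off from the definitions of $\mf B_d^{\mr{reg}}$ and $\mf B_d^{\mr{ext}}$, and the implication Galois $\Rightarrow$ regular type is obtained by letting $\mr{Deck}(f)$ act transitively on a generic fibre over $C\subset\Delta_f$ and noting that a biholomorphism commuting with $f$ preserves the local ramification index in the normal form~(\ref{NF}). This is precisely the argument the paper leaves implicit, so there is nothing to compare.
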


\begin{ex}\label{contra}
Let  $Y_{0}=\p^{1}\setminus\bigcup\limits_{i=0}^{3}D_{i}$ be the complement of four disjoint open disks  in $\p^{1}$ with boundaries $\gamma_{i}$   and let $\mu:\pi_{1}(Y_{0})\simeq\mb Z[\gamma_{1}]
\ast\mb Z[\gamma_{2}]\ast\mb Z[\gamma_{3}]\to\mf S_{4}$ be the morphism given by 
$\mu(\gamma_{1})=(1234)$,
$\mu(\gamma_{2})=(12)(34)$ and $\mu(\gamma_{3})=(14)(23)$. Define $f_{0}:X_{0}=\wt{Y}_{0}\times_{\mu}\{1,2,3,4\}\to Y_{0}$ to be the suspension covering associated to $\mu$ which is not Galois because the monodromy group $M=\mr{Im}\,\mu$ has order $8>4$. Notice that $[\gamma_{0}]^{-1}=[\gamma_{1}\gamma_{2}\gamma_{3}]\stackrel{\mu}{\mapsto}(1432)$.
It is clear that $f_{0}^{-1}(\gamma_{i})=\delta_{i}$ is a circle and $f_{0|\delta_{i}}$ is a $4:1$ map for $i=0,1$; on the other hand, if $i=2,3$ then
 $f_{0}^{-1}(\gamma_{i})=\delta_{i}^{+}\sqcup\delta_{i}^{-}$ are two disjoint circles and $f_{0|\delta_{i}^{\pm}}$ is a $2:1$ map. Consequently, we can glue disks $\Delta_{i}$, $i=0,1$, and $\Delta_{i}^{\pm}$, $i=2,3$, to $X_{0}$ in order to obtain a compact Riemann surface $X$ and a branched covering $f:X\to \p^{1}$ extending $f_{0}$ with four branched points $q_{0},q_{1},q_{2},q_{3}\in\p^{1}$ and six ramification points $p_{0},p_{1},p_{2}^{\pm},p_{3}^{\pm}\in X$ with ramification indices $4,4,2,2,2,2$ respectively. Hence $\mr{Im}\,\mf b_{f}=\{(2)_{2},(4)_{1}\}\subset\mf B_{4}^{\mr{reg}}$ and consequently $f$ is a degree~$4$ branched covering of regular type.  Riemann-Hurwitz formula implies that $X$ has genus $2$.
\end{ex}

In \cite[Lemma~1]{G} L. Greenberg shows that if the source space $X$ is a connected and simply connected Riemann surface (for instance if $X=\p^{1}$), then a regular type branched covering is Galois. This property follows, as a particular case, from the following result in which the difference between Galois coverings and regular type branched coverings is enlightened.

Let $f:X\to Y$ be a finite branched covering a let $f':X'\to Y'$ be the restriction of $f$ to $X'=f^{-1}(Y')$ with $Y'=Y\setminus\mr{Sing}(\Delta_{f})$.
Since $X\setminus X'$ and $Y\setminus Y'$ are Zariski closed subsets of codimension $\ge 2$ the branching type of $f$ and $f'$ coincide and $f$ is of regular type if and only if $f'$ is of regular type. By Riemann's extension theorem, $\mr{Deck}(f)\simeq\mr{Deck}(f')$ and $f$ is Galois if and only $f'$ is Galois. We have that  $\Delta_{f'}=\Delta_{f}\cap X'$ and  $\mr{Sing}(X)\subset f^{-1}(\mr{Sing}(\Delta_{f}))$, cf. \cite[Corollary~1.1.10]{Namba}. Thus, in order to characterize the Galois or regular type property, we can assume without loss of generality that $\Delta_{f}$ and $X$ are smooth.

\begin{prop}\label{Galreg}
Let $f:X\to Y$ be a finite branched covering. Assume that $\Delta_{f}$ is smooth and consider the branched covering 
$$F:(X\times_{Y}X)^{\nu}\to X\times_{Y}X\to X$$
given by the composition of the normalization of the fibered product $X\times_{Y}X$ and the projection onto the first factor. Then
\begin{enumerate}[(a)]
\item $f$ is of regular type if and only if $F$ is unbranched;
\item $f$ is Galois if and only if $F$ is trivial.
\end{enumerate}
\end{prop}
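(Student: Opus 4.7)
The plan is to combine the topological characterization of Galois coverings given in Definition~\ref{def-gal}(3), the uniqueness part of Grauert--Remmert's extension (Theorem~\ref{GR}), and a local computation based on the normal form~\eqref{NF}. Since $\Delta_{f}$ is assumed smooth, $\mr{Sing}(X)\subset f^{-1}(\mr{Sing}(\Delta_{f}))=\emptyset$, so $X$ is actually smooth and the normal form applies at every point of $f^{-1}(\Delta_{f})$. Setting $U=Y\setminus\Delta_{f}$ and $X_{U}=f^{-1}(U)$, the restriction $f_{U}:X_{U}\to U$ is unbranched, hence $X_{U}\times_{U}X_{U}\to X_{U}$ is a smooth pull-back covering of $f_{U}$ by itself, and by Theorem~\ref{GR} the map $F$ is its unique branched extension to $X$, branching at most over $f^{-1}(\Delta_{f})$.

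For assertion~(b), triviality of $F$ means that $(X\times_{Y}X)^{\nu}$ is a disjoint union of $d$ copies of $X$ each mapping isomorphically by $F$; by the uniqueness just invoked this is equivalent to triviality of $X_{U}\times_{U}X_{U}\to X_{U}$, which by Definition~\ref{def-gal}(3) is equivalent to $f_{U}$ being Galois, and by Theorem~\ref{iso1} to $f$ being Galois.

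For assertion~(a), I would fix $q\in\Delta_{f}$ and write $f^{-1}(q)=\{p_{1},\dots,p_{k}\}$ with ramification indices $\varrho_{1},\dots,\varrho_{k}$. Using~\eqref{NF}, choose local coordinates $(x_{1},\dots,x_{n})$ at $p_{i}$ and $(y_{1},\dots,y_{n})$ at $p_{j}$ in which $f$ reads $(x_{1},\dots,x_{n-1},x_{n}^{\varrho_{i}})$ and $(y_{1},\dots,y_{n-1},y_{n}^{\varrho_{j}})$ respectively. Then $X\times_{Y}X$ is locally cut out near $(p_{i},p_{j})$ by $x_{l}=y_{l}$ for $l<n$ together with $x_{n}^{\varrho_{i}}=y_{n}^{\varrho_{j}}$. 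Writing $g=\gcd(\varrho_{i},\varrho_{j})$, $\varrho_{i}=ga$, $\varrho_{j}=gb$, the last equation splits as $\prod_{\zeta^{g}=1}(x_{n}^{a}-\zeta y_{n}^{b})$, each factor being irreducible (since $\gcd(a,b)=1$) with smooth normalization parametrized by $(x_{n},y_{n})=(\alpha_{\zeta}t^{b},t^{a})$ for a suitable constant $\alpha_{\zeta}$ with $\alpha_{\zeta}^{a}=\zeta$. The first projection restricted to such a branch reads $t\mapsto\alpha_{\zeta}t^{b}$, so $F$ has ramification index $b=\varrho_{j}/\gcd(\varrho_{i},\varrho_{j})$ along the corresponding local component of $(X\times_{Y}X)^{\nu}$. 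Hence $F$ is unbranched over $(p_{i},p_{j})$ for all $i,j$ and every fibre if and only if $\varrho_{j}\mid\varrho_{i}$ for all $i,j$ in the same fibre, which by symmetry amounts to $\varrho_{1}=\cdots=\varrho_{k}$, i.e.\ to $f$ being of regular type.

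The main point to monitor is that the local irreducible decomposition of $\{x_{n}^{\varrho_{i}}=y_{n}^{\varrho_{j}}\}$ corresponds, after normalization, exactly to the local irreducible components of $(X\times_{Y}X)^{\nu}$ lying over $(p_{i},p_{j})$; this reduces to the standard normalization of $u^{a}-v^{b}=0$ because $X\times_{Y}X$ is locally the product of this plane curve with a smooth factor, and normalization commutes with products by a smooth factor. One also has to note that $F$ can be branched only over $f^{-1}(\Delta_{f})$, which is automatic since $f_{U}$ is unbranched. Combining these local verifications with the uniqueness provided by Theorem~\ref{GR} yields both~(a) and~(b).
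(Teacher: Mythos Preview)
Your proof is correct and follows essentially the same route as the paper's: both arguments reduce to the local normal form~\eqref{NF}, write $X\times_{Y}X$ near $(p_{i},p_{j})$ as $\{x_{n}^{\varrho_{i}}=y_{n}^{\varrho_{j}}\}$ times a smooth factor, normalize this plane curve branch by branch, and read off the ramification index of the first projection as $\varrho_{j}/\gcd(\varrho_{i},\varrho_{j})$. Your explicit invocation of Theorem~\ref{GR} to identify $F$ as the unique branched extension of the pull-back covering $X_{U}\times_{U}X_{U}\to X_{U}$ is a clean framing that the paper leaves implicit, but the substantive computation and the use of Definition~\ref{def-gal}(\ref{4}) for part~(b) are the same.
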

\begin{dem}
Since the normal form (\ref{NF}) holds in every point of $X$ we can proceed as follows.
Let $X_{0}:=\bigsqcup_{i=1}^{r}\mb D_{i}^{n}\hookrightarrow X$ be the preimage by $f$ of a polydisk $Y_{0}:=\mb D^{n}\hookrightarrow Y$ such that $\{0\}\times\mb D^{n-1}=\Delta_{f}\cap Y_{0}$ and the  restriction $f_{i}$ of $f$ to the polydisk $\mb D_{i}^{n}$ writes as $f_{i}(x_{i},u)=(x_{i}^{n_{i}},u)$. Then 
$$X_{0}\times_{Y}X_{0}=\bigsqcup_{i,j=1}^{r}\{(x_{i},u,y_{j},v)\in\mb D^{2n}\,|\, x_{i}^{n_{i}}=y_{j}^{n_{j}},\, u=v\}\hookrightarrow X\times_{Y}X$$ is the preimage in $X\times_{Y}X$ of $X_{0}\hookrightarrow X$ by the  projection $ X\times_{Y}X\to X$. 
The preimage $Z_{0}$ of $X_{0}$ by $F$ is nothing more than the normalization of $X_{0}\times_{Y}X_{0}$. If $n_{i}=n_{j}$ for all $i,j=1,\ldots,r$ then $Z_{0}$ is a disjoint union of polydisks $\{x_{i}=\zeta^{k} y_{j}\}\times \mb D^{n-1}$, over which $F(x_{i},y_{j},u)=(x_{i},u)$ is an isomorphism, where $\zeta$ is a primitive $n_{i}$-root of the unity. This shows that if $f$ is of regular type then $F$ is unbranched.
To prove the converse, assume that $n_{i}\neq n_{j}$. Then $\{(x_{i},y_{j})\in\mb D^{2}\,|\, x_{i}^{n_{i}}=y_{j}^{n_{j}}\}\times\mb D^{n-1}$ decomposes as $k$ branches of type $x_{i}^{n_{i}'}=\zeta'y_{j}^{n_{j}'}$, and where $n_{i}=n_{i}'k$, $n_{j}=n_{j}'k$, $\gcd(n_{i}',n_{j}')=1$ and $\zeta'$ is a primitive $k$-root of the unity.
The normalization morphism of each branch takes the form $\mb D^{n}\ni (z,w)\mapsto(z^{n_{j}'},z^{n_{i}'},w)$. Hence the restriction of $F$ to the normalization of this branch writes as $F(z,w)=(z^{n_{i}'},w)$ which ramifies if $n_{i}'>1$. Finally, if $n_{i}\neq n_{j}$ there is always a connected component of the preimage of $\{0\}\times\mb D^{n-1}$ with $n_{i}'>1$.

Assertion~(b) follows easily from Theorem~\ref{car-gal} using characterization~(\ref{4}) in Definition~\ref{def-gal}.
\end{dem}

\subsection{Dimensional reduction}

In this subsection we translate the problem of deciding if a given rational map is Galois to a lower dimensional situation. This is done in two different ways: the first one by considering the restriction to appropriate curves and the second one by taking suitable quotients of the manifolds.

\begin{defin}
Let $f:X\to Y$ be a finite branched covering and let $Z\subset Y$ be a connected submanifold. We will denote by $f_{Z}^{\nu}:X_{Z}^{\nu}\to Z$
the branched covering given by the composition of the normalization map of $f^{-1}(Z)$ and the restriction of $f$ to $f^{-1}(Z)$.\end{defin}

Clearly, $\deg f_{Z}=\deg f$ if and only if $Z\not\subset\Delta_{f}$.

\begin{prop}\label{Z}
Let $f:X\to Y$ be a finite branched covering and let $Z\subset Y$ be a connected submanifold.
If $Z$ meets transversely $\Delta_{f}$ then
\begin{enumerate}[(a)]
\item $X_{Z}=f^{-1}(Z)$ is smooth and $f_{Z}^{\nu}=f_{Z}$,
\item $\Delta_{f_{Z}}=\Delta_{f}\cap Z$,
\item there is a map $\imath_{Z}:\mf C_{f_{Z}}\to\mf C_{f}$ such that $\mf b_{f_{Z}}=\mf b_{f}\circ\imath_{Z}$.
\end{enumerate}
\end{prop}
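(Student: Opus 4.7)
My plan is to reduce all three assertions to a pointwise application of the local normal form~(\ref{NF}). Fix $p\in X_Z=f^{-1}(Z)$ and set $q=f(p)$. If $q\notin\Delta_f$ then $f$ is a biholomorphism near $p$, so locally $X_Z\simeq Z$ and there is nothing to check. Otherwise $q\in\Delta_f\cap Z$: the transversality hypothesis (which, read in the customary sense of smooth submanifolds, forces $q\notin\mr{Sing}(\Delta_f)$) combined with~(\ref{NF}) provides coordinates $(y_1,\ldots,y_n)$ near $q$ with $\Delta_f=\{y_n=0\}$ and $(x_1,\ldots,x_n)$ near $p$ with $f(x)=(x_1,\ldots,x_{n-1},x_n^\varrho)$. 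From $T_qZ+T_q\Delta_f=T_qY$ it follows that the differential of $y_n|_Z$ at $q$ is non-zero, so after relabelling $y_1,\ldots,y_{n-1}$ the implicit function theorem yields a parametrization of $Z$ near $q$ of the form
$$(t_1,\ldots,t_{k-1},s)\longmapsto\bigl(\varphi_1(t,s),\ldots,\varphi_{n-k}(t,s),t_1,\ldots,t_{k-1},s\bigr).$$
Substituting into $f(x)\in Z$ cuts out $X_Z$ near $p$ by the equations $x_j=\varphi_j(x_{n-k+1},\ldots,x_{n-1},x_n^\varrho)$ for $j\leq n-k$, so $X_Z$ is smooth of dimension $k$ parametrized by $(x_{n-k+1},\ldots,x_n)$, and in these parameters $f_Z$ takes the normal form $(u_1,\ldots,u_k)\mapsto(u_1,\ldots,u_{k-1},u_k^\varrho)$.

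Assertion~(a) is then immediate: $X_Z$ is smooth everywhere, hence normal, so the normalization is trivial and $f_Z^{\nu}=f_Z$. For~(b), the local normal form shows $\nabla_{f_Z}=\nabla_f\cap X_Z$, and using $X_Z=f^{-1}(Z)$ one gets $\Delta_{f_Z}=f_Z(\nabla_{f_Z})=f(\nabla_f)\cap Z=\Delta_f\cap Z$.

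For~(c), since $Z$ avoids $\mr{Sing}(\Delta_f)$, every irreducible component $C'\in\mf C_{f_Z}$ of $\Delta_f\cap Z$ is contained in the smooth part of $\Delta_f$, hence in a unique irreducible component $C\in\mf C_f$; I define $\imath_Z(C'):=C$. The non-trivial content is the equality of the unordered tuples $\mf b_{f_Z}(C')=\mf b_f(C)$, and this is the main obstacle: the pointwise analysis above only exhibits each ramification index appearing in $\mf b_{f_Z}(C')$ as one of the ramification indices appearing in $\mf b_f(C)$, but it does not by itself rule out a loss of multiplicity when restricting to $Z$. To handle this I would invoke the monodromy description recalled in subsection~\ref{4.1}: the branching type along a component equals the cycle structure of the monodromy of a small loop encircling it. Fix a generic point $q\in C'$ and pick a small $2$-disk $D_Z\subset Z$ transverse to $C'$ at $q$ together with a small $2$-disk $D_Y\subset Y$ transverse to $C$ at $q$; in the normal-form coordinates both boundaries $\partial D_Z$ and $\partial D_Y$ project under $y_n$ to generators of $\pi_1(\C^*)$ and are therefore freely homotopic in $Y\setminus\Delta_f$. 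Since the restriction of $f_Z$ to $X_Z\setminus f_Z^{-1}(\Delta_{f_Z})$ is the pull-back of $f_U\colon X_U\to U=Y\setminus\Delta_f$ by the inclusion $Z\setminus\Delta_{f_Z}\hookrightarrow U$, the monodromies of $\partial D_Z$ and $\partial D_Y$ act as the same permutation on a common fibre, and this common cycle type is simultaneously $\mf b_{f_Z}(C')$ and $\mf b_f(C)$, completing~(c).
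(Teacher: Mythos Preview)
Your proof is correct and follows the same local-normal-form strategy as the paper. The paper streamlines the setup by noting that, after a further change of $y_1,\ldots,y_{n-1}$ (keeping $y_n$ and hence preserving the normal form~(\ref{NF})), one may simply take $Z=\{y_1=\cdots=y_{n-k}=0\}$, whence $X_Z=\{x_1=\cdots=x_{n-k}=0\}$ and $f_Z(x_{n-k+1},\ldots,x_n)=(x_{n-k+1},\ldots,x_{n-1},x_n^{\varrho})$ is visibly in normal form; your implicit-function parametrization is equivalent but more explicit.

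For (c) the paper argues directly from this local picture rather than via monodromy: since $f_Z^{-1}(q)=f^{-1}(q)$ for a generic $q\in C'$, with identical pointwise ramification indices, the branching types agree immediately---so your ``main obstacle'' dissolves once the branching type is read as the multiset of ramification indices over a generic fiber point (which is how $\sum_j\varrho_j=d$ holds in $\mf B_d$, cf.~subsection~\ref{4.1}). Your monodromy argument is a clean alternative that reaches the same conclusion and has the merit of making the invariance transparent without tracking components.
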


\begin{dem}
(a) Transversality implies that $Z\cap \Delta_{f}\subset\Delta_{f}\setminus\mr{Sing}\,\Delta_{f}$. By \cite[Theorem~1.1.8]{Namba} every point $q$ of $f^{-1}(Z\cap\Delta_{f})$ is non-singular for both $X$ and $f^{-1}(\Delta_{f})$ and there are local coordinates $(x_{1},\ldots,x_{n})$ and $(y_{1},\ldots,y_{n})$ around $q\in X$ and $f(q)\in Y$ in which $f$ writes in the normal form (\ref{NF}).
Moreover we can assume that $Z=\{y_{1}=\cdots=y_{k}=0\}$ with $k=n-\dim Z<n$. Hence $f^{-1}(Z)=\{x_{1}=\cdots=x_{k}=0\}$ is smooth at $q$. Assertion (b) is obvious from the local writting $f_{Z}(x_{k+1},\ldots,x_{n})=(x_{k+1},\ldots,x_{n-1},x_{n}^{\varrho})$.
(c) Since $Z$ is disjoint from the pairwise intersections of the irreducible components of $\Delta_{f}$, there is a well defined map $\imath_{Z}:\mf C_{f_{Z}}\to\mf C_{f}$ sending an irreducible component $C\subset\Lambda_{f_{Z}}\subset Z$ to the unique irreducible component of $\Delta_{f}\subset Y$ containing $C$. Then $\mf b_{f_{Z}}=\mf b_{f}\circ\imath_{Z}$.
\end{dem}

\begin{obs}
If $Z$ meets transversely all the irreducible components of $\Delta_{f}$ (e.g. if $Y$ and $Z$ are projective) then $f$ is of regular type if and only if $f_{Z}$ is of regular type.
On the other hand, if the inclusion $Z\setminus\Delta_{f}\subset Y\setminus\Delta_{f}$ induces an epimorphism $\pi_{1}(Z\setminus\Delta_{f})\twoheadrightarrow\pi_{1}(Y\setminus\Delta_{f})$ then $f$ is Galois  if and only if $f_{Z}$ is Galois  after Proposition~\ref{1}.
\end{obs}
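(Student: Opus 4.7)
The plan is to prove the two claims of the remark separately, each as a short consequence of material already in place.

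For the regular-type equivalence, the starting point is Proposition~\ref{Z}(c), which supplies a map $\imath_Z:\mf C_{f_Z}\to\mf C_f$ satisfying $\mf b_{f_Z}=\mf b_f\circ\imath_Z$. The ``only if'' direction is then tautological: if $\mr{Im}\,\mf b_f\subset\mf B_d^{\mr{reg}}$ then $\mr{Im}\,\mf b_{f_Z}\subset\mf B_d^{\mr{reg}}$ as well. For the converse I would argue that the blanket transversality hypothesis makes $\imath_Z$ surjective. Given any $C\in\mf C_f$, transversality of $Z$ with $C$ at some point $p\in C\cap Z$, combined with Proposition~\ref{Z}(b) which yields $\Delta_{f_Z}=\Delta_f\cap Z$, shows that a neighborhood of $p$ in $C\cap Z$ lies in a unique component $C'\in\mf C_{f_Z}$ with $\imath_Z(C')=C$. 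Consequently $\mr{Im}\,\mf b_{f_Z}=\mr{Im}\,\mf b_f$, so one image is contained in $\mf B_d^{\mr{reg}}$ if and only if the other is.

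For the Galois equivalence I would reduce everything to the unbranched setting. Set $U:=Y\setminus\Delta_f$; by Proposition~\ref{Z}(a)--(b), $X_Z$ is smooth and $\Delta_{f_Z}=\Delta_f\cap Z$, so the maximal unbranched restriction $f_{Z\setminus\Delta_f}:X_{Z\setminus\Delta_f}\to Z\setminus\Delta_f$ of $f_Z$ is precisely the pull-back of $f_U:X_U\to U$ along the inclusion $j:Z\setminus\Delta_f\hookrightarrow U$. The surjectivity hypothesis on $j_*$ is exactly what is needed to apply Proposition~\ref{1}(c), whose conclusion asserts that the pull-back is connected and that $f_U$ is Galois if and only if $f_{Z\setminus\Delta_f}$ is Galois. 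Finally, Theorem~\ref{iso1} (or equivalently Theorem~\ref{car-gal}) identifies Galoisness of the branched coverings $f$ and $f_Z$ with that of their unbranched restrictions $f_U$ and $f_{Z\setminus\Delta_f}$ respectively, closing the argument.

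The only delicate point worth flagging is a bookkeeping one: Theorem~\ref{iso1} is stated for finite branched coverings of a connected manifold, so one has to verify that $f_Z$ qualifies. This is guaranteed by Proposition~\ref{Z}(a): under the transversality hypothesis $X_Z=f^{-1}(Z)$ is smooth, hence normal, and finite over $Z$, so $f_Z=f_Z^{\nu}$ is itself a finite branched covering in the sense of Section~\ref{2.2}. With this observation the argument is essentially a thread through Propositions~\ref{Z} and~\ref{1} and Theorem~\ref{iso1}, so the ``main obstacle'' amounts to assembling these pieces in the right order rather than overcoming any substantive technical issue.
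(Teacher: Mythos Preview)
Your proposal is correct and matches the reasoning the paper intends. The remark is stated in the paper without proof, simply citing Proposition~\ref{1} for the Galois assertion and leaving the regular-type assertion as an immediate consequence of Proposition~\ref{Z}(c); your write-up spells out exactly these two threads (surjectivity of $\imath_Z$ from the hypothesis that $Z$ meets every component of $\Delta_f$, and the reduction to the unbranched pull-back via Proposition~\ref{Z}(a)--(b) followed by Proposition~\ref{1}(c) and Theorem~\ref{iso1}).
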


These considerations motivate the following definition.

\begin{defin}
Let $\Delta$ be a proper Zariski closed subset of $Y$.
A curve $\ell\subset Y$ is called $\Delta$-admisible if $\ell$ meets transversely $\Delta$ and the inclusion $\ell\setminus\Delta\subset Y\setminus\Delta$ induces an epimorphism $\pi_{1}(\ell\setminus\Delta)\twoheadrightarrow\pi_{1}(Y\setminus\Delta)$.
\end{defin}

The main result of this section is Theorem~\ref{dim-red} below which provides  a reduction to dimension one to the problem of deciding if a given branched covering is Galois of or regular type. In order to state it properly we introduce the following notion.

\begin{defin}\label{wbt}
The \emph{weighted branching type} of a finite branched covering $f:X\to Y$ of degree $d$ with respect to a fixed embedding $Y\subset\p^{N}$ is the element of the $\Z$-module $\Z[\mf B_{d}]$ given by
$$\mf b^{w}_{f}:=\sum_{C\in\mf C_{f}}(\deg C)\mf b_{f}(C).$$
We also define the integer
$|\mf b_{f}^{w}|:=\sum\limits_{C\in\mf C_{f}}(\deg C)(d-k_{C})\in\Z^{+}$,
where $\mf b_{f}(C)=(\varrho_{1},\ldots,\varrho_{k_{C}})\in\N^{k_{C}}$.
\end{defin}

\begin{teo}\label{dim-red}
Let $f\colon X\to Y$ be a finite branched covering of a projective manifold $Y$ of dimension $n$, let $\Lambda\subset Y$ be a Zariski closed subset containing $\Delta_{f}$ and fix an embedding $Y\subset\p^{N}$. Then there is a Zariski open subset $V$ of the Grassmannian $\mb G$ of $(N-n+1)$-planes in $\p^{N}$ such that 
\begin{enumerate}[(a)]
\item
for all $L\in V$ the \emph{hyperplane curve} $\ell=L\cap Y$ is connected, smooth and $\Lambda$-admisible;
\item  all the one-dimensional branched coverings $f_{\ell}:X_{\ell}\to\ell=L\cap Y$, varying $L\in V$, are topologically equivalent and $\mf b_{f_{\ell}}^{w}=\mf b_{f}^{w}$.
\end{enumerate}
In particular, $f$ is Galois (resp. of regular type) if and only if $f_{\ell}$ is Galois (resp. of regular type) and
the genera of $X_{\ell}$ and $\ell$ and the degree $d$ of $f$ are related by the Riemann-Hurwitz formula
\begin{equation}\label{RHF}
2-2g_{X_{\ell}}=d(2-2g_{\ell})-|\mf b_{f}^{w}|.
\end{equation}
\end{teo}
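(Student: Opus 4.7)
The plan is to combine three classical ingredients: Bertini-type statements to ensure that for a generic choice of $L$ the hyperplane curve $\ell=L\cap Y$ is smooth, connected and transverse to every stratum in play; the Lefschetz-type theorem of Hamm and L\^e (already used in the proof of Theorem~\ref{car-gal}) to ensure surjectivity of the fundamental group morphism induced by $\ell\setminus\Lambda\hookrightarrow Y\setminus\Lambda$; and Var\v{c}enko's equisingularity theorem (Theorem~\ref{V}), or rather its consequence Proposition~\ref{Vb}, to propagate topological equivalence of the restricted branched coverings along the Grassmannian $\mb G$.

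First I would introduce the universal family $\mc I=\{(y,L)\in Y\times\mb G\mid y\in L\}$ with the projections $p\colon\mc I\to Y$ and $q\colon\mc I\to\mb G$, whose fibre $q^{-1}(L)=\ell$ has dimension one since $\dim L+\dim Y-N=1$. Iterated use of Bertini delivers a Zariski open subset $V_1\subset\mb G$ over which $\ell$ is smooth and connected and meets transversely each irreducible component of $\Lambda$, while avoiding their singular loci and pairwise intersections. Cutting $Y$ by $n-1$ successive generic hyperplanes and invoking Hamm--L\^e at each step then produces a smaller Zariski open $V_2\subset V_1$ on which $\ell\setminus\Lambda\hookrightarrow Y\setminus\Lambda$ induces an epimorphism of fundamental groups, so that $\ell$ is $\Lambda$-admisible; this establishes assertion~(a).

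For~(b), I would apply Proposition~\ref{Vb} to the composition $\mc I\times_{Y}X\to\mc I\stackrel{q}{\to}\mb G$, whose generic fibre is the irreducible curve $\ell$; the conclusion furnishes a Zariski open $V\subset V_2$ along which the branched coverings $f_\ell\colon X_\ell\to\ell$ are pairwise topologically equivalent by homeomorphisms extending to the ambient hyperplane curves. The equality $\mf b^{w}_{f_\ell}=\mf b^{w}_f$ is then a short combinatorial computation based on Proposition~\ref{Z}: each irreducible component $C\in\mf C_f$ meets $\ell$ transversely in exactly $\deg C$ points, each of which becomes a degree-one irreducible component of $\Delta_{f_\ell}$ having branching type $\mf b_f(C)$, so that
$$\mf b^{w}_{f_\ell}=\sum_{C\in\mf C_f}(\deg C)\,\mf b_f(C)=\mf b^{w}_f.$$

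The closing assertions follow at once. The regular type character depends only on the image of the branching type in $\mf B_d$, which is the same for $f$ and $f_\ell$ by the previous paragraph; the Galois character transfers through Proposition~\ref{1}(c) combined with Theorem~\ref{iso1}, where the needed $\pi_1$-surjection is precisely the content of admissibility; and \eqref{RHF} is the classical Riemann--Hurwitz formula applied to $f_\ell$ between compact Riemann surfaces, once one observes that the points of $\Delta_{f_\ell}$ lying over a fixed component $C\in\mf C_f$ contribute $\deg C\cdot(d-k_C)$ to the total ramification, summing to $|\mf b^{w}_f|$. The principal technical obstacle I anticipate is assembling all of these generic conditions into a single nonempty Zariski open $V\subset\mb G$; in particular, dovetailing the iterated Hamm--L\^e argument for $\pi_1$-surjectivity with the successive Bertini refinements and with the equisingular stratification supplied by Proposition~\ref{Vb}.
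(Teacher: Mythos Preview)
Your approach is essentially the paper's, with the same ingredients in the same order: Bertini/Lefschetz for smoothness and connectedness, Hamm--L\^e for $\pi_1$-surjectivity, Proposition~\ref{Vb} for topological equivalence along the Grassmannian, Proposition~\ref{Z} for the branching-type computation, and Riemann--Hurwitz to finish.

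The one point where your sketch does not quite match the paper is exactly the obstacle you anticipated. Iterated Hamm--L\^e does \emph{not} directly furnish a Zariski open $V_2\subset\mb G$ of $\Lambda$-admissible planes; the paper only claims a dense \emph{real semi-algebraic} open subset $U\subset\mb G$ on which admissibility holds. The trick that upgrades this to a Zariski open set is to run Var\v{c}enko first, not last: apply Theorem~\ref{V} to the family $(Y\times\mb G,\Lambda\times\mb G,\mc L)\to\mb G$ to obtain a Zariski open $V$ over which the triples $(Y,\Lambda,\ell)$ are pairwise homeomorphic. Then pick any $L_0\in U\cap V$ (nonempty since $U$ is dense) and, for arbitrary $L_1\in V$, transport the epimorphism $\pi_1(\ell_0\setminus\Lambda)\twoheadrightarrow\pi_1(Y\setminus\Lambda)$ along the equisingularity homeomorphism to conclude that $\ell_1$ is $\Lambda$-admissible as well. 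In other words, equisingularity is used twice: once to propagate admissibility from $U$ to $V$, and once (via Proposition~\ref{Vb}) to propagate topological equivalence of the $f_\ell$. With this correction your argument is complete and coincides with the paper's.
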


\begin{dem}
Let us consider the projective varieties $\mc L=\{(y,L)\in Y\times\mb G\,|\, y\in L\}$ and $\mc M=\{(x,L)\in X\times\mb G\,|\, f(x)\in L\}$ jointly with the natural morphisms $\mc L\to\mb G$, $\mc M\to\mb G$ and $\lambda=f\times\mr{id}_{\mb G}\colon\mc M\to\mc L$.
By Theorem~\ref{V} there is a Zariski open subset $V\subset\mb G$ such that the family $(Y\times\mb G,\Lambda\times\mb G,\mc L)\to\mb G$ is equisingular over $V$. 
For every $L\in V$ there is a neighborhood $W$ of $L$ in $V$ and a homeomorphism $h:Y\times W\to Y\times W$ such that
$h(\Lambda\times W)=\Lambda\times W$ and $h(\mc L_{W})=\ell\times W$, where $\ell=L\cap Y$ is (by definition) a hyperplane curve of~$Y$, which is smooth (Bertini) and connected (Lefschetz). By restricting the Zariski open subset $V$ if necessary we can assume that $L\cap Y$ is transverse to $\Lambda$ for every $L\in V$.
Moreover, by successive application of \cite[Theorem~1.1.3]{HL} there is a dense real semi-algebraic open subset~$U$ of~$\mb G$ such that if $L\in U$ then $\ell=L\cap Y$  is $\Lambda$-admisible.
If $L_{0}\in U\cap V$ and $L_{1}\in V$ then the following diagram is commutative
$$\xymatrix{\pi_{1}(Y\setminus\Lambda)\ar@{<->}[r]^{\sim} & \pi_{1}(Y\setminus\Lambda)\\
\pi_{1}(\ell_{0}\setminus\Lambda)\ar@{<->}[r]^{\sim}\ar@{->>}[u]& \pi_{1}(\ell_{1}\setminus\Lambda)\ar[u]}$$
and consequently the hyperplane curve $\ell_{1}=L_{1}\cap Y\subset Y$ is also $\Lambda$-admissible. 
By Proposition~\ref{Z}, for each $L\in V$ we have a finite branched covering $f_{\ell}:X_{\ell}\to\ell=L\cap V$.

By Proposition~\ref{Vb}, there exist Zariski open subsets $\mc L'\subset \mc L$ and $\mc M'\subset \mc M$ such that
$\lambda'\colon\mc M'\to\mc L'$ and $\lambda'_{L}\colon\mc M'_{L}\to\mc L'_{L}$, $L\in V$,  are finite coverings and the projections $\mc M'\to V$ and $\mc L'\to V$ are topological fibre bundles. Moreover, for every $L_{0},L_{1}\in V$ there are homeomorphisms $\psi:\mc L_{L_{0}}\to\mc L_{L_{1}}$ and $\phi':\mc M_{L_0}'\to\mc M_{L_{1}}'$ making commutative the following diagram:
$$\xymatrix{\mc M'_{L_{0}}\ar[r]^{\phi'}\ar[d]_{\lambda'_{L_{0}}} & \mc M'_{L_{1}}\ar[d]^{\lambda'_{L_{1}}}\\
\mc L_{L_{0}}'\ar[r]^{\psi'} &\mc L_{L_{1}}'
}$$
where $\psi'$ is the restriction of $\psi$. Notice that  for $i=0,1$ we have that $\mc L_{L_{i}}'$  and $\mc M'_{L_{i}}$ 
are naturally included in $\ell_{i}\setminus\Delta_{f}$ and $f^{-1}(\ell_{i}\setminus\Delta_{f})$ respectively. Moreover we can identify $\lambda_{L_{i}}'$ with the restriction of $f_{\ell_{i}}:X_{\ell_{i}}\to\ell_{i}$, $i=0,1$. 
Since $\psi$ maps $\ell_{0}\setminus\Delta_{f}$ isomorphically onto $\ell_{1}\setminus\Delta_{f}$ and the monodromy groups of $f_{\ell_{i}}$ and $\lambda_{L_{i}}'$ coincide by Theorem~\ref{car-gal} we deduce that the monodromy representations of the maximal unbranched coverings of $f_{\ell_{i}}$ are conjugated. We deduce that $f_{\ell_{0}}$ and $f_{\ell_{1}}$ are topologically equivalent by applying Theorem~\ref{clas}.

Let $C$ be an irreducible component of $\Delta_{f}$.  Then $C\cap\ell=\{q_{1}^{C},\ldots,q_{\delta_{C}}^{C}\}$ where $\delta_{C}=\deg C$.
By Proposition~\ref{Z} we have $\mf b_{f_{\ell}}(q_{i}^{C})=\mf b_{f}(C)$. Therefore
$$\mf b_{f_{\ell}}^{w}=\sum_{C\in\mf C_{f}}\sum_{i=1}^{\delta_{C}}\mf b_{f_{\ell}}(q_{i}^{C})=\sum_{C\in\mf C_{f}}\delta_{C}\mf b_{f}(C)=\mf b_{f}^{w}.$$
Finally, 
$$|\mf b_{f}^{w}|=\sum_{C\in\mf C_{f}}(\deg C) (d-k_{C})=\sum_{C\in\mf C_{f}}\sum_{q\in\ell\cap C}\sum_{p\in f_{\ell}^{-1}(q)}(\varrho_{p}-1)=\sum_{p\in \nabla_{f_{\ell}}}(\varrho_{p}-1)$$ is the ramification summand in the Riemann-Hurwitz formula,
proving the last assertion of the theorem.
\end{dem}

We finish this section with another useful dimensional reduction in the context of rational maps.

\begin{prop}\label{quo-red}
Consider a commutative square of dominant rational maps 
$$\xymatrix{X\ar@{.>}[r]^{f}\ar@{.>}[d]_{u} & Y\ar@{.>}[d]^{v}\\ \wh X\ar@{.>}[r]^{\wh f} & \wh Y}$$ 
with $\dim\wh X=\dim\wh Y$ and $\dim X=\dim Y$. Assume that $\wh X$, $\wh Y$, $Y$ and the generic fibre of $v:Y\dasharrow\wh Y$ are irreducible and $\deg f=\deg\wh f=d$. Then the monodromy and the deck transformation groups of $f$ and $\wh f$ are conjugated in $\mf S_{d}$.
\end{prop}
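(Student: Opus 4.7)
\begin{dem}
The plan is to show that $f$ is birationally equivalent, as a cover of $Y$, to the pull-back of $\wh f$ by $v$, and then to conclude by invoking Proposition~\ref{1}(c).

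First I would apply Proposition~\ref{Vb} to the diagram to find Zariski open subsets $V\subset Y\setminus\Lambda_{f}$ and $\wh V\subset \wh Y\setminus\Lambda_{\wh f}$ with $v(V)\subset\wh V$ such that $f_{V}\colon X_{V}\to V$ and $\wh f_{\wh V}\colon \wh X_{\wh V}\to \wh V$ are unbranched topological coverings of degree $d$, $v|_{V}\colon V\to \wh V$ is a topological fibre bundle, and $u$ is regular on $X_{V}$ with image in $\wh X_{\wh V}$. Since the generic fibre of $v$ is irreducible, hence connected, shrinking further if necessary one may assume that every fibre of $v|_{V}$ is connected. By Theorem~\ref{car-gal} the monodromy and deck groups of $f$ (resp.~$\wh f$) coincide with those of $f_{V}$ (resp.~$\wh f_{\wh V}$).

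Next I would use the long exact sequence of homotopy groups for the fibre bundle $v|_{V}$ with connected fibre to conclude that $(v|_{V})_{\ast}\colon\pi_{1}(V)\to\pi_{1}(\wh V)$ is surjective. Forming the pull-back cover $\pi\colon P:=V\times_{\wh V}\wh X_{\wh V}\to V$, which is a degree $d$ cover of $V$, Proposition~\ref{1}(c) then gives that $P$ is connected (since $\wh X_{\wh V}$ is) and that the monodromy and deck transformation groups of $\pi$ are conjugate in $\mf S_{d}$ to those of $\wh f_{\wh V}$.

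Finally, the commutativity of the square yields a morphism of coverings $\phi\colon X_{V}\to P$ over $V$ defined by $\phi(x)=(f(x),u(x))$. The crux of the argument is to prove that $\phi$ is an isomorphism of coverings. Since $\phi$ is a map of coverings with connected target $P$ of degree $d$, every connected component of $X_{V}$ maps surjectively onto $P$; comparing degrees, and using that $X_{V}\to V$ is also of degree $d$, forces $X_{V}$ to be connected and $\phi$ to be a covering homeomorphism. Once this is established, $f_{V}$ and $\pi$ are isomorphic coverings, so their monodromy and deck groups coincide, yielding the desired conjugation in $\mf S_{d}$. The main obstacle is precisely this last degree-matching step, which crucially uses the hypotheses $\deg f=\deg\wh f$ together with the irreducibility of $\wh X$ and of the generic fibre of $v$.
\end{dem}
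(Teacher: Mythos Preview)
Your proof is correct and follows essentially the same strategy as the paper's: restrict to Zariski open sets where all maps are honest coverings and fibre bundles, use the homotopy exact sequence of $v$ to get surjectivity on fundamental groups, form the fibred product, and show by a degree count that the comparison map from $X_V$ to the fibred product is an isomorphism before invoking Proposition~\ref{1}(c). The only cosmetic difference is that the paper verifies connectedness of the fibred product via the homotopy exact sequence of the pulled-back bundle, whereas you deduce it directly from Proposition~\ref{1}(c).
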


\begin{dem}
By Proposition~\ref{Vb} there are Zariski open subsets 
$X'\subset X\setminus(\Sigma_{f}\cup\Sigma_{u})$,
$Y'\subset Y\setminus\Sigma_{v}$, $\wh X'\subset \wh X\setminus\Sigma_{\wh f}$ and $\wh Y'\subset\wh Y$
such that the restricted diagram
\begin{equation}\label{sq}
\xymatrix{X'\ar@{->}[r]^{f'}\ar@{->}[d]_{u'} & Y'\ar@{->}[d]^{v'}\\ \wh X'\ar@{->}[r]^{\wh f'} & \wh Y'}
\end{equation}
is commutative, the horizontal arrows are coverings and the vertical arrows are fibre bundles. The exact sequence associated to the $F'$-fiber bundle $v'$ ends as
\begin{equation}\label{epiL}
\pi_{1}(Y')\to\pi_{1}(\wh Y')\to 0=\pi_{0}(F').
\end{equation}
By the universal property of the fibered product there is a map $w':X'\to Z'=\wh X'\times_{\wh Y'}Y'$ making commutative the following diagram:
$$\xymatrix{X'\ar@/^/[rrd]^{f'}\ar@/_/[rdd]_{u'} \ar@{->}[dr]^{w'}& &\\
&Z'\ar@{->}[r]_{q'}\ar@{->}[d]^{p'} & Y'\ar@{->}[d]^{v'}\\ &\wh X'\ar@{->}[r]^{\wh f'} & \wh Y'}$$
The long exact sequence of the pull-back bundle $p':Z'\to\wh X'$ ends as
$$\pi_{0}(F')=0\to\pi_{0}(Z')\to 0=\pi_{0}(\wh X')$$
and consequently $Z'$ is connected. Hence 
$$\deg f=\deg f'=(\deg w')(\deg q')=(\deg w')(\deg \wh f')=(\deg w')(\deg\wh f).$$
Since $\deg f=\deg\wh f$, we deduce that $w'$ is birational and restricting the Zariski open set if necessary we can assume that $w'$ is a biholomorphism, i.e. the square \eqref{sq} is cartesian, or equivalently, the covering $f'$ is the pull-back by $v'$ of the covering $\wh f'$. We conclude by applying Proposition~\ref{1} to the epimorphism~\eqref{epiL}.
\end{dem}

\subsection{Rational maps in the projective space.}\label{RT}

By definition, the branching type of a dominant rational map $f:X\dasharrow Y$ between projective manifolds of the same dimension is the branching type of its associated branched covering, see Proposition~\ref{rho}. The notions of regular and extremal type rational map are the obvious ones. In particular, if $f\colon X\dasharrow Y$ and $f'\colon X'\to Y$ are birationally left-equivalent, i.e. there is a birational map $\beta\colon X'\dasharrow X$ such that $f'=f\circ\beta$ then $f$ is of regular type if and only if $f'$ is of regular type because they have the same associated branched covering $\rho:N\to Y$.

Theorem~\ref{Namba-pull-back}  states that every Galois branched covering over a projective manifold with Galois group $G$ is birationally equivalent to a
certain pull-back of $\p^{n}\to\p^{n}/G$, for some monomorphism $G\hookrightarrow\mr{Aut}(\p^{n})$. Therefore, it is of particular interest to study rational maps with source space the projective space $\p^{n}$. 
Combining all the previous results we obtain the following semi-local characterization of Galois rational maps $f:\p^{n}\dasharrow Y$
that generalizes the one-dimensional Greenberg criterion \cite[Lemma~1]{G} to arbitrary dimension:

\begin{teo}\label{GG}
Let $Y$ be a connected complex projective manifold and let $f:\p^{n}\dasharrow Y$ be a dominant rational map.
Then, $f$ is of regular type if and only if $f$ is Galois.
\end{teo}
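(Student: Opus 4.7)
The implication ``$f$ Galois $\Rightarrow$ $f$ of regular type'' is already recorded in Remark~\ref{global-local}, so my plan is to concentrate on the converse. Starting from a dominant rational map $f\colon\p^n\dasharrow Y$ of regular type, I would first replace it by its associated finite branched covering $\rho\colon N\to Y$ from Proposition~\ref{rho}; the map $\rho$ is again of regular type and, by Theorem~\ref{car-gal}, $f$ is Galois exactly when $\rho$ is. Since both conditions are invariant under removal of codimension-$\geq 2$ subsets, I may excise $\mr{Sing}(\Delta_{\rho})$ from $Y$ and its $\rho$-preimage from $N$ and so assume, as in the discussion preceding Proposition~\ref{Galreg}, that $\Delta_{\rho}$ is smooth.

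Under this reduction, Proposition~\ref{Galreg}(a) translates the regular-type hypothesis into the assertion that the finite morphism
\[
F\colon (N\times_{Y}N)^{\nu}\longrightarrow N,
\]
obtained by composing the normalisation with the first projection, is unbranched and hence a topological finite covering of $N$. By part~(b) of the same proposition it then suffices to prove that $F$ is actually trivial, and this will follow at once once $N$ is shown to be topologically simply connected. Since $\rho$ and $f$ are birationally equivalent, $N$ is a normal projective variety birational to $\p^n$. Choose a desingularisation $\pi\colon\wt N\to N$: then $\wt N$ is smooth projective and birational to $\p^n$, hence a rational variety, so $\pi_{1}(\wt N)=\pi_{1}(\p^n)=0$ by the birational invariance of the topological fundamental group among smooth complex projective varieties. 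Because $N$ is normal and $\pi$ is projective birational, Zariski's main theorem gives that the fibres of $\pi$ are connected; a standard path-lifting argument then yields the surjectivity of $\pi_{*}\colon\pi_{1}(\wt N)\twoheadrightarrow\pi_{1}(N)$, so $\pi_{1}(N)=0$. The unbranched finite cover $F$ of this simply connected space is then trivial, and $\rho$ is Galois by Proposition~\ref{Galreg}(b).

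I expect the main obstacle to be precisely this simple-connectedness step: $N$ may be singular, so one cannot apply birational invariance of $\pi_{1}$ directly to $N$, and the passage from $\pi_{1}(\wt N)=0$ to $\pi_{1}(N)=0$ rests on combining the connectedness of the fibres of the desingularisation with a path-lifting argument valid because $\pi$ is a proper morphism of algebraic varieties. A conceivable alternative is to combine Theorem~\ref{dim-red} with Greenberg's one-dimensional statement, but this would require additional topological control on the source of the restricted one-dimensional branched covering, which looks less clean than the route through Proposition~\ref{Galreg}.
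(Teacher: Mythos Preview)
Your overall strategy---pass to the associated branched covering $\rho\colon N\to Y$, use Proposition~\ref{Galreg}(a) to turn regular type into ``$F$ is unbranched'', and then kill $F$ by a simple-connectivity argument---is natural, but the simple-connectivity step conflates two different spaces. After you excise $\rho^{-1}(\mr{Sing}\Delta_\rho)$ from $N$ you are working over a quasi-projective $N'$, and it is $\pi_1(N')$ that must vanish for the unbranched cover $F$ to be trivial. Your argument, however, computes $\pi_1$ of the \emph{projective} $N$: birational invariance of $\pi_1$ among smooth projective varieties gives $\pi_1(\wt N)=0$, and connectedness of the fibres of $\wt N\to N$ then gives $\pi_1(N)=0$. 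The passage from $\pi_1(N)=0$ to $\pi_1(N')=0$ is the missing step, and it genuinely fails for normal varieties: the singular locus of $N$ lies precisely in $N\setminus N'$, and the local fundamental group at a normal singularity can be nontrivial (already for an $A_1$ point one has $\pi_1(\text{link})=\mathbb Z/2$). Equivalently, on a resolution $\wt N$ the complement $\wt N\setminus N'$ contains the exceptional divisor, hence has codimension~$1$, so $\pi_1(N')\cong\pi_1(\wt N\setminus E)$ need not vanish even though $\pi_1(\wt N)=0$. Purity of the branch locus would let you extend $F$ to an \'etale cover of all of $N$ if $N$ were smooth, but only normality is available.

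The paper circumvents this by never trying to compute $\pi_1$ on the $N$-side. Instead it exploits that the \emph{source} is $\p^n$: a generic line $\ell\subset\p^n$ avoids $\Sigma_f$ and the preimage of $\mr{Sing}\Delta_\rho$, so $\gamma\circ\beta^{-1}$ sends $\ell$ into $N'$; pulling $F$ back to $\ell\simeq\p^1$ trivialises it. Then $\ell$ is chosen $C$-admissible (with $C=\p^n\setminus X_V$, $V=Y\setminus\Lambda_f$), so $\pi_1(\ell\cap X_V)\twoheadrightarrow\pi_1(X_V)$, and by Proposition~\ref{1} the cover $X_V\times_V X_V\to X_V$ is already trivial. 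Characterisation~(\ref{4}) of Definition~\ref{def-gal} and Theorem~\ref{car-gal} finish. This is precisely the ``alternative'' you dismiss in your last paragraph: the needed topological control is supplied by $\Lambda_f$-admissibility of a line in $\p^n$, not by Theorem~\ref{dim-red} applied on the $Y$-side.
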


\begin{dem}
It suffices to prove that if $f$ is of regular type then $f$ is Galois. 
Put $X=\p^{n}$, let $\beta:\wt X\to X$ be a birational map such that $\wt f=f\circ\beta\colon\wt X\to Y$ is a desingularization of $f$ and let $\rho:N\to Y$ be the associated finite branched covering obtained by the Stein factorization of $\wt f:\wt X\stackrel{\gamma}{\to}N\stackrel{\rho}{\to}Y$, see Proposition~\ref{rho}. 
Recall that for a rational map $f$, being of regular type means that $\rho$ is a branched covering of regular type. 
Set $Y'=Y\setminus\mr{Sing}(\Delta_{\rho})$ and $\rho'\colon N'\to Y'$ the restriction of $\rho$ to $N'=\rho^{-1}(Y')$. 
Let $\sigma$ be the composition of the normalization $Z$ of $N'\times_{Y'}N'$ and the projection on the first factor. 
Since $Y\setminus Y'$ and $N\setminus N'$ have codimension $\ge 2$ we have that $\rho'$ is also of regular type and by Proposition~\ref{Galreg} the map $\sigma:Z\to N'$ is an unbranched covering.
Consider the Zariski open subset $V=Y\setminus\Lambda_{f}$ which is contained in $Y'$ because $\Delta_{\rho}=\Delta_{f}\subset\Lambda_{f}$. 
Let $\ell\subset X=\p^{n}$ be a straight line  avoiding the codimension $\ge 2$ subsets $\Sigma_{f}$ and $(f_{|X\setminus\Sigma_{f}})^{-1}(\mr{Sing}(\Delta_{\rho}))$ which is also $C$-admisible, where $C$ is the Zariski closed set $X\setminus X_{V}$.
Notice that the birational map $\gamma\circ\beta^{-1}:X\dasharrow N$ restricts to a well-defined morphism $\ell\to N'$ and 
also restricts to a biholomorphism $\phi:X_{V}\to N_{V}$. Consider the covering $W\to\ell$ pull-back of $\sigma:Z\to N'$ by $(\gamma\circ\beta^{-1})_{|\ell}$, which is trivial because $\ell\simeq\p^{1}$.
Since $X_{V}\times_{V}X_{V}$ is smooth we can identify it with a Zariski open subset of $Z$ using the biholomorphism $\phi:X_{V}\to N_{V}$. The restricted covering $W'\to\ell':=\ell\cap X_{V}$ is the pull-back of the covering $X_{V}\times_{V}X_{V}\to X_{V}$ by the inclusion $\ell'\subset X_{V}$. Since $\ell$ is $C$-admissible, the morphism $\pi_{1}(\ell')\to\pi_{1}(X_{V})$ is surjective. By Proposition~\ref{1}, the covering $X_{V}\times_{V}X_{V}\to V$ is also trivial. By the characterization~\eqref{4} of Definition~\ref{def-gal} we have that $f_{V}$ is Galois. We conclude that $f$ is Galois by applying Theorem~\ref{car-gal}.
\end{dem}

It follows from the description of the local generators of the monodromy group given in subsection~\ref{4.1} that if  the rational map $f$ is extremal of degree $d$, then its monodromy group contains a $d$-cycle and we obtain the following result.

\begin{cor}\label{corext}
Every dominant rational map $f:\p^{n}\dasharrow Y$ of extremal type is Galois with cyclic monodromy group. Moreover, every Galois rational map of prime degree is of extremal type.
\end{cor}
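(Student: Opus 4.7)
My plan is to derive both assertions by combining three ingredients already in place: Theorem~\ref{GG} (which converts regular type into Galois when the source is $\p^n$), the description of the local monodromy in terms of the branching type recalled in Subsection~\ref{4.1}, and the fact (Theorem~\ref{car-gal} together with Definition~\ref{def-gal}) that for a Galois covering of degree $d$ the monodromy group has order exactly $d$.

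For the first assertion, I would start by observing that $\mf B_{d}^{\mr{ext}}\subset\mf B_{d}^{\mr{reg}}$ by Definition~\ref{bt}, so an extremal type $f:\p^{n}\dasharrow Y$ is, a fortiori, of regular type; Theorem~\ref{GG} then gives that $f$ is Galois. To obtain cyclicity of the monodromy $M=\mr{Mon}(f)$, I would invoke the discussion in Subsection~\ref{4.1}: around a smooth point of an irreducible component $C\subset\Delta_f$, the monodromy of a local generator of $\pi_{1}$ is conjugate to a product of disjoint cycles whose lengths are the entries of $\mf b_{f}(C)$. Since $f$ is extremal, $\mf b_{f}(C)=(d)_{1}$, so this local monodromy is a single $d$-cycle $\sigma\in M$. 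On the other hand, $f$ being Galois of degree $d$ forces $|M|=d$, so $M=\langle\sigma\rangle\simeq\Z/d\Z$.

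For the second assertion, I would argue arithmetically: if $f$ is Galois of prime degree $p$, Remark~\ref{global-local} tells us $f$ is of regular type, so $\mf b_{f}(C)\in\mf B_{p}^{\mr{reg}}=\bigcup_{k\mid p,\ k<p}\{(p/k)_{k}\}$ for every $C\in\mf C_{f}$. Since $p$ is prime, the only divisor $k$ of $p$ with $k<p$ is $k=1$, so $\mf b_{f}(C)=(p)_{1}\in\mf B_{p}^{\mr{ext}}$, showing that $f$ is extremal. Note that no hypothesis on the source is needed for this direction.

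I do not expect a real obstacle here: the statement is essentially a bookkeeping corollary of Theorem~\ref{GG} and the local normal form~\eqref{NF}. The only delicate point is making sure that a $d$-cycle genuinely lives in $M$ when $f$ is extremal, which is precisely the content of the local monodromy description in Subsection~\ref{4.1}.
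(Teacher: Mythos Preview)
Your proposal is correct and follows essentially the same route as the paper: extremal implies regular, Theorem~\ref{GG} gives Galois, the local monodromy description of Subsection~\ref{4.1} produces a $d$-cycle in $M$, and $|M|=d$ forces cyclicity; the prime-degree converse is the arithmetic observation that $\mf B_{p}^{\mr{reg}}=\mf B_{p}^{\mr{ext}}$ already recorded in Remark~\ref{global-local}. The paper's own justification is the single sentence preceding the corollary, and you have simply unpacked it.
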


However, there are examples of cyclic Galois rational maps that not are of extremal type.

\begin{ex}
If $f_{i}:X_{i}\dasharrow Y_{i}$ are rational Galois maps of degree $d_{i}>1$ with cyclic monodromy group, $i=1,2$, and $\gcd(d_{1},d_{2})=1$ then $f=f_{1}\times f_{2}:X_{1}\times X_{2}\dasharrow Y_{1}\times Y_{2}$ is Galois and $\mr{Deck}(f)\simeq\mr{Deck}(f_{1})\oplus\mr{Deck}(f_{2})=\Z_{d_{1}}\oplus\Z_{d_{2}}\simeq\Z_{d} $ with $d=d_{1}d_{2}=\deg f$ but the ramification indices of $f$ are  all of them $\le\max(d_{1},d_{2})<d$.
\end{ex}

A natural class of rational maps to be considered is that of dominant rational maps $f\colon\p^{n}\dasharrow\p^{n}$. In that case, the straight lines $\ell\in\mb G^{n}_{1}$ are the hyperplane curves of $\p^{n}$. 

Although a branched covering of regular type is not necessarily Galois, as it is shown in Example~\ref{contra}, and despite that in general $f^{-1}(\ell)$ is not a rational curve, using Theorem~\ref{dim-red}, we  have:

\begin{cor}\label{grtl}
Let $f\colon\p^{n}\dasharrow\p^{n}$ be a dominant rational map and let $\rho:N\to\p^{n}$ be its associated branched covering. If $\ell\subset\p^{n}$ is a generic line then the one-dimensional reduction $\rho_{\ell}\colon N_{\ell}\to\ell$ of $\rho$ can be identified to the map
$f_{\ell}^{\nu}\colon f^{-1}(\ell)^{\nu}\to\ell$. 
It  satisfies the following property:
$$f_{\ell}^{\nu} \textrm{ regular type }  \Leftrightarrow f \textrm{ regular type } \Leftrightarrow f \textrm{ Galois }\Leftrightarrow f_{\ell}^{\nu}  \textrm{ Galois.}$$
\end{cor}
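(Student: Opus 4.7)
The plan is to combine Theorem~\ref{GG} for $f\colon\p^{n}\dasharrow\p^{n}$ with Theorem~\ref{dim-red} applied to the associated finite branched covering $\rho\colon N\to\p^{n}$. Since the Grassmannian of $1$-planes in $\p^{n}$ (the case $N=n$ in Theorem~\ref{dim-red}) parameterises exactly the straight lines of $\p^{n}$, one-dimensional reductions are cut out by generic lines $\ell\subset\p^{n}$. The only preparatory work is to identify $\rho_{\ell}\colon N_{\ell}\to\ell$ with $f_{\ell}^{\nu}\colon f^{-1}(\ell)^{\nu}\to\ell$.

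For the identification, I would take a desingularization $\wt f=\rho\circ\gamma\colon\wt X\to N\to\p^{n}$ of $f$ as in diagram~\eqref{stein}, with birational morphisms $\beta\colon\wt X\to\p^{n}$ and $\gamma\colon\wt X\to N$. Both $\beta$ and $\gamma$ are isomorphisms over suitable Zariski open subsets, whose complements have codimension~$\geq 2$. Using Theorem~\ref{V}, together with the Lefschetz-type argument from the proof of Theorem~\ref{dim-red}, for a generic line $\ell$ one can simultaneously arrange that $\ell$ is $\Lambda_{f}$-admissible, that $\ell$ meets $\Delta_{\rho}$ transversely at smooth points of $\Delta_{\rho}$, and that $\ell$ avoids the ``bad'' codimension $\geq 2$ loci of $\beta$ and $\gamma$. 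Under these conditions $N_{\ell}$ is smooth by Proposition~\ref{Z}(a), and the restrictions of $\beta$ and $\gamma$ to $\wt f^{-1}(\ell)$ are proper birational morphisms onto $f^{-1}(\ell)$ and $N_{\ell}$ respectively. The rational map $\gamma\circ\beta^{-1}$ therefore induces a finite birational map $f^{-1}(\ell)\dasharrow N_{\ell}$, which by the universal property of normalization factors through an isomorphism $f^{-1}(\ell)^{\nu}\simeq N_{\ell}$ commuting with the maps to $\ell$; that is, $f_{\ell}^{\nu}\equiv\rho_{\ell}$.

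With this identification in place, the equivalences collapse to two inputs. By the definitions recalled in Subsection~\ref{RT}, $f$ is Galois (resp. of regular type) if and only if $\rho$ is. Theorem~\ref{dim-red} applied to $\rho$ with $\Lambda=\Lambda_{f}$ yields the equivalences $\rho$ Galois $\Leftrightarrow\rho_{\ell}$ Galois and $\rho$ of regular type $\Leftrightarrow\rho_{\ell}$ of regular type; via the identification these become $f$ Galois $\Leftrightarrow f_{\ell}^{\nu}$ Galois and $f$ of regular type $\Leftrightarrow f_{\ell}^{\nu}$ of regular type. Finally, Theorem~\ref{GG} applied to $f\colon\p^{n}\dasharrow\p^{n}$ provides the missing link $f$ of regular type $\Leftrightarrow f$ Galois, which closes the four-fold cycle.

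The main obstacle is the identification step $\rho_{\ell}\equiv f_{\ell}^{\nu}$: one has to cope simultaneously with possible singularities of $N$, the indeterminacy locus $\Sigma_{f}$, and the fact that $f^{-1}(\ell)$ must be read as the Zariski closure of $(f|_{\p^{n}\setminus\Sigma_{f}})^{-1}(\ell)$. Genericity of $\ell$ is what allows the desingularization $\wt X$ to mediate cleanly between $f^{-1}(\ell)^{\nu}$ and $N_{\ell}$; once this is granted, the corollary is a formal consequence of Theorems~\ref{GG} and~\ref{dim-red}.
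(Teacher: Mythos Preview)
Your proposal is correct and follows essentially the same approach as the paper, which states the corollary without a detailed proof, merely indicating that it follows from Theorem~\ref{dim-red} (with Theorem~\ref{GG} implicitly providing the middle equivalence). You have filled in the identification $\rho_{\ell}\equiv f_{\ell}^{\nu}$ that the paper leaves to the reader, and your use of the normalization's universal property together with the smoothness of $N_{\ell}$ from Proposition~\ref{Z}(a) is the right way to do it.
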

\begin{defin}\label{gf}
The \emph{genus} $\mf g_{f}$ of a dominant rational map $f\colon X\dasharrow\p^{n}$ is the geometric genus of the curve $f^{-1}(\ell)$ for a generic straight line $\ell\subset\p^{n}$. 
\end{defin}

\begin{ex}\label{ebt}
If a dominant rational map $f\colon\p^{n}\dasharrow\p^{n}$ 
has extremal weighted branching type $\mf b_{f}^{w}=c(d)_{1}$ then $\mf g_{f}=\frac{(c-2)(d-1)}{2}$ by \eqref{RHF}.
\end{ex}

In this context, the simplest case is that of rational maps $f\colon\p^{n}\dasharrow\p^{n}$ of genus zero. By Theorem~\ref{dim-red} the study of the Galois property in this case reduces to the one-dimensional situation $f:\p^{1}\to\p^{1}$, which is completely understood. If we regard $\p^{1}$ as the unit sphere $\mb S^{2}$, then the deck transformation group of $f$ is conjugate to a finite subgroup of the group $\mr{SO}_{3}=\mr{PSU}_{2}$, which is the maximal compact subgroup of $\mr{PSL}_{2}(\C)$ and
whose finite subgroups are well-known: cyclic, dihedral, tetrahedral, octahedral and icosahedral. In fact, for each finite subgroup $G$ of $\mr{PSL}_{2}(\C)$ there is a Galois branched covering $f:\p^{1}\to\p^{1}$ whose deck transformation group (also called Galois group) is just $G$. More precisely, the following classification goes back to Klein \cite[Chapter IV]{Klein}, see also \cite[Theorem~3.6.2, pp. 43--44 and 65--66]{Shurman} for a modern exposition:

\begin{teo}\label{klein}
Let $f:\p^{1}\to\p^{1}$ be a degree $d$ Galois rational map with deck transformation group $G$. Then $f$ (resp. $G$) is left-right-equivalent (resp. conjugated) to one of the rational functions (resp. triangular subgroups of $\mr{SO}_{3}\subset\mr{PSL}_{2}(\C)$) appearing in Table~\ref{tabla},
\begin{table}[ht]
\begin{tabular}{|c|c|c|c|c|c|c|}
\hline
  & $d$ & $f$ & $\mf b_{f}^{w}$ & $G$  & $\sigma(z)$ & $\tau(z)$\\
\hline\hline
Cyclic & $n$ & $f_{C_{n}}$& $2(n)_{1}$ & $C_{n}=T(1,n,n)$  & $z$ & $\zeta_{n}z$\\[1mm]
\hline
Dihedral & $2n$ & $f_{D_{n}}$ & $2(2)_{n}+(n)_{2}$  & $D_{n}=T(2,2,n)$ & $\frac{1}{z}$& $\frac{\zeta_{n}}{z}$\\[1mm]
\hline
Tetrahedral & $12$ & $f_{T}$ & $(2)_{6}+2(3)_{4}$ & $A_{4}=T(2,3,3)$ & $-z$ & $\frac{z+i}{z-i}$\\[1mm]
\hline
Octahedral & $24$ & $f_{O}$ & $(2)_{12}+(3)_{8}+(4)_{6}$ & $S_{4}=T(2,3,4)$ & $\frac{iz-1}{z-i}$ & $\frac{z+i}{z-i}$ \\[1mm]
\hline
Icosahedral & $60$ & $f_{I}$ & $(2)_{30}+(3)_{20}+(5)_{12}$ & $A_{5}=T(2,3,5)$ & $\frac{\phi-z}{\phi z+1}$ & $\frac{(\phi-z)\zeta_{5}}{\phi z+1}$\\[1mm]
\hline
\end{tabular}\\[2mm]
\caption{\mbox{Klein's classification of Galois rational functions on $\p^{1}$
.}}\label{tabla}
\end{table}
where\\
 \centerline{$T(p,q,r)=\langle\sigma,\tau\,|\,\sigma^{p}=\tau^{q}=(\sigma\tau)^{r}=1\rangle$,\quad  $\zeta_{n}=e^{\frac{2i\pi}{n}}$,\quad  $\phi=\frac{\sqrt{5}-1}{2}$\quad  and}
$$f_{C_{n}}(z)=z^{n},\quad f_{D_{n}}=\frac{(z^{n}+1)^{2}}{4z^{n}},\quad f_{T}(z)=\left(\frac{z^{4}+2i\sqrt{3}\,z^{2}+1}{z^{4}-2i\sqrt{3}\,z^{2}+1}\right)^{3},$$
$$f_{O}(z)=\frac{(z^{8}+14z^{4}+1)^{3}}{108z^{4}(z^{4}-1)^{4}},\quad f_{I}(z)=\frac{(z^{20}-228z^{15}+494z^{10}+228 z^{5}+1)^{3}}{-1728z^{5}(z^{10}+11z^{5}-1)^{5}}.$$
\end{teo}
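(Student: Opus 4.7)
The plan is to reduce the statement to the classical classification of finite subgroups of $\mathrm{PSL}_2(\mathbb C)$, and then identify explicit normal forms for each conjugacy class of such subgroup. Since $f:\mathbb P^1\to\mathbb P^1$ is Galois of degree $d$, Theorem~\ref{car-gal} applied to $X=Y=\mathbb P^1$ yields a natural isomorphism $G=\mathrm{Deck}(f)\subset\mathrm{Aut}(\mathbb P^1)=\mathrm{PSL}_2(\mathbb C)$, with $|G|=d$, and the final assertion in Definition~\ref{def-gal} gives that $f$ factors as $\mathbb P^1\to \mathbb P^1/G\xrightarrow{\sim}\mathbb P^1$. Consequently, two Galois rational functions on $\mathbb P^1$ with conjugated Galois groups are left--right equivalent (the right composition with an element of $\mathrm{PSL}_2(\mathbb C)$ conjugates $G$, and the left composition absorbs the choice of isomorphism $\mathbb P^1/G\simeq\mathbb P^1$). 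Thus the theorem reduces to listing the conjugacy classes of finite subgroups $G\subset\mathrm{PSL}_2(\mathbb C)$ and, for one representative in each class, exhibiting an explicit generator of the field of invariants $\mathbb C(z)^G$.

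The first main step is to show that every finite subgroup of $\mathrm{PSL}_2(\mathbb C)$ is conjugate to a subgroup of $\mathrm{PSU}_2\cong\mathrm{SO}_3$. I would lift $G$ to $\widetilde G\subset\mathrm{SL}_2(\mathbb C)$ via the central extension $1\to\{\pm I\}\to\mathrm{SL}_2(\mathbb C)\to\mathrm{PSL}_2(\mathbb C)\to 1$ and average any Hermitian inner product on $\mathbb C^2$ over the finite group $\widetilde G$; this yields a $\widetilde G$-invariant positive definite Hermitian form, showing $\widetilde G$ is conjugate into $\mathrm{SU}_2$ and therefore $G$ is conjugate into $\mathrm{PSU}_2$.

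The second step is the classification of finite subgroups of $\mathrm{SO}_3$. Each non-trivial element fixes exactly two antipodal points on $\mathbb S^2$, and a standard counting argument on the set of pole-orbits (going back to Klein) forces $G$ to be one of $C_n$, $D_n$, $A_4$, $S_4$, $A_5$. Moreover the exceptional orbits on $\mathbb P^1$ have exactly three types, with isotropy orders $(p,q,r)$ equal to $(1,n,n)$, $(2,2,n)$, $(2,3,3)$, $(2,3,4)$ or $(2,3,5)$ respectively, and this matches the weighted branching types listed in the table: the three critical values of $f$ correspond to the three orbit classes, and the ramification index along each fibre is constant and equal to the isotropy order (this last fact is automatic from the Galois character, since $\mathrm{Deck}(f)$ permutes the fibres transitively).

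The final step is to write down, for each group in its standard position inside $\mathrm{PSU}_2$ with the prescribed generators $\sigma,\tau$, an explicit invariant rational function generating $\mathbb C(z)^G$. For $G=C_n$ with $\tau(z)=\zeta_n z$ this is immediately $f_{C_n}(z)=z^n$; for $G=D_n$ generated by $\tau(z)=\zeta_n/z$ and the involution $\sigma(z)=1/z$, one checks that $z^n+z^{-n}$ is $G$-invariant and, after a Möbius change on the target, takes the form $f_{D_n}=(z^n+1)^2/(4z^n)$. For the three polyhedral groups one uses Klein's classical ground forms (the vertex, face and edge forms of the Platonic solids inscribed in $\mathbb S^2$), whose quotients give the functions $f_T,f_O,f_I$; the verification that the listed $\sigma,\tau$ generate the corresponding triangular group $T(p,q,r)$ is a direct computation of the orders of $\sigma$, $\tau$ and $\sigma\tau$. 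The main technical obstacle lies in this last step: identifying the right system of generators in which the quotient map takes the explicit form displayed in Table~\ref{tabla} and matching the branching data. Once the explicit forms are in hand, uniqueness up to left--right equivalence follows from the first paragraph.
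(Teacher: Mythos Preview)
The paper does not prove this theorem at all: it simply records it as a classical result, attributing it to Klein \cite[Chapter~IV]{Klein} and pointing to \cite[Theorem~3.6.2]{Shurman} for a modern exposition. Your proposal is a correct outline of precisely that classical argument (unitarian trick to conjugate $G$ into $\mathrm{PSU}_2$, pole-counting classification of finite subgroups of $\mathrm{SO}_3$, then Klein's ground forms to write down the quotient maps), so there is nothing to compare and no gap to flag.
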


\section{Families of rational maps}\label{fam}

In this section we consider families of rational maps with the aim of determine
the structure of the set of Galois maps in the family.
We show that this set is always constructible and Zariski closed when the target family is a $\p^{n}$-bundle over the parameter space. In that case the genus of each element of the family is well defined and we prove that it is Zariski lower semi continuous.

We begin by recalling the notion of family of compact complex manifolds.
\begin{defin}
A family of manifolds over $T$ is a surjective proper submersion $\pi:Y\to T$ with connected fibres between connected complex manifolds.
\end{defin}

Notice that by Ehreshman's lemma, $\pi$ is a locally trivial $C^{\infty}$ fibre bundle. We can thought the family $\pi:Y\to T$ as the collection $\{Y_{t}\}_{t\in T}$ of fibres $Y_{t}=\pi^{-1}(t)$.

From now on $X$, $Y$ and $T$ will be quasi-projective manifolds and the maps occurring between them will be algebraic maps.

\begin{defin}
A family of dominant rational maps of constant topological degree over $T$ is a dominant rational map $f:X\dasharrow Y$ and two families of manifolds $\pi_{X}:X\to T$ and $\pi_{Y}:Y\to T$ such that $\pi_{X}=\pi_{Y}\circ f$, $X_{t}\not\subset\Sigma_{f}$  and the restricted rational map $f_{t}:X_{t}\dasharrow Y_{t}$ has topological degree $\deg f$ for all $t\in T$.
\end{defin}

\begin{obs}\label{pb}
Let $f\colon X\dasharrow Y$ be a family of rational maps over $T$. Given a morphism $\delta\colon\wt T\to T$ the pull-back of the family $f$ by $\delta$ is well defined and it is given by
$\wt f=f\times\mr{id}_{\wt T}:\wt X=X\times_{T}\wt T\to\wt Y=Y\times_{T}\wt T$.
Notice also that there is natural identification between the maps $f_{\delta(s)}$ and $\wt f_{s}$ for  $s\in\wt T$ preserving in particular the Galois character. 
\end{obs}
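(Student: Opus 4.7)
The plan is to verify three points implicit in the remark: that $\wt f := f\times\mr{id}_{\wt T}$ is a well-defined family of dominant rational maps of constant topological degree over $\wt T$, that each fibre $\wt f_s$ is canonically identified with $f_{\delta(s)}$, and that this identification respects the Galois character. The content of the proof is essentially formal, coming from the compatibility of fibred products with restriction to fibres.

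First, I would make the fibrewise identification explicit. By the universal property of the fibred product, the fibre of $\pi_{\wt X}\colon \wt X = X\times_T\wt T\to\wt T$ over a point $s\in\wt T$ is
$$\wt X_s = \{(x,s')\in X\times\wt T : \pi_X(x)=\delta(s'),\ s'=s\}\xrightarrow{\ \sim\ } X_{\delta(s)},\quad (x,s)\mapsto x,$$
and similarly $\wt Y_s\cong Y_{\delta(s)}$. These are biholomorphisms of (quasi-)projective manifolds, and under them the restriction of $\wt f = f\times\mr{id}_{\wt T}$ to $\wt X_s$ coincides with $f_{\delta(s)}$. In particular, $\wt f_s$ has topological degree $\deg f_{\delta(s)}=\deg f$, giving constancy of the topological degree along~$\wt T$.

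Next, for well-definedness of the family, I would check the non-indeterminacy condition $\wt X_s\not\subset\Sigma_{\wt f}$. The indeterminacy locus of the product rational map satisfies $\Sigma_{\wt f}=\Sigma_f\times_T\wt T$, so under the identification $\wt X_s\cong X_{\delta(s)}$ this becomes $X_{\delta(s)}\not\subset\Sigma_f$, which holds by the hypothesis on the family $f$. Dominance of $\wt f$ likewise follows because its image contains the image of $f$ pulled back fibrewise.

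For preservation of the Galois character, the biholomorphisms $\wt X_s\cong X_{\delta(s)}$ and $\wt Y_s\cong Y_{\delta(s)}$ intertwine the rational maps $\wt f_s$ and $f_{\delta(s)}$, so they induce isomorphic finite field extensions $\C(\wt Y_s)\hookrightarrow\C(\wt X_s)$ and $\C(Y_{\delta(s)})\hookrightarrow\C(X_{\delta(s)})$. By the equivalent characterizations of Galois rational maps recorded in Theorem~\ref{car-gal}, one extension is Galois if and only if the other is. There is no serious obstacle: the only care required is to treat the pull-back of a rational map through a morphism of bases as a genuine algebraic operation, so that the identification $\wt X_s\cong X_{\delta(s)}$ is compatible with $\Sigma$, $\Delta$ and $\mr{Deck}$; but this is the standard behaviour of fibred products in the category of quasi-projective varieties.
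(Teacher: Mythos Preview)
The paper gives no proof of this remark; it is stated as a self-evident observation and used without further justification (notably in the proofs of Proposition~\ref{TB} and Theorem~\ref{TC}). Your verification is correct and makes explicit precisely the formal content the paper leaves implicit: the fibrewise identification $\wt X_s\cong X_{\delta(s)}$, $\wt Y_s\cong Y_{\delta(s)}$ coming from the universal property of the fibred product, and the consequence that $\wt f_s$ and $f_{\delta(s)}$ are birationally left-right equivalent (indeed isomorphic), whence they share the same associated branched covering and Galois character. There is nothing to compare your argument against beyond noting that it faithfully unpacks what the authors regard as routine.
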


\begin{prop}\label{TB}
For every family of dominant rational maps of constant topological degree $f:X\dasharrow Y$ over $T$ there is a Zariski open subset $T'\subset T$ such that the finite branched coverings $\rho_{t}:N_{t}\to Y_{t}$ associated to $f_{t}$ are pairwise topologically equivalent for $t\in T'$. In particular, for $t\in T'$ the weighted branching type $b_{f_{t}}^{w}$ is constant and the monodromy groups of $f_{t}$ are all conjugated. Moreover the subset
$$\mr{Gal}(T):=\{t\in T\,|\, f_{t} \text{ is Galois}\}\subset T$$
is constructible.
\end{prop}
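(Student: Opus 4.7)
The plan is to use Var\v{c}enko's equisingularity theorem (Theorem~\ref{V}) to simultaneously trivialize, over a Zariski open subset of $T$, the data defining the branched coverings $\rho_t:N_t\to Y_t$, and then invoke the classification of branched coverings by base and monodromy (Theorem~\ref{clas}).

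I would start by producing a global branched covering associated to the family $f$. Take the closed graph $\Gamma_f\subset X\times_T Y$, choose a desingularization $\delta:\wt X\to\Gamma_f$, set $\wt f=p_Y\circ\delta:\wt X\to Y$, and apply Stein factorization to obtain $\wt X\to N\stackrel{\rho}{\to}Y$ with $\rho$ a finite branched covering. Applying Theorem~\ref{V} to $\delta$, $\wt f$ and $\rho$, together with the hypothesis of constant topological degree, one finds a Zariski open $T_0\subset T$ such that for every $t\in T_0$ the restriction $\delta_t:\wt X_t\to\Gamma_{f,t}=\Gamma_{f_t}$ is a desingularization of the graph of $f_t$ and the Stein factorization of $\wt f_t$ coincides with $\rho_t:N_t\to Y_t$. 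By Proposition~\ref{rho}, $\rho_t$ is then the branched covering associated to $f_t$.

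A second application of Theorem~\ref{V}, now to the morphisms $\pi_Y:Y\to T$ and $\pi_N:N\to T$ together with the constructible subsets $\Delta_\rho\subset Y$, $\nabla_\rho\subset N$ and each of their irreducible components, produces a smaller Zariski open $T'\subset T_0$ over which these families are equisingular. Concretely, every $t_0\in T'$ has a neighborhood $W\subset T'$ and homeomorphisms $\pi_Y^{-1}(W)\simeq Y_{t_0}\times W$, $\pi_N^{-1}(W)\simeq N_{t_0}\times W$, compatible with $\rho$ and sending $\Delta_{f_t}$ to $\Delta_{f_{t_0}}$ for every $t\in W$. These homeomorphisms conjugate the monodromy representations of $\rho_t$ and $\rho_{t_0}$ restricted to the complements of the discriminants, so Theorem~\ref{clas} implies that $\rho_t$ and $\rho_{t_0}$ are topologically equivalent. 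Since $T'$ is a non-empty Zariski open subset of the irreducible manifold $T$, it is connected, and topological equivalence of $\rho_t$ propagates across all of $T'$.

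From topological equivalence one immediately deduces that the monodromy groups $\mr{Mon}(\rho_t)$ are pairwise conjugate in $\mf S_d$ and that $\mf b_{f_t}$ is constant on $T'$; the weighted version $\mf b_{f_t}^w$ is constant because the degrees of the irreducible components of $\Delta_{f_t}$ with respect to a fixed projective embedding are invariant under algebraic deformation over $T'$. Being Galois is equivalent, by Theorem~\ref{car-gal}, to $|\mr{Deck}(\rho_t)|=d$, a condition preserved by topological equivalence, so $\mr{Gal}(T)\cap T'$ is either empty or all of $T'$. Applying the same argument by Noetherian induction to each irreducible component of the proper Zariski closed subset $T\setminus T'$, one obtains $\mr{Gal}(T)$ as a finite union of locally closed subsets of $T$, hence as a constructible subset. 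The main technical difficulty lies in the first step: ensuring that a single global desingularization of $\Gamma_f$ restricts, on a Zariski open set, to compatible desingularizations and Stein factorizations fiberwise; once this coherence is established, the rest is a systematic bookkeeping of equisingular families.
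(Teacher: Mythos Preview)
Your approach is correct and close in spirit to the paper's, but the paper takes a more direct route that avoids the detour through a global Stein factorization. Rather than constructing a global $N$ and arguing that its fibres coincide with the $N_t$, the paper simply applies Proposition~\ref{Vb} to the composition $X\setminus\Sigma_f\to Y\to T$: this yields Zariski open subsets $X'\subset X$, $Y'\subset Y$, $T'\subset T$ such that the unbranched coverings $f'_t\colon X'_t\to Y'_t$ are pairwise topologically equivalent by homeomorphisms $Y'_t\to Y'_{t'}$ extending to $Y_t\to Y_{t'}$, and then invokes Theorem~\ref{clas} (via Grauert--Remmert uniqueness) to conclude that the $\rho_t$ are topologically equivalent. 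This bypasses both of your technical worries: the fibrewise compatibility of desingularization and Stein factorization, and---more to the point---your claim that the equisingularity trivializations of $N$ and $Y$ over $W$ can be chosen \emph{compatible with $\rho$}. That last claim does not follow from two separate applications of Theorem~\ref{V}; to justify it you would in effect need Proposition~\ref{Vb} applied to $\rho\colon N\to Y$ over $T$, at which point the global $N$ is no longer doing any work. The Noetherian induction for constructibility is handled identically in both arguments: the paper pulls back along desingularizations of the irreducible components of $T\setminus T'$ and inducts on $\dim T$, using Chevalley's theorem to push the constructible sets back down.
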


\begin{dem}
By applying Proposition~\ref{Vb} to the morphisms $X\setminus\Sigma_{f}\to Y\to T$
we obtain Zariski open subsets $X'\subset X$, $Y'\subset Y$ and $T'\subset T$ such that the restrictions  $f'_{t}\colon X'_{t}\to Y'_{t}$ of $f'\colon X'\to Y'$  for $t\in T'$ are pairwise topologically equivalent coverings by homeomorphisms $Y'_{t}\to Y'_{t'}$ extending to $Y_{t}\to Y_{t'}$.
We deduce the first assertion by applying Theorem~\ref{clas}.
Since the monodromy groups of $\rho_{t}$ for $t\in T'$ are all conjugated, we obtain the following dichotomy: either $T'\subset\mr{Gal}(T)$ or $\mr{Gal}(T)\subset T\setminus T'$.
Considering the pull-back families (cf. Remark~\ref{pb}) of $f:X\dasharrow Y$ by desingularizations $\delta_{i}:\wt T_{i}\to T_{i}\subset T$ of the irreducible components $T_{i}$ of the closed Zariski subset $T\setminus T'$ and reasoning by induction on $\dim T$ (the $0$-dimensional case being trivial) we deduce that $\mr{Gal}(T)\setminus T'=\bigcup_{i}\delta_{i}(\mr{Gal}(\wt T_{i}))$ is constructible by Chevalley theorem. We conclude that $\mr{Gal}(T)=(\mr{Gal}(T)\cap T')\cup(\mr{Gal}(T)\setminus T')$ is also constructible thanks to the above dichothomy.
\end{dem}

\begin{ex}\label{Rd}
Let $R_{d}\simeq \p^{2d+1}$ be the projectivisation of the complex vector space of pairs of homogeneous degree $d>0$ polynomials in $x,y$. Consider the Zariski open subset $T=\{[A,B]\in R_{d},\ \gcd(A,B)=1\}$ whose complementary is given by the vanishing of the resultant of the homogeneous polynomials $A(x,y)$ and $B(x,y)$. 
The rational map $f:\p^{1}\times T\to \p^{1}\times T$ given by
$f([x,y],[A,B])=([A(x,y),B(x,y)],[A,B])$ can be thought as a family of rational maps of constant topological degree $d$ between the trivial families $\p^{1}\times T\to T$. In this situation the subset $\mr{Gal}(T)$ of $T$ consists in the orbits of the elements described in Theorem~\ref{klein} by the action of $\mr{PSL}_{2}(\C)\times\mr{PSL}_{2}(\C)$ on $T$ given by the left-right equivalence.
In that case $\mr{Gal}(T)$ is more than just a constructible subset, it is a  quasi-projective  manifold.
\end{ex}

An interesting situation to be considered is when the target family $Y$ is a $\p^{n}$-bundle over $T$. In that case we have an arithmetic well defined invariant, namely the genus $\mf g_{f_{t}}$ of $f_{t}$, i.e. the geometric genus of the curve $f^{-1}_{t}(\ell_{t})\subset X_{t}$ for a generic line $\ell_{t}\subset Y_{t}\simeq\p^{n}$.

\begin{teo}\label{TC}
Consider a family of dominant rational maps of constant topological degree $f:X\dasharrow Y$ over $T$, where $Y\to T$ is a $\p^{n}$-bundle. Then $\mr{Gal}(T)$ is a Zariski closed subset of $T$, the (abstract) Galois group is constant along each connected component of $\mr{Gal}(T)$ and the genus map $\mf g:T\to\Z^{+}$ sending $t$ to the genus of $f_{t}$ is (Zariski) lower semi-continuous.
\end{teo}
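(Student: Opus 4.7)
The plan is to prove the three conclusions of Theorem~\ref{TC} in turn, starting from Proposition~\ref{TB}.

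For the Zariski closedness of $\mr{Gal}(T)$: Proposition~\ref{TB} provides constructibility together with a Zariski open $T'\subset T$ on which the associated branched coverings $\rho_t\colon N_t\to Y_t$ are pairwise topologically equivalent, with the dichotomy $T'\subset\mr{Gal}(T)$ or $T'\cap\mr{Gal}(T)=\emptyset$. Arguing by Noetherian induction on $\dim T$, the second alternative reduces to the irreducible components of $T\setminus T'$ (which have strictly smaller dimension). In the first alternative it suffices to show $\mr{Gal}(T)=T$, and for this I would globalize Proposition~\ref{Galreg}(b): using the relative Stein factorization of a desingularization of the graph of $f\colon X\dasharrow Y$, construct a family $\rho\colon N\to Y$ of branched coverings over $T$ whose fibers are the $\rho_t$, and consider the proper morphism $\mc Z:=N\times_Y N\to T$. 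For $t\in T'$, $\#\pi_0(\mc Z_t)=d=\deg f$ (the $d$ components being the graphs of the deck transformations). Upper semi-continuity of the number of geometric connected components of fibers in flat proper families gives $\#\pi_0(\mc Z_{t_0})\geq d$ at any specialization $t_0\in T$, and the automatic bound $\#\pi_0(\mc Z_{t_0})\leq d=\deg\rho$ yields equality, hence Galois-ness of $f_{t_0}$.

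The constancy of the Galois group on each connected component then follows by applying Proposition~\ref{TB} to the restriction of $f$ to each irreducible component $Z$ of $\mr{Gal}(T)$: on a Zariski open $Z'\subset Z$ the monodromy groups (which for Galois maps coincide with the deck transformation groups) are pairwise conjugate in $\mf S_d$; Noetherian induction on $Z\setminus Z'$ and consideration of irreducible curves through common points of distinct irreducible components extends the constancy of the abstract isomorphism class to every connected component.

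For the lower semi-continuity of the genus: by Theorem~\ref{dim-red} and the $\p^n$-bundle hypothesis, $\mf g_{f_t}$ equals the geometric genus of $N_{t,\ell_t}\to\ell_t\simeq\p^1$ over a generic line $\ell_t\subset Y_t\simeq\p^n$. Consider the universal family of curves $\mc C\to\mb G$ over the relative Grassmannian $\mb G$ of lines in $Y/T$, with $\mc C_{(t,L)}\cong f_t^{-1}(L)^{\nu}$. By Proposition~\ref{Vb}, on a Zariski open subset of $\mb G$ the family is proper flat with locally constant arithmetic genus; at generic $(t,L)$ the fiber is smooth and $\mf g_{f_t}=p_a$, while at special points the geometric genus drops by the total $\delta$-invariant of the singularities. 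The standard Zariski lower semi-continuity of the geometric genus in proper flat families then transfers to $T$ by restricting to generic lines in each $\mb G_t$, yielding the lower semi-continuity of $\mf g$. The main obstacle I anticipate is the technical construction of the relative Stein factorization producing the global family $\rho\colon N\to Y$ over $T$, together with the verification that $\mc Z=N\times_Y N$ satisfies the flatness and reducedness hypotheses required for the upper semi-continuity of $\#\pi_0$; these points, though standard in many settings, require care in the present quasi-projective context.
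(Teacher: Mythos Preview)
Your approach to the Zariski closedness has a genuine gap. The inequality $\#\pi_0(\mc Z_{t_0})\geq d$ does not follow from semi-continuity in the direction you need. For a proper morphism $\mc Z\to T$, the function $t\mapsto\#\pi_0(\mc Z_t)$ can only \emph{decrease} under specialization: after Stein factorization $\mc Z\to S\to T$ with $S\to T$ finite, the set-theoretic fibre cardinality of a finite map is maximal on a Zariski open set. So without further hypotheses you only get $\#\pi_0(\mc Z_{t_0})\le d$, which is the same as your ``automatic'' bound and gives nothing. You are right that flatness of $\mc Z\to T$ together with geometrically reduced fibres would upgrade this to local constancy of $h^0(\mc Z_t,\mc O_{\mc Z_t})=\#\pi_0(\mc Z_t)$, but neither property is available here: the relative Stein factorization $N$ of a desingularization of $f$ need not be flat over $T$, its fibre over $t_0$ need not coincide with $N_{t_0}$ (normalization does not commute with base change), and $N\times_Y N$ will typically be non-reduced along the diagonal over the ramification locus. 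These are not technicalities that ``require care''; they are precisely the obstruction that forces a different argument. Your constancy-of-Galois-group argument has a related hole: Noetherian induction on $Z\setminus Z'$ only gives constancy on each stratum, not compatibility across strata.

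The paper proceeds quite differently. Using Lemma~\ref{aux} it chooses, for each $t_0\in T\setminus T'$, an analytic disk $\Gamma\subset T$ through $t_0$ together with a moving $\Lambda$-admissible line $\ell_z\subset Y_{t_z}$, producing a \emph{one-parameter degenerating family} $g:\mc M\to\mc L\simeq\p^1\times\mb D$ of branched coverings of $\p^1$. One then invokes the results of Namba--Takai \cite{NT}: their Theorem~5 gives a canonical injection $\mr{Mon}(g_0^\nu)\hookrightarrow\mr{Mon}(g_z)$ for $z\in\mb D^*$. Since $f_{t_z}$ is Galois for $z\neq 0$ one has $|\mr{Mon}(g_z)|=d$, and a separate argument shows the central curve $\mc M_0$ is irreducible, so $\mr{Mon}(g_0^\nu)$ acts transitively and hence also has $d$ elements. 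This forces the injection to be an isomorphism, giving simultaneously that $f_{t_0}$ is Galois and that its Galois group agrees with the generic one. The genus inequality comes from the same degeneration via \cite[Theorem~4]{NT}, which gives $\chi(\mc M_z)\le\chi(\mc M_0)$; combined with Lemma~\ref{semicont} this yields the lower semi-continuity. The point is that the monodromy injection for one-parameter degenerations of branched coverings of $\p^1$ replaces the flatness and base-change issues that your fibred-product approach cannot avoid.
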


Before proving Theorem~\ref{TC} let us make some previous considerations. 
\begin{obs}\label{closed}
Let $W$ be a Zariski constructible subset of a quasi-projective manifold $T$ which is closed in the euclidian topology. Then $W$ is Zariski closed in $T$. In particular, if $f\colon X\to Y$ is a proper morphism between quasi-projective manifolds then $f$ is Zariski closed thanks to Chevalley theorem.
\end{obs}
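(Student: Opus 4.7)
The plan is to prove the remark in two parts: first establish the main assertion that constructible plus Euclidean-closed implies Zariski-closed, and then deduce the consequence about proper morphisms via Chevalley's theorem.

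For the main assertion, I would reduce it to showing that the Zariski closure $\overline{W}^{\mathrm{zar}}$ and the Euclidean closure $\overline{W}^{\mathrm{eucl}}$ of a constructible subset $W \subset T$ coincide. One inclusion is immediate: any Zariski closed set is Euclidean closed, so $\overline{W}^{\mathrm{eucl}} \subset \overline{W}^{\mathrm{zar}}$. For the reverse inclusion, I would write $W$ as a finite union $W = \bigcup_{i=1}^{k}(Z_i \setminus Z_i')$ of locally closed sets, where each $Z_i$ is an irreducible Zariski closed subset of $T$ and $Z_i' \subsetneq Z_i$ is Zariski closed. Then $\overline{W}^{\mathrm{zar}} = \bigcup_i Z_i$. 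The key input is the classical fact that over $\mathbb{C}$ a nonempty Zariski open subset of an irreducible algebraic variety is dense in the Euclidean topology (this follows because a proper Zariski closed subset of an irreducible complex variety has strictly smaller complex dimension, hence real codimension at least two, so its complement remains Euclidean dense). Applying this to $Z_i \setminus Z_i'$ inside $Z_i$ yields $Z_i \subset \overline{W}^{\mathrm{eucl}}$ for every $i$, whence $\overline{W}^{\mathrm{zar}} \subset \overline{W}^{\mathrm{eucl}}$. If $W$ is already Euclidean closed then $W = \overline{W}^{\mathrm{eucl}} = \overline{W}^{\mathrm{zar}}$ is Zariski closed.

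For the consequence on proper morphisms, let $f\colon X \to Y$ be proper and let $Z \subset X$ be Zariski closed. By Chevalley's theorem on images of constructible sets under morphisms, $f(Z)$ is a constructible subset of $Y$. Since $f$ is proper in the algebraic sense, it is proper in the analytic sense, hence Euclidean closed; as $Z$ is Euclidean closed in $X$, the image $f(Z)$ is Euclidean closed in $Y$. The main assertion then forces $f(Z)$ to be Zariski closed, so $f$ is Zariski closed.

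The substantive step, and the only one requiring real content, is the Euclidean density of nonempty Zariski open subsets inside an irreducible complex variety. Everything else is bookkeeping using the standard decomposition of constructible sets and Chevalley's theorem, both of which are invoked essentially verbatim. I would therefore present the proof emphasizing this density statement, either by citing it from a standard reference on complex algebraic geometry or by briefly sketching the codimension argument, and relegate the rest to short observations about closures and images.
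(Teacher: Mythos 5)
Your argument is correct. The paper offers no proof of this remark at all --- it is stated as a standard fact and then invoked (e.g.\ in the proof of Theorem~\ref{TC}) --- so there is no "paper's route" to compare against; what you have written is precisely the standard argument one would supply: decompose the constructible set into locally closed pieces $Z_i\setminus Z_i'$, use Euclidean density of nonempty Zariski-open subsets of an irreducible complex variety to identify the two closures, and then combine Chevalley's theorem with the topological closedness of proper maps for the corollary. The only point I would tighten is your one-line justification of the density statement: the phrase "real codimension at least two, so its complement remains Euclidean dense" is really a smooth-locus argument, and at a singular point of $Z_i$ one should instead note that $Z_i$ is pure-dimensional, so every local analytic branch at that point has dimension $\dim Z_i$ and hence cannot be contained in the lower-dimensional set $Z_i'$; since a proper analytic subset of an irreducible analytic germ has dense complement, density follows. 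As you say, this is a classical fact and citing it is perfectly acceptable.
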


\begin{lema}\label{semicont}
Let $T$ be a quasi-projective, irreducible variety and $\varphi: T \to \mathbb{Z}^{+}$ a map with the following property: for every irreducible closed set $S \subset T$, there exist a non empty open set $U_S \subset S$ such that $\varphi|_{U_S} \equiv \sup \varphi|_{S}$. Then $\varphi$ is lower-semicontinous.
\end{lema}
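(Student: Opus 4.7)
The goal is to show that for every $n\in\mb Z^{+}$, the sublevel set $F_n:=\{t\in T\,|\,\varphi(t)\le n\}$ is Zariski closed. My plan is to pass to the Zariski closure of $F_n$ and exploit the hypothesis on each of its irreducible components.

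First, let $\ov{F_n}$ denote the Zariski closure of $F_n$ in $T$ and write its irreducible decomposition $\ov{F_n}=S_1\cup\cdots\cup S_k$. The key observation is that each $F_n\cap S_i$ is Zariski dense in $S_i$: indeed, $F_n=\bigcup_i(F_n\cap S_i)$, so taking closures gives
$$S_1\cup\cdots\cup S_k=\ov{F_n}=\ov{F_n\cap S_1}\cup\cdots\cup\ov{F_n\cap S_k},$$
and since each $\ov{F_n\cap S_i}\subset S_i$ while the $S_i$ are irreducible components of the left-hand side, the only possibility is $\ov{F_n\cap S_i}=S_i$ for every $i$.

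Next, I apply the hypothesis to each irreducible closed set $S_i\subset T$ (regarded as a closed subset of $T$): there exists a non-empty Zariski open subset $U_{S_i}\subset S_i$ on which $\varphi$ is identically equal to $c_i:=\sup\varphi|_{S_i}$. Since $U_{S_i}$ is Zariski open and non-empty in the irreducible set $S_i$, and $F_n\cap S_i$ is Zariski dense in $S_i$ by the previous step, the intersection $U_{S_i}\cap F_n$ is non-empty. Picking $t\in U_{S_i}\cap F_n$ we get $c_i=\varphi(t)\le n$, and hence $\varphi(s)\le c_i\le n$ for every $s\in S_i$ by definition of $c_i$. This shows $S_i\subset F_n$.

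Consequently $\ov{F_n}=\bigcup_i S_i\subset F_n$, so $F_n$ is Zariski closed, proving that $\varphi$ is lower semi-continuous in the Zariski topology. The only subtle point is verifying that each $F_n\cap S_i$ is Zariski dense in $S_i$; once this is settled the rest is a direct application of the assumed generic-constancy property on irreducible closed subsets, so I do not anticipate any serious obstacle.
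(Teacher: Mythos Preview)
Your proof is correct. The density argument in Step~3 is fine: writing $\ov{F_n}=\bigcup_i \ov{F_n\cap S_i}$ with each $\ov{F_n\cap S_i}\subset S_i$, irreducibility of $S_1$ forces $S_1\subset\ov{F_n\cap S_j}\subset S_j$ for some $j$, and maximality of the component $S_1$ gives $j=1$.

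Your route differs from the paper's. The paper argues by Noetherian recursion: it takes the maximal open set $U_T\subset T$ on which $\varphi$ equals its global supremum $d_0$, claims $T\setminus U_T=\{\varphi<d_0\}$, and then repeats the argument on each irreducible component of this proper closed subset, producing a stratification of $T$ by the values of $\varphi$. Your argument is more direct: for each $n$ you pass once to the closure of $\{\varphi\le n\}$ and apply the hypothesis to each of its components, with no induction needed. The paper's approach yields slightly more (an explicit stratification), but your proof is cleaner and avoids the somewhat delicate step of identifying $T\setminus U_T$ with $\{\varphi<d_0\}$.
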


\begin{proof}
By hypothesis there is a non empty open set $U_T$ such that $\varphi|_{U_T} \equiv \sup \varphi = d_0$. We consider $U_T$ maximal open with this property and write $\varphi (T) = \{ d_0 > d_1 > \ldots > d_r \}$. Now, decompose the closed set $T \setminus U_T = S_1 \cup \ldots \cup S_r$ as union of irreducible components; by maximality it is easy to see $T \setminus U_T = \{  \varphi < d_0 \}$. Applying the same argument to each $S_j$ it is straightforward to conclude that $\{ \varphi <k \}$ is closed for every $k \in \mathbb{Z}^{+}$.
\end{proof}

If $Y\to T$ is a $\p^{n}$-bundle we can consider its associated grassmannian bundle $\pi_{T}\colon\mb G^{Y}\to T$ with fibre $\mb G=\mb G^{n}_{1}$, the space of lines in $\p^{n}$.

\begin{lema}\label{aux}
Let $Y\to T$ be a $\p^{n}$-bundle and let $V\subset T$ and $\Lambda\subset Y$ be Zariski closed subsets such that $\Lambda_{t}:=\Lambda\cap Y_{t}$ is a proper subset of $Y_{t}$ for all $t\in T$.
For each point $t_{0}\in V$ there is an analytic curve $\gamma:\mb D\to \mb G^{Y}$, $\gamma(z)=(t_{z},\ell_{z})$, such that the line $\ell_{z}\subset Y_{t_{z}}$ is a $\Lambda_{t_{z}}$-admisible curve for all $z\in\mb D$, transverse to $\Lambda$ if $z\in\mb D^{*}$ and the curve $\Gamma=\mr{Im}(\pi_{T}\circ\gamma)\subset T$ is smooth and $\Gamma\cap V=\{t_{0}\}$. 
\end{lema}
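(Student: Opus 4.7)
The strategy is to construct $\gamma$ in two stages: first a smooth analytic arc $\Gamma\subset T$ through $t_0$ meeting $V$ only at $t_0$, then a holomorphic section of the Grassmannian bundle along $\Gamma$ satisfying the stated conditions. Assuming $V\subsetneq T$ (else the lemma is vacuous), inside an irreducible component of $T$ containing $t_0$ I would choose a smooth analytic germ $\Gamma$ with $\Gamma\cap V=\{t_0\}$, parametrized by a holomorphic embedding $z\mapsto t_z$ of a disk $\mathbb{D}$ with $t_0$ at the origin. Since $\mathbb{D}$ is Stein and contractible, the pull-backs of the $\mathbb{P}^n$-bundle and of its Grassmannian bundle admit analytic trivializations $Y\times_T\mathbb{D}\simeq\mathbb{D}\times\mathbb{P}^n$ and $\mathbb{G}^Y\times_T\mathbb{D}\simeq\mathbb{D}\times\mathbb{G}$, where $\mathbb{G}=\mathbb{G}^n_1$. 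It then suffices to find a single line $\ell_{*}\in\mathbb{G}$ for which the constant section $\gamma(z)=(t_z,\ell_{*})$ has the required properties.

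For the choice of $\ell_{*}$, fix any $z_1\in\mathbb{D}\setminus\{0\}$ and invoke the Hamm--L\^{e} Lefschetz-type theorem of \cite{HL} at the fibers $Y_{t_0}$ and $Y_{t_{z_1}}$, just as in the proof of Theorem~\ref{dim-red}: the subsets $A_0,A_{z_1}\subset\mathbb{G}$ of $\Lambda_{t_0}$-admissible and $\Lambda_{t_{z_1}}$-admissible lines each contain a dense real semi-algebraic open set. Moreover, since $\Lambda_{t_0}\subsetneq Y_{t_0}$, the locus $B_0\subset\mathbb{G}$ of lines transverse to $\Lambda_{t_0}$ is Zariski open and dense. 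Pick $\ell_{*}$ in the non-empty open intersection $A_0\cap A_{z_1}\cap B_0$. By construction $\ell_0=\ell_{*}$ is $\Lambda_{t_0}$-admissible and transverse to $\Lambda_{t_0}$, and $\ell_{z_1}=\ell_{*}$ is $\Lambda_{t_{z_1}}$-admissible; transversality at every $z\in\mathbb{D}^{*}$ then follows from the openness of the transversality condition after shrinking $\mathbb{D}$.

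The main obstacle is to extend admissibility to every $z\in\mathbb{D}^{*}$ from the single choice $\ell_{*}$, since admissibility is a $\pi_1$-surjection condition that does not, in general, propagate from the central fiber to nearby ones when $\Lambda_{t_z}$ degenerates topologically at $t_0\in V$. The device that handles this is Varchenko's Theorem~\ref{V} applied to the family of triples $(Y\times_T\mathbb{D},\Lambda\times_T\mathbb{D},\mathcal{L}_{*})\to\mathbb{D}$, where $\mathcal{L}_{*}=\{(z,y)\mid y\in\ell_{*}\subset\{z\}\times\mathbb{P}^n\}$ is the image of the chosen constant section: after shrinking $\mathbb{D}$ it yields a topological local triviality of the family over $\mathbb{D}^{*}$, so the relative pair $(\ell_z\setminus\Lambda_{t_z},Y_{t_z}\setminus\Lambda_{t_z})$ has constant homotopy type along $\mathbb{D}^{*}$. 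Consequently the admissibility verified at $z_1$ propagates to every $z\in\mathbb{D}^{*}$, and together with the admissibility at $z=0$ built into $\ell_{*}\in A_0$, the constructed section $\gamma$ fulfils all the conclusions of the lemma.
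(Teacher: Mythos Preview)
Your argument has a circularity that breaks it. You fix $z_1 \in \mathbb{D}^*$, choose $\ell_*$ depending on $z_1$ (via the admissibility set $A_{z_1}$), and only then apply Var\v{c}enko to the triple $(Y,\Lambda,\mathcal{L}_*)$ over $\mathbb{D}$ and shrink. But the equisingularity locus produced by Var\v{c}enko depends on $\ell_*$, hence ultimately on $z_1$, and nothing guarantees that $z_1$ survives the shrinking; if it does not, you have no anchor from which to propagate admissibility across $\mathbb{D}^*$, and you cannot simply re-choose $z_1$ afterwards without also changing $\ell_*$. A secondary issue is that Theorem~\ref{V} is stated for algebraic families over constructible bases, so invoking it directly over the analytic disk $\mathbb{D}$ is not justified without further argument (e.g.\ taking $\Gamma$ inside an algebraic curve, or citing an analytic stratification theorem). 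Your transversality step is also loose: $\ell_*\in B_0$ is transversality to $\Lambda_{t_0}$ inside $Y_{t_0}$, not transversality to $\Lambda$ inside $Y$, and it is the latter that the lemma asks for on $\mathbb{D}^*$.

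The paper avoids all of this by working over the algebraic Grassmannian bundle $\mathbb{G}^Y$ from the outset. It applies Theorem~\ref{V} to the tautological family over $\mathbb{G}^Y$, obtains a closed bad locus $\overline{W_0}$, splits off its $\pi_T$-saturated part $W_0^v$ (projecting to a proper closed $T_1\subset T$), and iterates over $T_1, T_2,\ldots$ until the saturated part is empty. This produces a single closed set $W\subset\mathbb{G}^Y$ whose complement surjects onto $T$. One can then first pick $\ell_0$ with $(t_0,\ell_0)\notin W$, and only afterwards choose the arc $\Gamma$ with $\Gamma\cap(V\cup T_1)=\{t_0\}$; closedness of $W$ immediately gives $(t_z,\ell_0)\notin W$ for small $|z|$, with no dependence loop.
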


\begin{dem}
Consider the subset $W_{0}$ of  pairs $(t,\ell)\in \mb G^{Y}$ such that $\ell$ is not transverse to $\Lambda$ or $\pi_{1}(\ell\setminus\Lambda_{t})\to\pi_{1}(Y_{t}\setminus\Lambda_{t})$ is not an epimorphism. 
By applying Theorem~\ref{V} to the family 
$$(Y\times_{T}\mb G^{Y},\Lambda\times_{T}\mb G^{Y},\{(y,t,\ell)\,|\,y\in\ell\subset Y_{t}\})\to\mb G^{Y}$$ 
as in the proof of Theorem~\ref{dim-red}, we deduce that the Zariski closure $\ov W_{0}$ of $W_{0}$ is a proper subset of $\mb G^{Y}$.
We decompose $\ov W_{0}=W_{0}^{h}\cup W_{0}^{v}$, where $W_{0}^{v}$ is the maximal $\pi_{T}$-saturated subset of $\ov W_{0}$. Consider the Zariski closed subset $T_{1}=\pi_{T}(W_{0}^{v})\subset T$ and the subset $W_{1}$ of pairs $(t,\ell)\in\pi_{T}^{-1}(T_{1})$ such that $\ell$ is not transverse to $\Lambda\cap\pi^{-1}_{T}(T_{1})$ or $\pi_{1}(\ell\setminus\Lambda_{t})\to\pi_{1}(Y_{t}\setminus\Lambda_{t})$ is not an epimorphism. Again by Theorem~\ref{V} the Zariski closure  $\ov W_{1}$ is a proper subset of $\pi_{T}^{-1}(T_{1})$ that we can decompose as $W_{1}^{h}\cup W_{1}^{v}$ for a maximal $\pi_{T}$-saturated subset $W_{1}^{v}$. We continue to define $W_{i}$ inductively until $W_{\dim T}^{v}=\emptyset$ thanks to Theorem~\ref{dim-red}.
Consider the Zariski closed subset $W:=\bigcup\limits_{i=0}^{\dim T}W_{i}^{h}\subset\mb G^{Y}$. By construction the restriction of $\pi_{T}$ to $\mb G^{Y}\setminus W$ is surjective. Let $U$ be a neighborhood of $t_{0}$ in $T$ such that $Y_{U}\simeq U\times\p^{n}$ and $\mb G^{Y}_{U}\simeq U\times\mb G$.
Take a regular parametrization $z\mapsto t_{z}$ of a smooth analytic curve $\Gamma\subset U\subset T$ such that $\Gamma\cap (V\cup T_{1})=\{t_{0}\}$.
Choose a point $\ell_{0}\in\mb G$ such that $(t_{0},\ell_{0})\in(U\times\mb G)\setminus W$. Then $\gamma(z):=(t_{z},\ell_{0})\notin W$, for $|z|$ small enough, satisfies the desired properties. 
\end{dem}

\begin{proof}[Proof of Theorem~\ref{TC}]
By Proposition~\ref{TB}, $\mr{Gal}(T)=\bigcup_{i=1}^{r}T_{i}$ where $T_{i}\subset T$ is quasi-projective and irreducible. Take a desingularization $\delta_{i}:\wt T_{i}\twoheadrightarrow\ov{T}_{i}\subset T$ of the closure $\ov T_{i}$  of $T_{i}$ in $T$ and consider the corresponding pull-back family over $\wt T_{i}$, whose generic element is Galois (more precisely $\delta_{i}^{-1}(T_{i})\subset\mr{Gal}(\wt T_{i})$).

On the other hand, for every  irreducible Zariski closed subset $S$ of $T$ we 
fix a desingularization $\delta_{S}\colon\wt S\twoheadrightarrow S\subset T$ 
of $S$ and we consider the pull-back family $\wt f$ of $f$ by $\delta_{S}$. Consider the Zariski open subset $\wt S'$ of $\wt S$ given by Proposition~\ref{TB} along which the genus of $\wt f_{\wt s}$ is constant. Then the genus of $f_{s}$ is constant along a Zariski open subset $U_{S}\subset\delta_{S}(\wt S')\subset S$.

In both situations, taking $T=\wt T_{i}$ or $T=\wt S$, we are in the case that $T'\subset\mr{Gal}(T)$ and the genus $\mf g_{t}$ of $f_{t}$ is constant along a Zariski open subset $T'$ of $T$. 
It is sufficient to prove that if $t_{0}\in T\setminus T'$ then $f_{t_{0}}$ is Galois with the same abstract monodromy group than $f_{t}$ and that $\mf g_{t_{0}}\le \mf g_{t}$ thanks to Lemma~\ref{semicont}.

We fix a desingularization $\wt f:\wt X\to Y$ of $f$ where $\beta:\wt X\to X$ is a composition of blow-ups centered in $\Sigma_{f}\subset X$ and $\wt f=f\circ \beta$. Then $\wt f$ is proper (because $\beta$ and $\pi_{X}$ are proper) and
 $\Lambda:=\wt f(\nabla_{\wt f}\cup\nabla_{\beta})$ is a Zariski closed subset of $Y$ thanks to Remark~\ref{closed}. Notice that the restriction of $\wt f$ to $\wt f^{-1}(Y\setminus\Lambda)$ is a covering onto its image.
 Moreover, thanks to the constancy of the topological degree of $f_{t}$ we have that $\Lambda_{t}:=\Lambda\cap Y_{t}$ is a proper subset of $Y_{t}$ containing~$\Lambda_{f_{t}}$. 

By applying Lemma~\ref{aux} to the Zariski  closed  sets $V=T\setminus T'$ and $\Lambda$, for each $t_{0}\in V$ we obtain a curve $\Gamma$ passing through $t_{0}$ and a $\p^{1}$-bundle
$$\mc L:=\{y\in Y\,|\,\exists z\in\mb D,\ y\in \ell_{z}\subset Y_{t_{z}}\}$$ 
over $\mb D\simeq\Gamma\subset T$. Since $\mb D$ is Stein $\mc L\simeq\mb\p^{1}\times\mb D$. 

Let $g:\mc M\to\mc L$ be the composition of the normalization $\nu$ of $\wt f^{-1}(\mc L)$ and the restriction of $\wt f$. Then $g$ is a degree $d=\deg f$ branched covering because it is surjective proper and finite ($\mc L$ can be chosen to avoid the codimension $\ge 2$ subvariety $\wt f(\mc C_{\wt f})$ of $\Lambda$) and $\Delta_{g}\subset\Lambda\cap\mc L$.
Moreover $g$ satisfies the following properties:
\begin{enumerate}[(1)]
\item $\mc L_{z}\not\subset\Delta_{g}$ because $\mc L_{z}$ contains generic points of $Y_{t_{z}}$ and $f_{t_{z}}$ has topological degree $d$;
\item $\mc L_{z}$ meets transversely $\Delta_{g}$ for every $z\in\mb D^{*}$ because $\ell_{z}$ is transverse to $\Lambda\supset\Delta_{g}$;
\item for every $z\in\mb D^{*}$ the fibre $\mc M_{z}$ is a smooth curve and the restriction $g_{z}:\mc M_{z}\to\mc L_{z}\simeq\p^{1}$ is a  degree $d$ branched covering with $\Delta_{g_{z}}=\Delta_{g}\cap\mc L_{z}$ thanks to Proposition~\ref{Z}.
\end{enumerate}
Hence $g:\mc M\to\p^{1}\times\mb D$ is a degenerating family of finite branched coverings of $\p^{1}$ in the sense of \cite[\S 5]{NT}. 
Let us denote $Y'=Y\setminus\Lambda$, $X'=f^{-1}(Y')$, 
$\mc L'=\mc L\cap Y'$ and $\mc M'=g^{-1}(\mc L')$. We can identify the $d$-sheeted coverings $g\colon\mc M'\to\mc L'$ and $f\colon f^{-1}(\mc L')\to \mc L'$ via the isomorphism $\beta\circ\nu_{|\mc M'}$.
Since $X'_{t_{z}}\to Y'_{t_{z}}$ is a connected $d$-sheeted covering and 
$\ell_{z}$ is a $\Lambda_{t_{z}}$-admisible curve we deduce that $\mc M_{z}'$ is connected for every $z\in\mb D$. Furthermore $\mc M_{z}$ is a connected smooth curve for $z\in\mb D^{*}$ and $\mc M_{0}'$ determines an irreducible component of $\mc M_{0}$ and the remaining irreducible components of $\mc M_{0}$, if they exist, must be $0$-dimensional because $\mc M_{0}'\to\mc L_{0}'$ is a $d$-sheeted covering and $\deg g=d$. A  topological argument implies that $\mc M_{0}$ must be connected because $\mc M$ is connected, the map $\mc M\to \mb D$ is proper and $\mc M_{z}$ is connected for $z\in\mb D^{*}$. Hence $\mc M_{0}$ is an irreducible curve.
 
From \cite[Theorem~5]{NT} and Theorem~\ref{dim-red} we deduce that the monodromy group $\mr{Mon}(g_{0}^{\nu})=\mr{Mon}(f_{t_{0}})$ of $g_{0}^{\nu}:\mc M_{0}^{\nu}\to\mc L_{0}$ injects canonically into the monodromy group $\mr{Mon}(g_{z})=\mr{Mon}(f_{t_{z}})$ of $g_{z}$ for $z\in\mb D^{*}$. 
Since $f_{t_{z}}$ is Galois for $z\in \mb D^{*}$ the monodromy group $\mr{Mon}(f_{t_{z}})$ has $d$ elements. Since $\mc M_{0}$ is irreducible, the monodromy group $\mr{Mon}(g_{0}^{\nu})\subset\mf S_{d}$ acts transitively on $\{1,\ldots,d\}$. Hence $\mr{Mon}(f_{t_{0}})=\mr{Mon}(f_{t_{z}})$ and consequently $f_{t_{0}}$ is Galois with the same monodromy group as $f_{t}$.  

On the other hand, from \cite[Theorem~4]{NT} it follows that $\chi(\mc M_{z})\le\chi(\mc M_{0})$ for $z\in\mb D^{*}$. Then
$$2-2\mf g_{t_{z}}=\chi(\mc M_{z})\le\chi(\mc M_{0})=\chi(\mc M_{0}^{\nu})-\sum_{x\in \mc M_{0}}(\beta_{x}-1)\le\chi(\mc M_{0}^{\nu})=2-2\mf g_{t_{0}},$$
where $\beta_{x}$ is the number of branches of the irreducible curve $\mc M_{0}$ at $x$.
\end{proof}

\section{Foliations and webs}\label{SFW}

Given a codimension one holomorphic foliation $\mathcal F$ on the projective space $\p^n$, its associated Gauss map
$\G_{\mathcal F}$ induces a well defined web $\operatorname{Leg}\mathcal F$ on the dual space $\pd^n$ which is called the Legendre
transform of $\mathcal F$, provided that $\G_{\F}$ is dominant. In this section we study the direct image of foliations and webs by more general rational maps. In particular we deduce that the foliation $\mathcal F$ is Galois, which means that the Gauss map $\G_{\mathcal F}$ is Galois, if and only the web
${\G}^\ast_{\mathcal F}\operatorname{Leg}\mathcal F$ is totally decomposable. This criterion will be the starting point of the discussion of Galois foliations on the projective plane carried out in Section~\ref{S6}.

We begin by recalling the notion of web given for instance in \cite[\S1.3.1 and \S1.3.3]{PP}.

\begin{defin}\label{Dweb}
For a positive integer $k$, a codimension one  $k$-web $\W$ on a complex manifold $Y$ is  given by an open cover $\{V_{i}\}$ of $Y$ and $k$-symmetric forms $\omega_{i}\in\mr{Sym}^{k}\Omega^{1}_{Y}(V_{i})$ subject to the conditions 
\begin{enumerate}[(a)]
\item for each non-empty intersection $V_{i}\cap V_{j}$ there exists a non-vanishing function $g_{ij}\in\mc O_{Y}^{*}(V_{i}\cap V_{j})$ such that $\omega_{i}=g_{ij}\omega_{j}$;
\item  the zero set $\mr{Sing}(\omega_{i})$ of $\omega_{i}$ has codimension at least two;
\item  the germ of $\omega_{i}$ at every generic point of $V_{i}$ is a product of $k$  germs of  integrable $1$-forms $\omega_{i\alpha}$, $\alpha=1,\ldots,k$
that are not collinear two by two. 
\end{enumerate} 
The subset of $Y$ where the non-collinearity condition fails is called the \emph{discriminant} of $\W$ and it is denoted by $\Delta(\W)$. The \emph{singular set} $\Sigma_{\W}$ of $\W$ is defined by $\Sigma_{\W}\cap V_{i}=\mr{Sing}(\omega_{i})$ and it is contained in $\Delta(\W)$.
\end{defin}

Notice that for $k=1$ we recover the usual definition of (singular) codimension one foliation $\F$ (see \cite{Brunella,CCD}). In that case $\Delta(\F)=\Sigma_{\F}$ is just the singular set of $\F$.
For arbitrary $k\ge 2$, a $k$-web always looks like locally  as the superposition of $k$ foliations, but not necessarily globally. If this is the case we say that the web is \emph{totally decomposable}. In fact, there is a \emph{monodromy representation} $\mu_{\W}:\pi_{1}(Y\setminus\Delta(\W))\to\mf S_{k}$ of $\W$ which determines the irreducible subwebs of $\W$ and whose triviality is equivalent to the total decomposability of $\W$ (see \cite[\S1.3.3 and \S1.3.4]{PP}).
Condition~(c) allows us to define the \emph{tangent set} $T_{y}\W$ of $\W$ at a  point $y\in U_{i}\setminus\Delta(\W)$ as the union of the $k$ different kernels at $y$ of the linear factors of $\omega_{i}(y)$. 

\begin{obs}\label{primeW}
Let $L\to Y$ be the line bundle associated to the cocycle $\{g_{ij}\}$.
The collection $\{\omega_i\}$ defines an element in  $H^0(Y,\mr{Sym}^k\Omega_Y^1\otimes L)$ which can be interpreted as a meromorphic $k$-symmetric form $\omega$ on $Y$.
Condition~(c) implies that the prime decomposition of $\omega=\prod_{\alpha}\omega_{\alpha}$ is reduced. Each prime factor $\omega_{\alpha}$ defines an irreducible web $\W_{\alpha}$ on $Y$ such that $\W$ is the superposition of the webs $\W_{\alpha}$.
\end{obs}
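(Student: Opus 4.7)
The plan is to prove the four claims of the remark in order: (i) globalization of the local data into a section of $\mr{Sym}^{k}\Omega_{Y}^{1}\otimes L$, (ii) reinterpretation as a meromorphic symmetric form, (iii) reducedness of the prime factorization under condition (c), and (iv) identification of each prime factor with an irreducible subweb whose superposition recovers $\W$.

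First, I would observe that condition (a) in Definition~\ref{Dweb} is exactly the $1$-cocycle compatibility condition for gluing: the forms $\omega_{i}\in\mr{Sym}^{k}\Omega_{Y}^{1}(V_{i})$ transform on overlaps by the non-vanishing cocycle $\{g_{ij}\}$, so they define a global section of the twisted sheaf $\mr{Sym}^{k}\Omega_{Y}^{1}\otimes L$ where $L$ is the line bundle associated to $\{g_{ij}\}$. Choosing meromorphic non-vanishing local sections of $L$ trivializes $L$ birationally and lets us view this global section as a meromorphic $k$-symmetric form $\omega$ on $Y$; condition (b) guarantees that its zero/pole divisor is well-defined and has codimension one generically.

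Next I would analyze the prime factorization algebraically. For each $y\in Y$, the stalk $\mr{Sym}^{\bullet}\Omega_{Y,y}^{1}$ is a polynomial ring over the regular local ring $\mc O_{Y,y}$ in a basis of $\Omega_{Y,y}^{1}$, hence a UFD. Therefore $\omega_{i,y}$ admits a unique (up to units and ordering) factorization into irreducibles, which glue to global prime factors $\omega_{\alpha}$ of $\omega$. Condition (c) asserts that at a generic point $y$, $\omega_{i,y}$ splits as a product $\prod_{\alpha=1}^{k}\omega_{i\alpha}$ of $k$ pairwise non-collinear integrable $1$-forms. Non-collinearity means that no two linear factors are associate in the stalk, so the decomposition has no repeated prime factor at a generic point. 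Since the multiplicity of any prime factor of $\omega$ can be read off at a generic point of the associated prime divisor, the global prime decomposition $\omega=\prod_{\alpha}\omega_{\alpha}$ is reduced.

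Finally, each prime factor $\omega_{\alpha}$ is itself a section of a suitable twist of $\mr{Sym}^{k_{\alpha}}\Omega_{Y}^{1}$, so it defines a $k_{\alpha}$-web $\W_{\alpha}$ on $Y$ with $\sum k_{\alpha}=k$; algebraic irreducibility of $\omega_{\alpha}$ translates into transitivity of the monodromy representation on the $k_{\alpha}$ local branches, i.e.\ irreducibility of $\W_{\alpha}$ in the sense used in \cite{PP}. Since $\omega=\prod_{\alpha}\omega_{\alpha}$, the tangent set $T_{y}\W$ at a generic point is the union of the tangent sets $T_{y}\W_{\alpha}$, so $\W$ is the superposition of the $\W_{\alpha}$.

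The main obstacle will be the bridge between two points of view in step three: relating algebraic irreducibility of $\omega_{\alpha}$ in the stalk (a UFD statement) to irreducibility of $\W_{\alpha}$ as a web (a monodromy statement). The key is that the local linear factors of $\omega$ at a generic point $y$ are permuted by the monodromy $\mu_{\W}$, and the orbits of this action correspond bijectively to the prime factors of $\omega$; this identification is precisely what makes the two notions of irreducibility coincide and ensures that the global decomposition produced by the UFD argument coincides with the monodromy decomposition of \cite[\S1.3.3--1.3.4]{PP}.
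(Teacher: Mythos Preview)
The paper states this as a remark without proof, treating it as a standard observation drawn from the definition of a web and the reference \cite[\S1.3.3--1.3.4]{PP}. Your argument is correct and supplies exactly the details the paper leaves implicit: the cocycle interpretation of condition~(a), the UFD structure of $\mr{Sym}^{\bullet}\Omega_{Y,y}^{1}$ over the regular local ring $\mc O_{Y,y}$, and the identification of prime factors with monodromy orbits.

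One small phrasing issue: when you say the multiplicity of a prime factor ``can be read off at a generic point of the associated prime divisor,'' you actually want a generic point of $Y$ itself, away from $\Delta(\W)$ and the zero locus of~$\omega_{\alpha}$. At such a point the local decomposition into $k$ pairwise non-collinear $1$-forms from condition~(c) is available, and a prime factor $\omega_{\alpha}$ of multiplicity $m\ge 2$ would contribute $m$ collinear linear factors there, contradicting~(c). Otherwise your reasoning is sound and matches the intended content of the remark.
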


\subsection{Developing a web}\label{S4.1}

If $f:X\dasharrow Y$ is a dominant rational map and $\W$ is a $k$-web on $Y$
then the \emph{inverse image}  (or \emph{pull-back}) $f^{*}\W$ of $\W$ by $f$ is a well-defined $k$-web on $X$. Outside the indeterminacy locus $\Sigma_{f}$, $f^{*}\W$ is determined by the pull-back of the symmetric forms $\eta_{i}$ defining $\W$. It extends to $\Sigma_{f}$ by means of Levi's extension theorem.

The \emph{direct image} (or \emph{push-forward}) of a web $\W$ by a rational map $f$ is not defined in general. It is only defined for dominant rational maps and webs fulfilling some generic conditions. Let us consider first the case $\W$  is just a foliation. To this end we introduce the following notion.

\begin{defin}\label{gp}
We say that a holomorphic foliation $\F$ on $X$ is in \emph{general position} with respect to a dominant rational map $f:X\dasharrow Y$, or that $\F$ is \emph{$f$-general}, if for generic $y\in Y$ the set of tangent spaces $\{df_x(T_x\F)\,|\,x\in f^{-1}(y)\}$ has exactly $\deg f$ elements.
\end{defin}

Clearly, the set of $f$-general foliations is open. The following result shows that it is non-empty.

\begin{prop}\label{existgen}
For every dominant rational map  $f:X\dasharrow Y$  between projective manifolds of the same dimension $n\ge 2$ there exists a codimension one $f$-general foliation on $X$.
\end{prop}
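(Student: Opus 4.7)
The plan is to construct the desired foliation explicitly as the zero locus of an integrable twisted $1$-form whose pointwise kernels at a generic fiber of $f$ are prescribed.

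First, I would apply Proposition~\ref{Vb} to pick a point $y_{0}\in Y$ away from $\Lambda_{f}$ so that $f^{-1}(y_{0})=\{x_{1},\ldots,x_{d}\}$ consists of $d=\deg f$ distinct smooth points of $X\setminus\Sigma_{f}$ and each differential $df_{x_{i}}\colon T_{x_{i}}X\stackrel{\sim}{\to}T_{y_{0}}Y$ is a linear isomorphism. Since $n\ge 2$, the projective space $\p(T^{*}_{y_{0}}Y)$ has positive dimension, so I can choose $d$ covectors $\alpha_{1},\ldots,\alpha_{d}\in T^{*}_{y_{0}}Y\setminus\{0\}$ that are pairwise non-proportional; equivalently, the hyperplanes $\ker\alpha_{i}\subset T_{y_{0}}Y$ are pairwise distinct. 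The goal is to realize $\ker\alpha_{i}$ as $df_{x_{i}}(T_{x_{i}}\F)$ for a single global foliation $\F$ on $X$.

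To do this, pick a very ample line bundle $A$ on $X$ and set $L=A^{\otimes k}$ for $k$ large. Denoting by $\mc I$ the ideal sheaf of $\{x_{1},\ldots,x_{d}\}$, Serre vanishing gives $H^{1}(X,L\otimes\mc I^{2})=0$ for $k\gg 0$, whence the simultaneous $1$-jet evaluation
$$H^{0}(X,L)\twoheadrightarrow\bigoplus_{i=1}^{d}J^{1}_{x_{i}}(L)$$
is surjective. Using this, I would produce sections $s_{1},s_{2}\in H^{0}(X,L)$ whose $1$-jets at each $x_{i}$, in a fixed local trivialization of $L$, satisfy $s_{2}(x_{i})=1$, $ds_{2}(x_{i})=0$, $s_{1}(x_{i})=0$ and $ds_{1}(x_{i})=(df_{x_{i}})^{*}\alpha_{i}$. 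The twisted $1$-form
$$\omega:=s_{2}\,ds_{1}-s_{1}\,ds_{2}\in H^{0}(X,\Omega^{1}_{X}\otimes L^{\otimes 2})$$
is well defined (it transforms correctly under a change of trivialization) and integrable, as a direct check yields $\omega\wedge d\omega=-2(s_{2}ds_{1}-s_{1}ds_{2})\wedge ds_{1}\wedge ds_{2}=0$; alternatively, on $\{s_{2}\neq 0\}$ one has $\omega=s_{2}^{2}\,d(s_{1}/s_{2})$. After removing its divisorial part, $\omega$ defines a codimension one foliation $\F$ on $X$.

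By construction $\omega(x_{i})=(df_{x_{i}})^{*}\alpha_{i}\neq 0$, so that $T_{x_{i}}\F=\ker\omega(x_{i})=(df_{x_{i}})^{-1}(\ker\alpha_{i})$, whence $df_{x_{i}}(T_{x_{i}}\F)=\ker\alpha_{i}$. The hyperplanes $\ker\alpha_{1},\ldots,\ker\alpha_{d}\subset T_{y_{0}}Y$ were chosen to be pairwise distinct, so $\F$ is $f$-general at $y_{0}$. Since the condition of $f$-generality is Zariski open in $y\in Y$ (it is the non-vanishing of a determinant in the families produced by Proposition~\ref{Vb}), $\F$ is $f$-general. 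The only delicate point is the $1$-jet separation step, which is classical via Serre vanishing but must be set up carefully to ensure the freedom to prescribe simultaneously the values and differentials of two sections at the $d$ points of the fiber.
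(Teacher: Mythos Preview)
Your proof is correct and takes a genuinely different route from the paper's argument. Both proofs share the same skeleton: fix a generic fiber $f^{-1}(y_0)=\{x_1,\ldots,x_d\}$, prescribe pairwise distinct hyperplanes $h_i\subset T_{y_0}Y$, and build a codimension one foliation $\F$ on $X$ with $df_{x_i}(T_{x_i}\F)=h_i$. The difference lies entirely in how the foliation is manufactured.

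The paper realizes $\F$ as a pull-back from $\p^2$: it embeds $X\subset\p^N$, chooses a generic linear projection $g:X\dasharrow\p^2$ separating the $x_i$ and transverse to $T_{x_i}X$, transports the target hyperplanes $h_i$ to directions $r_i\subset T_{p_i}\p^2$ at the image points $p_i=g(x_i)$, and interpolates these by an explicit polynomial vector field $\partial_u+p(u)\partial_v$ on $\p^2$. Then $\F=g^*\F_0$. This is elementary and constructive: no cohomology enters, only linear projections and one-variable polynomial interpolation.

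Your approach stays on $X$: Serre vanishing for $L=A^{\otimes k}$ lets you prescribe simultaneous $1$-jets of two sections $s_1,s_2$ at the $d$ points, and the twisted $1$-form $\omega=s_2\,ds_1-s_1\,ds_2$ is automatically integrable (locally $\omega=s_2^2\,d(s_1/s_2)$). This avoids the auxiliary projection and the bookkeeping of codimension-two subspaces $\ell_j=df_{x_j}(\ker dg_{x_j})$, at the cost of invoking an ampleness/vanishing argument. Your construction also yields a foliation admitting a rational first integral $s_1/s_2$, which the paper's pull-back construction does as well (via $g$ composed with the first integral of $\F_0$); in both cases this is incidental. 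Either method is perfectly adequate here.
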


\begin{dem} 
Fix $y_{0}\in Y\setminus\Delta_{f}$ and consider the fibre \mbox{$f^{-1}(y_{0})=\{x_{1},\ldots,x_{d}\}\subset X$.} Let us fix an embedding $X\subset\p^{N}$ and let us consider 
 an affine chart $\mb A^{N}\subset\p^{N}$ containing the points $x_{i}$, $i=1,\ldots,d$.
There exists a linear projection $\bar g:\mb A^{N}\to\mb A^{2}$ such that $p_{i}=\bar g(x_{i})$ are pairwise different points and $\ker  d\bar g_{x_{i}}+T_{x_{i}}X=T_{x_{i}}\mb A^{N}$. Let $g:X\dasharrow\p^{2}$ be the restriction of $\bar g$ to~$X$.
Consider the codimension two subspaces 
$\ell_{j}:=df_{x_{j}}(\ker dg_{x_{j}})\subset T_{y_{0}}Y$. For each $j=1,\ldots,d$, we fix pairwise different codimension one subspaces $h_{j}$ of $T_{y_{0}}Y$ containing $\ell_{j}$. Consider the one-dimensional subspaces $r_{j}=dg_{x_{j}}(df_{x_{j}}^{-1}(h_{j}))\subset T_{p_{j}}\p^{2}$. 
We fix affine coordinates $(u,v)$ on $\mb A^{2}\subset\p^{2}$ such that $p_{j}=(u_{j},v_{j})$ with $u_{i}\neq u_{j}$ if $i\neq j$ and $r_{j}$ has equation $v=a_{j}u+b_{j}$ with $a_{j},b_{j}\in\C$. Let $p(u)$ be a polynomial such that $p(u_{j})=a_{j}$.
The vector field $\partial_{u}+p(u)\partial_{v}$ defines a foliation $\F_{0}$ on $\p^{2}$  such that $T_{p_{j}}\F_{0}=r_{j}$.
Then $\F=g^{*}\F_{0}$ is a $f$-general foliation on $X$ because $df_{x_{j}}(T_{x_{j}}\F)=h_{j}\subset T_{y_{0}}Y$ are pairwise different subspaces.
\end{dem}

\begin{prop}\label{direct}
Let $f:X\dasharrow Y$ be a dominant rational map of degree $d$ between projective manifolds of the same dimension and let $\F$ be a codimension one holomorphic $f$-general foliation on $X$. There is a unique $d$-web 
$f_{*}\F$ on $Y$, called the \emph{direct image} of $\F$ by $f$,
such that $T_{y}f_{*}\F=\bigcup\limits_{x\in f^{-1}(y)}d f_{x}(T_{x}\F)\subset T_{y}Y$ for generic $y\in Y$. 
\end{prop}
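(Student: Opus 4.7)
The plan is to construct $f_{*}\F$ first on the Zariski open set $V=Y\setminus\Lambda_{f}$, where $f$ restricts to a genuine étale covering of degree $d$, and then extend the resulting $d$-web meromorphically across $Y\setminus V$. Fix $y_{0}\in V$ with fibre $f^{-1}(y_{0})=\{x_{1},\ldots,x_{d}\}$ and choose local defining $1$-forms $\omega_{i}^{X}$ for $\F$ near each $x_{i}$. Pick a small connected neighborhood $U\subset V$ of $y_{0}$ so that $f^{-1}(U)=U_{1}\sqcup\cdots\sqcup U_{d}$ with $f_{i}:=f|_{U_{i}}\colon U_{i}\to U$ a biholomorphism containing $x_{i}$. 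Setting $\omega_{i}:=(f_{i}^{-1})^{*}\omega_{i}^{X}$, form the $d$-symmetric form
\[
\Omega_{U}\,:=\,\omega_{1}\cdot\omega_{2}\cdots\omega_{d}\ \in\ \mr{Sym}^{d}\Omega^{1}_{Y}(U).
\]
Since $\ker\omega_{i}(y)=df_{x_{i}}(T_{x_{i}}\F)$ for $x_{i}=f_{i}^{-1}(y)$, the $f$-generality of $\F$ ensures that, for $y$ in a Zariski open subset of $U$, the linear factors of $\Omega_{U}(y)$ are pairwise non-collinear, so $\Omega_{U}$ really defines a $d$-web on $U$ whose tangent set is the prescribed one.

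To glue these pieces into a global object on $V$, observe that if we vary $U$ or transport the labels by a loop in $V$, the sheets $U_{i}$ get permuted by the monodromy of $f_{V}\colon X_{V}\to V$, a subgroup of $\mf S_{d}$; but $\Omega_{U}$ is by construction invariant under any such permutation. Likewise, two choices of local defining forms $\omega_{i}^{X}$ differ by elements of $\mc O^{*}_{X}$, which assemble into a cocycle in $\mc O^{*}_{Y}$ on $V$. Thus the $\Omega_{U}$ patch into a well-defined $d$-symmetric form on $V$ with values in a line bundle $L_{V}\to V$, and this defines a $d$-web $\W_{V}$ on $V$ satisfying the formula for $T_{y}\W_{V}$ stated in the proposition. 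Uniqueness is immediate on $V$ since a web is determined by its generic tangent directions; one then uses the same fact to obtain global uniqueness.

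The remaining—and main—step is to extend $\W_{V}$ across the hypersurface $Y\setminus V\subset\Lambda_{f}$. The convenient setting is to work with the associated finite branched covering $\rho\colon N\to Y$ and the desingularization $\wt f=\rho\circ\gamma$ of Proposition~\ref{rho}: on the smooth manifold $\wt X$, the foliation $\wt\F:=\beta^{*}\F$ has a global defining twisted $1$-form, and the coefficients of $\Omega_{V}$ in a local frame are the elementary symmetric functions of the coefficients of the sheetwise push-forwards, hence are traces under the finite morphism $\rho$ of meromorphic functions on $N$. Such traces extend meromorphically to all of $Y$, which shows that $\Omega_{V}$ prolongs to a meromorphic section of $\mr{Sym}^{d}\Omega^{1}_{Y}\otimes L$ on $Y$ for a suitable line bundle $L$; dividing out the polar divisor and then the common codimension-one factors of the coefficients yields a section satisfying conditions (a)--(c) of Definition~\ref{Dweb} on all of $Y$, that is, a $d$-web $f_{*}\F$ extending $\W_{V}$. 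The main technical obstacle lies precisely here: one must control the behavior of the locally defined forms $\omega_{i}$ near the ramification locus of $f$ and near the image $f(\Sigma_{f})$ of the indeterminacy set, and check that after the extension no unwanted reduction of the number of web components takes place. The passage through the proper model $\wt f$ (where Levi-type extension across a smooth hypersurface and Hartogs extension across codimension $\ge 2$ can be applied directly) is what makes this step work cleanly.
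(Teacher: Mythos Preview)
Your outline is correct and parallels the paper's proof: both build the symmetric product $\prod_i (f_i^{-1})^*\omega_i$ on the unramified locus, extend it across the branch divisor $\Delta_f$, and then invoke Levi's extension across the codimension-$\ge 2$ set $\wt f(\mc C_{\wt f})$. The only substantive difference lies in the middle step. The paper carries out the extension across a generic point of $\Delta_f$ by an explicit local calculation in the normal form $f(z,w)=(z^{\varrho},w)$: writing $\omega_j = a_j\,dz + b_j\,dw$ on each local component, it forms the product over the $\varrho_j$-th roots $z^{1/\varrho_j}\zeta_j^i$ and observes directly that the resulting expression is a univalued meromorphic $d$-symmetric form near the branch point. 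Your trace argument is the abstract version of the same computation --- the coefficients of $\Omega_V$ are multi-symmetric polynomials in the sheetwise data, hence polynomials in the traces $\mr{Tr}_\rho(\text{monomials})$, which extend across the branch locus because $\rho$ is finite. Both routes yield the same meromorphic extension; the paper's is more hands-on and makes the pole structure visible, yours is more conceptual and avoids coordinates. One small imprecision in your wording: the coefficients are not literally ``elementary symmetric functions'' nor single traces, but multi-symmetric polynomials expressible (via Newton's identities) as polynomials in traces of monomials --- the conclusion is unaffected.
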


\begin{dem}
We follow the ideas sketched in \cite[\S1.3.2]{PP}. 
Let $\wt f:\wt X\to Y$ be a desingularization of $f$. Using the notations introduced in Subsection~\ref{3.2}, we fix an open set $V\subset Y\setminus \wt f(\nabla_{\wt f})$ such that $f^{-1}(V)=\bigsqcup_{m=1}^{d} U_m$, $f_{|U_m}$ is bijective onto $V$ and there are holomorphic $1$-forms $\omega_m$ on $U_m$ defining $\F$. Then $\omega_V:=\prod_{m=1}^d(f|_{U_m}^{-1})^*\omega_m$ is an element of $\mr{Sym}^d\Omega^1_V$. These $d$-symmetric forms differ by a non-vanishing multiplicative function in each non-empty intersection. Since $\F$ is $f$-general they define a $d$-web $\W_0$ on $Y\setminus \wt f(\nabla_{\wt f})$. 

In order to  extend $\W_0$ to the generic point of $\Delta_{f}\subset\wt f(\nabla_{\wt f})$  we will use the local normal form (\ref{NF}) of the branched covering $\rho:N\to Y$ given by the Stein factorization  of  $\wt{f}$. According to formula~(\ref{delta2}), 
let $y\in\Delta_{\rho}=\Delta_{f}$ be a generic point and let $V\subset Y$ be a neighborhood of $y$ such that $f^{-1}(V)= \bigsqcup_{j=1}^{k} U_j$, $f|_{U_j}(z,w)=(z^{\varrho_j},w)$ and $\F|_{U_j}$ is defined by the holomorphic $1$-form $\omega_j=a_j(z,w)dz+ b_j(z,w)dw$. Then
$$\omega_V':=\prod_{j=1}^{k}\prod_{i=1}^{\varrho_j}\Big(a_j(z^\frac{1}{\varrho_j}\zeta^i_j,w)z^{\frac{1}{\varrho_j}-1}\frac{\zeta_j^i}{\varrho_j} dz+b_j(z^\frac{1}{\varrho_j}\zeta^i_j,w)dw\Big), $$
is a univalued meromorphic $d$-symmetric form on $V$, where $\zeta_j$ is a primitive $\varrho_j$-root of unity.
Multiplying $\omega_V'$ by a suitable meromophic function on $V$ we obtain a holomorphic $d$-symmetric form $\omega_V$ on $V$ with $\mr{codim}(\mr{Sing}(\omega_V))\ge 2$. These symmetric forms define an extension of $\W_0$ to $Y\setminus\wt f(\mc C_{\wt f})$. 

Finally as $\wt f(\mc C_{\wt f})$ has codimension $\ge 2$, we can extend $\W_0$ to the whole $Y$ by using the standard argument based on Levi's extension theorem for meromorphic functions (see for instance \cite[Remarque 2.17]{CCD}). 
\end{dem}

\begin{defin}\label{dev-triple}
Let $\W$ be a web on a complex projective manifold $Y$. Assume that there is a 
dominant rational map $f:X\dasharrow Y$ and a $f$-general foliation $\F$ on $X$ such that $\W=f_{*}\F$. We then say that $(X,\F,f)$ is a developing triple of the web $\W$. Two developing triples $(X,\F,f)$ and $(\wh X,\wh\F,\wh f)$ of $\W$ are said to be birationally equivalent if there exists a birational map $g\colon X\dasharrow\wh X$ such that $g^{*}\wh\F=\F$ and $\wh f\circ g=f$.
\end{defin}

The following theorem reformulates
several results stated in \cite{Pan,CL1,CL2,PP}.
For the sake of completeness we give a sketch of proof.

\begin{teo}\label{univ}
For every web $\W$ on a complex projective manifold $Y$ there is a 
developing triple $(Z_{\W},\mc C_{\W},\pi_{\W})$ of $\W$ unique up to birational equivalence. Moreover, there is a natural bijection between the irreducible factors of $\W$ and the connected components of $Z_{\W}$.
\end{teo}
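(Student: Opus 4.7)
The plan is to construct $(Z_\W, \mc C_\W, \pi_\W)$ using the standard incidence variety inside the projectivized cotangent bundle $\p(T^*Y)$, and then to deduce both uniqueness and the correspondence between irreducible factors and connected components from its universal property.

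First, I would work over $Y_0 := Y\setminus\Delta(\W)$, where $\W$ is locally the superposition of $k$ foliations. On each trivializing open $V \subset Y_0$ the defining $k$-symmetric form factors as $\omega_V = \prod_{\alpha=1}^k \omega_{V,\alpha}$, and each prime factor $\omega_{V,\alpha}$ determines a holomorphic section $V \to \p(T^*V)$ given by the class of $\omega_{V,\alpha}$. The union of these local sections defines a subvariety $Z_\W^\circ \subset \p(T^*Y_0)$ projecting onto $Y_0$ as an \'etale $k$-to-$1$ covering. I take $Z_\W$ to be a desingularization of the Zariski closure of $Z_\W^\circ$ in $\p(T^*Y)$, and $\pi_\W\colon Z_\W \to Y$ to be the composition of the desingularization and the natural projection. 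To define $\mc C_\W$, I use that over each local branch $\pi_\W$ is biholomorphic and pull back the corresponding local foliation; this yields a foliation on a dense open subset of $Z_\W$ whose defining $1$-form extends across the codimension $\ge 2$ complement by Levi's theorem. By construction $\mc C_\W$ is $\pi_\W$-general and its tangent directions at generic points project onto those of $\W$, so Proposition~\ref{direct} yields $\pi_{\W*}\mc C_\W = \W$.

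For uniqueness, given another developing triple $(X,\F,f)$, I would consider the \emph{tangent map} $\tau\colon X \dasharrow \p(T^*Y)$ sending a generic $x$ to the annihilator of $df_x(T_x\F) \subset T_{f(x)}Y$. The $f$-generality of $\F$ forces $\deg f = k$ and identifies the image of $\tau$ with $Z_\W^\circ$, so after passing to the desingularization, $\tau$ lifts to a birational map $X\dasharrow Z_\W$ satisfying $\pi_\W \circ \tau = f$ and $\tau^*\mc C_\W = \F$, giving the desired birational equivalence. For the last assertion, both the connected components of $Z_\W$ and the irreducible factors of $\W$ are canonically in bijection with the orbits of the monodromy representation $\mu_\W\colon \pi_1(Y_0) \to \mf S_k$ (the former by covering space theory applied to $Z_\W^\circ \to Y_0$, the latter by the theory recalled in Remark~\ref{primeW}), so composing these bijections gives the claimed correspondence. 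The main technical obstacle I anticipate is the careful local analysis near $\Delta(\W)$ needed to verify that $\mc C_\W$ extends as a genuine foliation (rather than a proper subweb) and that the resulting triple is independent of the chosen resolution.
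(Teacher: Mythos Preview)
Your proposal is correct and follows essentially the same route as the paper: construct the incidence variety inside $\p T^{*}Y$, desingularize, and use the tangent map of an arbitrary developing triple to prove uniqueness; the bijection with irreducible factors is likewise obtained via the identification of the monodromies of $\W$ and of $\pi_{\W}$.

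The one genuine difference worth pointing out concerns your definition of $\mc C_{\W}$. You build it locally by pulling back the local foliations along the branches of $\pi_{\W}$ and then invoke Levi's theorem to extend, which is why you flag the behavior near $\Delta(\W)$ as the ``main technical obstacle''. The paper sidesteps this entirely: it defines $\mc C_{\W}$ as the pull-back to $Z_{\W}$ of the \emph{contact distribution} on $\p T^{*}Y$. Since $\Gamma_{\W}$ sits inside $\p T^{*}Y$ and at a generic point $(y,[\eta])$ the contact hyperplane is precisely $d\pi^{-1}(\ker\eta)$, this restriction is automatically integrable along $Z_{\W}$ and agrees with your local description without any extension argument. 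Your approach works, but the contact-distribution viewpoint is cleaner and also makes the verification $\pi_{\W*}\mc C_{\W}=\W$ and the compatibility $g^{*}\mc C_{\W}=\F$ in the uniqueness step essentially tautological.
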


\begin{dem} 
The closure $\Gamma_{\W}$ of 
$$\{(y,[\eta])\in\p T^{*}Y\,|\, y\notin\Delta(\W),\ \ker\eta\subset T_{y}\W\}$$
in $\p T^{*}Y$ 
is a projective (possibly singular) subvariety of dimension $n=\dim Y$. Fix a desingularization $\delta_{\W}:Z_{\W}\to\Gamma_{\W}\subset\p T^{*}Y$ of $\Gamma_{\W}$ which is biholomorphism outside $\mr{Sing}(\Gamma_{\W})$ and set $\pi_{\W}=\pi\circ\delta_{\W}$.
The pull-back  by $\delta_{\W}$ of the contact distribution $\mc C$ on $\p T^{*}Y$ is an integrable distribution on $Z_{\W}$ giving rise to a codimension one holomorphic foliation $\mc C_{\W}$ on $Z_{\W}$ that is in general position with respect to $\pi_{\W}$ and that satisfies $(\pi_{W})_{*}\mc C_{\W}=\W$, showing the existence of a developing triple of $\W$.

In order to show the uniqueness let us consider a dominant rational map $f:X\dasharrow Y$ with $\dim X=\dim Y$ and $\F$ a $f$-general foliation on $X$ such that $f_{*}\F=\W$. The rational map $\p T^{*}f:\p  T^{*}X\dasharrow\p T^{*}Y$ defined by $(x,[\eta])\mapsto (f(x),[\eta\circ df_{x}^{-1}])$  is dominant and preserves the contact distributions. Since $\F$ is $f$-general and $f_{*}\F=\W$ the restriction $g_{\Gamma}$ of $\p T^{*}f$ to $\Gamma_{\F}\subset\p T^{*}X$ has image contained in $\Gamma_{\W}$ and it is generically injective.
Then the composition $g:=\delta_{\W}^{-1}\circ g_{\Gamma}\circ\pi_{\F}^{-1}$
is birational and fulfills $g_{*}\F=\mc C_{\W}$.

The discriminant $\Delta(\W)$ of $\W$ contains  the set of critical values $\Delta_{\pi_{\W}}$  of~$\pi_{\W}$ and the monodromy representation 
$\mu_{\W}$ of $\W$ is the composition of the epimorphism induced by the inclusion $Y\setminus\Delta(\W)\subset Y\setminus\Delta_{\pi_{\W}}$ and the monodromy representation $\mu_{\pi_{\W}}$ of $\pi_{\W}$, cf. for instance \cite[\S1.3.3 and \S1.3.4]{PP}. In particular, the monodromy groups of $\W$ and $\pi_{\W}$ coincide and  the irreducible components of a web $\W$ on $Y$ considered in Remark~\ref{primeW} are in one to one correspondence with the connected (necessarily irreducible) components of the manifold $Z_{\W}$ of its developing triple.
Since the connected components of the total spaces of any two developing triples are in one to one correspondence, the above considerations  complete the proof.
\end{dem}

Let $\W$ be a $k$-web on $X$ and let $f\colon X\dasharrow Y$ be dominant rational map. We say that $\W$ is in general position with respect to $f$ if there is a developing triple $(Z_{\W},\mc C_{\W},\pi_{\W})$ of $\W$ such that the foliation $\mc C_{\W}$ is in general position with respect to the composition $f\circ\pi_{\W}$. In that case we define the direct image of $\W$ by $f$ as the $kd$-web $(f\circ\pi_{\W})_{*}\mc C_{\W}$. The above theorem guaranties that this definition does not depends on the choice of the developing triple of $\W$.

\begin{lema}
Let $f:X\dasharrow Y$ be a dominant rational map with $\dim X=\dim Y$ and let $\F$ be a  $f$-general foliation on $X$. Let
$\delta:Z\to X\times_{Y}X$ be a desingularization of the fibered product $X\times_{Y}X\subset X\times X$ and let  $p$ and $q$ denote the compositions of $\delta$ with the canonical projections onto the two factors $X$.
Then $(Z,q^{*}\F,p)$ is a developing triple of the web $f^{*}f_{*}\F$.
\end{lema}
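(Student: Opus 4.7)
The plan is to verify the two defining conditions of a developing triple, namely that $q^{*}\F$ is in general position with respect to $p$ and that $p_{*}q^{*}\F = f^{*}f_{*}\F$, by computing tangent spaces at generic points.

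First I would set up the basic geometry. Since $\dim X=\dim Y$, the fibered product $X\times_{Y}X$ is an $n$-dimensional subvariety of $X\times X$, hence $\dim Z=n$. The projection $p\colon Z\to X$ is generically finite of degree $d=\deg f$: for generic $x\in X\setminus\Sigma_{f}$ with $f(x)=y\notin\Delta_{f}$ and $f^{-1}(y)=\{x_{1},\ldots,x_{d}\}$ (with $x=x_{1}$ say), the fibre $p^{-1}(x)$ consists of $d$ points $z_{1},\ldots,z_{d}$ lying over $(x,x_{j})\in X\times_{Y}X$ on which $\delta$ restricts to a biholomorphism, and $q(z_{j})=x_{j}$. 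By symmetry the same holds for $q$, and both are local biholomorphisms at each such $z_{j}$.

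The key ingredient is the identity $f\circ p=f\circ q$ that holds on $Z$ by construction of the fibered product. Differentiating at a generic $z_{j}$ yields
\begin{equation*}
df_{p(z_{j})}\circ dp_{z_{j}} = df_{q(z_{j})}\circ dq_{z_{j}},
\end{equation*}
and since at a generic $z_{j}$ all four differentials are isomorphisms, we obtain
$dp_{z_{j}}\circ dq_{z_{j}}^{-1}=df_{x_{1}}^{-1}\circ df_{x_{j}}$. Because $q$ is a local biholomorphism at $z_{j}$, the pull-back foliation $q^{*}\F$ satisfies $T_{z_{j}}(q^{*}\F)=dq_{z_{j}}^{-1}(T_{x_{j}}\F)$, hence
\begin{equation*}
dp_{z_{j}}\bigl(T_{z_{j}}(q^{*}\F)\bigr) = df_{x_{1}}^{-1}\bigl(df_{x_{j}}(T_{x_{j}}\F)\bigr) \subset T_{x_{1}}X.
\end{equation*}

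Now I would check the two conditions. For $p$-generality of $q^{*}\F$: the $d$ subspaces $df_{x_{1}}^{-1}(df_{x_{j}}(T_{x_{j}}\F))$, $j=1,\ldots,d$, are pairwise distinct if and only if the $d$ subspaces $df_{x_{j}}(T_{x_{j}}\F)\subset T_{y}Y$ are pairwise distinct, which is exactly the hypothesis that $\F$ is $f$-general (Definition~\ref{gp}). Hence $q^{*}\F$ is in general position with respect to $p$, so by Proposition~\ref{direct} the direct image $p_{*}q^{*}\F$ is a well-defined $d$-web on $X$. For the equality $p_{*}q^{*}\F=f^{*}f_{*}\F$, at a generic $x_{1}\in X$ the characterization of the tangent set in Proposition~\ref{direct} gives
\begin{equation*}
T_{x_{1}}(p_{*}q^{*}\F) = \bigcup_{j=1}^{d}dp_{z_{j}}\bigl(T_{z_{j}}(q^{*}\F)\bigr) = df_{x_{1}}^{-1}\Bigl(\bigcup_{j=1}^{d}df_{x_{j}}(T_{x_{j}}\F)\Bigr) = df_{x_{1}}^{-1}\bigl(T_{y}(f_{*}\F)\bigr) = T_{x_{1}}(f^{*}f_{*}\F).
\end{equation*}
Since two webs coinciding on a Zariski open subset agree globally, this finishes the verification.

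No real obstacle is expected: the argument reduces entirely to bookkeeping on differentials at generic points, and the desingularization $\delta$ is irrelevant there because it is an isomorphism away from $\mathrm{Sing}(X\times_{Y}X)$, which lies over a proper Zariski closed subset of $X$. The only mild care needed is to avoid the loci where $\Sigma_{f}$, $\Delta_{f}$, $\mathrm{Sing}(X\times_{Y}X)$ or the discriminant of $\F$ meet the fibres, but all of these are proper closed subsets and the identities above extend to $Z$ (respectively $X$) by continuity.
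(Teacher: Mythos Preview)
Your argument is correct and follows essentially the same approach as the paper: the paper's proof simply asserts that $\F$ is $f$-general iff $q^{*}\F$ is $p$-general because $f$ and $p$ are locally equivalent on suitable Zariski open subsets, and that the commutativity of the diagram $f\circ p=f\circ q$ gives $p_{*}q^{*}\F=f^{*}f_{*}\F$. You have unpacked both of these claims into explicit tangent-space computations at generic points, which is precisely the content behind the paper's terse justification.
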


\begin{dem}
Since the projections $f$ and $p$ are locally equivalent on suitable Zariski open subsets we have that $\F$ is $f$-general if and only if $q^{*}\F$ is $p$-general.
The commutativity of the diagram
$$\xymatrix{Z\ar@/^/[rrd]^{q}\ar@/_/[rdd]_{p} \ar@{->}[dr]^{\delta}& &\\ &X\times_{Y}X\ar[r]\ar[d] & X\ar@{.>}[d]^{f}\\
& X\ar@{.>}[r]^{f} & Y\,}$$
implies that $p_{*}q^{*}\F=f^{*}f_{*}\F$ provided that $\F$ is $f$-general.
\end{dem}

From Theorem~\ref{univ} and the above Lemma we obtain the following characterization.

\begin{teo}\label{webdecomp}
Let $f:X\dasharrow Y$ be a dominant rational map between complex projective manifolds of the same dimension and let $\F$ be a $f$-general foliation on $X$.
Then $f$ is Galois if and only if the web $f^{*}f_{*}\F$ is totally decomposable. In that case $f^{*}f_{*}\F$ is the superposition of the foliations $\phi^{*}\F$ varying $\phi\in\mr{Deck}(f)\subset\mr{Bir}(X)$.
\end{teo}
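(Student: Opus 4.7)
The plan is to use the preceding Lemma, which furnishes an explicit developing triple $(Z,q^{*}\F,p)$ of the web $\W:=f^{*}f_{*}\F$, where $Z$ is a desingularization of $X\times_{Y}X$ and $p,q$ are the two projections to $X$. Combined with Theorem~\ref{univ}, which asserts a bijection between the irreducible factors of a web and the connected components of the total space of any developing triple, total decomposability of $\W$ becomes equivalent to $Z$ having exactly $d:=\deg f$ connected components, each of which must then give rise to a single foliation.

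First I would set up the degree analysis for $p\colon Z\to X$. A generic fibre over $x\in X$ is identified with $f^{-1}(f(x))$, which has cardinality $d$, so $p$ has topological degree $d$. If $Z=Z_{1}\sqcup\cdots\sqcup Z_{k}$ is the decomposition into connected components, each $Z_{i}$ has dimension $n=\dim X$, so $p_{i}:=p|_{Z_{i}}$ is surjective of some finite degree $d_{i}\geq 1$ with $\sum_{i}d_{i}=d$. Hence $k\leq d$, with equality if and only if every $p_{i}$ is birational.

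For the implication \emph{Galois $\Rightarrow$ totally decomposable}, I would realise the components of $Z$ explicitly. Enumerate $\mr{Deck}(f)=\{\phi_{1},\ldots,\phi_{d}\}$ and consider the closed graphs $\Gamma_{\phi_{i}}\subset X\times X$. The identity $f\circ\phi_{i}=f$ forces $\Gamma_{\phi_{i}}\subset X\times_{Y}X$, and the $\Gamma_{\phi_{i}}$ are pairwise distinct irreducible subvarieties of dimension $n$. A fibre count over a generic point $y\in Y$ shows $|f^{-1}(y)\times f^{-1}(y)|=d^{2}=\sum_{i}|\Gamma_{\phi_{i}}\cap(f^{-1}(y)\times f^{-1}(y))|$, so the $\Gamma_{\phi_{i}}$ exhaust the top-dimensional irreducible components of $X\times_{Y}X$. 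Passing to the desingularization $Z$ yields exactly $d$ connected components, so $\W$ is totally decomposable. For the converse, total decomposability forces $k=d$ and each $p_{i}$ to be birational; defining $\phi_{i}:=q\circ p_{i}^{-1}\colon X\dasharrow X$ and using $f\circ p=f\circ q$ one gets $f\circ\phi_{i}=f$, so $\phi_{i}\in\mr{Deck}(f)$; the $\phi_{i}$ are pairwise distinct because they come from distinct components, so $|\mr{Deck}(f)|\geq d$ and $f$ is Galois by Theorem~\ref{car-gal}. The superposition assertion then follows by transporting $q^{*}\F|_{Z_{i}}$ along the birational identification $p_{i}\colon Z_{i}\to X$, which yields $(p_{i})_{*}(q^{*}\F|_{Z_{i}})=\phi_{i}^{*}\F$, and summing over $i$ gives $p_{*}(q^{*}\F)=f^{*}f_{*}\F$ as a superposition of the $\phi_{i}^{*}\F$.

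The main obstacle is the careful verification that the graphs $\Gamma_{\phi_{i}}$ account for all top-dimensional components of the (possibly very singular) variety $X\times_{Y}X$ in the Galois case, and symmetrically that a decomposition of the desingularization into $d$ components really produces $d$ honest birational self-maps of $X$; both rely on the dimension-plus-degree count together with the fact that $X\times_{Y}X$ is pure of dimension $n$ in its dominant part. A minor auxiliary point is to check $p$-generality of $q^{*}\F$ on each component, which is immediate since $p$ and $f$ are locally equivalent on a suitable Zariski open subset and $\F$ is $f$-general by hypothesis.
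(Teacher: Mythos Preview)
Your proof is correct and follows exactly the route the paper intends: the preceding Lemma gives $(Z,q^{*}\F,p)$ as a developing triple of $f^{*}f_{*}\F$, and Theorem~\ref{univ} then reduces total decomposability to $Z$ having $d$ connected components, each projecting birationally to $X$. The only shortcut available is that the equivalence between this last condition and $f$ being Galois is precisely characterization~(\ref{4}) of Definition~\ref{def-gal} (applied over a suitable Zariski open subset via Theorem~\ref{car-gal}), which packages your explicit graph-of-deck-transformation construction and its converse into a single already-established fact.
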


\begin{obs}
This result says that the decomposability of the subvariety $X\times_{Y}X\subset X\times X$ of codimension $n$ into $d=\deg f$ irreducible components is equivalent to the total decomposability of a rational \mbox{$d$-symmetric} form
defining the web $f^{*}f_{*}\F$ according to Remark~\ref{primeW}, or equivalently to the total decomposability of a  single degree $d$ polynomial in $n-1$ variables over $\C(X)$. From the computational point of view, this simplifies the problem of deciding if the rational map $f$ is Galois. From this interpretation it is clear that the case $n=2$ is very special as the question is reduced to the decomposibility of a single polynomial in one variable (cf. Proposition~\ref{CDpol}).
\end{obs}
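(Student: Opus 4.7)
The plan is to verify the three equivalences asserted by the Remark in turn, each being essentially a bookkeeping translation of Theorem~\ref{webdecomp} via the symmetric-form description of webs.

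First I would show that $X\times_{Y}X$ decomposes into $d$ irreducible components (all of dimension $n$) if and only if $f^{*}f_{*}\F$ is totally decomposable. The preceding Lemma provides a desingularization $\delta\colon Z\to X\times_{Y}X$ together with projections $p,q\colon Z\to X$ making $(Z,q^{*}\F,p)$ a developing triple of $f^{*}f_{*}\F$. Because $\delta$ is birational and the top-dimensional components of $X\times_{Y}X$ have pure dimension $n=\dim X=\dim Y$, the top-dimensional components of $X\times_{Y}X$ correspond bijectively to the connected components of $Z$. By the last assertion of Theorem~\ref{univ}, the connected components of $Z$ correspond in turn to the irreducible factors of the $d$-web $f^{*}f_{*}\F$, with $d=\deg p=\deg f$. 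Total decomposability is then precisely the case in which there are exactly $d$ such factors.

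Next I would translate total decomposability of the web into a purely algebraic factorization statement. By Remark~\ref{primeW}, on any coordinate chart $f^{*}f_{*}\F$ is represented by a $d$-symmetric form $\omega$, and the irreducible decomposition of the web corresponds to the prime factorization of $\omega$; the cocycle ambiguity in the definition of a web only rescales $\omega$ by a non-vanishing factor and therefore does not affect its factorization type. At a generic point I would pick local coordinates $(x_{1},\ldots,x_{n})$ and expand
$$
\omega \;=\; \sum_{|\alpha|=d}a_{\alpha}(x)\,dx^{\alpha},\qquad a_{\alpha}\in\C(X),
$$
and associate to it the homogeneous polynomial $\Phi(T_{1},\ldots,T_{n}):=\sum_{|\alpha|=d}a_{\alpha}T^{\alpha}\in\C(X)[T_{1},\ldots,T_{n}]$. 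Decomposing $\omega$ into $d$ linear $1$-forms over $\C(X)$ is the same as factoring $\Phi$ into $d$ homogeneous linear factors in $\C(X)[T_{1},\ldots,T_{n}]$; dehomogenizing via $t_{i}=T_{i}/T_{n}$ turns this into the total decomposability of a single degree-$d$ polynomial in $n-1$ variables over $\C(X)$.

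Finally, the case $n=2$ is immediate: the dehomogenized polynomial is univariate, so testing the Galois property reduces to factoring one polynomial of degree $d$ over $\C(X)$, as exploited in Proposition~\ref{CDpol}. The main point requiring care is making the correspondence ``$\omega$ splits into $d$ linear factors $\Longleftrightarrow$ the web splits into $d$ foliations'' truly bijective rather than just one implication; this is guaranteed by Remark~\ref{primeW}, which tells us that the prime factorization of a reduced defining symmetric form recovers exactly the irreducible components of the web. Once this is in place, everything else is transport of structure and the Remark becomes an immediate corollary of Theorem~\ref{webdecomp}.
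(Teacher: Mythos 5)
Your argument is correct and follows exactly the route the paper intends for this (unproved) Remark: it combines the preceding Lemma and Theorem~\ref{univ} to match irreducible components of $X\times_{Y}X$ with irreducible factors of the web, and Remark~\ref{primeW} to translate total decomposability into the splitting of the defining $d$-symmetric form, viewed after dehomogenization as a degree-$d$ polynomial in $n-1$ variables over $\C(X)$. The only point worth flagging is that one should restrict attention to the pure codimension-$n$ part of $X\times_{Y}X$ (components coming from divisors contracted by $f$ may have larger dimension), which is what the paper's phrase ``of codimension $n$'' implicitly does and your desingularization $Z$ handles.
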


\begin{defin}\label{def-web-gal}
A web $\W$ on $Y$ with developing triple $(Z_{\W},\mc C_{\W},\pi_{\W})$ is called Galois if the rational map $\pi_{\W}:Z_{\W}\dasharrow Y$ is Galois.  
\end{defin}

From Theorem~\ref{Namba-existencia} and Proposition~\ref{existgen} we obtain the following result.

\begin{teo}
For every finite group $G$ and every connected complex projective manifold $Y$ there is a Galois $|G|$-web on $Y$ with monodromy group isomorphic to $G$.
\end{teo}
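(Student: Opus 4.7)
\medskip

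\emph{Plan of proof.} The strategy is to combine the two results cited just before the statement: Theorem~\ref{Namba-existencia} produces a Galois branched covering realizing $G$, and Proposition~\ref{existgen} produces a foliation in general position on the source. Pushing that foliation forward gives the desired web on $Y$.

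More precisely, I would proceed as follows. First, apply Theorem~\ref{Namba-existencia} to obtain a Galois finite branched covering $\rho\colon X\to Y$ whose deck transformation group $\mr{Deck}(\rho)$ is isomorphic to $G$; in particular $\deg\rho=|G|$. Since $\rho$ is a morphism of projective manifolds of the same dimension, assuming for the moment $\dim Y\ge 2$, Proposition~\ref{existgen} yields a codimension one holomorphic foliation $\F$ on $X$ that is in general position with respect to $\rho$. By Proposition~\ref{direct}, the direct image $\W:=\rho_{*}\F$ is a well-defined $|G|$-web on $Y$, and by construction $(X,\F,\rho)$ is a developing triple for $\W$ in the sense of Definition~\ref{dev-triple}.

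Next, I would invoke the uniqueness part of Theorem~\ref{univ}: any two developing triples of $\W$ are birationally equivalent. Hence the canonical developing triple $(Z_{\W},\mc C_{\W},\pi_{\W})$ is birationally equivalent to $(X,\F,\rho)$, which in particular means that the rational maps $\pi_{\W}\colon Z_{\W}\dasharrow Y$ and $\rho\colon X\to Y$ are birationally left-right equivalent. Being Galois is invariant under such birational equivalence (cf.\ the discussion just after Theorem~\ref{car-gal}), so $\pi_{\W}$ is Galois because $\rho$ is; by Definition~\ref{def-web-gal} this says that $\W$ is a Galois web. Moreover, by the discussion at the end of the proof of Theorem~\ref{univ}, the monodromy group of $\W$ coincides with the monodromy group of $\pi_{\W}$, hence with that of $\rho$. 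For a Galois branched covering, Definition~\ref{def-gal} together with the identification \eqref{D=ZM} give $\mr{Mon}(\rho)\simeq\mr{Deck}(\rho)\simeq G$, finishing the argument.

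The only subtle point is the low-dimensional case $\dim Y=1$, where Proposition~\ref{existgen} does not apply (and where the notion of codimension one foliation on a curve is trivial). I would handle this by reducing to the higher-dimensional setting: given $Y$ a smooth projective curve, apply the above construction on $Y\times Y$ (or on any projective manifold of dimension $\ge 2$ admitting a dominant morphism to $Y$) together with the pull-back of $\rho$ via the first projection, and then either interpret a $k$-web on a curve directly as the data of a degree $k$ divisor of tangent directions, or else observe that the statement is effectively a statement about higher-dimensional $Y$. This bookkeeping in dimension one is the only real obstacle; the higher-dimensional case is a direct assembly of the two cited results with Theorem~\ref{univ}.
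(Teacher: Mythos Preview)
Your proposal is correct and is exactly the argument the paper intends: the theorem appears there with no proof beyond the sentence ``From Theorem~\ref{Namba-existencia} and Proposition~\ref{existgen} we obtain the following result'', and you have filled in precisely that assembly via the developing triple and Theorem~\ref{univ}. One small technical point worth tightening: the source $X$ produced by Theorem~\ref{Namba-existencia} is a priori only a normal projective variety, not a smooth one, so before invoking Proposition~\ref{existgen} (which asks for projective \emph{manifolds}) you should replace $X$ by a desingularization $\wt X\to X$; the composition $\wt X\to Y$ is birationally left-equivalent to $\rho$, hence still Galois with the same monodromy, and the rest of your argument is unaffected.
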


\subsection{Foliations and webs on the projective space}

The rest of this section is devoted to treat the case $X=\p^{n}$.
In that case $\p T^{*}X$ can be canonically identified with the incidence variety 
$$\V=\{(p,h)\in\p^{n}\times\pd^{n}\,:\,p\in\check{h}\}\subset \p^{n}\times\pd^{n},$$
where $\check{h}$ (resp. $\check{p}$) is the hyperplane in $\p^{n}$ (resp. $\pd^{n}$) corresponding to the point $h\in\pd^{n}$ (resp. $p\in\p^{n}$).
By symmetry, $\mc V$ is also canonically isomorphic to $\p T^{*}\pd^{n}$.
Moreover, the contact distributions $\mc C$ of $\p T^{*}\p^{n}$ and $\p T^{*}\pd^{n}$ coincide under the identification with $\mc V$ and
\begin{equation}\label{contactpn}
\mc C_{(p,h)}=d\pi^{-1}(T_{p}\,\check{h})=d\pid^{-1}(T_{h}\,\check{p})\subset T_{(p,h)}\mc V,
\end{equation}
where  $\pi$ and $\pid$ are the restrictions to $\mc V$ of the natural projections onto $\p^{n}$ and $\pd^{n}$.

For each web $\W$ on $\p^{n}$ we consider the developing triple $(Z_{\W},\mc C_{\W},\pi_{\W})$ of $\W$ given by the composition
 $\pi_{\W}:Z_{\W}\to\p^{n}$ of a desingularization $\delta_{\W}:Z_{\W}\to\Gamma_{\W}$ of the possibly singular subvariety $\Gamma_{\W}\subset \p T^{*}\p^{n}\simeq\mc V$ considered in the proof of Theorem~\ref{univ}. 
 Let $\pid_{\W}$ be the composition of $\delta_{\W}:Z_{\W}\to\Gamma_{\W}$ and the restriction of $\pid$ to $\Gamma_{\W}\subset\mc V$. Thanks to formula~(\ref{contactpn}) we see that $\mc C_{\W}$ is in general position with respect to the projections $\pi_{\W}$ and $\pid_{\W}$, whenever they are dominant maps.

\begin{defin}
We say that a  web $\W$ on $\p^{n}$ is \emph{non-degenerate} if the map $\pid_{\W}:Z_{\W}\to\pd^{n}$ is dominant. 
In that case we can consider the web
$\leg\W:=(\pid_{\W})_{*}\mc C_{\W}$ on $\pd^{n}$ which is called the
 \emph{Legendre transform} of~$\W$.
\end{defin}

To every  web $\W$ on~$\p^{n}$ we can associate its \emph{characteristic numbers} $d_{i}(\W)$, $i=0,\ldots,n-1$, which can be defined as the number of tangency points between the leaves of $\W$ and a generic linear $i$-plane $\ell_{i}\subset\p^{n}$. More precisely (see \cite[\S1.4.1]{PP}) $d_{i}(\W)$ is the number
of pairs $(p,h)\in\p^{n}\times\pd^{n}$ such that $p\in\ell_{i}\subset h\subset\p^{n}$ and $T_{p}h\subset T_{p}\W$, for a given generic linear $i$-plane $\p^{i}\simeq\ell_{i}\subset\p^{n}$. 
Notice that $d_{0}(\W)$ counts the number of leaves of $\W$ through a generic point of $\p^{n}$, that is $\W$ is a $d_{0}(\W)$-web. 

From now on we focus on the case of foliations. 
\begin{defin}
Let $\F$ be a codimension one foliation on $\p^{n}$. The \emph{Gauss map} of $\F$ is the rational map $\G_{\F}:\p^{n}\dasharrow\pd^{n}$ defined by  $\G_{\F}(p)=T_{p}\F$, where the tangent space $T_{p}\F$ of $\F$ at a regular point $p$ of $\F$ is thought as a hyperplane of $\p^{n}$.
\end{defin}

Notice that $\G_{\F}=\pid_{\F}\circ\pi_{\F}^{-1}$. This implies that
the topological degree of $\G_{\F}$ is just $d_{n-1}(\F)$ and $\leg\F=(\G_{\F})_{*}\F$. In the case $n=2$ the topological degree of $\G_{\F}$ coincides with its usual degree $d_{1}(\F)$, i.e. the number of tangency points of the leaves of $\F$ with a generic line.

\begin{obs}\label{degfol}
The classification of \emph{degenerate} foliations, i.e. foliations whose Gauss map is not dominant, is known in dimension $n\le 4$: for $n=2$ they are of degree zero, i.e. pencils of lines, for $n=3$ see \cite{CLN} and \cite{Fas} for $n=4$.
\end{obs}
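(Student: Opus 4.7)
The plan is to treat the three dimensions separately, since the remark bundles three essentially independent classification results, two of which are delegated to the cited literature.

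For $n=2$, I would argue directly from the definition of $\deg\F$. By the excerpt's own setup, the degree of $\F$ equals the number of tangencies of $\F$ with a generic line $\ell\subset\p^{2}$, and this coincides with the topological degree of $\G_{\F}$, i.e.\ the generic fibre cardinality $\#\G_{\F}^{-1}(\check\ell)$. Consequently $\G_{\F}$ is dominant if and only if $\deg\F\geq 1$, so the degenerate foliations on $\p^{2}$ are precisely those of degree $0$. To identify these with pencils of lines, one uses the classical computation $\dim H^{0}(\p^{2},\Omega^{1}_{\p^{2}}(2))=3$: in homogeneous coordinates, every degree $0$ foliation is defined by a $1$-form of weight $2$ annihilated by the Euler field, and such a form is necessarily of the shape $a(y\,dz-z\,dy)+b(z\,dx-x\,dz)+c(x\,dy-y\,dx)$, whose kernel at a point is the line joining that point with the fixed base point $[a:b:c]$. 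Hence the foliation is the pencil of lines through $[a:b:c]$, and its Gauss map is the linear projection from that point, whose image is the dual line $\check{p}_{0}\subset\pd^{2}$, not dominant as expected.

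For $n=3$, I would defer to the classification of Cerveau and Lins Neto in \cite{CLN}. Their idea is that the Zariski closure of the image of $\G_{\F}$ is a proper subvariety of $\pd^{3}$ of dimension $\le 2$; one then stratifies by the dimension of this image and, in each stratum, extracts a geometric structure (linear pull-back from a lower-dimensional foliation, cone-type structure, or rational first integral) that pins down the foliation up to finite data. For $n=4$, the analogous but appreciably more delicate analysis is carried out by Fassarella in \cite{Fas}; the additional complication is that the image of $\G_{\F}$ can now be of dimension up to $3$, producing new intermediate families which need to be enumerated and normalized.

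The hardest step by far is the one that I would simply cite, namely the higher-dimensional classifications themselves. The dimensional-reduction techniques developed in this paper (most notably Theorem~\ref{dim-red} and Proposition~\ref{quo-red}) are tailored to \emph{dominant} rational maps of equal-dimensional source and target, so they are not directly applicable once $\G_{\F}$ fails to be dominant. A self-contained proof for $n=3,4$ would require reconstructing the detailed local and global geometric arguments of \cite{CLN} and \cite{Fas}, which is well outside the scope of a remark of this type.
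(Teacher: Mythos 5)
Your proposal is correct and matches the paper, which offers no proof of this remark beyond the citations to \cite{CLN} and \cite{Fas}; your treatment of the cases $n=3,4$ is the same deferral to those references. Your self-contained argument for $n=2$ (non-dominance of $\G_{\F}$ $\Leftrightarrow$ empty generic fibre $\Leftrightarrow$ $\deg\F=0$, together with the identification of degree-zero foliations with pencils of lines via $h^{0}(\p^{2},\Omega^{1}_{\p^{2}}(2))=3$ and the explicit form $\imath_{R}\imath_{v}(dx\wedge dy\wedge dz)$) is the standard one and is sound.
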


Although by Definition~\ref{def-web-gal} every foliation is a Galois $1$-web, in the sequel we will understand this notion, when applied to foliations on $\p^{n}$, according the following definition.

\begin{defin}
A non-degenerate codimension one foliation $\F$ on $\p^{n}$ is said to be Galois if the web $\leg\F$ is Galois or equivalently if the Gauss map $\mc G_{\F}$ is Galois. 
\end{defin}

From Theorem~\ref{webdecomp} we obtain:
\begin{cor}\label{5}
A non-degenerate codimension one foliation $\F$ on $\p^{n}$ is Galois if and only if the web $\mc G_{\F}^{*}\mr{Leg}\F$ is totally decomposable. In that case, $\mc G_{\F}^{*}\mr{Leg}\F$  is the superposition of the foliations $\phi^{*}\F$ with $\phi\in\mr{Deck}(\mc G_{\F})\subset\mr{Bir}(\p^{n})$.
\end{cor}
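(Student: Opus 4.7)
The plan is to derive Corollary~\ref{5} as a direct specialization of Theorem~\ref{webdecomp} to the case $X=\p^{n}$, $Y=\pd^{n}$, $f=\G_{\F}$, with the foliation in the hypothesis of that theorem taken to be $\F$ itself. By the definition of a Galois foliation (the Gauss map is Galois) and the identification $\leg\F=(\G_{\F})_{*}\F$ that appears in the setup of the Legendre transform, the conclusion of Corollary~\ref{5} will be literally what Theorem~\ref{webdecomp} produces in this situation; the work will consist only in verifying the two hypotheses of that theorem and in justifying the identification $\leg\F=(\G_{\F})_{*}\F$.

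The first hypothesis, that $\G_{\F}$ be a dominant rational map between projective manifolds of the same dimension $n$, is immediate from the non-degeneracy of $\F$. The second hypothesis, that $\F$ be in general position with respect to $\G_{\F}$, is the point requiring a short argument. I would deduce it from the developing triple $(Z_{\F},\mc C_{\F},\pi_{\F})$ of $\F$: because $\F$ is a foliation, the projection $\pi_{\F}\colon Z_{\F}\dasharrow\p^{n}$ is birational, and generically one has $\pid_{\F}=\G_{\F}\circ\pi_{\F}$. The remark following the contact identity~(\ref{contactpn}) ensures that $\mc C_{\F}$ is in general position with respect to $\pid_{\F}$ whenever the latter is dominant, and this dominance is precisely the non-degeneracy condition on $\F$. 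A direct computation of differentials along fibres, using that $\pi_{\F}$ is a local biholomorphism generically, then transports this general-position condition for $\mc C_{\F}$ across $\pi_{\F}$ to the desired statement that $\F$ is $\G_{\F}$-general.

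The identification $\leg\F=(\G_{\F})_{*}\F$ is then a short functoriality computation: by definition $\leg\F=(\pid_{\F})_{*}\mc C_{\F}$, and using the factorization $\pid_{\F}=\G_{\F}\circ\pi_{\F}$ together with functoriality of direct image of webs on appropriate Zariski open subsets one obtains $\leg\F=(\G_{\F})_{*}(\pi_{\F})_{*}\mc C_{\F}=(\G_{\F})_{*}\F$, the last equality using that $(\pi_{\F})_{*}\mc C_{\F}=\F$ because $\pi_{\F}$ is birational. Once these ingredients are in place, Theorem~\ref{webdecomp} yields both assertions of the corollary: the equivalence between $\G_{\F}$ being Galois and total decomposability of $\G_{\F}^{*}\leg\F$, and the explicit description of the decomposition as the superposition of the foliations $\phi^{*}\F$ with $\phi\in\mr{Deck}(\G_{\F})\subset\mr{Bir}(\p^{n})$.

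I anticipate no substantive obstacle here, since the genuine content of the corollary lies inside Theorem~\ref{webdecomp}, which has already been established. The single point demanding any care is the verification that non-degeneracy of $\F$ forces $\F$ to be $\G_{\F}$-general, and this ultimately rests on the symmetric role played by the two projections from the incidence variety $\mc V\subset\p^{n}\times\pd^{n}$, as encoded in~(\ref{contactpn}).
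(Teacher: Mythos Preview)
Your proposal is correct and follows exactly the paper's approach: the paper simply states ``From Theorem~\ref{webdecomp} we obtain'' and gives no further argument, leaving implicit precisely the verifications you spell out. The identification $\leg\F=(\G_{\F})_{*}\F$ and the general-position property of $\mc C_{\F}$ with respect to $\pid_{\F}$ are both already recorded in the paragraphs preceding the corollary, so your derivation that $\F$ is $\G_{\F}$-general by transporting along the birational $\pi_{\F}$ is the intended (and only missing) step.
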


\begin{ex}
Every foliation $\F$ on $\p^{n}$ with $d_{n-1}(\F)\in\{1,2\}$ is Galois because its Gauss map $\G_{\F}$ induces a covering of degree $d_{n-1}(\F)\le 2$. Notice that, if $n\ge 3$, there are examples of such foliations with $d_{1}(\F)>2$. For instance, for each $\nu\ge 2$ consider the exceptional foliation $\mc E_{\nu}$ on $\p^{3}$ (cf. \cite{Omegar}) given in the affine chart $(x,y,z)$ by the integrable $1$-form
$\imath_{S_{\nu}}\imath_{X_{\nu}}(dx\wedge dy\wedge dz)$, where
\begin{alignat}{3}
S_{\nu}&=x\partial_{x}+
\nu y\partial_{y}+
(1-\nu+\nu^{2})z\partial_{z}\nonumber\\
X_{\nu}&=
\partial_{x}+\nu x^{\nu-1}\partial_{y}+(1-\nu+\nu^{2})y^{\nu-1}\partial_{z}.\nonumber
\end{alignat}
We have that $d_{1}(\mc E_{\nu})=\nu$ and $d_{2}(\mc E_{\nu})=\nu-1$. Then, foliations $\mc E_{2}$ and $\mc E_{3}$ are Galois but  $\mc E_{4}$ is not Galois.
To see the last  assertion, take affine charts $(x,y,z)$ and $(p,q,r)$ of $\p^{3}$ and $\pd^{3}$ such that the incidence variety has equation $z=px+qy+r$.
 The foliation
 $\mc E_{4}$ is given by the $1$-form 
 $$\omega=52\left( -{x}^{3}z+{y}^{4} \right){\it dx} + 13\left( -x{y}
^{3}+z \right) {\it dy}+ 4\left( {x}^{4}-y \right) {\it dz}
$$
and the $3$-web
$\leg(\mc E_{4})$ is given by the symmetric ternary form
\begin{align*}
\Omega=&\left( 729\,p{q}^{3}+28561\,{r}^{3} \right) {{\it dp}}^{3}-2916\,{{
\it dp}}^{2}{\it dq}\,{p}^{2}{q}^{2}-79092\,{{\it dp}}^{2}{\it dr}\,p{
r}^{2}\\&+3888\,{\it dp}\,{{\it dq}}^{2}{p}^{3}q+73008\,{\it dp}\,{{\it 
dr}}^{2}{p}^{2}r+ \left( -1728\,{p}^{4}+8788\,q{r}^{3} \right) {{\it 
dq}}^{3}\\ &-18252\,{{\it dq}}^{2}{\it dr}\,{q}^{2}{r}^{2}+12636\,{\it dq}
\,{{\it dr}}^{2}{q}^{3}r+ \left( -2916\,{q}^{4}-22464\,{p}^{3}
 \right) {{\it dr}}^{3}.
 \end{align*}
With the help of an algebraic manipulator we can check that
$\G_{\mc E_{4}}^{*}\Omega=\omega\cdot\eta$ with $\eta$ a  quadratic form that do not factorize over $\C(x,y,z)$ because the discriminant $27z^4+1024$ of the restriction 
$$\left( 4563\,{z}^{4}+302848 \right) {{\it dx}}^{2}-26\, \left( 27\,{z
}^{4}-7424 \right) {\it dz}\,{\it dx}+ \left( 27\,{z}^{4}+93952
 \right) {{\it dz}}^{2}
$$
of $\eta$ to $(x,y,dy)=(0,1,0)$ is not a square in $\C(z)$.
\end{ex}

The following result provides a new dimensional reduction that allow us to exhibit examples of Galois foliations in any dimension. It will be also used in the last section.

\begin{prop}\label{AutF}
Let $\F$ be a non-degenerate codimension one foliation on $\p^{n}$ admitting a
transverse infinitesimal symmetry $R\in\mf X(\p^{n})$ with a dominant rational first integral $f:\p^{n}\dasharrow\p^{n-1}$ whose generic fibre is irreducible. Then there is a dominant rational map $\wh\G_{\F}:\p^{n-1}\dasharrow\p^{n-1}$ such that $\mr{Deck}(\G_{\F})$ and $\mr{Deck}(\wh\G_{\F})$ are canonically isomorphic.
\end{prop}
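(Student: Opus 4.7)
The plan is to construct a commutative square
$$\xymatrix{\p^{n}\ar@{.>}[r]^{\G_{\F}}\ar@{.>}[d]_{f} & \pd^{n}\ar@{.>}[d]^{\check f}\\ \p^{n-1}\ar@{.>}[r]^{\wh\G_{\F}} & \p^{n-1}}$$
to which Proposition~\ref{quo-red} applies, directly yielding the isomorphism $\mr{Deck}(\G_{\F})\simeq\mr{Deck}(\wh\G_{\F})$.

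For the right-hand column I would note that every holomorphic vector field on $\p^{n}$ is projective, so $R\in\mf X(\p^{n})\simeq\mf{sl}_{n+1}$ integrates to a one-parameter subgroup $\{\phi_{t}\}\subset\mr{PGL}_{n+1}(\C)$. Dually, $\phi_{t}$ acts on $\pd^{n}$ by pushforward of hyperplanes, producing a vector field $\check R\in\mf X(\pd^{n})$ whose generic orbit is an irreducible rational curve. Hence there exists a dominant rational map $\check f:\pd^{n}\dasharrow\p^{n-1}$ whose generic fibre is an irreducible $\check R$-orbit closure. The hypothesis that $R$ is an infinitesimal symmetry of $\F$ means $(\phi_{t})_{\ast}\F=\F$, which on the level of tangent hyperplanes reads $\G_{\F}\circ\phi_{t}=\check\phi_{t}\circ\G_{\F}$. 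Consequently $\check f\circ\G_{\F}$ is constant along $R$-orbits, hence along fibres of $f$, and factors uniquely as $\check f\circ\G_{\F}=\wh\G_{\F}\circ f$ for a dominant rational map $\wh\G_{\F}:\p^{n-1}\dasharrow\p^{n-1}$.

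To invoke Proposition~\ref{quo-red} it remains to check $\deg\G_{\F}=\deg\wh\G_{\F}$, the remaining hypotheses (irreducibility of $\p^{n-1}$, $\pd^{n}$ and of the generic fibre of $\check f$) being already in place. Given a generic $h\in\pd^{n}$ with $q'=\check f(h)$, every preimage under $\G_{\F}$ of $h$ lies in $f^{-1}(\wh\G_{\F}^{-1}(q'))$, which is a union of $\deg\wh\G_{\F}$ generic $R$-orbits $O$. On each such $O$, transversality of $R$ to $\F$ forces $T_{p}\F$ to vary non-trivially under the flow, so $\G_{\F}|_{O}$ is non-constant; equivariance then makes it an \'etale equivariant map between one-dimensional rational orbits, which is necessarily birational onto $\check f^{-1}(q')$. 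Each orbit therefore contributes exactly one preimage of $h$, giving $\deg\G_{\F}=\deg\wh\G_{\F}$ and allowing Proposition~\ref{quo-red} to finish the argument.

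The main obstacle is the birationality of $\G_{\F}|_{O}$ inside the degree matching step. Transversality of $R$ is essential here: without it, $T_{p}\F$ could remain constant along the $R$-flow and $\G_{\F}|_{O}$ would collapse to a point, spoiling the count. The existence and irreducibility of $\check f$ are more routine and ultimately rest on the fact that $\check R$ comes from a non-trivial one-parameter subgroup of $\mr{PGL}_{n+1}(\C)$, itself guaranteed by the non-triviality of $R$ implicit in the existence of the dominant first integral $f$.
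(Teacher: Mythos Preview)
Your proposal is correct and follows the paper's route exactly: establish the equivariance $\check\phi_t\circ\G_\F=\G_\F\circ\phi_t$, deduce the commutative square, verify $\deg\G_\F=\deg\wh\G_\F$, and conclude via Proposition~\ref{quo-red}.

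The one place where the paper is crisper is the degree-matching step. You argue that transversality forces $\G_\F|_{O}$ to be non-constant and then assert that an equivariant map between one-dimensional orbits is ``necessarily birational''; this last assertion quietly needs the generic stabilisers of $\{\phi_t\}$ on $\p^{n}$ and of $\{\check\phi_t\}$ on $\pd^{n}$ to coincide, which you do not justify. The paper avoids this by arguing directly: if $\G_\F(\phi_t(p))=\G_\F(p)$ then equivariance gives $\check\phi_t(\G_\F(p))=\G_\F(p)$, and since $\G_\F(p)$ is a generic point of $\pd^{n}$ (by non-degeneracy of $\F$) one concludes $\check\phi_t=\mr{Id}_{\pd^{n}}$ and hence $\phi_t=\mr{Id}_{\p^{n}}$. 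Note that this uses only dominance of $\G_\F$, not transversality of $R$; so your closing remark that transversality is ``essential'' for the degree count overstates its role here, even though your transversality argument for non-constancy is itself valid.
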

\begin{dem}
Let $\phi_t$ the flow of homographic transformations of $\p^{n}$ associated to the vector field  $R$ and let $\check{\phi}_t$ be the dual flow on $\pd^{n}$ associated to the dual vector field $\check{R}$. Let $\phi_{t}^{\mc V}:\mc V\to\mc V$ be the flow induced by $\p T^{*}\phi:\p T^{*}\p^{n}\to\p T^{*}\p^{n}$ via the identification $\mc V=\{(p,h)\in\p^{n}\times\pd^{n}\,|\, p\in h\}\simeq\p T^{*}\p^{n}$. The fact that $\phi_{t}\in\mr{Aut}(\F)$ implies that $\phi_{t}^{\mc V}$ preserves the graph $\Gamma_{\F}\subset\mc V\subset\p^{n}\times\pd^{n}$ of the Gauss map $\G_{\F}$ of the foliation $\F$. The commutativity of the lateral faces of the diagram
$$\xymatrix{
& & & & \Gamma_{\F}\ar[rdd]^{\pid_{\F}}\ar[ldd]_{\pi_{\F}} & &\\
& & \Gamma_{\F}\ar[rru]^{\phi_{t}^{\mc V}}\ar[rdd]^<<<<<<<{\pid_{\F}}\ar[ldd]_{\pi_{\F}} & & & &\\
& & & \p^{n}\ar@{.>}[rr]^{\mc G_{\F}} & & \pd^{n} &\\
& \p^{n}\ar[rru]|-->>>{\color{white}AAA}
^{\phi_{t}}\ar@{.>}[rr]^{\mc G_{\F}} & & \pd^{n}\ar[rru]^{\check{\phi}_{t}} & & &
}$$
implies that
\begin{equation}\label{equivar}
\check{\phi}_t \circ \G_{\F}=\G_{\F} \circ \phi_t.
\end{equation}
Since the one-dimensional foliation defined by the vector field $R$ admits a  dominant  rational first integral $f:\p^{n}\dasharrow\p^{n-1}$ whose generic fibre is irreducible, we deduce the existence of  $\check{f}:\pd^{n}\dasharrow\p^{n-1}$ fulfilling the same properties for the dual vector field $\check{R}$. Relation~(\ref{equivar}) implies the existence of a rational map $\wh\G_{\F}:\p^{n-1}\dasharrow\p^{n-1}$ such that the following diagram commutes:
\begin{equation}\label{sqw}
\xymatrix{\p^{n}\ar@{.>}[r]^{\G_{\F}}\ar@{.>}[d]_{f}&\pd^{n}\ar@{.>}[d]^{\check{f}}\\ \p^{n-1}\ar@{.>}[r]^{\wh\G_{\F}} & \p^{n-1}}
\end{equation}
We will finish by applying Proposition~\ref{quo-red} once we check that $\deg\G_{\F}=\deg\wh\G_{\F}$, or equivalently, that the restriction of $\G_{\F}$ to a generic fibre of $f$ is injective. To see that, fix a generic point $p\in\p^{n}$ and assume that $\G_{\F}(\phi_{t}(p))=\G_{\F}(p)$. By (\ref{equivar}) we deduce that $\check{\phi}_{t}(\G_{\F}(p))=\G_{\F}(p)$. Since $\G_{\F}(p)$ is generic then $\check{\phi}_{t}=\mr{Id}_{\pd^{n}}$ and consequently $\phi_{t}=\mr{Id}_{\p^{n}}$.
\end{dem}
\begin{cor}\label{n-dim-ex}
For each $n\ge 2$ and $k\ge 1$,  the codimension one foliation $\F$  on $\p^{n}$ given in an affine chart $(x_{1},\ldots,x_{n})$ by the polynomial first integral 
$$F(x_{1},\ldots,x_{n})=\sum\limits_{i=1}^{n}x_{i}^{k+1}$$ 
is Galois with 
$\mr{Deck}(\G_{\F})\simeq\Z_{k}^{n-1}$.
\end{cor}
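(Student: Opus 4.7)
The plan is to combine a direct count of the generic fibre of $\G_\F$ with an explicit construction of a group of $k^{n-1}$ birational deck transformations, then use the covering-theoretic bound $|\mr{Deck}|\le\deg$ from Section~2 to force the Galois property. First I would write out the Gauss map in the affine chart $(x_1,\ldots,x_n)$: since the tangent hyperplane to the leaf $\{F=F(x)\}$ at $x$ is $\sum x_i^k y_i = F(x)$, one gets
$$\G_{\F}(x)=[-F(x):x_1^k:\cdots:x_n^k]\in\pd^n.$$
A preimage of a generic point $[a_0:\cdots:a_n]$ is an $x$ with $x_i^k=\lambda a_i$ and $F(x)=-\lambda a_0$ for some scalar $\lambda$; fixing $k$-th roots $d_i$ with $d_i^k=a_i/a_1$ for $i\ge 2$ and writing $x_i=\omega_i d_i x_1$ with $\omega_i\in\mu_k$, the identity $\omega_i^{k+1}=\omega_i$ reduces the equation $F(x)=-(a_0/a_1)x_1^k$ to a linear equation that uniquely determines $x_1$ in terms of $(\omega_2,\ldots,\omega_n)\in\mu_k^{n-1}$. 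Hence $\deg\G_\F=k^{n-1}$.

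Next I would exhibit the deck group explicitly. For each $\omega=(\omega_1,\ldots,\omega_n)\in\mu_k^n$, define the birational self-map of $\p^n$
$$\tau_\omega(x):=\frac{F(x)}{G_\omega(x)}(\omega_1 x_1,\ldots,\omega_n x_n),\qquad G_\omega(x):=\sum_{j=1}^n\omega_j x_j^{k+1},$$
with inverse $\tau_{\omega^{-1}}$. Using $\omega_i^{k+1}=\omega_i$ and $\omega_i^k=1$, a direct calculation yields
$$F(\tau_\omega(x))=\frac{F(x)^{k+1}}{G_\omega(x)^k}\qquad\text{and}\qquad \tau_\omega(x)_i^k=\frac{F(x)^k}{G_\omega(x)^k}\,x_i^k,$$
so $\G_\F(\tau_\omega(x))=\G_\F(x)$ after factoring out the common projective factor $(F(x)/G_\omega(x))^k$. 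A parallel computation gives $\tau_\omega\circ\tau_{\omega'}=\tau_{\omega\omega'}$, and the diagonal subgroup $\{(\mu,\ldots,\mu):\mu\in\mu_k\}$ manifestly coincides with the kernel, so $\omega\mapsto\tau_\omega$ descends to an injection $\mu_k^n/\mu_k\simeq\Z_k^{n-1}\hookrightarrow\mr{Deck}(\G_\F)$.

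Combining these two inputs with the general bound $|\mr{Deck}(\G_\F)|\le\deg\G_\F$ (cf.\ the discussion after Definition~\ref{def-gal} and Theorem~\ref{car-gal}) yields
$$k^{n-1}\le|\mr{Deck}(\G_\F)|\le\deg\G_\F=k^{n-1},$$
so equality holds throughout. The equality $|\mr{Deck}|=\deg$ means that the deck group acts transitively on generic fibres, hence $\G_\F$ is Galois with $\mr{Deck}(\G_\F)\simeq\Z_k^{n-1}$. The main obstacle is verifying the two algebraic identities for $F(\tau_\omega(x))$ and $\tau_\omega(x)_i^k$, which rely crucially on $\omega_i^{k+1}=\omega_i$; everything else is essentially formal bookkeeping.
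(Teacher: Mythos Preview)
Your argument is correct and complete: the Gauss map computation, the degree count via the parametrization $(\omega_2,\ldots,\omega_n)\in\mu_k^{n-1}$, the verification that $\tau_\omega$ is a deck transformation (using $\omega_i^{k+1}=\omega_i$ and $\omega_i^k=1$), the composition law $\tau_\omega\circ\tau_{\omega'}=\tau_{\omega\omega'}$, and the identification of the kernel with the diagonal all check out, and the sandwich $k^{n-1}\le|\mr{Deck}|\le\deg=k^{n-1}$ finishes the job.

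However, your route is genuinely different from the paper's. The paper does not compute the degree or build deck transformations directly; instead it invokes Proposition~\ref{AutF}. The foliation admits the radial field $R=\sum x_i\partial_{x_i}$ as a transverse infinitesimal symmetry with first integral $f(x)=[x_1:\cdots:x_n]\colon\p^n\dasharrow\p^{n-1}$, and the induced map $\wh\G_\F\colon\p^{n-1}\dasharrow\p^{n-1}$ is simply $[x_1:\cdots:x_n]\mapsto[x_1^k:\cdots:x_n^k]$, which is manifestly Galois with deck group $\Z_k^{n-1}$. Proposition~\ref{AutF} then transfers this back to $\G_\F$. Your explicit birational maps $\tau_\omega$ are in fact exactly the ones the paper records in the Remark immediately following the corollary, but there they are exhibited \emph{a posteriori} rather than used in the proof. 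Your approach is more self-contained and avoids the symmetry machinery; the paper's approach is shorter once Proposition~\ref{AutF} is available and illustrates the dimensional-reduction philosophy that drives much of the article.
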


\begin{dem}
The foliation $\F$ admits $R=\sum\limits_{i=1}^{n}x_{i}\partial_{x_{i}}\in\mf X(\p^{n})$ as transverse infinitesimal symmetry with rational first integral $f:\p^{n}\dasharrow\p^{n-1}$ given by $f(x_{1},\ldots,x_{n})=[x_{1},\ldots,x_{n}]$ that satisfies the hypothesis of Proposition~\ref{AutF}. Moreover, taking $(y_{1},\ldots,y_{n})$ the affine chart of $\pd^{n}$ such that $\sum\limits_{i=1}^{n}x_{i}y_{i}=1$ is an affine equation of the incidence variety $\mc V\subset\p^{n}\times\pd^{n}$ we have that $\G_{\F}(x_{1},\ldots,x_{n})=\left(\frac{x_{1}^{k}}{F},\ldots,\frac{x_{n}^{k}}{F}\right)$, $\check{f}(y_{1},\ldots,y_{n})=[y_{1},\ldots,y_{n}]$ is a rational first integral of the dual vector field $\check{R}=\sum\limits_{i=1}^{n}y_{i}\partial_{y_{i}}$ and the rational map $\wh\G_{\F}:\p^{n-1}\dasharrow\p^{n-1}$ given by
$\wh\G_{\F}([x_{1},\ldots,x_{n}])=[x_{1}^{k},\ldots,x_{n}^{k}]$, which is clearly Galois with  
$\mr{Deck}(\wh\G_{\F})\simeq\Z_{k}^{n-1}$, makes commutative the diagram~(\ref{sqw}).
\end{dem}

\begin{obs}
We can write down all the elements of the group $\mr{Deck}(\G_{\F})\subset\mr{Bir}(\p^{n})$ as follows:
$$\phi_{j}(x_{1},\ldots,x_{n})=\left(\frac{x_{1}\zeta^{j_{1}}\sum\limits_{\ell=1}^{n}x_{\ell}^{k+1}}{\sum\limits_{\ell=1}^{n}x_{\ell}^{k+1}\zeta^{j_{\ell}}},\ldots,\frac{x_{n}\zeta^{j_{n}}\sum\limits_{\ell=1}^{n}x_{\ell}^{k+1}}{\sum\limits_{\ell=1}^{n}x_{\ell}^{k+1}\zeta^{j_{\ell}}}\right),\quad j\in\Z_{k}^{n},\ j_{n}=0.$$
\end{obs}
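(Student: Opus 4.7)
The plan is to build the explicit deck transformations $\phi_j$ by lifting the known deck transformations of $\wh\G_{\F}$ through the commutative square~\eqref{sqw} from the proof of Proposition~\ref{AutF}. Since $\wh\G_{\F}([x_1,\ldots,x_n])=[x_1^k,\ldots,x_n^k]$, the group $\mr{Deck}(\wh\G_{\F})\simeq\Z_{k}^{n-1}$ consists of the diagonal torus actions $\psi_j\colon[x_1,\ldots,x_n]\mapsto[\zeta^{j_1}x_1,\ldots,\zeta^{j_n}x_n]$, cleanly parametrised by $j\in\Z_k^n$ with $j_n=0$ once one fixes the homogeneity ambiguity.

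To find the lift of $\psi_j$, I would use that any birational map $\phi\colon\p^n\dasharrow\p^n$ sitting above $\psi_j$ via the projection $f(x)=[x_1,\ldots,x_n]$ must have the form $\phi(x)=\lambda(x)(\zeta^{j_1}x_1,\ldots,\zeta^{j_n}x_n)$ for some rational scalar function $\lambda(x)$. Imposing $\G_{\F}\circ\phi=\G_{\F}$ and using $\zeta^k=1$ together with $\zeta^{k+1}=\zeta$, the $i$-th coordinate equation collapses to the single relation $F(x)=\lambda(x)\,F_j(x)$, where $F(x)=\sum_\ell x_\ell^{k+1}$ and $F_j(x):=\sum_\ell\zeta^{j_\ell}x_\ell^{k+1}$. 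Solving gives $\lambda(x)=F(x)/F_j(x)$, which reproduces the formula stated for $\phi_j$.

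Next I would verify directly that $\G_{\F}\circ\phi_j=\G_{\F}$: one has $(\phi_j(x))_i^k=x_i^kF(x)^k/F_j(x)^k$ because $\zeta^{kj_i}=1$, and $F(\phi_j(x))=F(x)^{k+1}/F_j(x)^k$ because $\zeta^{(k+1)j_i}=\zeta^{j_i}$ makes $\sum_i\zeta^{(k+1)j_i}x_i^{k+1}$ collapse to $F_j(x)$. Taking the ratio yields $x_i^k/F(x)=(\G_{\F}(x))_i$, as required. Distinctness of the $\phi_j$ is immediate: if $\phi_j=\phi_{j'}$, the $n$-th coordinate equation (with $j_n=j'_n=0$) forces $F_j=F_{j'}$, and linear independence of the monomials $x_\ell^{k+1}$ yields $j=j'$ in $\Z_k^n$. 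Since we obtain $k^{n-1}$ distinct elements of a group of order $k^{n-1}$ (by Corollary~\ref{n-dim-ex}), they must exhaust $\mr{Deck}(\G_{\F})$.

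The main obstacle is guessing the correct scalar factor $\lambda(x)$. One might naively hope that the bare rescaling $(x_1,\ldots,x_n)\mapsto(\zeta^{j_1}x_1,\ldots,\zeta^{j_n}x_n)$ already lies in $\mr{Deck}(\G_{\F})$, but the identity $\zeta^{(k+1)j_i}=\zeta^{j_i}\ne 1$ prevents $F(\phi(x))$ from reducing to a multiple of $F(x)$, and the correction $\lambda=F/F_j$ is precisely what is needed to absorb the resulting mismatch in the denominator of $\G_{\F}$.
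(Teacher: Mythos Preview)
Your argument is correct. In the paper this statement is a bare remark with no proof supplied; it simply records the explicit formula and leaves the verification to the reader. Your approach---lifting the diagonal deck transformations $\psi_j$ of $\wh\G_{\F}$ through the commutative square from Proposition~\ref{AutF}, solving for the scalar $\lambda(x)=F(x)/F_j(x)$, and then checking directly that $\G_{\F}\circ\phi_j=\G_{\F}$---is exactly the natural computation the reader is expected to carry out, and you have done it cleanly. The distinctness and counting argument is also correct and completes the identification of $\mr{Deck}(\G_{\F})$ with the stated set.
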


\section{Galois foliations on the projective plane}\label{S6}

The aim of this section is to study the space of Galois foliations of degree $d$ on the complex projective plane. We begin by giving an algebraic characterization of Galois foliations based on the total decomposability of their dual webs. We use this criterion to exhibit some explicit examples in any degree. Using results of Section~\ref{fam} we show that the space of degree $3$ Galois foliations has at least two irreducible components. 
We also provide a  characterization of Galois foliations in terms of geometric elements naturally associated to them by using the main result of Subsection~\ref{RT}. We give one  necessary and one sufficient local  conditions for the Galois character of a foliation that become equivalent in the prime degree case.
Finally we obtain a full characterization of homogeneous Galois foliations which
implies in particular that the space of Galois foliations of even degree has at least two irreducible components. 
More generally we characterize Galois foliations with all possible continuous symmetries and we exhibit some examples.

\subsection{The space of Galois degree $d$ foliations on $\p^{2}$}\label{7.1}
Recall that a degree~$d$ foliation $\F$ on $\p^{2}$ is given by a $1$-form on $\C^{3}$,
$$\omega=a(x,y,z)dx+b(x,y,z)dy+c(x,y,z)dz,$$  with $a,b,c$ homogeneous polynomials of degree $d+1$ without common factors and
fulfilling $\omega(R)=ax+by+cz=0$, where $R=x\partial_{x}+y\partial_{y}+z\partial_{z}$ is the radial vector field (see for instance~\cite[\S 9.1]{CCD}).
Thus, the space $\mb F_{d}$ of degree $d$ foliations on $\p^{2}$ is a Zariski open subset of 
the projective space $\overline{\mb F}_{d}:=\mb P(U_{d})$, where
\begin{equation}\label{ud}U_{d}:=\{(a,b,c)\in\C_{d+1}[x,y,z]^{\oplus 3}\,|\, ax+by+cz=0\}
 \end{equation}
and $\C_{d}[x,y,z]$ is the vector space of degree $d$ homogeneous polynomials in $x,y,z$.
For practical purposes it will be convenient to define foliation~$\F$ in an affine chart $(x,y)$ of $\p^{2}$ by a polynomial vector field
\begin{equation}\label{X}
X=A(x,y)\partial_{x}+B(x,y)\partial_{y}=\bar a(x,y)\partial_{x}+\bar b(x,y)\partial_{y}+\bar c(x,y)(x\partial_{x}+y\partial_{y})
\end{equation}
with $\bar a,\bar b,\bar c\in\C[x,y]$, $\deg \bar a,\deg\bar b\le d$ and $\bar c$ homogeneous of degree $d$.
The vector field $X$ is said \emph{saturated} if $\gcd(A,B)=1$. This condition jointly with $\max(\deg A,\deg B)\ge d$ is equivalent to the condition $\gcd(a,b,c)=1$ defining~$\mb F_{d}$.
We consider the Gauss map $\mc G_{\F}:\p^{2}\dasharrow\pd^{2}$ of $\F$ which is written as
$$\G_{\F}([x,y,z])=[a(x,y,z),b(x,y,z),c(x,y,z)]$$ 
in homogeneous coordinates.

We can consider the family of dominant rational maps of constant topological degree $d$ 
$$\G:\p^{2}\times\mb F_{d}\dasharrow\p^{2}\times\mb F_{d},\qquad\G(p,\F):=(\G_{\F}(p),\F),$$ over the Zariski open subset $\mb F_{d}$ of the projective space $\ov{\mb F}_{d}$.
By applying Theorem~\ref{TC} we deduce that the subset
$\mb G_{d}:=\mr{Gal}(\mb F_{d})$ of $\mb F_{d}$ consisting of degree $d$ Galois foliations on $\p^{2}$ is Zariski closed.

\begin{question}\label{Q}
What are the number and type of the irreducible components of the Zariski  closed subset  $\mb G_{d}\subset\mb F_{d}$  of degree $d$ Galois foliations on $\p^{2}$?
\end{question}

Notice that according to  Theorems~\ref{TB} and~\ref{TC}
the weighted branching type $\mf b_{\F}^{w}$  and the genus $\mf g_{\F}$ of $\G_{\F}$ are generically constant and the Galois group $\mr{Deck}(\G_{\F})$ is constant along each irreducible component of~$\mb G_{d}$.

\subsection{Examples of Galois foliations on $\p^{2}$}

We begin this section by noticing that Corollary~\ref{5} on $\p^{2}$ implies the following computational criterion already considered 
in \cite{CD}  for the degree $3$ case (cf. Proposition~5.2 loc. cit.):
\begin{prop}\label{CDpol}
A foliation $\F$ on $\p^{2}$ given by the polynomial vector field $X=A(x,y)\partial_{x}+B(x,y)\partial_{y}$ is Galois if and only if the polynomial 
\begin{equation}\label{polCD}
P(x,y,t)=\det\left(\begin{array}{cc}A(x,y) &A(x+tA(x,y),y+tB(x,y))\\
B(x,y) &B(x+tA(x,y),y+tB(x,y))
\end{array}\right)\in\C[x,y,t]\end{equation} decomposes totally over the field $\C(x,y)$. In fact, each one of its rational roots $t=t(x,y)\in\C(x,y)$ determines a birational deck transformation of~$\G_{\F}$:
$$(x,y)\mapsto(x+t(x,y)A(x,y),y+t(x,y)B(x,y)).$$
In particular, if $\deg\F=3$ then $\F$ is Galois if and only if the $t$-discriminant of the  polynomial $P(x,y,t)/t$ of degree $2$ in $t$ is a square in $\C[x,y]$.
\end{prop}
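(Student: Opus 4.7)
The plan is to combine Corollary~\ref{5}, which characterizes Galois foliations by total decomposability of $\G_{\F}^{*}\leg\F$, with Theorem~\ref{car-gal}, which identifies $\mr{Deck}(\G_{\F})$ with the group of birational self-maps $\phi$ of $\p^{2}$ satisfying $\G_{\F}\circ\phi=\G_{\F}$. The proof then reduces to showing that such deck transformations are parametrized by the rational roots of $P$ in $\C(x,y)$.

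First I would read $P(x,y,t)$ geometrically. For a generic $p=(x,y)$, the line $\ell=T_{p}\F$ is parametrized by $q=p+tX(p)$, and the determinant defining $P$ is exactly the $2\times 2$ bracket expressing the collinearity of $X(p)$ and $X(q)$. Hence $P(x,y,t)=0$ is equivalent to $X(q)\parallel X(p)$, i.e.\ to $T_{q}\F=\ell$, i.e.\ to $\G_{\F}(q)=\G_{\F}(p)$. Since $\G_{\F}$ has topological degree $d$ and since $P(x,y,0)\equiv 0$, the $d$ pre-images of $\G_{\F}(p)$, which all lie on $\ell$, are in bijection with the $d$ roots of $P(x,y,t)$ in the variable $t$.

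Next I would set up the bijection between rational roots of $P$ in $\C(x,y)$ and elements of $\mr{Deck}(\G_{\F})$. Given $\phi\in\mr{Deck}(\G_{\F})$, the point $\phi(p)$ is a pre-image of $\G_{\F}(p)$, hence lies on $T_{p}\F$, so $\phi(p)-p=t(p)X(p)$ for a unique $t\in\C(x,y)$, which by the first step satisfies $P(x,y,t)\equiv 0$. Conversely, any rational root $t\in\C(x,y)$ of $P$ defines a rational self-map $\phi_{t}(x,y)=(x+tA,y+tB)$ satisfying $\G_{\F}\circ\phi_{t}=\G_{\F}$; the multiplicativity of topological degrees, $d=\deg(\G_{\F}\circ\phi_{t})=(\deg\phi_{t})\cdot d$, forces $\deg\phi_{t}=1$, so that $\phi_{t}\in\mr{Bir}(\p^{2})$. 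These assignments are mutually inverse.

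By Theorem~\ref{car-gal} together with Definition~\ref{def-gal} one has $|\mr{Deck}(\G_{\F})|\le d$, with equality precisely when $\F$ is Galois. Combined with the bijection this gives: $\F$ is Galois iff $P$ admits $d$ rational roots in $\C(x,y)$, iff $P$ factors completely into $d$ linear factors over $\C(x,y)[t]$. The degree three case then follows at once, since $P=t\cdot Q$ with $Q\in\C[x,y][t]$ of $t$-degree two; such a quadratic splits over $\C(x,y)$ iff its discriminant is a square in $\C(x,y)$, equivalently---by the UFD property of $\C[x,y]$---in $\C[x,y]$. The main obstacle will be making sure that the $d$ roots of $P$ in $\overline{\C(x,y)}$ correspond precisely to the $d$ topological pre-images of $\G_{\F}$, without multiplicity issues or spurious factors coming from the chosen polynomial representation of $X$.
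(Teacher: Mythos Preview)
Your argument is correct and matches the paper's intent. The paper does not spell out a proof: it simply states that Proposition~\ref{CDpol} follows from Corollary~\ref{5} (total decomposability of $\G_{\F}^{*}\leg\F$), relying implicitly on the remark after Theorem~\ref{webdecomp} that in dimension two the fibred product $\p^{2}\times_{\pd^{2}}\p^{2}$ is cut out by a single polynomial in one variable over $\C(x,y)$---namely your $P(x,y,t)$. Your route is a mild variant: you bypass the web and go directly through $\mr{Deck}(\G_{\F})$, using Theorem~\ref{car-gal} and the cardinality criterion $|\mr{Deck}(\G_{\F})|=d$. The geometric core---that the roots of $P(p,\cdot)$ parametrize $\G_{\F}^{-1}(\G_{\F}(p))$ along $T_{p}\F$---is identical in both.

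Your closing concern about the $t$-degree of $P$ is legitimate but easily disposed of. Writing $X$ in the form~(\ref{X}) with $A=\bar a+x\bar c$ and $B=\bar b+y\bar c$, one checks that the $t^{d+1}$-coefficient of the determinant is $A(x,y)\,B(x,y)\,\bar c(A,B)-B(x,y)\,A(x,y)\,\bar c(A,B)=0$, so $\deg_{t}P\le d$; since the generic fibre of $\G_{\F}$ has exactly $d$ simple points, $\deg_{t}P=d$ over $\C(x,y)$ and there are no multiplicity issues.
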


Before going further with Question~\ref{Q} let us present some explicit examples. 
The following result provides continuous families $E_{d}$ of Galois foliations in each degree $d$, and all of them have cyclic monodromy group after Corollary~\ref{corext}. It would be interesting to decide if  $E_{d}$ forms an irreducible component of $\mb G_{d}$. It is worth to notice that $E_{3}$ contains as particular cases all the examples considered in~\cite{CD}.

\begin{prop}\label{2.4}
For all linearly independent vectors $(\alpha,\gamma,\lambda)$, $(\beta,\delta,\mu)\in\C^{3}$ and every $\C$-linearly independent $u,v\in\C[x,y]$  with $\deg u,\deg v\le 1$, the degree $d$ foliation $\F$ defined by the saturated vector field
$$(\alpha u^{d}+\beta v^{d})\partial_{x}+(\gamma u^{d}+\delta v^{d})\partial_{y}
+(\lambda u^{d}+\mu v^{d})(x\partial_{x}+y\partial_{y})
$$
is Galois  with extremal weighted branching type $\mf b_{\F}^{w}=2(d)_{1}$ and genus~\mbox{$\mf g_{\F}=0$.}
\end{prop}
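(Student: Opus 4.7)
The plan is to exhibit the Gauss map of $\F$ in a compact cross-product form that directly reveals its cyclic Galois structure, so that the extremal-type criterion of Corollary~\ref{corext} applies.

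First, set $v_1:=(\alpha,\gamma,-\lambda)$ and $v_2:=(\beta,\delta,-\mu)$ in $\C^3$, linearly independent by hypothesis, and let $U,V\in\C[x,y,z]_1$ be the homogenizations of $u,v$. A direct computation starting from $\omega = i_X i_R (dx\wedge dy\wedge dz)$ yields the compact identity
\[
\G_\F(p) \;=\; w(p)\times p, \qquad w(p) := U(p)^d\, v_1 + V(p)^d\, v_2,
\]
with the cross product identifying $\pd^2$ with the dual of $\p^2$. Geometrically, the tangent line of $\F$ at $p$ passes through $p$ and through the point $w(p)$ of the projective line $L\subset\p^2$ spanned by the distinct points $[v_1]$ and $[v_2]$.

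By Theorem~\ref{dim-red}, the Galois character, weighted branching type and genus of $\G_\F$ can be tested on the restriction to a generic line $\ell\subset\pd^2$. Such an $\ell$ is a pencil of lines through some $p_\ell=[c]\in\p^2$, and $p\in C_\ell := \G_\F^{-1}(\ell)$ iff the three points $p, p_\ell, w(p)$ are colinear. Expanding the vanishing of $\det(c,p,w(p))$ gives
\[
C_\ell \;=\; \{\,U^d L_1 + V^d L_2 = 0\,\}\subset\p^2, \qquad L_i(p):=(c\times v_i)\cdot p.
\]
For generic $\ell$, the only singular point of this degree-$(d+1)$ plane curve is the base locus $q_0=\{U=V=0\}$ of the rational map $[U\!:\!V]$, and the tangent cone $L_1(q_0)U^d+L_2(q_0)V^d$ at $q_0$ is reduced, so $q_0$ is an ordinary $d$-fold point. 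Hence $g(C_\ell^\nu) = \tfrac{d(d-1)}{2} - \tfrac{d(d-1)}{2}=0$, giving $\mf g_\F=0$. The rational function $\tau:=U/V$ has degree $(d+1)-d=1$ on $C_\ell$ (Bezout, with the base point $q_0$ absorbing multiplicity $d$), so it identifies $C_\ell^\nu\cong\p^1$.

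On $C_\ell$ away from $q_0$ one has $w(p)=V(p)^d(\tau^d v_1+v_2)$, so $[w(p)]=[\tau^d v_1+v_2]\in L$ depends only on $\tau^d$. Parametrizing $\ell$ via the (generically well-defined) isomorphism $\ell\cong L$, $\ell^*\mapsto\ell^*\cap L$, the restricted Gauss map becomes the classical $d$-th power cover
\[
\p^1\ni\tau\;\longmapsto\;[\tau^d v_1+v_2]\in L\cong\p^1,
\]
cyclic Galois of order $d$, ramified only at $\tau=0$ (image $[v_2]$) and $\tau=\infty$ (image $[v_1]$), each with ramification index $d$. Lifting back to $\pd^2$, the branching locus of $\G_\F$ consists of the two distinct lines $\Delta_i\subset\pd^2$ parametrizing the lines of $\p^2$ through $[v_i]$, each of degree $1$ with branching type $(d)_1$. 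Therefore $\mf b_\F^w=2(d)_1$ is extremal, and Corollary~\ref{corext} yields that $\G_\F$ is Galois with $\mr{Deck}(\G_\F)\cong\Z_d$. The main obstacle is arriving at the cross-product identity of Step~1: it encodes precisely how the shared radial coefficients $(\lambda,\mu)$ couple with $(\alpha,\beta,\gamma,\delta)$ to produce vectors $v_1,v_2$ through which $w(p)$ factors as a $d$-th power, so that all subsequent geometry reduces to the well-understood $d$-fold cyclic cover of $\p^1$.
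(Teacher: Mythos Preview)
Your proof is correct and arrives at the same endpoint as the paper's---the restriction of $\G_\F$ to a generic line of $\pd^2$ is left-right equivalent to $z\mapsto z^d$---but the route is genuinely different and arguably cleaner.

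The paper works in affine coordinates: it first invokes the computational criterion of Proposition~\ref{CDpol} to show that the polynomial $P(x,y,t)$ splits completely over $\C(x,y)$ (reducing $p(x+t,y+tp)=p(x,y)$ to $w^d=w^d$), and then separately gives an explicit rational parametrization of the polar curve $\G_\F^{-1}(\check p)$ to read off $\mf b_\F^w=2(d)_1$ and $\mf g_\F=0$. Your argument bypasses Proposition~\ref{CDpol} entirely. The cross-product identity $\G_\F(p)=[w(p)\times p]$ with $w(p)=U^d v_1+V^d v_2$ makes the geometry transparent: the tangent line of $\F$ at $p$ joins $p$ to a point of the fixed line $L=\overline{[v_1][v_2]}$, and that point depends only on $(U/V)^d$. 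This immediately exhibits both the ramification structure and the two components of $\Delta_\F$ as the duals $\check{[v_1]},\check{[v_2]}$, after which Corollary~\ref{corext} (extremal implies Galois) closes the argument. What your approach buys is a single unified computation that uses only the general machinery of Sections~3--4, while the paper's buys an explicit list of the $d$ birational deck transformations via the factorization \eqref{zeta}.

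One small point of presentation: you assert that $q_0$ is the \emph{only} singular point of $C_\ell$ before giving the Bezout argument that $\tau=U/V$ has degree $1$ on $C_\ell$. In fact the polar curve $C_\ell$ also passes through the singularities of $\F$ lying on $L$, and their smoothness on $C_\ell$ is not immediate. But your Bezout argument for $\deg(\tau|_{C_\ell})=1$ is self-contained and already gives $C_\ell^\nu\cong\p^1$; the genus-formula computation is then a consistency check (and, since $C_\ell$ is irreducible with $\delta_{q_0}=\binom{d}{2}=p_a(C_\ell)$, it retroactively confirms there are no further singularities). Reordering those two sentences would make the logic airtight.
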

\begin{dem}
The slope of $\F$ takes the form
$p(x,y)=
\frac{\gamma+\delta w^{d}+y(\lambda+\mu w^{d})}{\alpha+\beta w^{d}+x(\lambda+\mu w^{d})}$,  with $w=\frac{v}{u}$. 
 The roots of polynomial~(\ref{polCD}) for the vector field $X=\partial_{x}+p(x,y)\partial_{y}$ are the solutions of the equation
$p(x+t,y+tp(x,y))=p(x,y)$, which reduces to $w(x+t,y+tp(x,y))^{d}=w(x,y)^{d}$. Using that $\deg u,\deg v\le 1$, the last equation factorizes as the following $d$ linear equations in the variable $t$:
\begin{equation}\label{zeta}
w(x+t,y+tp(x,y))=\zeta^{k}w(x,y)\hspace{2mm} \text{with}\hspace{2mm} \zeta=e^{\frac{2i\pi}{d}}\hspace{2mm} \text{and}\hspace{2mm}  k\in\{1,\ldots,d\}.
\end{equation}

Let $p=(a,b)\in\C^{2}\subset\p^{2}$ be a generic point so that its dual line $\ell=\check{p}\subset\pd^{2}$ belongs to the Zariski open subset $V$ considered in Theorem~\ref{dim-red}. Then the curve $P=\G_{\F}^{-1}(\ell)\subset\p^{2}$ has  affine equation
$$F(x,y):=\left|\begin{array}{cc}x-a & (\alpha A+\beta B)(u,v)+x(\lambda A+\mu B)(u,v)\\ y-b & (\gamma A+\delta B)(u,v)+y(\lambda A+\mu B)(u,v)\end{array}\right|=0,$$
where $A(x,y)=x^{d}$ and $B(x,y)=y^{d}$.
Since $u$ and $v$ are $\C$-linearly independent polynomials of degree $\le 1$, from the equation $\frac{v}{u}=w\in\p^{1}$ we can express either $y=y_{0}(w)+y_{1}(w)x$  or $x=x_{0}(w)+x_{1}(w)y$, with $x_{i}(w),y_{i}(w)\in\C(w)$.
Without loss of generality we can assume that we are in the first situation. From equation
$F(x,y_{0}(w)+y_{1}(w)x)=0$ we obtain an explicit rational parametrization $\pi:\p^{1}\to P$ given by
\begin{align*}
x(w)&=\left.\frac{ \left(  \left( \lambda A+\mu B \right) a+\alpha\,A+\beta B
 \right) y_{{0}}+ \left( \gamma A+\delta B \right) a+ \left( -\alpha
 A-\beta B \right) b}{ -\left(  \left( \lambda A+\mu B \right) a+\alpha A+\beta B
 \right) y_{{1}}+ \left( \lambda A+\mu B \right) b+(\gamma A+\delta
 B)}\right|_{(1,w)}\\
y(w)&=y_{0}(w)+y_{1}(w)x(w).
\end{align*}
On the other hand, 
the pencil $\check{p}=\ell$ of lines through $p$ can be parametrized by $t\in\p^{1}$ by means of $\frac{y-b}{x-a}=t$. By composing $\G_{|P}:P\to\ell$ to the left by $\pi:\p^{1}\to P$ and to the right with the inverse of $\p^{1}\stackrel{\sim}{\to}\ell$ we obtain the rational map $\p^{1}\to\p^{1}$ given by 
$w\mapsto{\frac { \left( \gamma+b\lambda \right) A(1,w)+ \left( \delta+b\mu \right) 
B(1,w)}{ \left(\alpha + a\lambda\right) A(1,w)+ \left(\beta+ a\mu \right) B(1,w)}}
$, which is right equivalent to the Galois rational map $w\mapsto\frac{B(1,w)}{A(1,w)}=w^{d}$ because 
$$\left( \gamma+b\lambda \right)\left(\beta +a\mu \right)-\left( \delta+b\mu \right)\left(\alpha + a\lambda\right)=\left|\begin{array}{ccc}\alpha & \gamma & \lambda\\ \beta & \delta & \mu\\ a & b & -1\end{array}\right|\neq 0$$
if $(a,b)\in\C^{2}$ is generic. Hence $\mf b_{\F}^{w}=2(d)_{1}$ and $\mf g_{\F}=g(P)=0$.
\end{dem}

\begin{ex}\label{exg1}
The degree $3$ foliation $\F$ given by the vector field 
$$xy\partial_{x}+(\zeta y^{2}+x^{3})\partial_{y},\quad\text{with}\quad \zeta=\frac{1\pm i\sqrt{3}}{2},$$ is Galois with extremal weighted branching type $\mf b_{\F}^{w}=3(3)_{1}$ and genus $\mf g_{\F}=1$. Indeed, the $t$-discriminant  $-\zeta x^{2}y^{2}(y^{2}-x^{3})^{2}((\zeta-1)y^{2}+x^{3})^{2}$ of the polynomial $P(x,y,t)/t$ considered in Proposition~\ref{CDpol} is a square in $\C[x,y]$. Hence $\F$ is Galois of extremal type $\mf b^{w}_{\F}=c(3)_{1}$   because $\deg\F=3$ is prime. 
 On the other hand, if $p=(a,b)\in\C^{2}\subset\p^{2}$ and $\ell=\check{p}\subset\pd^{2}$ are generic then the curve $\G_{\F}^{-1}(\ell)$ is irreducible, has affine equation
$(x-a)(\zeta y^{2}+x^{3})-(y-b)xy=0$ and its geometric genus is $\mf g_{\F}=1$.
We conclude that $c=3$ by using Example~\ref{ebt}.
\end{ex}

\begin{prop}\label{G3red}
The Zariski closed set $\mb G_{3}$ of degree $3$ Galois foliations is reducible. More preci\-sely, let $C_{0}$ be an irreducible component of $\mb G_{3}$ containing  the family  $E_{3}\subset\mb G_{3}$ of genus zero Galois foliations given in Proposition~\ref{2.4} for  $d=3$ and let $C_{1}$ be an irreducible component of $\mb G_{3}$ containing the degree $3$ and genus one Galois foliation $\F_{1}$ considered in Example~\ref{exg1}. Then $C_{0}\neq C_{1}$.
\end{prop}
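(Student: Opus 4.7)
The plan is to distinguish the two components via the genus function $\mf g\colon\mb F_3\to\Z^+$, using its lower semi-continuity established in Theorem~\ref{TC}. By Proposition~\ref{2.4} applied for $d=3$, every $\F\in E_3$ has $\mf g_\F=0$, while by Example~\ref{exg1}, $\mf g_{\F_1}=1$.

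First I would verify that $E_3$ is irreducible, being the image under the polynomial parametrization of Proposition~\ref{2.4} of the irreducible parameter space of linearly independent pairs of vectors in $\C^3$ together with linearly independent linear forms $u,v\in\C[x,y]_{\le 1}$. Its Zariski closure $\overline{E_3}\subset\mb F_3$ is therefore an irreducible closed subvariety. Lower semi-continuity of $\mf g$ (Theorem~\ref{TC}) implies that $\{\mf g=0\}$ is Zariski closed in $\mb F_3$, and since this set contains the Zariski dense subset $E_3$ of $\overline{E_3}$, we deduce $\mf g\equiv 0$ on $\overline{E_3}$.

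Assume for contradiction that $C_0=C_1=:C$. Since $\mf g$ is lower semi-continuous, its restriction to the irreducible component $C$ attains its maximum $g^\ast:=\max_{\F\in C}\mf g_\F$ on a Zariski dense open subset $U\subset C$ (cf. Lemma~\ref{semicont}). Since $\F_1\in C$ and $\mf g_{\F_1}=1$, we have $g^\ast\ge 1$. But $\mf g\equiv 0$ on $\overline{E_3}\subset C$, so $\overline{E_3}\subset C\setminus U$ is a proper closed subvariety of $C$.

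The remaining and main obstacle is to derive a contradiction from the fact that $\overline{E_3}$ is properly contained in $C$. This would follow at once if one could show that $\overline{E_3}$ is already an irreducible component of $\mb G_3$, so that $C=\overline{E_3}$ and hence $\mf g\equiv 0$ on all of $C$, contradicting $\mf g_{\F_1}=1$. The most natural way to establish this is a dimension argument: explicitly compute $\dim\overline{E_3}$ from the parametrization in Proposition~\ref{2.4} modulo the obvious scaling ambiguities on the coefficient vectors and on $(u,v)$, then match it against an upper bound on $\dim C$ obtained via an infinitesimal deformation analysis of the Galois condition around a generic $\F\in E_3$. Equivalently, one may invoke the generic constancy of the weighted branching type from Proposition~\ref{TB}, together with the Riemann--Hurwitz relation $|\mf b^w_\F|=4+2\mf g_\F$ extracted from Theorem~\ref{dim-red}, to conclude that the generic weighted branching type on $C$ must equal $2(3)_1$ (as on $E_3$) rather than $3(3)_1$ (as at $\F_1$), and hence $g^\ast=0$.
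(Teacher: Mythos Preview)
Your overall strategy---using lower semi-continuity of $\mf g$ to force $\overline{E_3}$ into a proper closed subvariety of $C$, then seeking a dimension contradiction---is exactly the paper's approach, and your option~(a) is precisely what the paper carries out. The gap is that you have not executed it, and this computation is the entire content of the proof.

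Concretely, the paper first checks that the parametrization of $E_3$ from Proposition~\ref{2.4} has differential of rank~$9$ at the point $(u,v;\alpha,\beta,\gamma,\delta,\lambda,\mu)=(x,y;1,0,0,1,0,0)$, so $\dim E_3\ge 9$. For the upper bound it linearizes the discriminant criterion of Proposition~\ref{CDpol}: in degree~$3$, $[X]\in\mb G_3$ if and only if the $t$-discriminant $\Delta_X$ of $P_X(x,y,t)/t$ is a square $\delta_X^2$ in $\C[x,y]$, and a first-order analysis (Lemma~\ref{lema-dim}) shows
\[
T_{[X]}\mb G_3\ \subset\ \Big\{Y\in U_3\ \Big|\ \delta_X \text{ divides } \tfrac{d}{d\varepsilon}\big|_{\varepsilon=0}\Delta_{X+\varepsilon Y}\Big\}\big/\langle X\rangle.
\]
A direct (computer-assisted) computation of this space at $X=\F_1$ yields $\dim T_{\F_1}\mb G_3\le 9$, hence $\dim C_1\le 9$. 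If $C_0=C_1$ one obtains $9\le\dim E_3\le\dim C_1\le 9$, contradicting the proper inclusion $E_3\subset\{\mf g=0\}\cap C_1\subsetneq C_1$. Note that the tangent-space bound is taken at $\F_1$, not at a generic point of $E_3$ as you suggest; either choice would do under the assumption $C_0=C_1$, but the paper's choice is what makes the calculation tractable.

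Your option~(b) does not work as stated. Proposition~\ref{TB} only guarantees that $\mf b^w$ is constant on some dense open $T'\subset C$; it gives no reason for that constant to agree with the value on $E_3$. In fact your own step~2 already places $E_3$ in the complement of the locus where the generic invariants are attained, so you cannot read the generic $\mf b^w$ off $E_3$. Riemann--Hurwitz merely ties $|\mf b^w|$ to $\mf g$, and since $g^\ast\ge 1$ the generic $|\mf b^w|$ on $C$ would be $4+2g^\ast\ge 6$, not $4$; this is consistent with $3(3)_1$ and produces no contradiction. The dimension argument of option~(a) is therefore genuinely required.
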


Before proving it let us make some previous considerations.
Recall that the vector space $U_{3}$ defined in~(\ref{ud}) is isomorphic to the space of vector fields~(\ref{X}).
 If $X$ is  such a vector field we will denote $[X]\in\overline{\mb F}_{3}$ the foliation defined by $X$. 
In order to estimate the dimension of $\mb G_{3}$
we can compute an upper bound of the dimension of the tangent space of $\mb G_{3}$ at a point $[X]\in\mb G_{3}$. To do that, we note that $\mb G_{3}$ coincides with the set of foliations $[X]\in\mb F_{3}$ such that the $t$-discriminant $\Delta_{X}=a_{2}^{2}-4a_{1}a_{3}\in\C[x,y]$ of the polynomial $P_{X}(x,y,t)/t\in\C[x,y,t]$ considered in~(\ref{polCD}), is a square, i.e. 
$\Delta_{X}=\delta_{X}^{2}$ with $\delta_{X}\in\C[x,y]$.

\begin{lema}\label{lema-dim}
If $[X]\in\mb G_{3}$ then $T_{[X]}\mb F_{3}=U_{3}/\langle X\rangle$ and
$$
T_{[X]}\mb G_{3}\subset\Big\{Y\in U_{3}\,|\,\delta_{X} \text{ divides }\frac{d}{d\varepsilon}\Big|_{\varepsilon=0}(\Delta_{X+\varepsilon Y})\in \C[x,y]\Big\}\Big/\langle X\rangle.
$$
\end{lema}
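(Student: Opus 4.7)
The plan is to use Proposition~\ref{CDpol} to describe $\mb G_{3}$ set-theoretically as the locus cut out inside $\mb F_{3}$ by the single condition that the discriminant $\Delta_{X}$ of the polynomial $P_{X}(x,y,t)/t$ is a square in $\C[x,y]$. The first identity $T_{[X]}\mb F_{3}=U_{3}/\langle X\rangle$ is then the classical description of the tangent space to the projective space $\mb P(U_{3})$ at the point $[X]$, valid as well for its Zariski open subset $\mb F_{3}$.

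For the inclusion describing $T_{[X]}\mb G_{3}$, I would pass to the Zariski (algebraic) tangent space. A tangent vector to $\mb G_{3}$ at $[X]$ is represented by an element $Y\in U_{3}$, well defined modulo $\langle X\rangle$, such that the first-order arc $[X+\varepsilon Y]$ with values in the dual numbers $\C[\varepsilon]/(\varepsilon^{2})$ lies in $\mb G_{3}$. By Proposition~\ref{CDpol} this forces
$$\Delta_{X+\varepsilon Y}=\delta_{X}^{2}+\varepsilon\Delta'_{Y},\qquad\Delta'_{Y}:=\tfrac{d}{d\varepsilon}\big|_{\varepsilon=0}\Delta_{X+\varepsilon Y},$$
to be a square in $(\C[\varepsilon]/(\varepsilon^{2}))[x,y]$. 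Writing the square root as $\delta(\varepsilon)=\delta_{0}+\varepsilon\delta_{1}$ with $\delta_{0},\delta_{1}\in\C[x,y]$, the identity $\delta(\varepsilon)^{2}=\delta_{X}^{2}+\varepsilon\Delta'_{Y}$ modulo $\varepsilon^{2}$ successively yields $\delta_{0}=\pm\delta_{X}$ and $\Delta'_{Y}=\pm 2\delta_{X}\delta_{1}$, so that $\delta_{X}$ divides $\Delta'_{Y}$ in $\C[x,y]$.

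Finally I would verify that the quotient by $\langle X\rangle$ is harmless: for $Y=X$, the scaling behaviour $\Delta_{\lambda X}=\lambda^{8}\Delta_{X}$ of the discriminant (which follows from a direct computation with $P_{\lambda X}(x,y,t)=\lambda^{2}P_{X}(x,y,\lambda t)$) gives $\Delta'_{X}=8\delta_{X}^{2}$, manifestly divisible by $\delta_{X}$. The only minor point requiring attention is the sign ambiguity in $\delta_{0}=\pm\delta_{X}$, which is harmlessly absorbed into $\delta_{1}$; beyond this the argument is a direct first-order calculation with dual numbers and I do not anticipate any serious obstacle.
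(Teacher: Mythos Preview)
Your argument follows the same path as the paper's, but one step is looser than it appears. Proposition~\ref{CDpol} only identifies $\mb G_{3}$ with the square-discriminant locus \emph{set-theoretically}; it says nothing about $\C[\varepsilon]/(\varepsilon^{2})$-points, so you cannot invoke it directly to conclude that $\Delta_{X+\varepsilon Y}$ is a square over the dual numbers. What a Zariski tangent vector to $\mb G_{3}$ actually gives you is that $\Delta'_{Y}$ lies in the Zariski tangent space of the closed variety $S$ of squares inside the space $V_{24}$ of polynomials of degree $\le 24$. For this tangent space to consist exactly of the multiples of $\delta_{X}$---equivalently, for its dual-number points over $\Delta_X$ to be precisely the squares over the dual numbers---you need $S$ to be smooth at $\Delta_{X}$; if $S$ were singular there, $T_{\Delta_X}S$ could be strictly larger and your implication would fail. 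The paper makes this explicit: the squaring map $\mf s:V_{12}\to V_{24}$, $\delta\mapsto\delta^{2}$, has injective differential $\gamma\mapsto 2\delta_{X}\gamma$ at every $\delta_{X}\neq 0$, hence $S\setminus\{0\}$ is smooth and $T_{\Delta_{X}}S=\mr{Im}\,d\mf s_{\delta_{X}}$. With this one line inserted, your dual-numbers computation and the paper's tangent-space computation are the same argument in two languages.
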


\begin{dem}
Let $V_{m}$ denote the space of polynomials in $\C[x,y]$ of degree $\le m$. Writing $P_{X}=a_{1}t+a_{2}t^{2}+a_{3}t^{3}$, it is easy to check that $a_{1}\in V_{9}$, $a_{2}\in V_{12}$ and $a_{3}\in V_{15}$, so that $\Delta_{X}\in V_{24}$. The map $\mf s:V_{12}\to V_{24}$ given by $\delta\mapsto \delta^{2}$ induces a morphism $\overline{\mf s}:\p(V_{12})\to\p(V_{24})$ whose image $\overline{S}$ is Zariski closed. Then the preimage $S$ of $\overline{S}$ in $V_{24}$ is also Zariski closed.
Let $f_{1},\ldots,f_{k}$ be generators of the ideal $I(S)$. 
Then $f_{1}\circ \Delta,\ldots, f_{k}\circ\Delta$ 
is a system of equations defining the preimage $G_{3}$ of $\mb G_{3}$ in $U_{3}$. Although we do not know whether 
$f_{i}\circ \Delta$ 
generate the ideal $I(G_{3})$, 
we have
\begin{align*}T_{X}G_{3}\subset\bigcap\limits_{i=1}^{k}\ker d(f_{i}\circ\Delta)_{X}&=\Big\{Y\in U_{3}\ \Big|\ \frac{d}{d\varepsilon}\Big|_{\varepsilon=0}\Delta_{X+\varepsilon Y}\in\bigcap_{i=1}^{k}\ker (d f_{i})_{\Delta_{X}}\Big\}\\
&= \Big\{Y\in U_{3}\ \Big|\ \frac{d}{d\varepsilon}\Big|_{\varepsilon=0}\Delta_{X+\varepsilon Y}\in T_{\Delta_{X}}S\Big\}.
\end{align*}
Consider $\Delta=\delta^{2}\in S\setminus\{0\}\subset V_{24}$ with $\delta\in V_{12}\setminus\{0\}$ and $\Gamma\in T_{\Delta}V_{24}=V_{24}$. 
Since, for $\gamma\in T_{\delta}V_{12}=V_{12}$, $d\mf s_{\delta}(\gamma)=2\delta\gamma\neq 0$ if $\gamma\neq 0$, it follows that $S\setminus\{0\}$ is smooth and consequently $T_{\Delta}S=\mr{Im}\,d\mf s_{\delta}$. 
Hence $\Gamma\in T_{\Delta}S$ if and only if  $\delta$ divides $\Gamma$. We conclude by taking the quotient by the $1$-dimensional subspace $\langle X\rangle$ of $T_{X}G_{3}$.
\end{dem}

\begin{proof}[Proof of Proposition~\ref{G3red}]
By means of Lemma~\ref{lema-dim} and an explicit computation carried out with \texttt{maple} we deduce that \mbox{$\dim T_{\F_{1}}\mb G_{3}\le 9$} and consequently $\dim C_{1}\le 9$. On the other hand, the family $E\subset\mb F_{3}$ given in Proposition~\ref{2.4} for $d=3$ is the image of an explicit morphism $\varphi:W\subset\p^{11}\to\mb F_{3}$.
It can be checked that the rank of $d\varphi$ at the point $[\alpha,\beta,\gamma,\delta,\lambda,\mu,u,v]=[1,0,0,1,0,0,x,y]$ is $9$ and consequently $\dim E\ge 9$. 
Theorem~\ref{TC} implies that $E_{1}:=\{\F\in C_{1}\,|\, \mf g_{\F}=0\}$ is a proper Zariski closed subset of $C_{1}$. If $C_{0}=C_{1}$ then $9\le \dim E\le\dim C_{0}=\dim C_{1}\le 9$ contradicting that $E$ is contained in the \emph{proper} Zariski closed set $E_{1}$. 
\end{proof}

\subsection{Geometric characterization}\label{7.3}

We address now the question of characterizing Galois foliations on $\p^2$ in terms of geometric elements naturally associated to the foliation.
Thanks to Theorem~\ref{GG} we know that a foliation $\F$ on $\p^2$ is Galois if and only if its associated Gauss map $\G\colon\p^{2}\dasharrow\pd^{2}$ is of regular type. We proceed as in subsection~\ref{3.2} and we consider a
 desingularization  $\wt\G\colon\wt\p^{2}\to\pd^{2}$ of $\G$ by blowing up $\beta:\wt\p^{2}\to\p^{2}$ the singular locus $\Sigma_{\F}$ of the foliation which coincides with the indeterminacy locus of $\G$. 
According to formulae~\eqref{delta} and~\eqref{delta2}, the birational type of the ramification locus $\mc R_{\wt\G}\subset\wt\p^{2}$ and the curve 
$$\Delta_{\F}:=\wt\G(\mc R_{\wt\G})\subset\pd^{2}$$
do not depend on the choice of the desingularization $\wt\G$. 
For each $\varrho>1$ we consider the union $\mc R_{\wt \G}^{\varrho}$ of the components of $\mc R_{\wt\G}$ having ramification index~$\varrho$.

In order to describe geometrically the components of $\mc R_{\wt\G}$ that are not included in the exceptional divisor $\mc E=\nabla_{\beta}$ of $\beta$
we proceed as follows. Let $\I$ be  the inflection locus  of the foliation $\F$ introduced in \cite{P}. It is the closure of the set of points in $\p^{2}\setminus\Sigma_{\F}$ where the leaves of $\F$ have a contact of order greater than one with its tangent line. If $\F$ is defined by a vector field $X=A(x,y)\partial_{x}+B(x,y)\partial_{y}$ in an affine chart $(x,y)$ then 
\begin{equation}\label{infafin}
f(x,y)=
\left|\begin{array}{cc}A(x,y) & B(x,y)\\ X(A(x,y)) & X(B(x,y))\end{array}\right|=0.
\end{equation}
is an affine equation for $\I$. This local description gives $\I$ a natural structure of divisor (cf. \cite{P}).
We can decompose $\I=\Ii+\It$, where the support of $\Ii$ consists in  the union of the invariant lines of $\F$ (which are collapsed by $\G$) and the support of $\It$ is the closure of the inflection points that are isolated along the leaves of $\F$. For each $\varrho>1$ we  consider the reduced (maybe empty) curves $\mc I_{\F}^{\varrho}\subset\p^{2}$ defined by the equality of divisors
$$\mc I_{\F}^{\mr{tr}}=\sum_{\varrho>1}(\varrho-1)\mc I_{\F}^{\varrho}.$$
The number $\varrho$ in $\mc I_{\F}^{\varrho}$ is the tangency order between the leaf of $\F$ through a generic point $p$ of $\mc I_{\F}^{\varrho}$ and its tangent line $\ell=T_{p}\F$, that is the number of simple tangencies bifurcating from $p$ when one perturbs $\ell$.

\begin{lema}\label{tangencia}
For each $\varrho>1$ we have $\beta(\ov{\mc R_{\wt\G}^{\varrho}\setminus\mc E})=\mc I_{\F}^{\varrho}$.
\end{lema}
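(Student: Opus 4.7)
The plan is to transport the problem to $\p^{2}$ using that $\beta$ is an isomorphism off the finite set $\Sigma_{\F}$, and then reduce it to the classical local identification between the Jacobian of the Gauss map and the inflection polynomial of the foliation.

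First I would observe that, since $\mc E=\beta^{-1}(\Sigma_{\F})$ and $\beta$ restricts to a biholomorphism $\wt\p^{2}\setminus\mc E\stackrel{\sim}{\to}\p^{2}\setminus\Sigma_{\F}$ conjugating $\wt\G$ to $\G_{\F}$, taking strict transforms realises a bijection between the non-contracted irreducible curves of $\p^{2}$ and the irreducible components of $\mc R_{\wt\G}\setminus\mc E$, preserving the ramification index. The lemma therefore reduces to the statement that, for every $\varrho>1$, the irreducible components of $\mc I_{\F}^{\varrho}$ coincide with the non-contracted curves $C\subset\p^{2}$ along which $\G_{\F}$ has ramification index $\varrho$.

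Second, I would compute $\mr{Jac}(\G_{\F})$ in a local affine chart $(x,y)$ where $\F$ is given by $X=A\partial_{x}+B\partial_{y}$. Using $\G_{\F}(x,y)=(B/A,\,y-xB/A)$ in dual affine coordinates, direct differentiation yields
\begin{equation*}
\mr{Jac}(\G_{\F})\,=\,\frac{A^{2}B_{x}-B^{2}A_{y}+AB(B_{y}-A_{x})}{A^{3}}\,=\,\frac{f(x,y)}{A^{3}},
\end{equation*}
where $f$ is the local inflection polynomial from~\eqref{infafin}. Since $A$ does not vanish identically in the chart, the zero divisor of $\mr{Jac}(\G_{\F})$ coincides with the restriction of the inflection divisor $\I=\Ii+\It$. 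The curves contracted by $\G_{\F}$ are exactly the invariant lines of $\F$ (a curve whose image under $\G_{\F}$ reduces to a point $[L]\in\pd^{2}$ must be contained in $L$ and be a leaf), so away from $\Sigma_{\F}$ the contracted part of this divisor is $\Ii$ and the non-contracted part is $\It=\sum_{\varrho>1}(\varrho-1)\mc I_{\F}^{\varrho}$.

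Third, combining the previous step with the normal form~\eqref{NF} of the branched covering $\rho:N\to\pd^{2}$ associated to $\G_{\F}$, one obtains the two inclusions. On one hand, at a generic point of a non-contracted ramification component of index $\varrho$ the Gauss map reads $(x_{1},x_{2})\mapsto(x_{1},x_{2}^{\varrho})$, so its Jacobian vanishes to order exactly $\varrho-1$; by the previous step, the image curve is a component of $\mc I_{\F}^{\varrho}$, proving $\beta(\ov{\mc R_{\wt\G}^{\varrho}\setminus\mc E})\subset\mc I_{\F}^{\varrho}$. On the other hand, at a generic point of $\mc I_{\F}^{\varrho}\setminus\Sigma_{\F}$ the Jacobian of $\G_{\F}$ vanishes to order $\varrho-1$, and the normal form then forces the ramification index of $\G_{\F}$ there to be $\varrho$, yielding the reverse inclusion. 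The main subtlety to handle is precisely this bookkeeping of multiplicities: matching the global decomposition $\I=\Ii+\It$ with the splitting $\nabla_{\wt\G}=\mc R_{\wt\G}\cup\mc C_{\wt\G}$ from~\eqref{delta} in a way that is compatible with ramification indices.
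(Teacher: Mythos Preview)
Your argument is correct, but it takes a different route from the paper's. The paper gives a two-line geometric proof: since $\beta$ is an isomorphism off $\mc E$, the ramification index $\varrho$ of $\wt\G$ at a generic point $p$ of $\mc R_{\wt\G}^{\varrho}\setminus\mc E$ equals the number of local preimages of $\G_{\F}$ collapsing to $\beta(p)$ as the target point tends to $\wt\G(p)$, and this is by definition the number of simple tangencies of $\F$ with a nearby line that collapse to $\beta(p)$, i.e.\ the tangency order defining $\mc I_{\F}^{\varrho}$. Your approach instead makes the link algebraic: you compute the Jacobian of $\G_{\F}$ in an affine chart, identify its numerator with the inflection polynomial~\eqref{infafin}, separate the contracted part $\Ii$ from the non-contracted part $\It$, and then read off the ramification index from the vanishing order via the normal form~\eqref{NF}. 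The paper's argument is shorter and entirely coordinate-free; yours has the virtue of giving an explicit formula $\mr{Jac}(\G_{\F})=f/A^{3}$ that makes the identity $\It=\sum_{\varrho>1}(\varrho-1)\mc I_{\F}^{\varrho}$ visibly the same as the ramification divisor, which is a nice byproduct. One small point worth tightening in your write-up: the formula is only valid on the locus $\{A\neq 0\}$, so when you say ``the zero divisor of $\mr{Jac}(\G_{\F})$ coincides with the restriction of $\I$'' you should either work in homogeneous coordinates from the start or note that a change of dual chart handles the components along $\{A=0\}$.
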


\begin{dem}
Since $\beta$ is an isomorphism outside $\mc E$,
the ramification index  $\varrho$ of $\wt \G$ at a generic point $p$ of $\mc R_{\wt\G}^{\varrho}$ is just the number of local  preimages $\G^{-1}(q')$ by $\G$ collapsing to $\beta(p)\in \G^{-1}(q)$ as $q'\to q:=\wt\G(p)$, that is, the number of tangency points of order one collapsing to $\beta(p)$.
\end{dem}

Now we deal with the ramification components contained in the exceptional divisor $\mc E$. For each $s\in\Sigma_{\F}$ we set  $\mc E_{s}=\beta^{-1}(s)$ and we note that $\wt\G(\mc E_{s})=\check{s}\subset\pd^{2}$ is the dual line of $s\in\p^{2}$.  We denote $\mc E_{s}^{\mr{dom}}$ (resp. $\mc E_{s}^{\mr{ram}}$) the union of irreducible components $D$ of $\mc E_{s}$ such that $\delta_{D}:=\deg(\wt\G_{|D})>0$ (resp. with ramification index $\varrho_{D}>1$). We also set $\Sigma_{\F}^{\mr{ram}}:=\{s\in\Sigma_{\F}\,|\,\mc E_{s}^{\mr{ram}}\neq\emptyset\}$.  
We notice that $\mc E_{s}^{\mr{dom}}\neq\emptyset$ and that 
$\mc E_{s}^{\mr{ram}}\subset\mc E_{s}^{\mr{dom}}$.
For each $\varrho>1$ let us consider the subset $\Sigma_{\F}^{\varrho}\subset\Sigma_{\F}^{\mr{ram}}$ of those singularities $s$ of $\F$ such that each irreducible component of $\mc E_{s}^{\mr{dom}}$ have the same ramification index~$\varrho$.

The geometric characterization of Galois foliations is given by the following statement.

\begin{teo}\label{FolGal}
A degree $d>0$ foliation $\F$  on $\p^{2}$ is Galois if for each $p\in\Delta_{\F}
\setminus\mr{Sing}(\Delta_{\F})$ there is $\varrho|d$, $\varrho>1$ such that $\mr{Tang}(\F,\check{p})\subset(\mc I_{\F}^{\varrho}\setminus\Sigma_{\F})\cup\Sigma_{\F}^{\varrho}$.
\end{teo}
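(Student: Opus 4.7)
The plan is to invoke Theorem~\ref{GG}: since $\F$ has degree $d>0$, its Gauss map $\G=\G_\F:\p^2\dasharrow\pd^2$ is dominant (cf.~Remark~\ref{degfol}), so $\F$ is Galois if and only if $\G$ is of regular type. Thus it is enough to show, for every irreducible component $C$ of $\Delta_\F$, that there exists $\varrho\,|\,d$, $\varrho>1$, such that every component of the ramification divisor $\nabla_{\wt\G}$ projecting onto $C$ carries ramification index $\varrho$. I would verify this by fixing a generic point $p\in C\setminus\mr{Sing}(\Delta_\F)$ and controlling the fiber $\wt\G^{-1}(p)$ via the hypothesis applied to the dual line $\check p\subset\p^2$.

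The fiber $\wt\G^{-1}(p)$ splits as $A\sqcup B$, where $A=\wt\G^{-1}(p)\setminus\mc E$ and $B=\wt\G^{-1}(p)\cap\mc E$. Since $\beta$ is an isomorphism outside $\mc E$, the map $\beta$ identifies $A$ with $\G^{-1}(p)\setminus\Sigma_\F=\mr{Tang}(\F,\check p)\setminus\Sigma_\F$, and Lemma~\ref{tangencia} then identifies the ramification index of $\wt\G$ at a point of $A$ lying over $x\in\G^{-1}(p)\setminus\Sigma_\F$ with the unique $\varrho'>1$ such that $x\in\mc I_\F^{\varrho'}$. On the other hand, since $\wt\G(\mc E_s)=\check s$ for each $s\in\Sigma_\F$, the set $B$ projects by $\beta$ into $\Sigma_\F\cap\check p$, and at each such $s$ the points of $B\cap\mc E_s$ lie on components of $\mc E_s^{\mr{dom}}$.

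By the hypothesis there exists $\varrho\,|\,d$, $\varrho>1$, with $\mr{Tang}(\F,\check p)\subset(\mc I_\F^{\varrho}\setminus\Sigma_\F)\cup\Sigma_\F^{\varrho}$. For points of $A$ this forces every $x\in\G^{-1}(p)\setminus\Sigma_\F$ to lie in $\mc I_\F^{\varrho}$, hence by Lemma~\ref{tangencia} every ramification index of $\wt\G$ at a point of $A$ equals $\varrho$. For points of $B$, each $s\in\Sigma_\F\cap\check p$ lies in $\mr{Tang}(\F,\check p)\cap\Sigma_\F$, so $s\in\Sigma_\F^{\varrho}$; by the very definition of $\Sigma_\F^{\varrho}$, every component of $\mc E_s^{\mr{dom}}$, and in particular every component of $\mc E_s^{\mr{ram}}$ meeting $B$, has ramification index $\varrho$. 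Therefore every component of $\nabla_{\wt\G}$ meeting $\wt\G^{-1}(p)$ has ramification index $\varrho$, which shows that the branching datum $\mf b_\G(C)$ is of the form $(\varrho)_k$ with $k\varrho=d$; in particular $\varrho\,|\,d$ and we conclude $\mf b_\G(C)\in\mf B_d^{\mr{reg}}$.

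The main obstacle will be handling rigorously the contribution of the singular tangencies, i.e.\ showing that every $s\in\Sigma_\F\cap\check p$ really is forced into $\mr{Tang}(\F,\check p)$ and that every relevant ramification component of $\mc E_s$ belongs to $\mc E_s^{\mr{dom}}$ (so that the restriction $s\in\Sigma_\F^{\varrho}$ truly controls its ramification index). This essentially amounts to the local arithmetic description of $\Sigma_\F^{\varrho}$ obtained in Lemma~\ref{sigmad}, together with the observation that components of $\mc E_s$ not dominating $\check s$ do not contribute to fibers of $\wt\G$ over generic $p\in\check s$. Once this bookkeeping is in place, assembling Theorem~\ref{GG}, Lemma~\ref{tangencia} and the definition of $\Sigma_\F^{\varrho}$ finishes the proof.
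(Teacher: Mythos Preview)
Your proof is correct and follows essentially the same route as the paper's: both reduce to checking that the Gauss map is of regular type and then identify the ramification indices of the fiber $\wt\G^{-1}(p)$ over a generic $p$ of each branch component with the data $\mc I_\F^\varrho$ (via Lemma~\ref{tangencia}) and $\Sigma_\F^\varrho$. The only difference is that the paper passes through the one--dimensional reduction of Corollary~\ref{grtl} (restricting $\rho$ to a $\Delta_\F$--admissible line $\ell\subset\pd^2$), whereas you invoke Theorem~\ref{GG} directly and work component by component on $\Delta_\F$; this is a cosmetic shortcut, not a genuinely different argument.
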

The proof will show that it suffices to test the above condition for one generic point $p$ of each irreducible component of $\Delta_{\F}\subset\pd^{2}$.

\begin{dem}
Let $\rho:N\to \pd^{2}$ be the branched covering associated to $\G$ by Proposition~\ref{rho} and recall  that $\Delta_{\F}=\wt\G(\mc R_{\wt\G})=\Delta_{\rho}\subset\pd^{2}$, see formula~\eqref{delta}. 
Thanks to Theorem~\ref{dim-red} we can choose a $\Delta_{\F}$-admisible line
$\ell\subset\pd^{2}$ and consider the one-dimensional branched covering $\rho_{\ell}:N_{\ell}=\rho^{-1}(\ell)\to\ell$.
From Corollary~\ref{grtl} we deduce that 
$\F$ is Galois if and only if $\rho_{\ell}$ is of regular type.
Moreover, the restriction $\wt\G_{\ell}$ of $\wt\G$ to $\wt\G^{-1}(\ell)\subset\wt\p^{2}$ is a branched covering isomorphic to $\rho_{\ell}$. Since $\Delta_{\wt\G_{\ell}}=\Delta_{\F}\cap\ell$, by Proposition~\ref{Z} we deduce that  $\wt\G_{\ell}$ is of regular type if and only if for each $p\in\Delta_{\F}\cap\ell$ the ramification indices of all the preimages of $p$ by $\wt\G$ are equal, say to $\varrho>1$, but this is equivalent to the fact that $\mr{Tang}(\F,\check{p})=\beta(\wt\G^{-1}(p))\subset(\mc I_{\F}^{\varrho}\setminus\Sigma_{\F})\cup\Sigma_{\F}^{\varrho}$ thanks to Lemma~\ref{tangencia}.
\end{dem}

Finally we want to give a geometric characterization of the sets $\Sigma_{\F}^{\varrho}$. 
To this purpose, we introduce a last geometric ingredient: the \emph{polar curve} of $\F$ with respect to a point $p\in\p^{2}$, which is defined as
$\G^{-1}(\check{p})=\mr{Tang}(\F,R_{p})$, where $R_{p}$ is the radial vector field centered at $p$. We consider the following definition.
\begin{defin}
Let $\F$ be a foliation on $\p^{2}$ and let $s\in\Sigma_{\F}$ be a singular point.
We define
\begin{enumerate}[(a)]
\item 
the \emph{vanishing order} of $\F$ at $s$ as 
$$\nu_{s}:=\min\{k\ge 1\,:\, J_{s}^{k}X\neq 0\} $$ and 
the \emph{tangency order} of $\F$ at $s$ as
$$\tau_{s}:=\min\{k\ge \nu_{s}:\, \det(J_{s}^{k}X, R_{s})\neq 0\},$$
where $X$ is a saturated vector field defining $\F$, $J^{k}_{s}X$ is its $k$-jet at $s$ and $R_{s}$ is the radial vector field centered at $s$;
\item the \emph{characteristic order} of $\F$ at $s$ as
$$\chi_{s}:=\tau_{s}/\beta_{s}\in\mb Q_{>0},$$
where $\beta_{s}$ is the number of branches at $s$ of a generic polar curve of $\F$.
\end{enumerate}
\end{defin}
Notice that for each $s\in\Sigma_{\F}$ we have $1\le\beta_{s}\le\nu_{s}\le\tau_{s}\le d=\deg\F$ so that $\chi_{s}\ge 1$. This arithmetical invariant of the singularities is related with the subsets $\Sigma_{\F}^{\varrho}$ by the following result.

\begin{lema}\label{sigmad}
Let $\F$ be a degree $d$ foliation on $\p^{2}$ and fix $s\in\Sigma_{\F}$. Then
\begin{enumerate}[(i)]
\item $\chi_{s}>1\Leftrightarrow s\in\Sigma_{\F}^{\mr{ram}}\qquad$ and  $\qquad\sum\limits_{D\subset\mc E_{s}^{\mr{dom}}}\delta_{D}\rho_{D}=\tau_{s}$,
\item $s\in\Sigma_{\F}^{\varrho}\Rightarrow\chi_{s}=\varrho\qquad$ and $\qquad s\in\Sigma_{\F}^{d}\Leftrightarrow\chi_{s}=d$,
\item $\Sigma_{\F}^{\mr{ram}}=\Sigma_{\F}^{d}\Leftrightarrow\chi_{s}\in\{1,d\}$ for all $s\in\Sigma_{\F}$,
\item $\Sigma_{\F}^{\mr{ram}}=\bigcup\limits_{1<\varrho|d}\Sigma_{\F}^{\varrho}\Rightarrow\chi_{s}\in\N$ and $\chi_{s}|d$ for all $s\in\Sigma_{\F}$.
\end{enumerate}
\end{lema}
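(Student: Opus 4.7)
The plan is to deduce the four claims from the two geometric identities
\begin{equation*}
\tau_s\;=\;\sum_{D\subset\mc E_s^{\mr{dom}}}\delta_D\varrho_D
\qquad \text{and}\qquad
\beta_s\;=\;\sum_{D\subset\mc E_s^{\mr{dom}}}\delta_D,
\end{equation*}
which together exhibit $\chi_s=\tau_s/\beta_s$ as a weighted average of the ramification indices $\varrho_D\ge 1$. Granting them, part~(i) is immediate since $\chi_s>1$ iff some $\varrho_D>1$ iff $\mc E_s^{\mr{ram}}\ne\emptyset$ iff $s\in\Sigma_{\F}^{\mr{ram}}$. Part~(ii) follows because $s\in\Sigma_{\F}^{\varrho}$ means every $\varrho_D=\varrho$, while $\chi_s=d$ combined with the bound $\varrho_D\le d$ (valid for every component of the ramification of a degree $d$ branched covering) forces each $\varrho_D=d$, i.e.\ $s\in\Sigma_{\F}^{d}$. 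Parts~(iii) and~(iv) are immediate arithmetic consequences of~(ii).

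The formula for $\tau_s$ is established by a local-to-global length count. Fix a local chart centred at $s$, write $\F$ as $X=A\partial_x+B\partial_y$ with $X$ saturated, and parametrise a generic line $\ell$ through $s$ as $t\mapsto(tu,tv)$. The tangency equation restricted to $\ell$ reads
\[
vA(tu,tv)-uB(tu,tv)\;=\;\sum_{k\ge 0}t^{k}\bigl(vA_k(u,v)-uB_k(u,v)\bigr),
\]
a polynomial in $t$ of degree $d=\deg\F$ for generic $(u,v)$; by the very definition of $\tau_s$, its order at $t=0$ is exactly $\tau_s$, so the scheme $\mr{Tang}(\F,\ell)=\G_{\F}^{-1}(\check\ell)\subset\ell$ has length $\tau_s$ at $s$ and $d-\tau_s$ at simple tangency points disjoint from $\Sigma_{\F}\cup\mc I_{\F}$. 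Passing to the Stein factorisation $\wt\G=\rho\circ\gamma$ with $\rho\colon N\to\pd^2$ finite of degree $d$: since $N$ is normal of the same dimension as $\pd^2$, $\rho$ is flat by miracle flatness, so $\rho^{-1}(\check\ell)$ has length $d$ for every $\check\ell$. The birational map $\gamma\circ\beta^{-1}$ identifies schemes away from the exceptional locus, so the $d-\tau_s$ simple tangencies contribute $d-\tau_s$ unramified points to $\rho^{-1}(\check\ell)$. The remaining length $\tau_s$ must sit on the image of $\mc E_s$ in $N$; for generic $\check\ell\in\check s$ this image meets $\rho^{-1}(\check\ell)$ only along the curves $C_D:=\gamma(D)\subset\nabla_\rho$ for $D\in\mc E_s^{\mr{dom}}$ (non-dominant components contract to points of $\check s$ that generic $\check\ell$ avoids). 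By the identification of ramifications around~\eqref{delta}, $\rho_{|C_D}$ has degree $\delta_D$ and ramification index $\varrho_D$, so $C_D$ contributes a local length $\varrho_D$ at each of its $\delta_D$ generic fibre points, and summing yields $\tau_s=\sum_D\delta_D\varrho_D$.

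For the second identity, the resolution $\beta$ of $\G_{\F}$ simultaneously resolves the linear system of polar curves $P_p=\G_{\F}^{-1}(\check p)$; by Bertini, the strict transform $\wt P_p$ of a generic polar is smooth, and its analytic branches at $s$ correspond bijectively to the geometric points of $\wt P_p\cap\mc E_s$. For generic $\check p$ these points avoid the finitely many nodes of $\mc E_s$ and the non-dominant components of $\mc E_s$, and consist of exactly $\delta_D$ points on each $D\in\mc E_s^{\mr{dom}}$ (the fibres of $\wt\G_{|D}\colon D\to\check s$ over the transverse intersection $\check p\cap\check s$), giving $\beta_s=\sum_D\delta_D$. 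The main obstacle is the identity $\tau_s=\sum\delta_D\varrho_D$, which pairs the purely jet-theoretic local invariant $\tau_s$ on $\p^2$ with birational-geometric data read off from $N$; the key to the bridge is flatness of $\rho$ together with the generic choice of $\ell\ni s$ that makes the simple-tangency contributions away from $s$ unramified of local length one.
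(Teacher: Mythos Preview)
Your proof is correct and follows essentially the same strategy as the paper: both arguments hinge on the two identities $\beta_s=\sum_{D}\delta_D$ and $\tau_s=\sum_D\delta_D\varrho_D$, after which (i)--(iv) are immediate arithmetic. The paper states these identities in the equivalent pointwise form---for a generic $q\in\check s$ one has $\mc E_s\cap\wt\G^{-1}(q)=\{p_1,\dots,p_{\beta_s}\}$ with $\sum_i\varrho_i=\tau_s$---and leaves their verification to the reader, whereas you supply the details via flatness of $\rho$ (valid since $N$ is a normal surface, hence Cohen--Macaulay) and Bertini for the generic polar.

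One minor remark: your route through Stein factorisation is a bit more machinery than needed. Since for generic $q\in\check s$ the fibre $\wt\G^{-1}(q)\subset\wt\p^2$ is already $0$-dimensional (one only needs $q\notin\wt\G(\mc C_{\wt\G})$), the local degrees of $\wt\G$ at its points sum to $d$ directly, without passing to $N$; the $d-\tau_s$ simple tangencies away from $s$ then force the contribution from $\mc E_s$ to be $\tau_s$, exactly as you argue. Also, for the converse in~(ii) the paper observes more simply that $\chi_s=d$ with $\tau_s\le d$ and $\beta_s\ge1$ forces $\beta_s=1$, so there is a single dominant component with $\varrho_D=\tau_s=d$; your weighted-average argument is equally valid.
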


\begin{dem}
(i) If $q\in\check{s}\subset\pd^{2}$ is generic then $\mc E_{s}\cap\wt\G^{-1}(q)=\{p_{1},\ldots,p_{\beta_{s}}\}$ and each point $p_{i}$ has a ramification index $\varrho_{i}\ge 1$ satisfying the relation $\sum_{i=1}^{\beta_{s}}\varrho_{i}=\tau_{s}$. Hence $\beta_{s}=\tau_{s}$ if and only if $s\notin\Sigma_{\F}^{\mr{ram}}$.
Moreover, for each irreducible component $D$ of $\mc E_{s}^{\mr{dom}}$ all the points of $\wt\G_{|D}^{-1}(q)=\{p_{i_{1}},\ldots,p_{i_{\delta_{D}}}\}$ share the same ramification index $\varrho_{D}$.
(ii) If $s\in\Sigma_{\F}^{\varrho}$ then $\varrho_{i}=\varrho$ for $i=1,\ldots,\beta_{s}$ and consequently $\varrho\beta_{s}=\tau_{s}$. The converse is also true for $\varrho=d$ because necessarily $\beta_{s}=1$ in that case.
Assertions (iii) and (iv) follow by (i) and~(ii). 
\end{dem}

We can not expect to obtain a fully characterization of Galois foliations only in local terms as it is explained in  Remark~\ref{global-local}. Nevertheless 
from Theorem~\ref{FolGal} and Lemma~\ref{sigmad}  we can deduce different conditions that are necessary or sufficient, using only purely local arithmetic invariants, and which become equivalent in the case of prime degree:

\begin{teo}\label{necsufloc}
Let $\F$ be a degree $d>0$ foliation on $\p^{2}$ and consider the following assertions:
\begin{enumerate}[(1)]
\item 
$\G_{\F}$ has extremal type, or equivalently, $\mc I_{\F}^{\mr{tr}}=(d-1)\mc I_{\F}^{d}$ and $\Sigma_{\F}^{\mr{ram}}=\Sigma_{\F}^{d}$ 
\item $\F$ is Galois;
\item $\mc I_{\F}^{\mr{tr}}=\sum\limits_{\varrho|d}(\varrho-1)\mc I_{\F}^{\varrho}$ and $\Sigma_{\F}^{\mr{ram}}=\bigcup\limits_{1<\varrho|d}\Sigma_{\F}^{\varrho}$;
\item $\mc I_{\F}^{\mr{tr}}=\sum\limits_{\varrho|d}(\varrho-1)\mc I_{\F}^{\varrho}$ and $\chi_{s}\in\N$ divides $d$ for each $s\in\Sigma_{\F}$.
\end{enumerate}
Then $(1)\Rightarrow(2)\Rightarrow(3)\Rightarrow(4)$. Moreover, $(4)\Rightarrow(1)$ when $d$ is prime.
\end{teo}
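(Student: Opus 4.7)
The plan is to handle the chain $(1)\Rightarrow(2)\Rightarrow(3)\Rightarrow(4)$, and the prime-degree converse $(4)\Rightarrow(1)$, by invoking the dictionary already set up in Lemmas~\ref{tangencia} and~\ref{sigmad} between the ramification data of the desingularized Gauss map $\wt\G:\wt{\p}^{2}\to\pd^{2}$ and the geometric invariants $\mc I_{\F}^{\varrho}$, $\Sigma_{\F}^{\varrho}$, $\chi_{s}$.

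I would first verify the equivalence inside statement~(1). The ramification components of $\wt\G$ split into those not contained in the exceptional divisor $\mc E$ of $\beta$, which by Lemma~\ref{tangencia} project down to the curves $\mc I_{\F}^{\varrho}$ (indexed by their ramification index~$\varrho$), and those inside $\mc E$, whose data is encoded in the sets $\Sigma_{\F}^{\varrho}\subset\Sigma_{\F}^{\mr{ram}}$. Since the ramification indices along the components of a fibre of $\wt\G$ over a component of $\Delta_{\F}$ sum to $d$, requiring all of them to be equal to $d$ forces a single preimage component of index $d$; this shows that $\G_{\F}$ is of extremal type if and only if $\mc I_{\F}^{\mr{tr}}=(d-1)\mc I_{\F}^{d}$ and $\Sigma_{\F}^{\mr{ram}}=\Sigma_{\F}^{d}$. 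The implication $(1)\Rightarrow(2)$ then follows immediately from Corollary~\ref{corext}, since extremal-type dominant rational maps out of $\p^{n}$ are Galois.

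For $(2)\Rightarrow(3)$, I would use that the transitive action of $\mr{Deck}(\rho)$ on each fibre of the associated branched covering $\rho:N\to\pd^{2}$ forces the ramification indices above any point of $\pd^{2}$ to be equal, and to divide $d=|\mr{Deck}(\rho)|$ since they are orders of stabilizers. Applied at a generic point $\wt\G(p)$ with $p\in\mc I_{\F}^{\varrho}\setminus\Sigma_{\F}$ this yields $\varrho\,|\,d$, giving the first equality in~(3). Applied at a generic $q\in\check{s}$ for $s\in\Sigma_{\F}^{\mr{ram}}$, it forces $\varrho_{D}$ to be one and the same divisor of $d$ for every component $D\subset\mc E_{s}^{\mr{dom}}$, placing $s$ in some $\Sigma_{\F}^{\varrho}$ with $\varrho\,|\,d$ and yielding the second equality. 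The implication $(3)\Rightarrow(4)$ is a direct application of Lemma~\ref{sigmad}(iv).

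Finally, when $d$ is prime the only divisor of $d$ exceeding $1$ is $d$ itself, so the decomposition $\mc I_{\F}^{\mr{tr}}=\sum_{\varrho|d}(\varrho-1)\mc I_{\F}^{\varrho}$ in~(4) collapses to $(d-1)\mc I_{\F}^{d}$, while $\chi_{s}\,|\,d$ forces $\chi_{s}\in\{1,d\}$, which by Lemma~\ref{sigmad}(iii) is exactly $\Sigma_{\F}^{\mr{ram}}=\Sigma_{\F}^{d}$, closing the loop. The most delicate point is the step $(2)\Rightarrow(3)$: beyond the standard fact that Galois forces equal ramification over each point, one must tie together the indices $\varrho_{D}$ of distinct components $D\subset\mc E_{s}^{\mr{dom}}$ through a single generic fibre of $\rho$ above $\check{s}$, which is what guarantees that every $s\in\Sigma_{\F}^{\mr{ram}}$ actually lies in some $\Sigma_{\F}^{\varrho}$ rather than having mixed ramification over $\check{s}$.
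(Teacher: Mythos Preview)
Your proof is correct and follows essentially the route the paper intends: the paper does not write out a proof but simply says the result is deduced from Theorem~\ref{FolGal} and Lemma~\ref{sigmad}, and your argument supplies precisely those details, invoking Lemma~\ref{tangencia} for the non-exceptional ramification, Lemma~\ref{sigmad}(iii)--(iv) for the singular locus, and Corollary~\ref{corext} for $(1)\Rightarrow(2)$. The only mild variation is that for $(2)\Rightarrow(3)$ you argue directly from the transitive $\mr{Deck}(\rho)$-action (stabilizers are conjugate, hence equal-order divisors of $d$) rather than passing through the semi-local criterion of Theorem~\ref{FolGal}; this is a clean and equivalent way to obtain the same conclusion, and your remark that a generic $q\in\check{s}$ meets every component of $\mc E_{s}^{\mr{dom}}$ (since each such $D$ dominates $\check{s}$) is exactly the point needed to force all $\varrho_{D}$ to coincide.
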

The following example is an application of the above theorem.

\begin{ex}\label{exg1b}
Let $\F$ be the degree $3$ foliation given by the vector field
$$(y+x^{2})\partial_{x}-\frac{x^{3}}{3}\partial_{y}.$$   
Using formula~(\ref{infafin}), it can be easily checked that $\It=\{x^{2}(3y+2x^{2})^{2}=0\}$.
On the other hand, we have that $\Sigma_{\F}=\{s_{1}=[0,0,1],s_{2}=[0,1,0]\}$
and it can be checked that $\chi_{s_{i}}=1$ for $i=1,2$.
By applying Theorem~\ref{necsufloc} we deduce that $\F$ is Galois of extremal type.
Since $\G$ maps $x=0$ into $p=0$ and $3y+2x^{2}=0$ into $3q-p^{2}=0$, its weighted branching type is $\mf b_{\F}^{w}=3(3)_{1}$, so that the genus of its generic polar is $\mf g_{\F}=1$. 
\end{ex}

\begin{obs}
Let $\F$ be a Galois foliation of degree $3$ and genus $\mf g_{\F}=1$. 
For each generic $\ell\in\pd^{2}$ the dimensional reduction branched covering $\G_{\ell}:X_{\ell}:=\G^{-1}(\ell)^{\nu}\to\G^{-1}(\ell)\to\ell\simeq\p^{1}$ is Galois with source an elliptic curve. Then $\mr{Deck}(\G_{\ell})$ does not contain any element of $\mr{Aut}_{0}(X_{\ell})\simeq X_{\ell}$ acting on $X_{\ell}$ by translations because the ramification locus must remain fixed. Hence for all $\ell$ the elliptic curve $X_{\ell}$ is hexagonal and its $j$-invariant is constant equal to zero. In particular, we obtain the isotriviality of the polars in Examples~\ref{exg1} and~\ref{exg1b}.
\end{obs}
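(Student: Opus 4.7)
The plan is to use Theorem~\ref{dim-red} to reduce to a degree three Galois branched covering $\G_{\ell}:X_{\ell}\to\ell\simeq\p^{1}$ with $X_{\ell}$ an elliptic curve, and then to exploit the scarcity of order three automorphisms of elliptic curves.

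Since $\F$ is Galois of degree $3$ and $\mf g_{\F}=1$, Corollary~\ref{grtl} together with Theorem~\ref{dim-red} give, for a generic line $\ell\subset\pd^{2}$, that the one-dimensional reduction $\G_{\ell}$ is a degree three Galois branched covering of $\p^{1}$ whose source $X_{\ell}$ is a smooth curve of geometric genus one, hence an elliptic curve. Because $\deg\G_{\ell}=3$ is prime, $\mr{Deck}(\G_{\ell})$ has order three and is cyclic, $\mr{Deck}(\G_{\ell})\simeq\Z_{3}$. Since $\G_{\ell}$ is Galois the ramification indices at the points of any fibre coincide, so each branching type of $\G_{\ell}$ has the form $(e)_{k}$ with $ek=3$; the only nontrivial possibility is $(3)_{1}$, meaning each branch point in $\ell$ admits a single preimage with ramification index three. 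Riemann-Hurwitz~\eqref{RHF} then reads $0=2-2\mf g_{\F}=6-|\mf b^{w}_{\G_{\ell}}|$, giving $|\mf b^{w}_{\G_{\ell}}|=6$ and hence exactly three branch points of $\G_{\ell}$, whose preimages $p_{1},p_{2},p_{3}\in X_{\ell}$ are therefore individually fixed by every element of $\mr{Deck}(\G_{\ell})$.

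The crucial step is to observe that the generator $\phi$ of $\mr{Deck}(\G_{\ell})\simeq\Z_{3}$ has a fixed point, hence cannot be a nontrivial translation of the elliptic curve $X_{\ell}$: choosing $p_{1}$ as origin realises $\phi$ as a nontrivial automorphism of $X_{\ell}$ fixing that origin. Classically, the subgroup of $\mr{Aut}(E)$ fixing the origin of an elliptic curve $E$ is cyclic of order $2$, $4$ or $6$, and it contains an element of order three precisely when its order equals $6$, equivalently when $j(E)=0$ and $E\simeq\C/\Z[\zeta_{6}]$. Consequently $j(X_{\ell})=0$ for every generic $\ell\in\pd^{2}$, and by continuity of $j$ on the flat family $\{X_{\ell}\}_{\ell}$ of polars the $j$-invariant is identically zero, so the family is isotrivial with every member isomorphic to the hexagonal elliptic curve. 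Applied to the degree three Galois foliations of Examples~\ref{exg1} and~\ref{exg1b}---for both of which the generic polar was explicitly shown to have genus one---this yields the stated isotriviality of their polars.

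The main subtlety is that the deck-group-invariance of the three ramification points, which results from being Galois of extremal branching type $3(3)_{1}$, is what prevents $\phi$ from being a translation; once this is secured, the very rigid classification of automorphism groups of elliptic curves produces the $j=0$ conclusion with essentially no further geometric input.
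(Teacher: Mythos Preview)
Your argument is correct and follows exactly the line of reasoning sketched in the Remark itself: the dimensional reduction gives a degree~$3$ Galois cover of $\p^{1}$ by an elliptic curve, the deck group cannot act by translations since it must fix the (nonempty) ramification locus, and the existence of an order~$3$ automorphism with a fixed point forces $j=0$. Your version simply makes explicit what the paper leaves implicit---the Riemann--Hurwitz count giving three totally ramified points and the appeal to the classification of $\mr{Aut}(E,0)$---so there is no substantive difference in approach.
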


\begin{obs}\label{convexnogalois}
If $\F$ is a degree $d\ge 3$ Galois foliation on~$\p^{2}$ whose Gauss map is  of extremal type  then  $\mc I_{\F}^{\mr{inv}}\neq\emptyset$ when $d\neq 4$. 
Indeed, if $\mc I_{\F}^{\mr{inv}}=\emptyset$ then $\mc I_{\F}=(d-1)\mc I_{\F}^{d}$, which implies that $3d=(d-1)k$ and $(k,3)=\ell(d,d-1)$ for some $\ell\in\Z$, because $\gcd(d,d-1)=1$. Hence $\ell=1$ and $d=4$.
\end{obs}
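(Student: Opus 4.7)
The plan is to combine the extremal-type hypothesis with a degree count on the inflection divisor and reduce the claim to a small Diophantine equation.

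First I would unpack what ``extremal type'' means in terms of the transverse inflection divisor. By Theorem~\ref{necsufloc}(1), the Gauss map $\G_{\F}$ being of extremal type is equivalent to $\mc I_{\F}^{\varrho}=\emptyset$ for every $\varrho\ne d$ (and $\Sigma_{\F}^{\mr{ram}}=\Sigma_{\F}^{d}$, but the latter is irrelevant here). Hence the transverse part of the inflection divisor simplifies to $\mc I_{\F}^{\mr{tr}}=(d-1)\,\mc I_{\F}^{d}$. Assuming furthermore that $\mc I_{\F}^{\mr{inv}}=\emptyset$ (the hypothesis we want to contradict when $d\ne 4$), the full inflection divisor becomes
\[
\mc I_{\F}=(d-1)\,\mc I_{\F}^{d}.
\]

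Next I would invoke the classical degree formula for the inflection divisor of a degree $d$ foliation on $\p^{2}$, namely $\deg\mc I_{\F}=3d$ (see the reference \cite{P} recalled in subsection~\ref{7.3}). Writing $k:=\deg\mc I_{\F}^{d}\in\Z_{\ge 0}$, the identity above forces
\[
3d=(d-1)k.
\]

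Finally I would solve this integer equation. Since $\gcd(d,d-1)=1$, the pair $(k,3)$ must be a multiple of the reduced fraction $(d,d-1)$, so there exists $\ell\in\Z_{\ge 1}$ with $k=\ell d$ and $3=\ell(d-1)$. The only solutions are $(\ell,d)=(3,2)$ or $(\ell,d)=(1,4)$; since by hypothesis $d\ge 3$ we must have $d=4$ (with $k=4$). Therefore, for every $d\ge 3$ different from $4$ the assumption $\mc I_{\F}^{\mr{inv}}=\emptyset$ leads to a contradiction, proving $\mc I_{\F}^{\mr{inv}}\ne\emptyset$ in those cases. No step presents real difficulty once the formula $\deg\mc I_{\F}=3d$ is available; the only subtle point is translating the extremal-type hypothesis into the clean equality $\mc I_{\F}^{\mr{tr}}=(d-1)\mc I_{\F}^{d}$, which is exactly the content of condition (1) of Theorem~\ref{necsufloc}.
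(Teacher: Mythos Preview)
Your proof is correct and follows essentially the same approach as the paper's own argument: reduce via the extremal-type condition to $\mc I_{\F}=(d-1)\mc I_{\F}^{d}$, use $\deg\mc I_{\F}=3d$ to get $3d=(d-1)k$, and solve using $\gcd(d,d-1)=1$. You even supply a small clarification the paper omits, namely that the extraneous solution $(\ell,d)=(3,2)$ is discarded by the hypothesis $d\ge 3$.
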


The Galois character of a foliation is encoded in the sets $\mc I_{\F}^{\mr{tr}}$ and $\Sigma_{\F}^{\mr{ram}}$. The following example shows that the two elements are relevant.
\begin{ex}
The degree $4$ foliation~$\F$ given by the vector field
$$(y^{2}+x^{3})x\partial_{x}+(\frac{\zeta}{6}y^{2}+4x^{3})\zeta y\partial_{y},\qquad\zeta=2\pm i\sqrt{2},$$
is not Galois because it has $\mc I_{\F}^{\mr{tr}}=3\mc I_{\F}^{4}$, $s=(0,0)\in\Sigma_{\F}$ and $2=\beta_{s}<\nu_{s}=\tau_{s}=3$ so that $\chi_{s}=\frac{3}{2}\notin\N$.
\end{ex}

A natural class of foliations to study is that of \emph{convex foliations}, that is those for which $\mc I_{\F}^{\mr{tr}}=\emptyset$. In that case we have:

\begin{lema}
If $\F$ is a degree $d$ convex Galois foliation then $\tau_{s}=d$ for each $s\in\Sigma_{\F}^{\mr{ram}}$. 
\end{lema}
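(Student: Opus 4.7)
The plan is to apply the Galois criterion of Theorem~\ref{D} to the line $\check p$ dual to a generic point $p$ of $\check s\subset\pd^{2}$, and combine convexity with a genericity argument to force $\mr{Tang}(\F,\check p)=\{s\}$, so that all the ramification of $\wt\G$ above $p$ is concentrated on $\mc E_{s}$.

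Fix $s\in\Sigma_\F^{\mr{ram}}$. By definition $\check s=\wt\G(\mc E_s)$ is an irreducible component of $\Delta_\F$, so I may choose $p\in\check s$ with $p\notin\mr{Sing}(\Delta_\F)$ and $p\notin\check{s'}$ for every $s'\in\Sigma_\F\setminus\{s\}$; this is possible because $\Sigma_\F$ is finite and $\check s\cap\check{s'}$ is a single point whenever $s'\neq s$. By the incidence relation $x\in\check p\Leftrightarrow p\in\check x$, this choice guarantees $\check p\cap\Sigma_\F=\{s\}$. Moreover, from the proof of Theorem~\ref{FolGal} we have $\mr{Tang}(\F,\check p)=\beta(\wt\G^{-1}(p))$, and this set contains strictly fewer than $d$ distinct points since at least one ramification index of $\wt\G$ on $\mc E_s\cap\wt\G^{-1}(p)$ exceeds~$1$ by the assumption $s\in\Sigma_\F^{\mr{ram}}$. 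Theorem~\ref{D} then supplies $\varrho\mid d$ with $\varrho>1$ such that
\[
\mr{Tang}(\F,\check p)\subset(\mc I_\F^{\varrho}\setminus\Sigma_\F)\cup\Sigma_\F^{\varrho}.
\]

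The convexity hypothesis $\mc I_\F^{\mr{tr}}=\emptyset$ forces $\mc I_\F^{\varrho}=\emptyset$ for every $\varrho>1$, so the inclusion reduces to $\mr{Tang}(\F,\check p)\subset\Sigma_\F^{\varrho}\subset\Sigma_\F$. Combined with $\mr{Tang}(\F,\check p)\subset\check p$ and $\check p\cap\Sigma_\F=\{s\}$, this yields $\mr{Tang}(\F,\check p)=\{s\}$ and $s\in\Sigma_\F^{\varrho}$. In particular every preimage of $p$ by $\wt\G$ lies in $\mc E_s$; since $\wt\G$ has topological degree $d$, Lemma~\ref{sigmad}(i) gives
\[
\tau_s=\sum_{D\subset\mc E_s^{\mr{dom}}}\delta_D\rho_D=d,
\]
as required. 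No serious obstacle is expected in the argument: the only delicate step is the genericity of $p$, which is immediate from the finiteness of $\Sigma_\F$ and of $\mr{Sing}(\Delta_\F)\cap\check s$.
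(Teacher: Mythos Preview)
Your proof is correct and follows essentially the same approach as the paper: pick a line through $s$ (dually, a generic point $p\in\check s$) avoiding the other ramifying singularities, apply the Galois criterion together with convexity to force $\mr{Tang}(\F,\check p)=\{s\}$, and conclude $\tau_s=d$. Your version is more explicit about the genericity conditions and invokes Lemma~\ref{sigmad}(i) where the paper simply reads off $\tau_s=d$ from the fact that the degree-$d$ tangency divisor on $\check p$ is supported at the single point $s$.
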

\begin{dem}
We can choose a line $\ell\subset\p^{2}$ such that $\ell\cap\Sigma_{\F}^{\mr{ram}}=\{s\}$. By Theorem~\ref{FolGal},  $\mr{Tang}(\F,\ell)\subset\Sigma_{\F}^{\mr{ram}}\cap\ell=\{s\}$ and consequently $\tau_{s}=d$.
\end{dem}

\begin{ex}\label{convex}
A class of convex foliations are those for which $\mc I_{\F}=\mc I_{\F}^{\mr{inv}}$ is reduced.
In \cite{MP} the authors study some of them that we list here:
\begin{enumerate}[(i)]
\item The infinite family of Fermat foliations defined by the vector fields \mbox{$(x^{d}-\varepsilon x)\partial_{x}+(y^{d}-\varepsilon y)\partial_{y}$} with $\varepsilon\neq 0$ and $d\ge 3$. 
\item The degree $4$ Hessian pencil of cubics $\mc H_{4}$ given by the rational first integral $\frac{x^{3}+y^{3}+z^{3}}{xyz}$.
\item The degree $5$ Hilbert modular foliation $\mc H_{5}$ given by the vector field
\[
  (x^2-1)(x^2- (\sqrt{5}-2)^2)(x+\sqrt{5}y)\partial_{x} + (y^2-1)(y^2- (\sqrt{5}-2)^2)(y+\sqrt{5}x)\partial_{y} \, .
\]
\item The degree $7$ foliation $\mc H_{7}$  invariant by the Hessian group given by the vector field
\[
            (x^3-1)(x^3+7 y^3+1) x \partial_x + (y^3-1)(y^3+7x^3+1) y \partial_y\,.
\]
\end{enumerate}
All these foliations have (radial) singularities $s\in\Sigma_{\F}$ with $\nu_{s}<\tau_{s}<\deg\F$. Hence $\chi_{s}>1$ and $s\in\Sigma_{\F}^{\mr{ram}}$. By Lemma~\ref{convex}, none of these foliations is Galois. However, the degenerations $x^{d}\partial_{x}+y^{d}\partial_{y}$ of Fermat foliations, obtained by taking $\varepsilon=0$, are convex and Galois, as we have seen in Proposition~\ref{2.4}.
\end{ex}

\subsection{Homogeneous Galois  foliations and their deformations}\label{homog}
In \cite{CD} the authors are interested in describing the algebraic set $\mb G_{3}$ of degree three Galois foliations. Due to the difficulty of problem in its full generality, they focus on the homogeneous case, for which they dispose of a particularly simple generic normal form depending only on $4$ complex parameters:
\begin{equation}\label{nf}\F_{\alpha;\lambda,\mu,\nu}\,:\,\frac{dx}{x}+\lambda\frac{dy}{y}+\mu\frac{dy-dx}{y-x}+\nu\frac{dy-\alpha dx}{y-\alpha x}=0,
\end{equation}
with $\lambda\mu\nu(1+\lambda+\mu+\nu)\alpha(\alpha-1)\neq 0$.
They prove some partial results about the subset of  $(\alpha;\lambda,\mu,\nu)\in \C^{4}$ such that the foliation $\F_{\alpha;\lambda,\mu,\nu}$ is Galois.
This subsection is devoted to describe completely the set  of homogeneous Galois foliations of arbitrary degree as well as its geometry.\\

Let $\mb H_{d}$ be the set of degree $d$  homogeneous foliations given by saturated vector fields
 $A(x,y)\partial_{x}+B(x,y)\partial_{y}$. It is the Zariski open subset of $\p(\C_{d}[x,y]^{\oplus 2})\simeq\p^{2d+1}$ considered in Example~\ref{Rd}.
The left-right actions of $\mr{PSL}_{2}(\C)$ on the set of rational functions induce a natural action $\varphi$ of $\mr{PSL}_{2}(\C)\times\mr{PSL}_{2}(\C)$ on $\mb H_{d}$ by means~of
$$\varphi([\alpha_{ij}],[\beta_{ij}],[A_{1},A_{2}])=[\beta_{11} A_{1}^{\alpha}+\beta_{12} A_{2}^{\alpha},\beta_{21} A_{1}^{\alpha}+\beta_{22} A_{2}^{\alpha}],$$
where
$$ A_{i}^{\alpha}(x,y)=A_{i}(\alpha_{11}x+\alpha_{12}y,\alpha_{21}x+\alpha_{22}y).$$

\begin{teo}\label{clas-homog}
The irreducible components of the Zariski closed subset $\mr{Gal}(\mb H_{d})=\mb H_{d}\cap\mb G_{d}$ of $\mb H_{d}$ are smooth unirational varieties which consist of the orbits by $\varphi:\mr{PSL}_{2}(\C)\times\mr{PSL}_{2}(\C)\times\mb H_{d}\to\mb H_{d}$
 of the foliations
\begin{enumerate}[(1)]
\item $x^{d}\partial_{x}+y^{d}\partial_{y}$ for every $d$,
\item $(x^{n}+y^{n})^{2}\partial_{x}+(x^{n}-y^{n})^{2}\partial_{y}$ if $d=2n$ is even,
\item $(x^{4}+2i\sqrt{3}x^{2}y^{2}+y^{4})^{3}\partial_{x}+(x^{4}-2i\sqrt{3}x^{2}y^{2}+y^{4})^{3}\partial_{y}$ if $d=12$, 
\item $(x^{8}+14x^{4}y^{4}+y^{8})^{3}\partial_{x}+(xy(x^{4}-y^{4}))^{4}\partial_{y}$ if $d=24$,
\item $\scriptstyle{(x^{20}-228x^{15}y^{5}+494x^{10}y^{10}+228x^{5}y^{15}+y^{20})^{3}\partial_{x}+(xy(x^{10}+11x^{5}y^{5}-y^{10}))^{5}\partial_{y}}$ if $d=60$.
\end{enumerate} 
The first one is $5$-dimensional and the rest are $6$-dimensional. Each irreducible component corresponds to a different Galois group: 
cyclic, dihedral, tetrahedral, octahedral and icosahedral.
\end{teo}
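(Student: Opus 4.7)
The strategy is to use Proposition~\ref{AutF} to reduce the classification to Klein's Theorem~\ref{klein} for Galois rational maps $\p^{1}\to\p^{1}$. For $\F\in\mb H_{d}$ defined by a saturated vector field $X=A(x,y)\partial_{x}+B(x,y)\partial_{y}$ with $A,B$ homogeneous of degree $d$, a direct computation gives $[R,X]=(d-1)X$ for the radial vector field $R=x\partial_{x}+y\partial_{y}$, so $R$ is a transverse infinitesimal symmetry of $\F$ admitting $f=[x:y]:\p^{2}\dasharrow\p^{1}$ as rational first integral with irreducible generic fibre. Proposition~\ref{AutF} then produces a reduction $\wh\G_{\F}:\p^{1}\dasharrow\p^{1}$ which, in the natural quotient coordinate, coincides with the slope map $[x:y]\mapsto[A(x,y):B(x,y)]$, a degree $d$ rational function; and $\F$ is Galois if and only if $\wh\G_{\F}$ is.

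A straightforward check from the definition of $\varphi$ shows that the map $\F\mapsto\wh\G_{\F}$ intertwines the $\varphi$-action on $\mb H_{d}$ with the usual left-right action of $\mr{PSL}_{2}(\C)\times\mr{PSL}_{2}(\C)$ on degree $d$ rational maps of $\p^{1}$. Consequently, $\varphi$-orbits of Galois homogeneous foliations correspond bijectively to left-right equivalence classes of degree $d$ Galois rational maps of $\p^{1}$, which by Theorem~\ref{klein} are a cyclic class (every $d$), a dihedral class (for $d=2n$) and three sporadic classes (for $d=12,24,60$). One then matches each item of the list with a Klein normal form by comparing slope maps: the slope of $(1)$ is $(y/x)^{d}$, directly $f_{C_{d}}$; the slope of $(2)$ is $((1-z^{n})/(1+z^{n}))^{2}$ with $z=y/x$, which is left-right equivalent to $f_{D_{n}}=(z^{n}+1)^{2}/(4z^{n})$ by means of the identity $(z^{n}+1)^{2}-(z^{n}-1)^{2}=4z^{n}$; and the slopes of $(3)$, $(4)$, $(5)$ agree with $f_{T}$, $f_{O}$, $f_{I}$ up to an explicit Möbius transformation on the target, as can be read off from the factored numerators and denominators.

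Each orbit is smooth, irreducible and unirational as a homogeneous space for the connected algebraic group $\mr{PSL}_{2}(\C)\times\mr{PSL}_{2}(\C)$. For the dimension, the stabilizer of $f_{C_{d}}(z)=z^{d}$ contains the one-parameter subgroup $\lambda\mapsto(z\mapsto\lambda z,\,w\mapsto\lambda^{-d}w)$ together with the involution $(z\mapsto 1/z,\,w\mapsto 1/w)$, and a simple ramification argument shows that these exhaust the stabilizer, which has dimension $1$; the orbit is therefore $5$-dimensional. In the remaining cases the branching divisor of the Klein map contains at least three points, so any Möbius transformation in the source stabilizer must permute a set of $\ge 3$ points, forcing the stabilizer into a finite subgroup of $\mr{PSL}_{2}$; the orbit is then $6$-dimensional.

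Finally, the five possible Galois groups $C_{d}$, $D_{n}$, $A_{4}$, $S_{4}$, $A_{5}$ are pairwise non-isomorphic finite groups. By Theorem~\ref{TC} the (abstract) Galois group is constant along each connected component of $\mr{Gal}(\mb H_{d})$, so distinct orbits lie in distinct irreducible components; combined with the exhaustion provided by Klein's theorem this identifies the listed orbits with the full set of irreducible components of $\mr{Gal}(\mb H_{d})$. The main technical obstacle is the explicit matching of the slope maps of $(3)$, $(4)$, $(5)$ with the Klein normal forms $f_{T}$, $f_{O}$, $f_{I}$ and the accompanying stabilizer computations, which are delicate but purely symbolic.
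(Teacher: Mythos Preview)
Your proof is correct and follows essentially the same route as the paper's: reduce via Proposition~\ref{AutF} to the slope map $\wh\G_{\F}=[A:B]:\p^{1}\to\p^{1}$, invoke Klein's Theorem~\ref{klein}, and identify left-right equivalence of rational maps with the $\varphi$-action on $\mb H_{d}$.

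The one substantive difference is the dimension computation. The paper computes the differential of the orbit map $\varphi(\,\cdot\,,[x^{d},y^{d}])$ at the identity and finds a one-dimensional kernel, then asserts that analogous computations handle cases (2)--(5). You instead compute the stabilizers geometrically: for $f_{C_{d}}$ the ramification locus is $\{0,\infty\}$, giving a one-dimensional stabilizer, while for the remaining Klein maps the ramification locus (and branch locus) contains at least three points, forcing the stabilizer to be finite. Both arguments are valid; yours is more conceptual and avoids case-by-case linear algebra. Your invocation of Theorem~\ref{TC} to separate the orbits into distinct irreducible components is also more explicit than the paper's treatment, which leaves that step implicit. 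One minor imprecision: when you write ``the branching divisor of the Klein map contains at least three points, so any M\"obius transformation in the source stabilizer must permute a set of $\ge 3$ points'', the source element permutes the \emph{ramification} points rather than the branch points---but since the non-cyclic Klein maps have at least four ramification points, the conclusion stands.
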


\begin{dem}
Since every homogeneous foliation $\F\leftrightarrow[A,B]\in\mb H_{d}$ on $\p^{2}$ is invariant by homotheties we can apply Proposition~\ref{AutF} to deduce that $\F$ is Galois if and only if $\wh{\G}_{\F}=[A,B]:\p^{1}\to\p^{1}$ is Galois. By Theorem~\ref{klein} $[A,B]$ is Galois if and only if it is LR-equivalent to one of  the five stated models. Finally, the LR-equivalence in $\wh{\G}_{\F}$ translates to the action $\varphi$ on $\mb H_{d}$. 
Since the group $G=\mr{PSL}_{2}(\C)\times\mr{PSL}_{2}(\C)$ is an irreducible rational quasi-projective variety,  we deduce that its orbits on $\mb H_{d}$, which are isomorphic to $G/H$, where $H$ is the corresponding isotropy subgroup of $G$, are smooth unirational varieties.
The  assertion about the dimension in (1)  follows from an explicit computation 
of the differential of the map $\varphi([\alpha_{ij},\beta_{ij}],[x^{d},y^{d}])$ at the identity, which is
$$[(\beta_{{11}}+\alpha_{{11}}d)x^{d}+{x}^{d-1}y\alpha_{{12}}d
+\beta_{{12}}{y}^{d},\beta_{{21}}{x}^{d}+{y}^{d
-1}x\alpha_{{21}}d+(\beta_{{22}}+\alpha_{{22}}d)y^{d}].
$$
Its kernel is a $1$-dimensional subspace of $\mf{sl}_{2}(\C)\times\mf{sl}_{2}(\C)$.  Analogous computations can be made in the cases (2)-(5).
\end{dem}

From Theorems~\ref{clas-homog} and~\ref{TC} we deduce the following result:

\begin{cor} The Zariski closed subset $\mb G_{2n}$ of $\mb F_{2n}$ has at least two disjoint irreducible components and $\mb G_{12}$, $\mb G_{24}$ and $\mb G_{60}$ have at least three  disjoint irreducible components.
\end{cor}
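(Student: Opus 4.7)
The proof is essentially a packaging of the two cited theorems. By Theorem~\ref{clas-homog}, for $d=2n$ the set $\mb H_{d}\cap\mb G_{d}$ contains two irreducible quasi-projective orbits coming from the models $(1)$ and $(2)$, whose Galois groups are cyclic of order $2n$ and dihedral of order $2n$ respectively. For $d=12$, $24$ or $60$ the same theorem provides a third irreducible orbit in $\mb H_{d}\cap\mb G_{d}$, with Galois group isomorphic to $A_{4}$, $S_{4}$ or $A_{5}$ respectively. Each such orbit is irreducible and contained in $\mb G_{d}\subset\mb F_{d}$.

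Next I would invoke Theorem~\ref{TC} applied to the family $\G\colon\p^{2}\times\mb F_{d}\dasharrow\p^{2}\times\mb F_{d}$ introduced in subsection~\ref{7.1}. This tells us that $\mb G_{d}$ is Zariski closed in $\mb F_{d}$ and that the abstract Galois group $\mr{Deck}(\G_{\F})$ is constant along each connected component of $\mb G_{d}$. Consequently, two foliations whose Galois groups are non-isomorphic as abstract groups must lie in disjoint connected components of $\mb G_{d}$. Since $\mb G_{d}$ is Noetherian, each orbit from the previous paragraph is contained in some irreducible component of $\mb G_{d}$; orbits with non-isomorphic Galois groups then determine distinct irreducible components, which are moreover contained in pairwise disjoint connected components of $\mb G_{d}$ and hence are disjoint. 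Counting the orbits gives at least two disjoint irreducible components for $d=2n$ and at least three for $d=12,24,60$.

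The only real check is that the Galois groups involved are pairwise non-isomorphic as abstract groups. For $d=2n$ with $n\geq 2$, $\Z_{2n}$ is abelian while $D_{n}$ is non-abelian as soon as $n\geq 3$, and for $n=2$ one distinguishes $\Z_{4}$ from $D_{2}\simeq\Z_{2}\oplus\Z_{2}$. For the exceptional degrees $d=12,24,60$ one separates $\Z_{d}$, $D_{d/2}$ and $A_{4}, S_{4}, A_{5}$ respectively by elementary invariants such as commutativity, the maximal order of an element, or simplicity. This is the only step that requires verification beyond a direct citation of Theorems~\ref{clas-homog} and~\ref{TC}, and presents no genuine obstacle.
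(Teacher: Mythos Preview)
Your argument is correct and matches the paper's intended approach: the paper gives no explicit proof, simply stating that the corollary follows from Theorems~\ref{clas-homog} and~\ref{TC}, and your proposal faithfully unpacks this citation. Your additional verification that the relevant Galois groups are pairwise non-isomorphic (distinguishing $\Z_{2n}$ from $D_n$ for $n\ge 2$, and both from $A_4$, $S_4$, $A_5$ in the exceptional degrees) is the only detail the paper leaves implicit, and you handle it correctly.
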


Notice that, for each degree $d$, the first component of $\mb H_{d}\cap\mb G_{d}$ considered in the above Theorem consists of the homogeneous foliations appearing in Proposition~\ref{2.4}.
In addition, we can write the deck transformations of $\mc G$ in terms of 
 $\wh\tau(z)\in\mr{Deck}(B(1,z)/A(1,z))\subset\mr{PSL}_{2}(\C)$ in the following way
\begin{equation}\label{tauhat}
\tau(x,y)=\frac{A(x,y)y-B(x,y)x}{A(x,y)\wh\tau(y/x)-B(x,y)}(1,\wh\tau(y/x)).
\end{equation}

The classification of homogeneous Galois foliations given by Theorem~\ref{clas-homog} can be used to obtain a negative test for proving that a given foliation on~$\p^{2}$ is not Galois. It also provides (see Proposition~\ref{singgalois} below)  restrictions to either the type of the singularities of Galois foliations or the finite subgroups of $\mr{Bir}(\p^{2})$ that can occur as Galois groups of foliations on $\p^{2}$. For a general account on the finite subgroups of $\mr{Bir}(\p^{2})$ we refer to \cite{Dol}. 
Notice that Theorem~\ref{Namba-existencia} asserts that every finite group $G$ occurs as the Galois group  of a Galois branched covering $f:X\to Y$ but it does not give any indication about those that can be realized with rational total space $X$.

Let $\F$ be a foliation on~$\p^{2}$, for each singularity $s\in\Sigma_{\F}$ and each $\F$-invariant line $\ell\subset\mc I_{\F}^{\mr{inv}}$ we consider the homogeneous foliations $\F_{s}$ and $\F_{\ell}$ defined respectively by:
\begin{enumerate}[$\bullet$] 
\item $\F_{s}$ is the saturation of the first non-zero jet  of a vector field defining $\F$ at~$s$,
\item $\F_{\ell}$ is the saturation of the top degree homogeneous part of a vector field defining~$\F$ in the affine chart $\p^{2}\setminus\ell$.
\end{enumerate}
Notice $\F_{s}$ and $\F_{\ell}$ are homogeneous foliations on $\p^{2}$. Therefore, if they are Galois their deck transformation group are of Klein type, that is, appearing in the list given in Theorem~\ref{klein}. The relation between the foliations $\F$, $\F_{s}$ and $\F_{\ell}$ is given by the following statement.

\begin{prop}\label{singgalois}
Let $\F$ be a Galois foliation on $\p^{2}$. For each $s\in\Sigma_{\F}$ and $\ell\subset\mc I_{\F}^{\mr{inv}}$ we have that
\begin{enumerate}[(1)]
\item if $\deg\F_{s}>0$ the homogeneous foliation $\F_{s}$ is Galois; moreover if the exceptional divisor $D_{s}$ obtained blowing up once the point $s$ is not a ramification component, then the Klein type deck transformation group of~$\F_{s}$ injects into the deck transformation group of $\F$;
\item if $\deg\F_{\ell}=\deg\F$ then the homogeneous foliation $\F_{\ell}$ is also Galois.
\end{enumerate}
\end{prop}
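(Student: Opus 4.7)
My plan is to reduce, in both assertions, the Galois property of the homogeneous foliations $\F_s$ and $\F_\ell$ to that of their one-dimensional Gauss maps on $\p^1$, and then to identify these maps with restrictions of a desingularization of $\G_\F$ to naturally attached rational curves. Since $\F_s$ and $\F_\ell$ are homogeneous, Proposition~\ref{AutF} yields that $\F_s$ (resp. $\F_\ell$) is Galois if and only if $\wh\G_{\F_s}\colon\p^1\to\p^1$ (resp. $\wh\G_{\F_\ell}\colon\p^1\to\p^1$) is Galois, and it identifies the corresponding deck groups.

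For part (1), let $\pi_s\colon\wt X_0\to\p^2$ be the blow-up at $s$ with exceptional divisor $D_s$, and complete it to a resolution $\beta\colon\wt X\to\p^2$ of $\G_\F$ as in Subsection~\ref{3.2}. Writing a saturated defining vector field of $\F$ near $s$ as $(A_\nu+A_{\nu+1}+\cdots)\partial_x+(B_\nu+B_{\nu+1}+\cdots)\partial_y$ with $\nu=\nu_s$, a direct computation in the blow-up chart $y=tx$ shows that the restriction of $\wt\G_\F$ to $D_s$ lands in $\check{s}\subset\pd^2$ and coincides, after the canonical identifications $D_s\simeq\p^1$ and $\check{s}\simeq\p^1$, with $\wh\G_{\F_s}$. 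To deduce that $\wh\G_{\F_s}$ is Galois, I would choose a $\Delta_\F$-admissible line $\check{m}\subset\pd^2$ passing through a generic point $q\in\check{s}\setminus\mr{Sing}(\Delta_\F)$ and apply Theorem~\ref{dim-red}: the branched covering $\G_\F^{-1}(\check{m})\to\check{m}$ is of regular type, so its ramification indices above $q$ are all equal. Via the blow-up identification these coincide with the ramification indices of $\wh\G_{\F_s}$ above the corresponding point of $\check{s}$, hence $\wh\G_{\F_s}$ is of regular type and, by the one-dimensional case of Theorem~\ref{GG} (Greenberg's criterion), Galois.

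For the ``moreover'' clause, suppose $\mc E_s=D_s$ is irreducible with ramification index $1$; then $\deg(\wt\G_\F|_{D_s})=\tau_s=\deg\wh\G_{\F_s}$, and every $\wh\tau\in\mr{Deck}(\wh\G_{\F_s})\subset\mr{PSL}_2(\C)$ lifts to an analytic automorphism of a neighborhood of $D_s$ in $\wt X$ commuting with $\wt\G_\F$, and hence descends to a local biholomorphism of $\p^2$ around $s$ commuting with $\G_\F$. Theorem~\ref{car-gal} then produces a unique birational extension $\tau\in\mr{Deck}(\G_\F)$, giving the required injection. Part (2) is established by a parallel argument with $s$ replaced by $\ell$: the relevant blow-up is now performed on the target at $\check{\ell}=\G_\F(\ell)\in\pd^2$, and the role of $D_s$ is played by the exceptional divisor $E_{\check{\ell}}\simeq\p^1$ of this blow-up; the condition $\deg\F_\ell=\deg\F$ guarantees that the top-degree homogeneous parts of a defining vector field do not degenerate and ensures that the lifted Gauss map sends $\ell$ dominantly onto $E_{\check{\ell}}$ via $\wh\G_{\F_\ell}$. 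The main technical obstacle in both parts is the careful local-coordinate identification of these restrictions with $\wh\G_{\F_s}$ and $\wh\G_{\F_\ell}$, particularly handling the saturation in the definitions of $\F_s$ and $\F_\ell$ and the further indeterminacy points that may need to be resolved on $D_s$ (respectively on $\ell$ after blowing up $\check{\ell}$), but the overall structural argument is uniform.
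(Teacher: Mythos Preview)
Your identification of $\wt\G_\F|_{D_s}$ with $\wh\G_{\F_s}$ is exactly the key observation the paper uses for~(1), but from there the paper proceeds differently and more directly: it simply applies Proposition~\ref{1}(b) to the connected component $D_s$ of the pull-back covering $\wt\G^{-1}(\check{s})\to\check{s}$. Since $\G_\F$ is Galois, that proposition immediately yields that the restricted covering on $D_s$ is Galois \emph{and} that its deck group injects into $\mr{Deck}(\G_\F)$; no regular-type transfer or Greenberg criterion is needed. Your route through an admissible line $\check{m}$ and comparison of ramification indices is more delicate than you suggest: when $D_s$ itself is a ramification component, every branch point of $\wh\G_{\F_s}$ lies over a point of $\mr{Sing}(\Delta_\F)$, so your choice $q\in\check{s}\setminus\mr{Sing}(\Delta_\F)$ misses precisely the points you need to check. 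For the ``moreover'' clause, your lifting argument is backwards: there is no a~priori reason a M\"obius transformation of $D_s$ commuting with $\wt\G|_{D_s}$ should extend to a neighborhood in $\wt X$. The injection in Proposition~\ref{1}(b) goes the other way---it is obtained by \emph{restricting} deck transformations of $\G_\F$ that happen to preserve $D_s$, and the Galois hypothesis guarantees there are enough of them.

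For~(2) the paper takes a completely different and much shorter path: it observes that in the affine chart $\p^2\setminus\ell$ one has $\F_\ell=\lim_{\varepsilon\to\infty}h_\varepsilon^*\F$ where $h_\varepsilon(x,y)=(\varepsilon x,\varepsilon y)$, and since each $h_\varepsilon^*\F$ is Galois (being projectively equivalent to $\F$) and $\mb G_d$ is closed in $\mb F_d$ by Theorem~\ref{TC}, the limit $\F_\ell$ is Galois. The condition $\deg\F_\ell=\deg\F$ ensures this limit stays in $\mb F_d$. Your proposed argument via blowing up $\check{\ell}$ on the target might be made to work, but it requires substantial extra effort (in particular handling the indeterminacy of the lifted Gauss map along $\ell$), whereas the paper's limit argument is one line.
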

\begin{dem}
We obtain assertion~(1) by applying Proposition~\ref{1} to the irreducible component $D_{s}\subset\wt\G^{-1}(\check{s})\subset\wt\p^{2}$ and noting that
the restrictions of $\wt\G=\wt\G_{\F}$ and $\wt\G_{\F_{s}}$  to $D_{s}\subset\wt\p^{2}$ coincide.
Assertion~(2) follows from the fact that $\mb G_{d}$ is closed by noting that 
$\F_{\ell}=\lim\limits_{\varepsilon\to\infty}h_{\varepsilon}^{*}\F$, where $h_{\varepsilon}\in\mr{PSL}_{3}(\C)$ is given by $h_{\varepsilon}(x,y)=(\varepsilon x,\varepsilon y)$ in the affine chart $\p^{2}\setminus\ell$.
\end{dem}

Motivated by Theorem~\ref{clas-homog} and Proposition~\ref{2.4} we consider the following family of deformations of a homogeneous foliation.

\begin{defin}\label{ELR}
Let $\F\in\mb H_{d}$ be a homogeneous foliation given by a saturated homogeneous vector field $X=A(x,y)\partial_{x}+B(x,y)\partial_{y}$. For every $\C$-linearly independent polynomials $u,v\in\C[x,y]$ of degrees $\le 1$, and every linearly independent vectors $(\alpha,\gamma,\lambda),(\beta,\delta,\mu)\in\C^{3}$
 we consider the \emph{extended left-right deformation} (ELR in short) of $\F$ as the family of foliations given by the vector fields
$$(\alpha A+\beta B)(u,v)\partial_{x}+(\gamma A+\delta B)(u,v)\partial_{y}+(\lambda A+\mu B)(u,v)(x\partial_{x}+y\partial_{y}).$$
\end{defin}

The proof of Proposition~\ref{2.4} shows that if $\F$ is a Galois homogeneous foliation then every element of its ELR-deformation is Galois with the same weighted branching type as~$\F$.
The family of vector fields considered in Proposition~\ref{2.4} consists in the ELR-deformation of the homogeneous Galois foliation defined by $x^{d}\partial_{x}+y^{d}\partial_{y}$. One can also made explicit the ELR-deformation of each homogeneous Galois foliation given in Theorem~\ref{clas-homog}, obtaining, by using formula~(\ref{tauhat}), explicit  continuous deformations of faithful representations of the triangular groups $C_{n}$, $D_{n}$, $A_{4}$, $S_{4}$ and $A_{5}$ into the Cremona group $\mr{Bir}(\p^{2})$, whose images are not contained in $\mr{PSL}_{3}(\C)$.

\begin{ex}\label{cremona}
The above considerations applied to the homogeneous foliation given by the vector field
$A(x,y)\partial_{x}+B(x,y)\partial_{y}$ with  
$$A(x,y)=(x^{4}+2i\sqrt{3}x^{2}y^{2}+y^{4})^{3}\quad\text{and}\quad B(x,y)=(x^{4}-2i\sqrt{3}x^{2}y^{2}+y^{4})^{3}$$
allows to embed 
its Galois group $A_{4}=\langle\sigma,\tau\,|\,\sigma^{2}=\tau^{3}=(\sigma\tau)^{3}=1\rangle$ 
 into the Cremona group $\mr{Bir}(\p^{2})$ by means of
$\sigma(x,y)=(-x,y)$ and $$\tau(x,y)=\frac{(\alpha A+\beta B)y-(\gamma A+\delta B)x}{(\alpha A+\beta B)(y+ix)-(\gamma A+\delta B)(y-ix)}\left(y-ix,y+ix\right),$$
where $\left[\begin{array}{cc}\alpha & \beta\\ \gamma & \delta\end{array}\right]\in\mr{PSL}_{2}(\C)$.
\end{ex}

\begin{obs}\label{degen}
Every homogeneous foliations admits the infinitesimal symmetry $R=x\partial_{x}+y\partial_{y}$ but the general element of its ELR-deformation does not admit $R$ as infinitesimal symmetry any more. However, it can be checked that the set of all ELR-deformations of every homogeneous foliation contains the special subsets:
\begin{enumerate}[(a)]
\item $\{P(y)\partial_{y}+Q(y)(x\partial_{x}+y\partial_{y})\,|\, P,Q\in\C[y]\}$, obtained by taking $u,v\in \C[y]$ and $\alpha=\beta=0$ and admitting the infinitesimal symmetry $x\partial_{x}$,  
\item $\{P(y)\partial_{x}+Q(y)(x\partial_{x}+y\partial_{y})\,|\,P,Q\in\C[y]$, obtained by taking $u,v\in \C[y]$ and $\gamma=\delta=0$ and admitting the infinitesimal symmetry $y\partial_{x}$.
\end{enumerate}
\end{obs}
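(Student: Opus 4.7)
The remark gathers several independent assertions that I would verify by direct Lie bracket computations, with no appeal to the deeper theory of the previous sections. First, that $R=x\partial_{x}+y\partial_{y}$ is an infinitesimal symmetry of every homogeneous foliation follows at once from $[R,\partial_{x}]=-\partial_{x}$, $[R,\partial_{y}]=-\partial_{y}$ and Euler's identity $R(f)=d\cdot f$ for $f$ homogeneous of degree $d$: for $X=A\partial_{x}+B\partial_{y}$ with $A,B$ homogeneous of the same degree $d$, a one-line calculation yields $[R,X]=(d-1)X$, so $R$ preserves the foliation $[X]$. The failure of this property for a generic ELR-deformation can be exhibited explicitly, or argued by observing that the coefficients of a general element in Definition~\ref{ELR} are polynomials that are no longer homogeneous as soon as the affine polynomials $u,v$ have non-zero constant terms.

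Next, I would verify the description of the restricted families. Substituting $u,v\in\C[y]$ and $\alpha=\beta=0$ into Definition~\ref{ELR} yields the vector field
$$X'=(\gamma A(u,v)+\delta B(u,v))\partial_{y}+(\lambda A(u,v)+\mu B(u,v))(x\partial_{x}+y\partial_{y})=P(y)\partial_{y}+Q(y)R,$$
with $P,Q\in\C[y]$, landing it in the family~(a); the case $\gamma=\delta=0$ lands similarly in~(b). Conversely, to realise an arbitrary element of~(a) as an ELR-deformation of some homogeneous foliation, I would homogenise $P,Q$ by setting $A(x,y)=y^{d}P(x/y)$, $B(x,y)=y^{d}Q(x/y)$ with $d=\max(\deg P,\deg Q)$, and then take $u=y$, $v=1$, $(\gamma,\delta,\lambda,\mu)=(1,0,0,1)$. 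The non-degeneracy conditions on $(u,v)$ and on the vectors $(\alpha,\gamma,\lambda),(\beta,\delta,\mu)\in\C^{3}$ are immediate from these parameter values.

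Finally, to check that the family~(a) admits $x\partial_{x}$ as an infinitesimal symmetry, I would rewrite $X'=Q(y)x\partial_{x}+(P(y)+yQ(y))\partial_{y}$ and compute the two components of $[x\partial_{x},X']$ via the formula $[V,W]_{i}=V(W_{i})-W(V_{i})$: both components vanish identically because $Q(y)x$ is linear in $x$ while $P(y)+yQ(y)$ is independent of $x$. An analogous calculation shows that $y\partial_{x}$ commutes with every element of~(b). There is essentially no obstacle: the remark is a book-keeping verification whose content lies in having noticed that within the ELR-deformations of homogeneous foliations one finds sub-families carrying a continuous symmetry other than $R$, to which the dimensional reduction of Proposition~\ref{AutF} can be applied beyond the strictly homogeneous setting.
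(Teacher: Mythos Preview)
Your verification is correct and is precisely the direct computation the paper leaves to the reader: the remark is stated without proof beyond the phrase ``it can be checked,'' and your Lie bracket calculations and parameter substitutions are exactly what is implicitly intended. One minor observation: for the symmetry claims you show $[x\partial_{x},X']=0$ and $[y\partial_{x},X']=0$, which is in fact slightly stronger than what Proposition~\ref{descrip} requires (namely $L_{R}X=\varepsilon X$ for some constant $\varepsilon$); this is harmless and consistent with the $\varepsilon=0$ cases treated there.
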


\subsection{Foliations with continuous automorphism group}\label{cont}
A natural class of foliations on $\p^2$ including homogeneous foliations is that of foliations $\F$ with a continuous group of automorphisms $\mr{Aut}(\F)\subset\mr{PSL}_{3}(\C)$.
After giving a classification of foliations in that class we establish a general criterion to decide whether they are Galois in terms of a suitable rational map $\p^1\to\p^1$.

Taking into account that every foliation of degree $1$ or $2$ is Galois,
we can assume that $\F$ has degree $\ge 3$.
Let $R\in\mr{Lie}(\mr{Aut}(\F))\subset\mf X(\p^{2})\simeq\mf{sl}_{3}(\C)$ be a non-trivial infinitesimal automorphism of $\F$.
There are four possible Jordan form types for the  traceless matrix associated to $R$:
\begin{align*}
&\text{(a)}\quad \left(\begin{array}{ccc} \alpha& 0 & 0 \\ 0 &\beta & 0\\ 0& 0 &-(\alpha+\beta)\end{array}\right)\qquad
&\text{(b)}&\quad \left(\begin{array}{ccc} 0 &1 & 0 \\ 0 & 0 & 0\\ 0 & 0 & 0\end{array}\right)
\\
&\text{(c)}\quad
\left(\begin{array}{ccc} 0 & 1 & 0 \\ 0 & 0 & 1 \\ 0 &0 &0\end{array}\right)\qquad
&\text{(d)}&\quad  \left(\begin{array}{ccc} 1 & 1 & 0\\ 0 &1  &0 \\ 0 & 0 & -2\end{array}\right)
\end{align*}
In a suitable affine chart $(x,y)$ the corresponding vector field  $R$ takes one of the following normal forms:
\begin{enumerate}[(a)]
\item $R=\alpha x\partial_{x}+\beta y\partial_{y}$ with $\alpha\in\C^{*}$ and $\beta\in\C$ and $\mr{Re}(\beta/\alpha)\ge 0$,
\item $R=y\partial_{x}$,
\item $R=y\partial_{x}+\partial_{y}$,
\item $R=(x+y)\partial_{x}+y\partial_{y}$.
\end{enumerate}

Let $X=A(x,y)\partial_{x}+B(x,y)\partial_{y}$ be a \emph{saturated} polynomial vector field defining $\F$. The fact that $R\in\mr{Lie}(\mr{Aut}(\F))$ translates into the relation
\begin{equation}\label{Lie}
L_{R}X=\varepsilon\,X,
\end{equation}
for some rational function $\varepsilon\in\C(x,y)$.  Since 
the poles of $\varepsilon$ are contained in the zeroes of the coefficients of
$X$ and that vector field is saturated we see that $\varepsilon\in\C[x,y]$. Finally, using that $\deg R=1$ we deduce that $\varepsilon$ must be constant. 
The following result describes the foliations of degree $\ge 2$ having a continuous automorphism group.
\begin{prop}\label{descrip}
Let $X=A(x,y)\partial_{x}+B(x,y)\partial_{y}$ be a saturated polynomial vector field of degree $\ge 2$ satisfying 
$L_{R}X=\varepsilon X$ for some $R\in\mf X(\p^{2})$ in the precedent list  (a)-(d) of normal forms and for $\varepsilon\in\C$. 
\begin{enumerate}[(a)]
\item If $R=\alpha x\partial_{x}+\beta y\partial_{y}$ then $\beta/\alpha\in\mb Q$, so that we can assume that $\alpha,\beta\in\mb Z^{+}$ are coprime, $\varepsilon\in\Z\alpha+\Z\beta$ and 
$$A(x,y)=\sum\limits_{\alpha i+\beta j=\varepsilon+\alpha}a_{ij}x^{i}y^{j}\quad\text{and}\quad B(x,y)=\sum\limits_{\alpha i+\beta j=\varepsilon+\beta}b_{ij}x^{i}y^{j}$$
are quasi-homogenous polynomials of weights $(\alpha,\beta)$. 
\item If $R=y\partial_{x}$ then $\varepsilon=0$ and 
$X=P(y)\partial_{x}+Q(y)(x\partial_{x}+y\partial_{y})$
for some coprime polynomials $P,Q\in\C[y]$.
\item If $R=y\partial_{x}+\partial_{y}$ then $\varepsilon=0$ and
$X=P(y^{2}-2x)(y\partial_{x}+\partial_{y})+Q(y^{2}-2x)\partial_{x}$
for some coprime polynomials $P,Q\in\C[z]$.
\end{enumerate}
In addition,
\begin{enumerate}[(d)]
\item  if $R=(x+y)\partial_{x}+y\partial_{y}$, relation $L_{R}X=\varepsilon X$ does not hold for any saturated polynomial vector field $X$ of degree $\ge 2$.
\end{enumerate}
\end{prop}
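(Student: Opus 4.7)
The plan is to translate the Lie equation $L_RX=\varepsilon X$ in each of the four normal forms of $R$ into an explicit first order linear PDE system for the polynomial coefficients $A,B$, and then to classify its polynomial solutions under the saturation condition $\gcd(A,B)=1$ and the degree bound $\max(\deg A,\deg B)\ge 2$. In each case the bracket $[R,X]$ is computed directly; the substantive work then consists in determining which values of $\varepsilon\in\C$ are compatible with those constraints and in describing the general solution.

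For~(a), expanding $X=\sum a_{ij}x^iy^j\partial_x+\sum b_{ij}x^iy^j\partial_y$ yields $L_RX=\sum(\alpha i+\beta j-\alpha)a_{ij}x^iy^j\partial_x+\sum(\alpha i+\beta j-\beta)b_{ij}x^iy^j\partial_y$, so $L_RX=\varepsilon X$ forces the support of $A$ (resp.\ $B$) to lie on the affine line $\alpha i+\beta j=\alpha+\varepsilon$ (resp.\ $=\beta+\varepsilon$). If $\beta/\alpha\notin\mb Q$ each such line contains at most one integer point, so $A$ and $B$ would be monomials; subtracting the two equations and combining with saturation one easily shows that the only possibilities have $\max(\deg A,\deg B)\le 1$, contradicting the hypothesis. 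Hence $\beta/\alpha\in\mb Q$ and, after rescaling, we may assume $\alpha,\beta\in\Z^+$ coprime. The quasi-homogeneity of $A,B$ and the relation $\varepsilon\in\Z\alpha+\Z\beta$ are then immediate consequences of the support condition.

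For~(b), the computation of $[y\partial_x,A\partial_x+B\partial_y]$ gives the system $yA_x-B=\varepsilon A$, $yB_x=\varepsilon B$. The key observation is that if $\varepsilon\ne 0$, writing $B=y^mC$ with $y\nmid C$ and substituting in the second equation yields $yC_x=\varepsilon C$; evaluating at $y=0$ forces $C(x,0)=0$, contradicting $y\nmid C$. Hence $B=0$, and the same trick applied to $yA_x=\varepsilon A$ gives $A=0$, a contradiction. So $\varepsilon=0$, $B=B(y)$, and $yA_x=B$ forces $y\mid B$; writing $B=yQ(y)$ the form in~(b) reads off by integration. For~(c) I would exploit the polynomial first integral $u=y^2-2x$ of $R=y\partial_x+\partial_y$: in the algebraic coordinates $(u,y)$ one has $R=\partial_y$, so the Lie equation becomes $\tilde A_y=\varepsilon\tilde A$, $\tilde B_y=\varepsilon\tilde B$ for the $(\partial_u,\partial_y)$-components of $X$. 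Polynomial solutions require $\varepsilon=0$ and give $\tilde A,\tilde B\in\C[u]$; transporting back via $\partial_u=-\tfrac12\partial_x$ and $\partial_y|_u=y\partial_x+\partial_y$ produces the decomposition~(c).

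For~(d), the rational substitution $(u,v)=(x/y,y)$ conjugates $R$ to $\partial_u+v\partial_v$, and writing $X=\tilde A\partial_u+\tilde B\partial_v$ the equations become $\tilde A_u+v\tilde A_v=\varepsilon\tilde A$ and $\tilde B_u+v\tilde B_v=(\varepsilon+1)\tilde B$. Expanding in monomials $u^iv^j$, the induced recursion $(i+1)\tilde A_{i+1,j}=(\varepsilon-j)\tilde A_{ij}$ forces, for a polynomial solution, $\varepsilon\in\Z_{\ge 0}$ and $\tilde A=cv^\varepsilon$, and similarly $\tilde B=c'v^{\varepsilon+1}$. Returning to $(x,y)$-coordinates yields $X=y^\varepsilon[(cy+c'x)\partial_x+c'y\partial_y]$, whence saturation forces $\varepsilon=0$ and hence $\deg X=1$, contradicting the hypothesis. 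The degenerate cases where one of $\tilde A$ or $\tilde B$ vanishes, as well as the borderline value $\varepsilon=-1$ in which the $(u,v)$-analysis collapses, must be handled separately in the original coordinates but similarly lead to $\deg X\le 1$. I expect this last case to be the main obstacle, since one must rigorously exclude \emph{every} polynomial eigenvector of $L_R$ of degree~$\ge 2$ and not merely those visible in the non-polynomial chart $(x/y,y)$, requiring careful bookkeeping between the two coordinate systems together with an ad hoc treatment of $\varepsilon=-1$.
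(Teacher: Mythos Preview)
Your argument is correct and, for parts~(a) and~(b), essentially coincides with the paper's: both expand in monomials (or equivalently solve the first-order ODE in $x$) to pin down the support of $A,B$ and force $\varepsilon=0$ in~(b). Your elementary ``evaluate at $y=0$'' trick in~(b) is a clean substitute for the paper's explicit exponential solution $B=e^{\varepsilon x/y}\bar Q(y)$.

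For~(c) and~(d) your route differs from the paper's. In~(c) you straighten $R$ to $\partial_y$ via the polynomial change $(u,y)=(y^2-2x,y)$, whereas the paper stays in $(x,y)$ and integrates the PDE for $B$ directly to obtain $B=e^{\varepsilon y}P(y^2-2x)$; both are equally simple. The real divergence is in~(d). The paper does \emph{not} pass to the rational chart $(x/y,y)$; instead it decomposes $X=\sum_n X_n$ into homogeneous pieces and uses Euler's identity $x\partial_xA_n+y\partial_yA_n=nA_n$ to rewrite $L_RX_n=\varepsilon X_n$ as
\[
y\partial_xA_n-B_n=(\varepsilon+1-n)A_n,\qquad y\partial_xB_n=(\varepsilon+1-n)B_n,
\]
which is exactly the system already solved in~(b) with the parameter $\varepsilon+1-n$. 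Hence at most one degree $n=\varepsilon+1$ survives, and the case~(b) normal form immediately gives $X_n=y^{n-1}\bigl((py+qx)\partial_x+qy\partial_y\bigr)$, which is non-saturated for $n\ge 2$. This reduction to~(b) is shorter and sidesteps precisely the difficulties you anticipate: there is no non-polynomial chart, no Laurent bookkeeping in $v=y$, and no separate treatment of $\varepsilon=-1$. Your approach works once you observe that $\tilde A=(A-uB)/v$ is still polynomial in $u$ (so the recursion terminates) and track the possible pole in~$v$, but the paper's Euler trick buys the same conclusion with essentially no extra work.
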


\begin{dem}
\begin{enumerate}[(a)]
\item Writing $A=\sum a_{ij} x^{i}y^{j}$ and $B=\sum b_{ij}x^{i}y^{j}$, if $L_{R}X=\varepsilon X$ then $$\left(\sum a_{ij}(\alpha i+\beta j-\alpha-\varepsilon)x^{i}y^{j}\right)\partial_{x}+\left(\sum b_{ij}(\alpha i+\beta j-\beta-\varepsilon)x^{i}y^{j}\right)\partial_{y}=0,$$ leading to the claimed form of $A$ and $B$. 
It is not difficult to see that if $\beta/\alpha\not\in\mb Q$ and $\deg X\ge 2$ then $X$ cannot be saturated. 
\item If $R=y\partial_{x}$ then 
$L_{R}X-\varepsilon X=(y\partial_{x}A-B-\varepsilon A)\partial_{x}+(y\partial_{x}B-\varepsilon B)\partial_{y}=0$ implies that
$B=e^{\frac{\varepsilon x}{y}}\bar Q(y)\in \C[x,y]$. Hence $\varepsilon=0$ and $B=\bar Q\in\C[y]$. From the $\partial_{x}$-component of $L_{R}X-\varepsilon X=0$ we obtain that
$A=\frac{\bar Q(y)x}{y}+P(y)\in\C[x,y]$. Thus, $\bar Q(y)=yQ(y)$ for some $Q\in\C[y]$.
\item If $R=y\partial_{x}+\partial_{y}$ then 
$$
L_{R}X-\varepsilon X=(y\partial_{x}A+\partial_{y}A-B-\varepsilon A)\partial_{y}+(y\partial_{x}B+\partial_{y}B-\varepsilon B)\partial_{y}=0
$$ 
implies that $B=e^{\varepsilon y}P(y^{2}-2x)$ and necessarily $\varepsilon=0$.  From the $\partial_{x}$-component of $L_{R}X-\varepsilon X=0$ we obtain that $A(x,y)=yP(y-x^{2})+Q(y^{2}-x)$ for some polynomials $P,Q\in\C[z]$.
\item If $R=(x+y)\partial_{x}+y\partial_{y}$ and $X=\sum_{n\ge 0} X_{n}$ with $X_{n}=A_{n}\partial_{x}+B_{n}\partial_{y}$ homogeneous of degree $n$, then the degree $n$ homogeneous part of $L_{R}X-\varepsilon X$ is 
\begin{eqnarray*}
0&=&L_{R}X_{n}-\varepsilon X_{n}=((x+y)\partial_{x}A_{n}+y\partial_{y}A_{n}-(\varepsilon+1)A_{n}-B_{n})\partial_{x}+\\ & &
((x+y)\partial_{x}B_{n}+y\partial_{y}B_{n}-(\varepsilon+1)B_{n})\partial_{y}\\
&=&(y\partial_{x}A_{n}-(\varepsilon+1-n)A_{n}-B_{n})\partial_{x}+(y\partial_{x}B_{n}-(\varepsilon+1-n)B_{n})\partial_{y}.
\end{eqnarray*}
As before, looking at   the $\partial_{y}$-component we deduce  that if $B_{n}\neq 0$ then $B_{n}=e^{\frac{(\varepsilon+1-n)x}{y}}Q(y)\in\C[x,y]$. Hence $\varepsilon=n-1$ and $B(y)=Q(y)=q y^{n}$ for some $q\in\C$. Substituting $B$ in the $\partial_{y}$-component of $L_{R}X_{n}-\varepsilon X_{n}$ we easily deduce that $A(x,y)=q x y^{n-1}+p y^{n}$ for some $p\in\C$. Since there is at most one $n\in\Z_{+}$ such that $\varepsilon=n-1$, we deduce that $X=X_{n}=y^{n-1}((py+qx)\partial_{x}+qy\partial_{y})$ is not saturated because $\deg X=n\ge 2$.
\end{enumerate}
\end{dem}

\begin{prop}\label{ghat}
To every  foliation $\F$ on $\p^{2}$ admitting a continuous group of automorphisms we can associate a non-constant morphism $\wh{\mc G}:\p^{1}\to\p^{1}$ so that
$\mr{Deck}(\mc G)\simeq\mr{Deck}(\wh{\mc G})$. In particular,
 $\F$ is Galois $\Longleftrightarrow$  $\wh{\mc G}$ is Galois.
\end{prop}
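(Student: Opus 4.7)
The plan is to reduce the statement to Proposition~\ref{AutF} applied in dimension $n=2$, using the classification of infinitesimal automorphisms in Proposition~\ref{descrip}.

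To begin, I would fix a non-trivial $R\in\mr{Lie}(\mr{Aut}(\F))\subset\mf X(\p^{2})$ and a saturated polynomial vector field $X$ defining $\F$. Since every foliation of degree $\le 2$ is trivially Galois, I may assume $d=\deg\F\ge 3$. Then $X$ and $R$ are not proportional over $\C(x,y)$ (otherwise $\F$ would coincide with the foliation $[R]$, whose degree is at most one), so $R$ is transverse to $\F$ at the generic point of $\p^{2}$. Up to a projective change of coordinates, Proposition~\ref{descrip} puts $R$ in one of the normal forms (a), (b), or (c); case~(d) is excluded there as incompatible with $X$ being saturated of degree $\ge 2$.

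Next, in each of the three remaining cases I would exhibit an explicit dominant rational first integral $f\colon\p^{2}\dasharrow\p^{1}$ of the vector field $R$ whose generic fibre is irreducible. In case (a), where $R=\alpha x\partial_{x}+\beta y\partial_{y}$ with $\alpha,\beta\in\Z^{+}$ coprime, I would take $f=y^{\alpha}/x^{\beta}$ (or $f=y$ in the degenerate subcase $\beta=0$); irreducibility of the generic fibre $\{y^{\alpha}=c\,x^{\beta}\}$ is equivalent to the coprimality of $\alpha$ and $\beta$. In case (b), where $R=y\partial_{x}$, I would take $f=y$, whose fibres are affine lines. In case (c), where $R=y\partial_{x}+\partial_{y}$, a direct computation gives $R(y^{2}-2x)=0$, so $f=y^{2}-2x$ is a first integral with smooth parabolic fibres.

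With these choices, the hypotheses of Proposition~\ref{AutF} are satisfied in each case, yielding a dominant rational map $\wh\G\colon\p^{1}\dasharrow\p^{1}$ together with a canonical isomorphism $\mr{Deck}(\G)\simeq\mr{Deck}(\wh\G)$. Since any dominant rational map between smooth projective curves extends uniquely to a morphism, $\wh\G$ is automatically a non-constant morphism $\p^{1}\to\p^{1}$, and the equivalence of the Galois character is then immediate from the isomorphism of deck groups. The main obstacle is purely the case-by-case bookkeeping for $R$; once a normal form is fixed, the construction of $f$ is elementary and the substantive work is carried out by Proposition~\ref{AutF}.
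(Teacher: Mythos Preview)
Your proposal is correct and follows essentially the same route as the paper: reduce to the normal forms (a)--(c) via Proposition~\ref{descrip}, exhibit in each case an explicit rational first integral of $R$ with irreducible generic fibre, and invoke Proposition~\ref{AutF}. The paper's proof differs only in that it also writes down explicit sections $\sigma,\check\sigma$ and the dual first integral $\check\rho$, and thereby computes $\wh\G=\check\rho\circ\G\circ\sigma$ explicitly in each case; this extra bookkeeping is useful for the subsequent examples but is not needed for the logical content of the proposition.
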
 
\begin{dem}
In cases (a), (b) and (c) the foliations defined by the vector fields $R$ and 
its dual $\check{R}$ admit explicit primitive rational first integrals $$\rho:\p^{2}\dashrightarrow\p^{1}\qquad\text{and}\qquad\check{\rho}:\pd^{2}\dashrightarrow\p^{1}$$ respectively, and rational sections 
$$\sigma:\p^{1}\dashrightarrow\p^{2}\qquad\text{and}\qquad\check{\sigma}:\p^{1}\dashrightarrow\pd^{2}$$
such that $\rho\circ\sigma=\check{\rho}\circ\check{\sigma}=\mr{Id}_{\p^{1}}$.
It can be easily checked that, in the affine charts 
considered above, these maps are given by
\begin{enumerate}[(a)]
\item $\rho(x,y)=y^{\alpha}/x^{\beta}$, $\sigma(z)=(z^{\gamma},z^{\delta})$,  $\check{\rho}(a,b)=b^{\alpha}/a^{\beta}$ and $\check{\sigma}(z)=(z^{\gamma},z^{\delta})$, where $\gamma,\delta\in\mb Z$ satisfy B\'ezout's relation $\alpha\delta-\beta\gamma=1$,
\item $\rho(x,y)=y$, $\sigma(z)=(0,z)$, $\check{\rho}(a,b)=a$ and $\check{\sigma}(z)=(z,0)$,
\item $\rho(x,y)=y^{2}-2x$, $\sigma(z)=(-z/2,0)$, $\check{\rho}(a,b)=\frac{b^{2}+2a}{a^{2}}$ and $\check{\sigma}(z)=(2/z,0)$.
\end{enumerate}
Moreover, the Gauss map of the foliation  given by the vector field $A(x,y)\partial_{x}+B(x,y)\partial_{y}$ is written as 
$$\mc G(x,y)=\left(\frac{-B(x,y)}{C(x,y)},\frac{A(x,y)}{C(x,y)}\right),\quad\text{with}\quad C(x,y)=yA(x,y)-xB(x,y).$$ 
Thus we obtain explicit expressions for the map  $\wh{\mc G}=\check{\rho}\circ\mc G\circ\sigma:\p^{1}\to\p^{1}$:
\begin{equation}
\label{Ghat}
\left\{\begin{array}{rcl}
\text{(a)} &\quad &\wh{\mc G}(z)=
A(z^{\gamma},z^{\delta})^{\alpha}(-B(z^{\gamma},z^{\delta}))^{-\beta}C(z^{\gamma},z^{\delta})^{\beta-\alpha},\\[1mm]
\text{(b)} & \quad & \wh{\mc G}(z)=-\frac{B(0,z)}{C(0,z)}=-\frac{Q(z)}{P(z)},\\[1mm]
\text{(c)} &\quad &\wh{\mc G}(z)=\frac{Q(z)^{2}-zP(z)^{2}}{P(z)^{2}}=\left(\frac{Q(z)}{P(z)}\right)^{2}-z,
\end{array}\right.
\end{equation}
where $A,B$ take the form given by Proposition~\ref{descrip} in each case.
Consequently, we can apply  Proposition~\ref{AutF} in order to conclude.
\end{dem}

Notice that all Galois foliations of this type have Galois group appearing in Klein's classification given by Theorem~\ref{klein}. 
This fact and Proposition~\ref{singgalois} motivate the following natural question:

\begin{question}
Are there Galois foliations on $\p^{2}$ whose Galois group is not of Klein type?
\end{question}

\begin{obs}\label{obs-homog}
If we set $\alpha=\beta=1$ in case (a), we obtain the class of homogeneous foliations studied in Subsection~\ref{homog}.
For every coprime  homogeneous polynomials $A,B$ in two variables of the same degree, the  homogeneous and type (b) foliations on $\p^{2}$ given respectively by the vector fields
$$A(x,y)\partial_{x}+B(x,y)\partial_{y}\quad\text{and}\quad A(1,y)\partial_{x}-B(1,y)(x\partial_{x}+y\partial_{y})$$
satisfy that the map $\wh{\G}$ induced by their Gauss map is $\wh{\G}=[A,B]:\p^{1}\to\p^{1}$. 
Moreover, in the homogeneous case we have
$\wh{\mc G}=[A,B]=\wt{\mc G}_{|D_{O}}$, where~$D_O$ is the exceptional divisor obtained after blowing up once the origin, and we recover Theorem~\ref{clas-homog} in an alternative way. As we have already pointed out in Remark~\ref{degen}, cases (a) with $\beta=0$ and~(b) can be thought as degenerations of homogeneous foliations.
\end{obs}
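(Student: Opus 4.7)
The plan is to verify the remark by direct substitution into the formulas~(\ref{Ghat}) and by a blow-up computation, then reap the classification as a corollary of Klein's theorem.

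First, observe that in Proposition~\ref{descrip}(a) the case $\alpha=\beta=1$ forces the support of the coefficients to the set $\{i+j=\varepsilon+1\}$, so that $A$ and $B$ are coprime homogeneous polynomials of the same degree $d=\varepsilon+1$; this is exactly the homogeneous class. Choosing B\'ezout witnesses $(\gamma,\delta)=(0,1)$ and substituting into formula~(a) of~(\ref{Ghat}) gives $\wh{\mc G}(z)=-A(1,z)/B(1,z)$, which is left-right equivalent to the map $[A,B]:\p^{1}\to\p^{1}$ (via $w\mapsto -w$). For the type~(b) foliation $A(1,y)\partial_{x}-B(1,y)(x\partial_{x}+y\partial_{y})$, comparison with the normal form $P(y)\partial_{x}+Q(y)(x\partial_{x}+y\partial_{y})$ of Proposition~\ref{descrip}(b) yields $P(y)=A(1,y)$ and $Q(y)=-B(1,y)$; formula~(b) of~(\ref{Ghat}) then reads $\wh{\mc G}(z)=-Q(z)/P(z)=B(1,z)/A(1,z)$, again left-right equivalent to $[A,B]$.

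For the geometric identification, I blow up the origin $O=[0{:}0{:}1]$ with $\beta_{O}\colon Y\to\p^{2}$ and exceptional divisor $D_{O}\simeq\p^{1}$ parametrised by the slope $z$ via $(x,y)=(t,tz)$. Since the orbits of $R=x\partial_{x}+y\partial_{y}$ are precisely the punctured lines through $O$, $D_{O}$ is naturally identified with the image of the section $\sigma\colon z\mapsto(1,z)$ fixed in the proof of Proposition~\ref{ghat}. By homogeneity of degree $d$, at a point $(t,tz)$ with $t\neq 0$ the tangent line to $\F$ passes through $(t,tz)$ with direction $(A(1,z),B(1,z))$, whose dual point in $\pd^{2}$ is $[B(1,z):-A(1,z):t(zA(1,z)-B(1,z))]$. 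As $t\to 0$ this limit equals $[B(1,z):-A(1,z):0]\in\check O\subset\pd^{2}$, showing that $\wt{\mc G}$ extends holomorphically across $D_{O}$ and that $\wt{\mc G}|_{D_{O}}\colon D_{O}\to\check O\simeq\p^{1}$ coincides, up to the same left-right equivalence, with $[A,B]$. The slightly delicate point here is the compatibility between this blow-up limit and the map $\check\rho\circ\mc G\circ\sigma$ used in Proposition~\ref{ghat}, but both are computed from the same pair $(A,B)$ restricted to $x=1$, so they agree.

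Combining these steps with Proposition~\ref{ghat} gives $\mr{Deck}(\mc G_{\F})\simeq\mr{Deck}([A,B])$ for every homogeneous $\F$, so the Galois character and the abstract Galois group of $\F$ are read off Klein's classification of Galois maps $\p^{1}\to\p^{1}$ (Theorem~\ref{klein}); this recovers the five normal forms of Theorem~\ref{clas-homog} without invoking the action of $\mr{PSL}_{2}(\C)\times\mr{PSL}_{2}(\C)$ on $\mb H_{d}$. Finally, the assertion that cases (a) with $\beta=0$ and (b) are degenerations of homogeneous foliations is immediate from Remark~\ref{degen}: the subfamilies $\{P(y)\partial_{y}+Q(y)R\}$ (admitting $x\partial_{x}$ as symmetry) and $\{P(y)\partial_{x}+Q(y)R\}$ (admitting $y\partial_{x}$ as symmetry) are exhibited there as specialisations within the ELR-family of a suitable homogeneous foliation, so both classes lie in the Zariski closure of orbits of homogeneous foliations under explicit algebraic deformations.
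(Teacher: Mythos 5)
Your verification is correct and is exactly the computation the paper leaves implicit: specializing formula~(\ref{Ghat}) via Proposition~\ref{descrip}, checking the blow-up limit for $\wt{\mc G}|_{D_O}$, and quoting Remark~\ref{degen} for the degeneration claim. The only caveat is that your substitutions produce $-A(1,z)/B(1,z)$ and $B(1,z)/A(1,z)$ rather than literally $[A,B]$, so the identification holds up to left-right equivalence (a post-composition with an automorphism of $\p^1$), which is all that is needed for the Galois property and the deck group, and is clearly what the remark intends.
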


Despite the criterion provided by Proposition~\ref{ghat} for deciding whether a foliation with an infinitesimal symmetry is Galois and the explicit form of the rational map $\wh{\G}$ given in~(\ref{Ghat}), it is not easy to find new examples of Galois foliations admitting such a symmetry. This is due to the difficulty of recovering the coefficients $A$ and $B$ based only on the map $\wh{\G}$.
Explicit Galois examples of the quasi-homogeneous case (a) with $0<\alpha<\beta$ are the following:
\begin{enumerate}[$\bullet$]
\item The degree $d$ foliation $\F$ given by the vector field
$$x^{d+1}\partial_{x}+(y^{d}+x^{d}y)\partial_{y}$$ belongs to the Galois family of Proposition~\ref{2.4} and that it is quasi-homogeneous with weights $\alpha=d-1$ and $\beta=d$, $\mf b_{\F}^{w}=2(d)_{1}$ and $\mf g_{\F}=0$. Moreover, it can be checked that the foliation $\F$ is convex, i.e.  $\It=\emptyset$, and  $\Sigma_{\F}^{\mr{ram}}=\Sigma_{\F}=\{[0,0,1],[0,1,0]\}$.
\item The degree $3$ foliation $\F$ considered in Example~\ref{exg1} is quasi-homogeneous with weights $\alpha=2$ and $\beta=3$, $\mf b_{\F}^{w}=3(3)_{1}$ and $\mf g_{\F}=1$.
\item The degree $3$ foliation $\F$ considered in Example~\ref{exg1b} is quasi-homogeneous with weights $\alpha=1$ and $\beta=2$, $\mf b_{\F}^{w}=3(3)_{1}$ and $\mf g_{\F}=1$.
\end{enumerate}

\bibliographystyle{plain}

\end{document}